\documentclass[a4paper,11pt,reqno]{amsart}

\usepackage{amsmath,amssymb,amsthm,mathtools,mathrsfs}
\usepackage{microtype}
\usepackage{enumitem}
\usepackage{booktabs,array,tabularx}
\usepackage{tikz-cd}
\usepackage[all]{xy}
\usepackage{xcolor}
\usepackage{aliascnt}
\usepackage[hypertexnames=false]{hyperref}
\usepackage[nameinlink,noabbrev]{cleveref}

\setlist{itemsep=2pt,topsep=4pt}
\allowdisplaybreaks
\definecolor{TR4link}{RGB}{24,70,125}
\definecolor{TR4cite}{RGB}{32,100,70}
\hypersetup{
  colorlinks=true,
  linkcolor=TR4link,
  citecolor=TR4cite,
  urlcolor=TR4link,
  pdftitle={From Pre-triangulation to Triangulation: Obstructions and Exact Lifting},
  pdfauthor={Ping He and Bo Le}
}

\newtheorem{theorem}{Theorem}[section]
\newaliascnt{proposition}{theorem}
\newtheorem{proposition}[proposition]{Proposition}
\aliascntresetthe{proposition}
\newaliascnt{lemma}{theorem}
\newtheorem{lemma}[lemma]{Lemma}
\aliascntresetthe{lemma}
\newaliascnt{corollary}{theorem}
\newtheorem{corollary}[corollary]{Corollary}
\aliascntresetthe{corollary}
\theoremstyle{definition}
\newaliascnt{definition}{theorem}
\newtheorem{definition}[definition]{Definition}
\aliascntresetthe{definition}
\newaliascnt{notation}{theorem}
\newtheorem{notation}[notation]{Notation}
\aliascntresetthe{notation}
\newtheorem*{notation*}{Notation}
\newaliascnt{fact}{theorem}
\newtheorem{fact}[fact]{Fact}
\aliascntresetthe{fact}
\newaliascnt{convention}{theorem}

\aliascntresetthe{convention}
\newaliascnt{remark}{theorem}
\newtheorem{remark}[remark]{Remark}
\aliascntresetthe{remark}
\newtheorem*{remark*}{Remark}

\crefname{section}{Section}{Sections}
\Crefname{section}{Section}{Sections}
\crefname{subsection}{Section}{Sections}
\Crefname{subsection}{Section}{Sections}
\crefname{theorem}{Theorem}{Theorems}
\Crefname{theorem}{Theorem}{Theorems}
\crefname{proposition}{Proposition}{Propositions}
\Crefname{proposition}{Proposition}{Propositions}
\crefname{lemma}{Lemma}{Lemmas}
\Crefname{lemma}{Lemma}{Lemmas}
\crefname{corollary}{Corollary}{Corollaries}
\Crefname{corollary}{Corollary}{Corollaries}
\crefname{definition}{Definition}{Definitions}
\Crefname{definition}{Definition}{Definitions}
\crefname{notation}{Notation}{Notations}
\Crefname{notation}{Notation}{Notations}
\crefname{fact}{Fact}{Facts}
\Crefname{fact}{Fact}{Facts}
\crefname{convention}{Convention}{Conventions}
\Crefname{convention}{Convention}{Conventions}
\crefname{remark}{Remark}{Remarks}
\Crefname{remark}{Remark}{Remarks}
\theoremstyle{plain}
\newtheorem*{maintheorema}{Theorem A}
\newtheorem*{maintheoremb}{Theorem B}
\newtheorem*{maintheoremc}{Theorem C}
\newtheorem*{maintheoremd}{Theorem D}
\newtheorem*{maintheoreme}{Theorem E}

\numberwithin{equation}{section}

\newcommand{\T}{\mathcal T}
\newcommand{\C}{\mathcal C}
\newcommand{\D}{\mathcal D}

\newcommand{\coker}{\operatorname{coker}}
\newcommand{\im}{\operatorname{im}}
\newcommand{\Hom}{\operatorname{Hom}}
\newcommand{\End}{\operatorname{End}}
\newcommand{\Ext}{\operatorname{Ext}}
\newcommand{\Aut}{\operatorname{Aut}}
\newcommand{\xto}[1]{\xrightarrow{#1}}
\newcommand{\kk}{\Bbbk}
\DeclareRobustCommand{\correspondingmark}{%
	\textcolor{blue}{\(\dagger\)}%
}

\title[From Pre-triangulation to Triangulation]
{From Pre-triangulation to Triangulation: Obstructions and Exact Lifting}
\author[Ping He]{Ping He}
\address{Beijing Institute of Mathematical Sciences and Applications,
	Beijing 101408, People's Republic of China}
\email{pinghe@bimsa.cn}

\author[Bo Le]{Bo Le \textsuperscript{\correspondingmark}}
\address{Department of Mathematical Sciences, Tsinghua University,
Beijing 100084, People's Republic of China}
\email{bole\_math@163.com}

\thanks{\correspondingmark  Corresponding author.}
\date{}

\subjclass[2020]{Primary 18G80; Secondary 18C20, 16E35, 18E10}
\keywords{octahedral axiom, pre-triangulated category, relative Heller comparison, exact lifting, exotic triangulated category, preprojective algebra}

\begin{document}

\begin{abstract}
We give a new equivalent formulation of Verdier's octahedral axiom.  An
initial square in a pre-triangulated category determines an obstruction in a
quotient of ordinary Hom groups.  Its vanishing is equivalent to the
existence of a good completion, and $\mathrm{TR4}$ is equivalent to this
vanishing for the squares associated with composable morphisms.  As a
consequence, if two
pre-triangulations have the same underlying additive category and one is
triangulated, then the other satisfies $\mathrm{TR4}$ precisely when their
relative inverse Heller comparison admits an exact lift along every short
exact sequence in the Freyd category.

As applications, we prove that the exotic pre-triangulated category of
D\'iaz Cabrera--Muro is in fact a triangulated category over every
algebraically closed field.
We construct scalar families for type-$A_{3a-1}$ preprojective algebras over arbitrary fields
and for a range of Dynkin preprojective algebras in characteristic two; in each family
the zero parameter is the unique triangulated member.  The local equations also yield separable
descent, including canonical triangulations for non-modular
equivariantizations and for the Markman--Mehrotra K3 deformation categories.
\end{abstract}

\maketitle
\section{Introduction}
\label{sec:introduction}

Verdier's octahedral axiom controls the compatibility of cones with
composition.  The first three axioms provide cones, rotations, and
morphisms of triangles, but do not require these choices to be compatible
for two composable morphisms.  The independence problem was recorded in
\cite[Conjecture~9.9]{Beligiannis2000} and
\cite[Remark~1.3.15]{Neeman2001}.  The independence of $\mathrm{TR4}$ from
$\mathrm{TR1}$--$\mathrm{TR3}$ was proved recently by
Chen--Liu--Lu--Zhang \cite{ChenLiuLuZhang2026}.  An independent
computer-assisted example in characteristic two appears in
\cite{AnonymousD4}.  These results show that the missing compatibility is
genuine.  The purpose of this paper is to identify it explicitly and turn
it into a criterion that can be used both to prove and to disprove the
octahedral axiom.

Throughout, a \emph{pre-triangulated category} means a category satisfying
$\mathrm{TR1}$--$\mathrm{TR3}$ in the sense of Puppe.  It is
\emph{triangulated} if it also satisfies $\mathrm{TR4}$; see
\cref{def:pretriangulation}.  Let $\T$ be a
pre-triangulated category with suspension $\Sigma$ and distinguished
triangles $\Delta$, and put
\[
 h_X=\T(-,X),\qquad
 \mathcal E=\mathsf{mod}\,\T.
\]
Here $\mathsf{mod}\,\T$ is the Freyd category of finitely presented
additive functors $\T^{\mathrm{op}}\to\mathsf{Ab}$; see
\cref{not:freyd}.  It is Frobenius by \cref{prop:freyd-category}.
A distinguished triangle beginning with a morphism $c$ determines a
characteristic extension class (see
\cref{def:characteristic-class-pretriangulation})
\[
 \chi_\Delta(c)\in
 \Ext^1_{\mathcal E}(L(c),K(c)),
 \qquad K(c)=\coker h_c,\quad L(c)=\ker h_{\Sigma(c)}.
\]
A candidate triangle is Yoneda exact if its associated sequence of
contravariant representable functors is exact; see
\cref{def:yoneda-exact}.  By \cref{prop:characteristic-extension}, such a
triangle beginning with $c$ is distinguished precisely when it represents
$\chi_\Delta(c)$.

Now let $q$ be a commutative square between the first maps of two chosen
distinguished triangles.  We call these data an \emph{initial square}; see
\cref{def:initial-square}.  Following Neeman
\cite[Definition~1.3.14]{Neeman2001}, a completion of $q$ is called
\emph{good} if the mapping cone of the resulting morphism of triangles is
distinguished.  Completing the first map of this cone candidate triangle
to a distinguished triangle reduces goodness to two identities for a
single morphism; see \cref{lem:two-arrow-recognition}.  These identities
define the local octahedral obstruction
\[
 o_\Delta(q)\in\operatorname{Obs}_\Delta(q),
\]
where $\operatorname{Obs}_\Delta(q)$ is a quotient of ordinary Hom groups;
see \cref{def:octahedral-defect}.  Its vanishing is independent of the
auxiliary distinguished triangle used in its construction, by
\cref{prop:local-choice-independence}.
For a composable pair $X\xrightarrow{f}Y\xrightarrow{g}Z$, let
$q(g\circ f)$ denote the initial square formed by distinguished
completions of $f$ and $g\circ f$, with vertical maps $1_X$ and $g$;
see \eqref{eq:composition-square}.

\begin{maintheorema}[{\normalfont see~\Cref{thm:boundary-lifting,thm:composition-obstruction}}]
Let $q$ be an initial square.  Then $q$ has a good completion if and only if
$o_\Delta(q)=0$.  Moreover, the following are equivalent.
\begin{enumerate}[label=\textup{(\roman*)}]
\item $\Delta$ satisfies $\mathrm{TR4}$.
\item One has $o_\Delta(q)=0$ for every initial square $q$.
\item One has $o_\Delta(q(g\circ f))=0$ for every composable pair
      $X\xrightarrow{f}Y\xrightarrow{g}Z$.
\end{enumerate}
\end{maintheorema}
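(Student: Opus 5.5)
The plan is to prove the first assertion (good completion $\iff o_\Delta(q)=0$) directly from the construction of the obstruction, and then derive the equivalences using the classical relationship between TR4 and good (homotopy cartesian) completions due to Neeman.

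\textbf{First assertion.} Fix an initial square $q$ from $X\xrightarrow{u}Y\to Z\to\Sigma X$ to $X'\xrightarrow{u'}Y'\to Z'\to\Sigma X'$, with vertical maps $a\colon X\to X'$ and $b\colon Y\to Y'$.

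1. By TR3 there is a completion $c\colon Z\to Z'$. Any other completion differs from $c$ by an element of the ambiguity group. This group consists of the maps $Z\to Z'$ that vanish after precomposition with $Y\to Z$ and after postcomposition with $Z'\to\Sigma X'$, so it is a sum of images of Hom groups of the form $\T(\Sigma X,Z')$ and $\T(Z,Y')$ under composition. This is exactly the subgroup by which $\operatorname{Obs}_\Delta(q)$ is a quotient.
2. The mapping cone candidate is
\[
Y\oplus X'\to Z\oplus Y'\to \Sigma X\oplus Z'\to \Sigma Y\oplus\Sigma X'.
\]
We complete its first map to a distinguished triangle, using the auxiliary triangle of \cref{def:octahedral-defect}.
3. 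By \cref{lem:two-arrow-recognition}, the candidate is distinguished iff a single comparison morphism satisfies two identities. The failure of these identities is measured by $o_\Delta(q)$, and by construction it depends on $c$ only modulo the ambiguity subgroup.
4. Hence $o_\Delta(q)=0$ iff some $c$ makes the two identities hold, i.e. iff some completion is good. Independence of auxiliary choices is \cref{prop:local-choice-independence}.

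The delicate point is checking that changing $c$ by elements of the ambiguity group changes the defect by exactly the subgroup quotiented out in $\operatorname{Obs}_\Delta(q)$. This gives both directions: a zero class can be corrected to an honest good completion, and a good completion forces the class to vanish. I expect this to require a careful diagram chase of how the identities transform, using the Yoneda-exactness of the triangles and \cref{prop:characteristic-extension} to detect distinguishedness.

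\textbf{Equivalences.}

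(ii)$\Rightarrow$(iii) is trivial, since $q(g\circ f)$ is an initial square.

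(i)$\Rightarrow$(ii) is Neeman's theorem that in a triangulated category every commutative square of first maps of triangles admits a good completion, combined with the first assertion. I would reprove it by reducing to a composition square. Factor the square through the diagonal $u'a=bu$: the square with vertical maps $(a,b)$ is obtained from composition squares for the pairs $(u,b)$ and $(a,u')$. TR4 applied to these composites supplies octahedra, and the good completion is pasted from the octahedral cones. Equivalently, apply TR4 to the composable pairs $Y\oplus X'\to \cdots$ arising from the mapping cone construction. This reduction lemma is the main obstacle of the whole proof. I would first try Neeman's argument in \cite[Section~1.3--1.4]{Neeman2001}, which shows TR4 implies that every square has a good (middling good) completion, and check that it uses only TR1--TR4.

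(iii)$\Rightarrow$(i): given $f$ and $g$, the first assertion gives a good completion $w$ of $q(g\circ f)$. Its mapping cone is a distinguished triangle
\[
Y\oplus X\to Z\oplus C_f\to C_{gf}\oplus\Sigma X\to\cdots
\]
containing the summand $1_X$. We split off this contractible summand $X\xrightarrow{1}X\to0\to\Sigma X$ (direct summands of distinguished triangles are distinguished in a pre-triangulated category). After rotating, this leaves a distinguished triangle $C_f\to C_{gf}\to C_g\to\Sigma C_f$ whose maps satisfy the octahedral commutativities, which is TR4 for the pair $(f,g)$.
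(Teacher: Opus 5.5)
\textbf{Main gap: (iii)\(\Rightarrow\)(i).} Use the paper's notation: triangles \(X\xrightarrow{f}Y\xrightarrow{a}F\xrightarrow{b}\Sigma(X)\) and \(X\xrightarrow{g\circ f}Z\xrightarrow{c}H\xrightarrow{d}\Sigma(X)\), with completion \(h\colon F\to H\). Splitting the contractible summand off the cone \(M(h)\) leaves
\[
 Y\xrightarrow{\left[\begin{smallmatrix}-a\\ g\end{smallmatrix}\right]}F\oplus Z
 \xrightarrow{[h\ \ c]}H\xrightarrow{\Sigma(f)\circ d}\Sigma(Y),
\]
which is the homotopy-cartesian composition form \(T(h)\).
\begin{itemize}
\item Its terms are \(Y\), \(F\oplus Z\) and \(H\). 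The cone of \(g\) never appears, and you never use a triangle on \(g\). So no rotation yields Verdier's triangle \(F\to H\to C_g\to\Sigma(F)\).
\item Passing from ``\(T(h)\) is distinguished for every composable pair'' to \(\mathrm{TR4}\) is a genuine theorem of Neeman (\cite[Theorem~1.8]{Neeman1991}, \cite[Proposition~1.4.6]{Neeman2001}). The paper invokes exactly this in \cref{thm:composition-obstruction}.
\item The splitting is also not literal. You must first conjugate the cone by coordinate changes such as \(R_A=\left[\begin{smallmatrix}1&f\\0&1\end{smallmatrix}\right]\) and \(R_C=\left[\begin{smallmatrix}0&1\\1&d\end{smallmatrix}\right]\). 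This uses \(a\circ f=0\), \(d\circ c=0\), \(h\circ a=c\circ g\) and \(d\circ h=b\), and exhibits \(M(h)\cong T(h)\oplus(X\to0\to\Sigma(X)\xrightarrow{1}\Sigma(X))\).
\end{itemize}

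\textbf{First assertion.} Your outline matches the paper: complete \(\alpha\), then apply \cref{lem:two-arrow-recognition}. However, in the notation of \cref{def:initial-square}, your description of the quotient is wrong.
\begin{itemize}
\item Two completions differ by some \(k\colon Z\to Z'\) with \(k\circ v=0\) and \(w'\circ k=0\). These maps form the \emph{intersection} of the images of \(\T(\Sigma(X),Z')\) and \(\T(Z,Y')\), not their sum.
\item \(\operatorname{Obs}_\Delta(q)\) does not quotient by this group. It quotients \(\T(B,C)\) by all of \(J_q\cong\T(Z,Z')\), so \(o_\Delta(q)=[(\bar\beta_0,t)]\) involves no chosen completion.
\end{itemize}
The missing idea, which dissolves your anticipated ``delicate point'', is this. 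A lift \(\phi\) determines \(h\) through \(\phi\circ s=\beta(h)\). Then \(s\circ\alpha=0\) and \(t\circ s=0\) give \(\beta(h)\circ\alpha=0\) and \(\gamma\circ\beta(h)=0\). Their \((2,1)\)-entries are exactly \(h\circ v=v'\circ g\) and \(w'\circ h=\Sigma(f)\circ w\). So every lift automatically yields a completion, and \cref{lem:cone-exact} plus two-arrow recognition makes that completion good. Without this observation, and with a quotient as large as the sum you describe, your step ``a zero class can be corrected to an honest good completion'' does not go through.

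\textbf{(i)\(\Rightarrow\)(ii).} Cite \(\mathrm{TR4}\Rightarrow\mathrm{TR4}'\) from \cite[Theorem~1.8]{Neeman1991}, as the paper does. Your pasting sketch is not a proof.
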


Thus $\mathrm{TR4}$ is characterized by the vanishing of explicit local
obstruction classes in quotients of ordinary Hom groups.

The local criterion treats one pre-triangulation at a time.  A complementary
relative criterion compares two such structures.  For \(i=0,1\), equip the
same additive category \(\T\) with a suspension \(\Sigma_i\) and a
pre-triangulation \(\Delta_i\), and set
\[
 \mathsf F_i=(\Sigma_i^{-1})_*:\mathcal E\longrightarrow\mathcal E.
\]
These functors also act on the stable category $\underline{\mathcal E}$,
obtained by factoring out morphisms through projective-injective objects;
see \cref{not:stable-freyd}.  Their inverse Heller comparisons, constructed
in \cref{prop:intrinsic-inverse-comparison}, determine a stable natural
isomorphism
\[
 \tau:\mathsf F_0\xrightarrow{\sim}\mathsf F_1.
\]
This is the relative comparison of \cref{def:relative-comparison}.
For a short exact sequence $e$ in $\mathcal E$, an exact lift of $\tau$
along $e$ is a morphism of short exact sequences
\[
 \mathsf F_0(e)\longrightarrow\mathsf F_1(e)
\]
whose three components represent those of $\tau$ in the stable category;
see \cref{def:exact-lift}.  We obtain the following relative criterion.

\begin{maintheoremb}[{\normalfont see~\Cref{thm:relative-heller-criterion}}]
Assume that $\Delta_0$ satisfies $\mathrm{TR4}$.  Then $\Delta_1$
satisfies $\mathrm{TR4}$ if and only if the relative comparison $\tau$
admits an exact lift along every short exact sequence in $\mathcal E$.
\end{maintheoremb}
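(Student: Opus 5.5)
We plan to reduce Theorem B to the local criterion of Theorem A, by translating the vanishing of $o_{\Delta_1}(q)$ into a statement about the comparison $\tau$. The first step is a dictionary between distinguished triangles and extension classes. By \cref{prop:characteristic-extension}, a Yoneda exact candidate triangle beginning with $c$ is $\Delta_i$-distinguished if and only if it represents $\chi_{\Delta_i}(c)\in\Ext^1_{\mathcal E}(L_i(c),K(c))$. Here $L_i(c)=\ker h_{\Sigma_i(c)}$, which can be identified with $\mathsf F_i$ applied to a syzygy-type object attached to $c$. Since $\mathcal E$ is Frobenius (\cref{prop:freyd-category}), $\Ext^1$ is computed stably. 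We will express both $\chi_{\Delta_0}(c)$ and $\chi_{\Delta_1}(c)$ through the inverse Heller comparisons of \cref{prop:intrinsic-inverse-comparison}. The expected outcome is that $\chi_{\Delta_1}(c)$ is obtained from $\chi_{\Delta_0}(c)$ by transport along the component of $\tau$. In particular, the $\Delta_1$-triangles are exactly the $\Delta_0$-triangles with third morphism twisted by $\tau$, up to the usual non-canonical choice of fill-in.

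The second step proves the implication ``exact lifts $\Rightarrow$ TR4''. Take a composable pair and its initial square $q=q(g\circ f)$ for $\Delta_1$. Using TR4 for $\Delta_0$, choose a good $\Delta_0$-octahedron on the same underlying maps. Its associated sequence of representables yields a short exact sequence $e$ in $\mathcal E$, namely the one relating the cokernels and kernels $K(f)$, $K(g\circ f)$, $K(g)$ (respectively the $L$'s) that the octahedron encodes. An exact lift $\mathsf F_0(e)\to\mathsf F_1(e)$ of $\tau$ then supplies compatible honest (not merely stable) representatives of $\tau$ on all three terms. We use these representatives to modify the third morphisms of the $\Delta_0$-octahedron into $\Delta_1$-distinguished triangles, as in step one. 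Compatibility with $e$ ensures that the two identities of \cref{lem:two-arrow-recognition} still hold for the modified cone triangle. Hence $o_{\Delta_1}(q)=0$, and Theorem A(iii) gives TR4 for $\Delta_1$.

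For the converse, assume $\Delta_1$ satisfies TR4 and let $e$ be a short exact sequence in $\mathcal E$. Since $\mathcal E$ is Frobenius with projectives $h_X$, we present $e$ by a composable pair in $\T$ up to projective summands: present the first term and the middle term compatibly, then take the induced maps. Good octahedra for both $\Delta_0$ and $\Delta_1$ on this pair give two morphisms of short exact sequences into the Heller resolutions. Comparing them yields genuine morphisms $\mathsf F_0(e)\to\mathsf F_1(e)$ representing $\tau$ componentwise, that is, an exact lift. We then check that adding projective summands does not affect liftability.

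The main obstacle is the core of the forward step. Stable representatives of $\tau$ on three terms can always be chosen individually, but they need not form a morphism of short exact sequences. We must show that the discrepancy is precisely the class $o_{\Delta_1}(q)$ in the quotient $\operatorname{Obs}_{\Delta_1}(q)$. Our approach is to compute the failure of commutativity as a map factoring through a projective-injective, hence through some $h_W$. By Yoneda this is a morphism in $\T$, and we will match it with the defect morphism of \cref{def:octahedral-defect}, modulo the indeterminacy that \cref{prop:local-choice-independence} already quotients out.
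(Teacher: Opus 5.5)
There is a genuine gap, and it sits in the direction you flag as the main obstacle (exact lifts $\Rightarrow\mathrm{TR4}$).

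\emph{The short exact sequence is the wrong one.} The sequence $K(f)\to K(g\circ f)\to K(g)\to0$ is only right exact: for $X=0$, $g=0$, $Y\neq0$ its first map is $0\colon h_Y\to h_Z$. Likewise $0\to L(f)\to L(g\circ f)\to L(g)$ is only left exact: take $f\colon0\to Y$, $g\colon Y\to0$. The sequence that controls goodness is the square extension of \cref{def:square-extension}, $0\to B(g\circ f)\to B(\alpha_q)\to B(a)\to0$. Its middle term is the kernel of the first cone map $h_{\alpha_q}$. It is exact for every initial square by \cref{rem:square-extension}, needs no $\Delta_0$-octahedron, and the hypothesis is applied to its $\Sigma_{1*}$-shift.

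\emph{The mechanism turning a lift into a good $\Delta_1$-completion is missing.} Twisting the third maps of a $\Delta_0$-octahedron cannot give $\Delta_1$-triangles when $\Sigma_0\neq\Sigma_1$, because the third objects change: for $c\colon C_0\to0$ they are $\Sigma_0C_0$ and $\Sigma_1C_0$. The sentence ``the discrepancy is precisely $o_{\Delta_1}(q)$'' restates the goal rather than proving it. The paper's argument runs as follows.
\begin{enumerate}
\item Fix an arbitrary $\Delta_1$-completion $h$. By \cref{lem:standard-three-row}, $h$ is good iff the connecting morphism $\kappa_h$ of the middle row of its three-row diagram equals $(\theta_1)_{\Sigma_{1*}(M)}$.
\item Realize $\Sigma_{1*}(e_q)$ by a \emph{different} square $q_0$ for $\Delta_0$ and take a good $\Delta_0$-completion of it.
\item Read both diagrams by columns as partial projective resolutions in $\mathsf{SES}(\mathcal E)$ and compare them. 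Composing with the lift gives $r\colon e_q\to e_q$, stably the identity on the end terms, with $\underline{r_M}\circ\kappa_h=(\theta_1)_{\Sigma_{1*}(M)}$.
\item Normalize $r$ to $(1,1+\iota_q\circ\xi\circ\pi_q,1)$ and twist the middle row. This gives a distinguished candidate triangle whose third map is the cone map with an extra $\lambda$ in the lower-left entry (\cref{lem:middle-row-correction}).
\item Since $\Sigma_1(g\circ f)\circ\lambda=0$, $\lambda$ is absorbed into the new completion $h-c_0\circ b$ (\cref{cor:cone-shear-removal}). This is where the composition form of the square is used.
\end{enumerate}
None of these ingredients appears in your sketch.

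In the converse direction your idea is the paper's: good completions make all three rows Heller resolutions, and one then compares. However, one composable pair cannot serve both structures. The diagram of a square $q$ for $\Delta_i$ resolves $\Sigma_{i*}(e_q)$, and for a composition square $e_q$ depends only on $(f,g)$. So with a single pair the two resolutions end at $\Sigma_{0*}(e_q)$ and $\Sigma_{1*}(e_q)$, which in general are different sequences when $\Sigma_0\neq\Sigma_1$, and no map $\mathsf F_0(e)\to\mathsf F_1(e)$ results. You need, for each $i$ separately, a square $q_i$ with $\Sigma_{i*}(e_{q_i})\cong e$. \Cref{lem:realize-short-exact-sequence} supplies this by realizing the extension class of $\mathsf F_i(e)$ through the connecting isomorphism $\underline{\Hom}(B(u),B(u'))\cong\Ext^1(B(v),B(u'))$, using a general initial square with arbitrary vertical map $f$. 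Your step of presenting $e$ by a composable pair up to projective summands would need its own justification.
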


The criterion can be used both to establish $\mathrm{TR4}$ and to detect
its failure.  Constructing exact lifts along all short exact sequences
proves $\mathrm{TR4}$.  Failure of exact lifting along a single short exact
sequence rules it out.

Most triangulated categories arising in algebra, geometry, and topology
come with an algebraic or topological model.  A triangulated category is
called \emph{exotic} if it is neither algebraic nor topological.
Muro--Schwede--Strickland constructed the first examples
\cite{MuroSchwedeStrickland2007}.  Rizzardo--Van den Bergh later produced
the first $\kk$-linear examples, over fields of characteristic zero
\cite{RizzardoVandenBergh2020}.  More recently, D\'iaz Cabrera--Muro
constructed, over every algebraically closed field, a pre-triangulation on
the category of finitely generated projective modules over the generalized
type-$\mathbb L_2$ preprojective algebra which is neither algebraic nor
topological \cite{DiazCabreraMuro2026}.  They left open whether this
structure satisfies the octahedral axiom.

The relative criterion resolves this question.

\begin{maintheoremc}[{\normalfont see~\Cref{thm:L2-octahedral}}]
Over every algebraically closed field, the D\'iaz Cabrera--Muro
pre-triangulation $\Delta_\sigma$ on
$\operatorname{proj}P(\mathbb L_2)$ satisfies $\mathrm{TR4}$.
Consequently,
$(\operatorname{proj}P(\mathbb L_2),\Delta_\sigma)$ is an exotic
triangulated category.
\end{maintheoremc}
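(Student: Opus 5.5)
The plan is to apply Theorem~B with $\Delta_1=\Delta_\sigma$ and $\Delta_0$ a standard triangulation on the same additive category. We need a triangulated structure $\Delta_0$ on $\operatorname{proj}P(\mathbb L_2)$. Following D\'iaz Cabrera--Muro, the category $\operatorname{proj}P(\mathbb L_2)$ carries a family of pre-triangulations. These are built from the Amiot-type construction out of the Nakayama/twisted suspension $\Sigma=(-)\otimes_P{}_\sigma P$, together with a choice of class in Hochschild cohomology (a Massey-type product $m_3$). The algebraic member of that family is triangulated, for instance as the stable/cluster category of a suitable dg or $A_\infty$ model. Alternatively, we could take the parameter value where their obstruction vanishes. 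The first step is to fix such a $\Delta_0$ with the same underlying additive category, and to record that $\mathcal E=\mathsf{mod}\,\T$ is identified with finitely generated modules over the self-injective algebra $P=P(\mathbb L_2)$. By \cref{prop:freyd-category}, $\mathcal E$ is Frobenius, and $\underline{\mathcal E}$ is the stable module category.

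Next we compute the relative comparison $\tau:\mathsf F_0\to\mathsf F_1$ of \cref{def:relative-comparison}. Both $\mathsf F_i$ are induced by twisting with automorphisms of $P$ (the suspensions differ at most by an automorphism $\sigma$ in the D\'iaz Cabrera--Muro data). So $\tau$ should be represented stably by a bimodule map $\Omega$-shifted against the inverse Heller functor. Concretely, $\tau$ is a stable natural isomorphism between two twist functors, $\mathsf F_0\cong(-)\otimes{}_{\alpha}P$ and $\mathsf F_1\cong(-)\otimes{}_{\beta}P$, on $\underline{\mathsf{mod}}\,P$. The key reduction is this: if $\tau$ is represented by an honest natural transformation of exact functors on $\mathcal E$, for example induced by a bimodule homomorphism ${}_\alpha P\to{}_\beta P$, then applying it to any short exact sequence $e$ gives a morphism of short exact sequences $\mathsf F_0(e)\to\mathsf F_1(e)$. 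Its components represent $\tau$, so it is an exact lift in the sense of \cref{def:exact-lift}. The step is therefore to show that the stable isomorphism $\tau$ lifts to a genuine bimodule isomorphism.

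The main obstacle is exactly this lifting. The difference between $\Delta_\sigma$ and $\Delta_0$ is encoded by a class in stable Hochschild cohomology $\underline{\Hom}_{P^e}({}_\alpha P,{}_\beta P)$. Some such classes are not represented by bimodule maps, namely those supported in degree involving $\Omega$-shifts. I would first compute the relevant stable Hom group explicitly using the periodic bimodule resolution of $P(\mathbb L_2)$; this algebra is periodic, with $\Omega^3_{P^e}P$ or $\Omega^6$ a twist of $P$. The aim is to show that over an algebraically closed field every stable bimodule automorphism in the relevant component is represented by an actual bimodule automorphism, up to a scalar adjustment. Algebraic closedness enters here: we may need to extract a root of a scalar (e.g.\ solve $\lambda^n=c$) to rescale the automorphism $\sigma$ so that the comparison becomes induced by an algebra automorphism. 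If a direct bimodule lift fails, the fallback is to construct exact lifts sequence by sequence. One reduces to sequences $0\to K(c)\to\cdot\to L(c)\to0$ arising from morphisms $c$ in $\T$, which by \cref{prop:characteristic-extension} is all that is needed. One then checks that the obstruction to lifting lies in $\Ext^1$ terms that vanish by degree reasons for the graded algebra $P(\mathbb L_2)$.

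Finally, Theorem~B gives $\mathrm{TR4}$ for $\Delta_\sigma$. The ``exotic'' conclusion then follows from D\'iaz Cabrera--Muro's theorem that $\Delta_\sigma$ is neither algebraic nor topological. Those notions apply to triangulated categories and are unaffected by our verification of TR4.
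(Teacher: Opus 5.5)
\paragraph{The central step cannot work.}
Your frame is right: apply \cref{thm:relative-heller-criterion} with $\Delta_0=\Delta_\nu$, the algebraic triangulation of D\'iaz Cabrera--Muro, and $\Delta_1=\Delta_\sigma$. Then $\mathsf F_0=F_\nu$ and $\mathsf F_1=F_\sigma=F_\varphi\circ F_\nu$ with $\varphi(b)=b+b^3$, and $\tau_M=\zeta_{F_\nu(M)}$ for the stable isomorphism $\zeta\colon\operatorname{Id}\xrightarrow{\sim}F_\varphi$. But the step you call the heart of the argument, lifting $\tau$ to a genuine bimodule isomorphism, is impossible, not merely hard.

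A natural transformation $\operatorname{Id}\Rightarrow F_\varphi$ on $\operatorname{mod}\Lambda$ has the form $m\mapsto mc$ for some $c$ with $ac=c\varphi(a)$ for all $a\in\Lambda$. Suppose it represents $\zeta$. Each simple module $S_i$ is projective-free, so $\zeta_{S_i}=\underline{1}$ and $\underline{\End}(S_i)=\kk$. Hence the transformation acts as the identity on each $S_i$, so $c\equiv1$ modulo $\operatorname{rad}\Lambda$. Then $c$ is invertible and $\varphi=c^{-1}(-)c$ is inner. This contradicts the defining feature of the construction: $\varphi$ is stably trivial but nontrivial in $\operatorname{Out}(\Lambda)$. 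Rescaling by roots of scalars does not affect this class, and algebraic closedness plays no such role.

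There is also a conceptual reason. Such a natural isomorphism would, via \cref{prop:intrinsic-inverse-comparison}\textup{(ii)}, make $\operatorname{Id}_\T$ with the induced suspension comparison an exact equivalence $(\T,\Delta_\nu)\to(\T,\Delta_\sigma)$. Then $\Delta_\sigma$ would be algebraic, contradicting the exoticity you want to conclude. So exact lifts can only be built one sequence at a time and cannot be natural in $e$.

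\paragraph{The fallback does not close the gap.}
The sequences $0\to K(c)\to h_Z\to L(c)\to0$ have projective-injective middle term, and along those an exact lift is automatic. Lift a representative of $\tau_L$ through the projective middle term; since $\tau$ is a morphism of triangulated functors, the induced left component represents $\tau_K$. \Cref{prop:characteristic-extension} only recognizes distinguished triangles; it does not reduce Theorem~B to these sequences. The implication (ii)$\Rightarrow$(i) needs lifts along square extensions $e_q$. By \cref{lem:realize-short-exact-sequence}, these are, up to $\Sigma_*$ and isomorphism, arbitrary short exact sequences. Nor is any vanishing of Ext groups ``by degree reasons'' available: $\varphi$ is not even graded.

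\paragraph{What the paper does instead.}
The missing idea is an explicit, non-natural construction.
\begin{enumerate}
\item Reduce to reduced sequences $0\to A\xrightarrow{j}N\oplus P\xrightarrow{s}C\to0$ with $N$ projective-free and $P$ projective. Split projective summands lift by zero maps.
\item Use $M\operatorname{soc}(\Lambda)=0$ to get $\mathsf F(M)=M$ and $\zeta_M=\underline{1_M}$ for projective-free $M$.
\item On $P_1$ take $\xi_P(x)=(e_1-b^2)\varphi(x)$. This fixes $\operatorname{rad}P\supseteq j_P(A)$.
\item Set $t_B=\left[\begin{smallmatrix}1_N&-\chi\\0&\xi_P\end{smallmatrix}\right]$ with $\chi=\beta_N\circ\gamma\circ\pi_P$. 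Here $\gamma$ comes from a section of $T_1(C)\twoheadrightarrow T_1(P)$ adapted to the flag $K_C\subseteq G_C$.
\item Show $\mathsf F(s)\circ t_B-s=-d_C\circ s$, where $d_C$ factors through $\operatorname{add}(P_1)$ by \cref{lem:L2-top-flag-factorization,lem:L2-adapted-top-splitting}.
\end{enumerate}
Thus $(1_A,t_B,1_C-d_C)$ is an exact lift of $\zeta$. Applied to $F_\nu(e)$, it lifts $\tau$, and Theorem~B gives $\mathrm{TR4}$. Your closing appeal to \cite[Corollary~4.2]{DiazCabreraMuro2026} for exoticity is fine.
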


To detect failure of $\mathrm{TR4}$, we use the central Heller twists
$\Delta_\lambda^\delta$ of \cref{def:central-twist}, obtained from a
triangulation $\Delta_0$ by a square-zero central endomorphism $\delta$.
For these twists, \cref{prop:central-twist-obstruction} expresses exact
liftability in terms of a class in a quotient of a stable endomorphism
space.  If this class is nonzero, only the original triangulation,
corresponding to $\lambda=0$, satisfies $\mathrm{TR4}$.
Under the Calabi--Yau hypotheses of \cref{sec:serre-traces}, Serre traces
give a criterion for this nonvanishing.  In particular,
\cref{prop:periodic-orbit-detector} gives a criterion using a socle orbit
(\cref{def:periodic-socle-orbit}) and a short exact sequence ending at an
object of that orbit.  The first two terms have no summands in the orbit,
and the sequence has compatible period identifications
(\cref{def:trace-functional}).
The constructions in \cref{sec:deformations,sec:ade-lines} provide these
data for the preprojective algebras in the following theorem.

\begin{maintheoremd}[{\normalfont see~\Cref{thm:scalar-family,thm:ade-explicit-lines}}]
For every field $\kk$ and every $a\ge2$, the projectives over the
type-$A_{3a-1}$ preprojective algebra carry pairwise distinct
pre-triangulations
$\{\Delta_{\kk,\lambda}\}_{\lambda\in\kk}$ satisfying
\[
 \Delta_{\kk,\lambda}\text{ satisfies }\mathrm{TR4}
 \quad\Longleftrightarrow\quad\lambda=0.
\]
If $\operatorname{char}\kk=2$, the same conclusion holds for the
preprojective algebras of types
$A_n$ $(n\ge5)$, $D_n$ $(n\ge5)$, $E_6$, $E_7$, and $E_8$.
It also holds for type $D_4$ if $\operatorname{char}\kk=2$ and $|\kk|>2$.
\end{maintheoremd}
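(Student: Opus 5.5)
The plan is to obtain Theorem D from the central-twist machinery summarized in the introduction. We start from the standard (algebraic) triangulation $\Delta_0$ on $\operatorname{proj}\Lambda$, where $\Lambda$ is the relevant preprojective algebra. This structure comes from the stable module category of a self-injective algebra, or equivalently a $2$-periodic or Amiot-type cluster model. We then construct a square-zero central endomorphism $\delta$ of the suspension, or of the identity, for which the central Heller twists $\Delta_\lambda:=\Delta_\lambda^\delta$ of \cref{def:central-twist} are pre-triangulations.

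Pairwise distinctness of $\{\Delta_\lambda\}_{\lambda\in\kk}$ should follow from the characteristic classes. By \cref{prop:characteristic-extension}, a Yoneda exact triangle is distinguished exactly when it represents $\chi_{\Delta_\lambda}(c)$. Twisting by $\lambda\delta$ changes this class by $\lambda$ times a fixed nonzero class coming from $\delta$. So two parameters give the same pre-triangulation only if $(\lambda-\lambda')\delta$ acts trivially, which forces $\lambda=\lambda'$ once $\delta\neq 0$ stably.

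For the TR4 dichotomy we apply Theorem B with $\Delta_0$ triangulated. Then $\Delta_\lambda$ satisfies TR4 iff the relative comparison $\tau_\lambda$ lifts exactly along every short exact sequence in $\mathcal E$. By \cref{prop:central-twist-obstruction}, this liftability is governed by a class, linear in $\lambda$, in a quotient of a stable endomorphism space. For $\lambda=0$ the comparison is the identity and lifts trivially. For $\lambda\neq0$ it therefore suffices to exhibit one short exact sequence on which the class is nonzero.

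To find that sequence we use the Calabi--Yau setting of \cref{sec:serre-traces}; preprojective stable categories are $2$-CY up to twist. Specifically, we apply \cref{prop:periodic-orbit-detector}, which needs three ingredients:
\begin{itemize}
\item a periodic socle orbit of indecomposable projectives (\cref{def:periodic-socle-orbit}), determined by the Nakayama permutation and the periodicity of $\Omega$;
\item a short exact sequence in $\mathcal E$ ending at an object of that orbit, whose first two terms have no summands in the orbit;
\item compatible period identifications (\cref{def:trace-functional}) making the Serre trace of $\delta$ along the orbit nonzero.
\end{itemize}

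For type $A_{3a-1}$ over any field, the Nakayama involution and the $\Omega$-period, which is $6$ up to the twist, should produce an orbit of length divisible by $3$. That is why the family is indexed by $3a-1$. The trace is then a sum of orbit-length many equal unit contributions and so is nonzero in every characteristic; we would choose $\delta$ supported on this orbit to arrange this. In the characteristic-two cases (the other $A_n$, $D_n$, $E_6$, $E_7$, $E_8$), the relevant trace is instead a sum in which signs coincide, so it survives only mod $2$. In this case we would read the orbit and the exact sequence off the explicit lines of \cref{sec:ade-lines}. For $D_4$ the triality orbit requires choosing a parameter with $\mu^2+\mu+1\ne0$ or $\mu\notin\{0,1\}$, which needs $|\kk|>2$.

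The main obstacle is the explicit verification that the trace functional is nonzero. That means constructing the short exact sequence with no orbit summands in its first two terms and checking compatibility of the period identifications, case by case in the Dynkin types. We would first do $A_{3a-1}$ uniformly via the combinatorics of the mesh relations, then handle the exceptional types by a direct computation of socle paths.
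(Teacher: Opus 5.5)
Your top-level reduction is the paper's: a base triangulation, central Heller twists, \cref{prop:central-twist-obstruction}, and \cref{prop:periodic-orbit-detector}. The gap is that almost all of Theorem~D consists of producing the input to \cref{prop:periodic-orbit-detector}. Your plan leaves that construction open, and the directions it suggests cannot work.

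A socle orbit lives in $\C=\underline{\mathcal E}\simeq\underline{\operatorname{mod}}\Lambda$, where indecomposable projectives are zero. So it cannot be an orbit of projectives under the Nakayama permutation. You need nonprojective modules $X$ whose stable endomorphism algebra is local with residue field $\kk$ and has one-dimensional socle spanned by a \emph{nonzero radical} element. The missing idea is how to manufacture such modules.

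The paper obtains them as images in periodic acyclic complexes of projectives. In type $A_{3a-1}$ it uses the stable Auslander model $\Lambda\cong\underline{\End}_{R_{3a}}(U_1\oplus\cdots\oplus U_{3a-1})^{\mathrm{op}}$. This model also gives the base triangulation over an arbitrary field, where Amiot's construction is unavailable. The complex $\cdots\to P_a\xrightarrow{x}P_{2a}\xrightarrow{y}P_a\to\cdots$ is exact because of $0\to U_a\to U_{2a}\to U_a\to0$ together with $\Omega^{-1}_{R_{3a}}U_a=U_{2a}$. That identity is where $3a-1$ comes from. The modules $M=\im x$ and $N=\im y$ have $\underline{\End}\cong\kk[t]/(t^a)$ and form an $S$-orbit of length two, not a multiple of three, with $\rho=t^{a-1}$.

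In characteristic two one instead takes $d,h\in\End(P)$ with $\im d=\ker d=M$. Then \cref{lem:periodic-block-stable-endomorphisms} forces $\underline{\End}(M)\cong\kk[\varepsilon]/(\varepsilon^2)$, and the orbit is just $\{M\}$. Proving $\im d=\ker d$ for $D_n$ and $E_6,E_7,E_8$ requires a dimension-vector argument: Crawley-Boevey's Hom--Ext formula plus uniqueness of the dominant root. In $D_4$, the condition $\omega\notin\{0,1\}$ is what makes $(ux_i)\bar x_i\ne0$ at all three leaves; it is not a triality-orbit condition.

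You also misplace the difficulty. Nonvanishing of the trace is not a case-by-case sum over an orbit. Once a socle orbit and compatible period identifications exist, $\ell_M(\rho_M)\ne0$ follows from the almost-vanishing property of the Auslander--Reiten triangle ending at $M$, as in the proof of \cref{prop:periodic-orbit-detector}. Nothing in that step depends on the characteristic. Characteristic two is used elsewhere:
\begin{itemize}
\item to make $\cdots\xrightarrow{d}P\xrightarrow{d}P\xrightarrow{d}\cdots$ strictly $1$-periodic, since the shift negates differentials;
\item in local identities: the cancellation in $\delta^2$ for $A_n$, $\varphi(\gamma r_0\bar\gamma)=1+1+0=0$, and $ab=ba$ via the path-reversal anti-involution.
\end{itemize}

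The step your proposal does not address at all is the short exact sequence. The paper builds it as the cycle sequence of the second of two successive mapping cones, $C^\bullet=\operatorname{Cone}(-h)$ and $D^\bullet=\operatorname{Cone}(f)$. It gets the period identifications from the strict periodicity of these cones. It excludes orbit summands from $A$ and $B$ by proving $\underline{\Hom}_\Lambda(X,A)\circ\rho_X=0=\underline{\Hom}_\Lambda(X,B)\circ\rho_X$ for $X$ in the orbit and then invoking \cref{lem:socle-annihilation}. Until such a sequence $e$ is constructed, you have no sequence with $[\delta_M]_e\ne0$, so the proposal does not yet prove the theorem.
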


The local obstruction also behaves well under retracts.  Its defining
equations are functorial under exact functors, and a separability retraction
on Hom groups carries a solution back to the source.
Here an exact functor includes a comparison with suspension, as in
\cref{def:exact-functor}.  Separability means that the induced maps on Hom
groups admit a natural bimodule retraction, as in \cref{def:separable}.

\begin{maintheoreme}[{\normalfont see~\Cref{thm:separable-descent}}]
Let \(\D\) be a pre-triangulated category, let \(\C\) be a triangulated
category, and let \(F:\D\to\C\) be exact and separable.  Then the given
pre-triangulation on \(\D\) satisfies \(\mathrm{TR4}\).
\end{maintheoreme}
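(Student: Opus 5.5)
We prove Theorem~E through the local criterion of Theorem~A. By the implication (iii)$\Rightarrow$(i) there, it suffices to show that $o_\Delta(q)=0$ for every initial square $q$ in $\D$, and indeed only for the squares $q(g\circ f)$. By the first part of Theorem~A, vanishing of $o_\Delta(q)$ is equivalent to the solvability of the two identities of \cref{lem:two-arrow-recognition} for a single morphism. More precisely, the obstruction is the class of a specific element in a quotient of Hom groups, and we must show that this element lies in the subgroup of \emph{trivial} defects. So the task is to show that this linear condition on Hom groups in $\D$ is reflected along $F$.

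\textbf{Step 1: the defining data are preserved by $F$.} Fix an initial square $q$ in $\D$, consisting of two distinguished triangles and a commutative square between their first maps. Apply $F$, using the comparison $\phi:F\Sigma_\D\cong\Sigma_\C F$ of \cref{def:exact-functor}. The image triangles are distinguished, so $F(q)$ is an initial square in $\C$. Similarly, $F$ carries the auxiliary distinguished triangle used to define $o_\Delta(q)$ to an auxiliary distinguished triangle for $F(q)$. The two identities defining the obstruction are composites of the structure maps, their suspensions, and the unknown morphism. Under $F$ and $\phi$ they become the corresponding identities for $F(q)$. Hence $F$ induces a homomorphism $\operatorname{Obs}_\Delta(q)\to\operatorname{Obs}_{\Delta_\C}(F(q))$ sending $o_\Delta(q)$ to $o_{\Delta_\C}(F(q))$. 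Here we invoke \cref{prop:local-choice-independence}, so the choice of auxiliary triangle in $\C$ is harmless. Since $\C$ satisfies TR4, Theorem~A gives $o_{\Delta_\C}(F(q))=0$. Concretely, there is a morphism $w$ in $\C$ between images of objects of $\D$ that satisfies the two identities.

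\textbf{Step 2: descent via the separability retraction.} By \cref{def:separable} there are retractions $r_{A,B}:\C(FA,FB)\to\D(A,B)$ of $F_{A,B}$. These are natural in the bimodule sense: $r(F(u)\circ w\circ F(v))=u\circ r(w)\circ v$. Both identities are linear in the unknown and have the form ``$F(\alpha)\circ w\circ F(\beta)+F(\gamma)=0$'' after transport through $\phi$. Applying $r$ therefore yields the same identities in $\D$ for $r(w)$. This shows $o_\Delta(q)=0$. The parts of the equations that are already images of morphisms of $\D$ are recovered because $r\circ F=\mathrm{id}$.

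\textbf{Main obstacle.} The delicate point is the suspension. The identities involve $\Sigma$ applied to the unknown or to structure maps, so after applying $F$ they contain $\phi$-conjugates such as $\phi\circ F\Sigma(-)\circ\phi^{-1}$. We must check that the retraction is compatible with this, which means rewriting each identity purely as a two-sided composite in $\C(FA,FB)$ with outer factors in the image of $F$. The first thing to try is to absorb $\phi$ into the objects by regarding $\Sigma_\C FX$ as $F\Sigma_\D X$. If an identity involves $\Sigma$ of the unknown itself, we instead use the flexibility of \cref{lem:two-arrow-recognition} to choose the unknown at the unsuspended level. A secondary check is that the ``trivial'' subgroup quotiented out in $\operatorname{Obs}$ is also of this bimodule form. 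Only vanishing matters, so it suffices to show that an honest solution exists in $\D$, which the above provides.
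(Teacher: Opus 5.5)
Your proposal is correct and follows essentially the paper's route. The paper likewise shows that an exact functor induces $\Phi_q:\operatorname{Obs}_\D(q)\to\operatorname{Obs}_\C(F(q))$ carrying $o_\D(q)$ to $o_\C(F(q))$, and builds a retraction of $\Phi_q$ from the separability maps, precomposed with $\kappa_q^{-1}$ and $\varepsilon_{A_q}^{-1}$; this is exactly your ``absorb the comparison into the objects''. It then concludes via \cref{thm:boundary-lifting} and \cref{thm:composition-obstruction}. Your fallback for a suspended unknown is never needed, since the unknown $\phi:E\to C$ is not suspended. Your ``secondary check'' also holds: the bimodule identity, applied with the biproduct inclusions and projections, sends $J_{F(q)}$ into $J_q$.
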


This applies to the canonical pre-triangulations on non-modular
equivariantizations \cite{Sun2019} and on the K3 deformation categories of
Markman--Mehrotra \cite{MarkmanMehrotra2015}.  The geometric hypotheses
needed for the latter were verified in
\cite[Theorem~1.11]{MarkmanMehrotraVerbitsky2019}.  It also gives the
corresponding separable-monad transfer for ordinary triangulated
categories.

This paper is organized as follows. In Section~\ref{sec:preliminaries},
we collect the required facts on
pre-triangulations, Freyd categories, inverse Heller comparisons, and
Heller parameters for a fixed suspension.  Section~\ref{sec:obstruction}
develops characteristic extensions and the local obstruction.
Section~\ref{sec:relative-heller} proves the relative exact-lifting
criterion and a trace criterion for central twists.  Section~\ref{sec:L2-test}
treats the generalized type-$\mathbb L_2$ category.
Section~\ref{sec:transfer} proves separable descent and its equivariant and
geometric applications.  Sections~\ref{sec:deformations}
and~\ref{sec:ade-lines} construct the type-$A_{3a-1}$ families over
arbitrary fields and the Dynkin families in characteristic two, respectively.

\section{Preliminaries}
\label{sec:preliminaries}

All categories are additive and essentially small unless stated otherwise.
A suspended category is written $(\T,\Sigma)$, with $\Sigma$ an additive
autoequivalence.  Composition is written $f\circ g$ when $g$ is followed by
$f$.  If $a:i\to j$ and $b:j\to\ell$ are arrows in a quiver, then
$ab:i\to\ell$ denotes the path which first traverses $a$ and then $b$.
As a composite of morphisms, this path is $b\circ a$.
Whenever a ground field is present, it is denoted by \(\kk\).
All modules over rings and algebras are right modules.

\subsection{Pre-triangulations and mapping cones}

\begin{definition}[{\normalfont cf.~\cite[Definition~2.2]{ChenLiuLuZhang2026},
\cite[Definitions~1.1.1--1.1.2]{Neeman2001}, and \cite{Puppe1967}}]
\label{def:pretriangulation}
A \emph{candidate triangle} in $(\T,\Sigma)$ is a sequence
\begin{equation*}
\xymatrix@C=2.2pc{
 X\ar[r]^-{u}&Y\ar[r]^-{v}&Z\ar[r]^-{w}&\Sigma(X)
}
\end{equation*}
such that \(v\circ u=0\), \(w\circ v=0\), and
\(\Sigma(u)\circ w=0\).
A morphism from this candidate triangle to
\begin{equation*}
	\xymatrix@C=2.2pc{
		X'\ar[r]^-{u'}&Y'\ar[r]^-{v'}&Z'\ar[r]^-{w'}&\Sigma(X')
	}
\end{equation*}
is a triple $(f,g,h)$ satisfying
\[
 g\circ u=u'\circ f,\qquad h\circ v=v'\circ g,\qquad
 w'\circ h=(\Sigma(f))\circ w.
\]
The \emph{mapping-cone candidate triangle} of such a morphism is
\begin{equation}
\label{eq:mapping-cone}
\xymatrix@C=2.0pc{
Y\oplus X'\ar[r]^-{\alpha}&Z\oplus Y'\ar[r]^-{\beta(h)}&
\Sigma(X)\oplus Z'\ar[r]^-{\gamma}&
\Sigma(Y)\oplus\Sigma(X')
}
\end{equation}
where
\begin{equation}
\label{eq:cone-matrices}
 \alpha=
 \begin{bmatrix}-v&0\\ g&u'\end{bmatrix},\qquad
 \beta(h)=
 \begin{bmatrix}-w&0\\ h&v'\end{bmatrix},\qquad
 \gamma=
 \begin{bmatrix}-\Sigma(u)&0\\ \Sigma(f)&w'\end{bmatrix}.
\end{equation}

A \emph{pre-triangulation} on $(\T,\Sigma)$ is a class $\Delta$ of
candidate triangles, called \emph{distinguished triangles}, satisfying
\textup{(TR1)}--\textup{(TR3)} below.  We record \textup{(TR4)} in the
same list for later reference.
\begin{description}[leftmargin=4.2em,style=nextline]
\item[\textup{(TR1)}]
The class $\Delta$ is closed under isomorphisms,
\begin{equation*}
	\xymatrix@C=2.2pc{
		X\ar[r]^-{1_X}&X\ar[r]&0\ar[r]&\Sigma(X)
	}
\end{equation*}
is distinguished for every $X\in\T$, and every morphism
$u:X\to Y$ is the first map of a distinguished triangle.
\item[\textup{(TR2)}]
A candidate triangle
$X\xrightarrow{u}Y\xrightarrow{v}Z\xrightarrow{w}\Sigma(X)$
is distinguished if and only if its left rotation
\begin{equation*}
	\xymatrix@C=2.2pc{
		Y\ar[r]^-{v}&Z\ar[r]^-{w}&
		\Sigma(X)\ar[r]^-{-\Sigma(u)}&\Sigma(Y)
	}
\end{equation*}
is distinguished.
\item[\textup{(TR3)}]
Every commutative square on the first maps of two distinguished triangles
extends to a morphism of candidate triangles.
\item[\textup{(TR4)}]
Suppose that
\[
\begin{gathered}
 X\xrightarrow{u}Y\xrightarrow{i}X'\xrightarrow{i'}\Sigma(X),
 \qquad Y\xrightarrow{v}Z\xrightarrow{j}Z'\xrightarrow{j'}\Sigma(Y),\\
 X\xrightarrow{v\circ u}Z\xrightarrow{k}Y'\xrightarrow{k'}\Sigma(X)
\end{gathered}
\]
are distinguished.  There exist morphisms
$u':X'\to Y'$ and $v':Y'\to Z'$ such that
\[
 u'\circ i=k\circ v,\qquad
 v'\circ k=j,\qquad
 k'\circ u'=i',\qquad
 j'\circ v'=(\Sigma(u))\circ k',
\]
and the candidate triangle
\begin{equation*}
	\xymatrix@C=2.6pc{
		X'\ar[r]^-{u'}&Y'\ar[r]^-{v'}&
		Z'\ar[r]^-{(\Sigma(i))\circ j'}&\Sigma(X')
	}
\end{equation*}
is distinguished.
\end{description}
A category equipped with a pre-triangulation is
\emph{pre-triangulated}.  A pre-triangulation satisfying
\textup{(TR4)} is a \emph{triangulation}.  A \emph{triangulated category}
is an additive category equipped with a suspension and a triangulation.
\end{definition}

We follow Verdier's numbering and include closure under isomorphisms in
\textup{(TR1)}.  This closure axiom is denoted \textup{(TR0)} in
\cite[Definition~1.1.2]{Neeman2001}.  We refer to \textup{(TR4)} above as
Verdier's octahedral axiom.

Following Neeman, we write \textup{(TR4$'$)} for the \emph{mapping-cone
axiom}: in \textup{(TR3)}, the third component can be chosen so that the
mapping-cone candidate triangle \eqref{eq:mapping-cone} is distinguished;
cf.~\cite[Definition~1.3.13]{Neeman2001}.  Under
\textup{(TR1)}--\textup{(TR3)}, axioms \textup{(TR4)} and
\textup{(TR4$'$)} are equivalent.  The implication
\textup{(TR4$'$)}$\Rightarrow$\textup{(TR4)} is
\cite[Proposition~1.4.6]{Neeman2001}, and the converse is
\cite[Theorem~1.8]{Neeman1991}.

Unless otherwise stated, throughout \cref{sec:preliminaries,sec:obstruction}
we fix a pre-triangulated category $(\T,\Sigma,\Delta)$.

\begin{definition}[{\normalfont cf.~\cite[Definition~2.1.1]{Neeman2001}}]
\label{def:exact-functor}
Let \(\T\) and \(\T'\) be pre-triangulated categories, with suspensions
\(\Sigma,\Sigma'\) and pre-triangulations \(\Delta,\Delta'\), respectively.
An \emph{exact functor} \(F:\T\to\T'\) is an additive functor
together with a natural isomorphism
$\epsilon:F\circ\Sigma\xrightarrow{\sim}\Sigma'\circ F$
such that the image of every distinguished triangle
\begin{equation*}
\xymatrix@C=2.2pc{
 X\ar[r]^-{u}&Y\ar[r]^-{v}&Z\ar[r]^-{w}&\Sigma(X).
}
\end{equation*}
is the distinguished triangle
\begin{equation*}
\xymatrix@C=3.2pc{
 F(X)\ar[r]^-{F(u)}&F(Y)\ar[r]^-{F(v)}&F(Z)
 \ar[r]^-{\epsilon_X\circ F(w)}&\Sigma'(F(X)).
}
\end{equation*}
\end{definition}

When both categories are triangulated, an exact functor is also called a
\emph{triangulated functor}.  A \emph{morphism of exact functors}
$\eta:(F,\epsilon_F)\Rightarrow(G,\epsilon_G)$ is a natural transformation
$\eta:F\Rightarrow G$ satisfying
\[
 (\epsilon_G)_X\circ\eta_{\Sigma(X)}
 =\Sigma'(\eta_X)\circ(\epsilon_F)_X
 \qquad(X\in\T).
\]
For a triangulated functor $(F,\epsilon_F)$, we write
$\operatorname{Aut}_{\mathrm{tri}}(F)$ for its automorphism group in this
sense.  Suspension comparisons are left implicit when no ambiguity can arise.

We shall use the following standard exactness fact repeatedly.

\begin{lemma}[{\normalfont see~\cite[Lemma~1.1.10 and
Remark~1.1.11]{Neeman2001}}]
\label{lem:representable-exactness}
Let $\T$ be a pre-triangulated category, and let
$X\xrightarrow{u}Y\xrightarrow{v}Z\xrightarrow{w}\Sigma(X)$ be
distinguished.  For every $A\in\T$, the associated sequences
\[
 \begin{aligned}
 \cdots&\longrightarrow\T(A,\Sigma^n(X))
 \xrightarrow{(\Sigma^n(u))_*}\T(A,\Sigma^n(Y))
 \xrightarrow{(\Sigma^n(v))_*}\T(A,\Sigma^n(Z))\\
 &\xrightarrow{(\Sigma^n(w))_*}\T(A,\Sigma^{n+1}(X))
 \longrightarrow\cdots
 \end{aligned}
\]
and
\[
 \begin{aligned}
 \cdots&\longrightarrow\T(\Sigma^{n+1}(X),A)
 \xrightarrow{(\Sigma^n(w))^*}\T(\Sigma^n(Z),A)
 \xrightarrow{(\Sigma^n(v))^*}\T(\Sigma^n(Y),A)\\
 &\xrightarrow{(\Sigma^n(u))^*}\T(\Sigma^n(X),A)
 \longrightarrow\cdots
 \end{aligned}
\]
are exact.
\end{lemma}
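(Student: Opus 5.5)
The plan is the classical argument: first show that $(\T,\Sigma)$ is exact at the middle term, then extend along the whole sequence by rotating with \textup{(TR2)} and using the fact that $\Sigma$ is an autoequivalence. The second (contravariant) sequence will follow by the dual argument, or formally by passing to $\T^{\mathrm{op}}$.

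\emph{Step 1: the composite of consecutive maps vanishes.} Since the triangle is a candidate triangle, we have $v\circ u=0$, $w\circ v=0$ and $\Sigma(u)\circ w=0$. Applying $\Sigma^n$, which is additive, gives the same relations after shifting. Hence every composite of two consecutive maps in either sequence is zero, so each image lies in the corresponding kernel.

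\emph{Step 2: exactness at $\T(A,Y)$.} Let $y:A\to Y$ satisfy $v\circ y=0$.
\begin{itemize}
\item By \textup{(TR1)}, the triangle $A\xrightarrow{1_A}A\to0\to\Sigma(A)$ is distinguished.
\item Rotate both triangles by \textup{(TR2)} so that $y$ and $v$ become first maps. Concretely, use $A\to0\to\Sigma A\xrightarrow{-1}\Sigma A$ and $Y\xrightarrow{v}Z\xrightarrow{w}\Sigma X\xrightarrow{-\Sigma u}\Sigma Y$.
\item The square with vertical maps $y$ and $0$ commutes, because $v\circ y=0$. By \textup{(TR3)} it extends to a morphism of triangles, giving $x':\Sigma A\to\Sigma X$ with $-\Sigma(u)\circ x'=\Sigma(y)\circ(-1)$, that is, $\Sigma(u)\circ x'=\Sigma(y)$.
\item Since $\Sigma$ is an autoequivalence, it is full and faithful. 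So $x'=\Sigma(x)$ for some $x:A\to X$, and faithfulness gives $u\circ x=y$.
\end{itemize}
This is exactly exactness at $\T(A,Y)$.

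\emph{Step 3: the remaining positions.} By \textup{(TR2)}, applied in both directions, the rotations $Y\to Z\to\Sigma X\to\Sigma Y$ and $Z\to\Sigma X\to\Sigma Y\to\Sigma Z$ are distinguished. For the latter, signs are absorbed via the isomorphism of triangles given by $-1$ on one term, using closure under isomorphisms in \textup{(TR1)}. Applying Step 2 to these rotations gives exactness at $\T(A,Z)$ and at $\T(A,\Sigma X)$. The signs do not affect kernels or images.

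For an arbitrary $n$, the argument is as follows.
\begin{itemize}
\item Shifts with $n\ge0$ are reached by iterated left rotation. At this point we only know that the rotated triangle has $\Sigma^n(u)$ up to sign as a map, which again does not affect exactness.
\item Shifts with $n<0$ are reached by right rotation, which is the converse direction of \textup{(TR2)}. Since $\Sigma$ is an equivalence, every map $\Sigma X\to\Sigma Y$ has the form $\Sigma$ of something, so we can write the preceding term of the sequence.
\end{itemize}
The contravariant sequence is handled the same way, now extending a square with \textup{(TR3)} on the left.

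\emph{Main obstacle.} The substantive step is Step 2, where the lift is produced by \textup{(TR3)} rather than by any octahedral argument. The rest is bookkeeping of rotation signs together with the full faithfulness of $\Sigma$. Care is needed on two points: in Step 2, the morphism must be placed so that the resulting map lands in the correct term; and for negative $n$, one must ensure that the de-suspended triangle is distinguished, which uses the ``if'' direction of \textup{(TR2)}.
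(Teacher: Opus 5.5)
Your proof is correct and follows the standard argument of the cited source (Neeman, Lemma~1.1.10 and Remark~1.1.11), which the paper quotes rather than reproving: apply \textup{(TR3)} to the rotated trivial triangle $A\to0\to\Sigma(A)\xrightarrow{-1}\Sigma(A)$ and the rotation of the given triangle, pull the resulting map back along the fully faithful $\Sigma$, then extend by rotation and by the dual argument. As a minor simplification, you can skip the right rotations and most of the sign bookkeeping for arbitrary $n$: full faithfulness of $\Sigma$ identifies $\T(A,\Sigma^n(Y))$ with $\T(\Sigma^{-n}(A),Y)$ compatibly with the maps, and $A$ is arbitrary.
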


\begin{definition}[{\normalfont cf.~\cite[Definition~2.1]{GKO2013}}]
\label{def:yoneda-exact}
Let $\T$ be a pre-triangulated category, and let
$X\xrightarrow{u}Y\xrightarrow{v}Z\xrightarrow{w}\Sigma(X)$ be a candidate
triangle.  It is \emph{Yoneda exact} if the associated sequence of
contravariant representable functors
\[
 \cdots\longrightarrow h_{\Sigma^{-1}(Z)}
 \xrightarrow{h_{\Sigma^{-1}(w)}}h_X
 \xrightarrow{h_u}h_Y\xrightarrow{h_v}h_Z
 \xrightarrow{h_w}h_{\Sigma(X)}\longrightarrow\cdots
\]
is exact.
\end{definition}

Every distinguished triangle is Yoneda exact by
\cref{lem:representable-exactness}.  We also use the elementary observation
that mapping cones preserve this weaker exactness.

\begin{lemma}
\label{lem:cone-exact}
Let $\T$ be a pre-triangulated category.  The mapping cone
of a morphism between Yoneda-exact candidate triangles is Yoneda exact.
\end{lemma}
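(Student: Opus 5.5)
The plan is to apply $h=\T(-,?)$ to the whole situation and reduce the claim to the classical fact that the mapping cone of a morphism of long exact sequences (a chain map between acyclic complexes) is acyclic. Write the two Yoneda-exact candidate triangles as $X\xto{u}Y\xto{v}Z\xto{w}\Sigma(X)$ and $X'\xto{u'}Y'\xto{v'}Z'\xto{w'}\Sigma(X')$, with morphism $(f,g,h)$.

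First, unroll each candidate triangle into a $\mathbb Z$-indexed complex of representable functors. For the first triangle this complex is
\[
 P_\bullet:\ \cdots\to h_{\Sigma^{n}(X)}\xto{h_{\Sigma^n(u)}}h_{\Sigma^n(Y)}\xto{h_{\Sigma^n(v)}}h_{\Sigma^n(Z)}\xto{h_{\Sigma^n(w)}}h_{\Sigma^{n+1}(X)}\to\cdots.
\]
Yoneda exactness says precisely that this complex is acyclic. The identities $v\circ u=0$, $w\circ v=0$ and $\Sigma(u)\circ w=0$, together with additivity of $\Sigma$, show that it is a complex at all.

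Second, form the analogous complex $P'_\bullet$ for the second triangle. The triple $(f,g,h)$, together with its suspensions $\Sigma^n(f),\Sigma^n(g),\Sigma^n(h)$, then gives a chain map $\phi:P_\bullet\to P'_\bullet$ in the abelian category of additive functors $\T^{\mathrm{op}}\to\mathsf{Ab}$. Commutativity of each square is exactly one of the three defining identities of a morphism of candidate triangles, or a suspension of one.

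Third, identify the unrolled complex of the mapping-cone triangle \eqref{eq:mapping-cone} with the standard cone of $\phi$, up to signs. In degree $k$ the cone complex has term $P_{k+1}\oplus P'_k$, with differential
\[
 \begin{bmatrix}-d&0\\ \phi&d'\end{bmatrix}.
\]
The matrices $\alpha$, $\beta(h)$, $\gamma$ in \eqref{eq:cone-matrices} have exactly this shape. So the unrolled cone sequence $\cdots\to h_{Y\oplus X'}\to h_{Z\oplus Y'}\to h_{\Sigma(X)\oplus Z'}\to h_{\Sigma(Y)\oplus\Sigma(X')}\to\cdots$ agrees with $\operatorname{cone}(\phi)$. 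The suspended stretches use $\Sigma$ applied to $\alpha,\beta(h),\gamma$ and involve $-\Sigma(u)$, $-\Sigma(v)$, $-\Sigma(w)$ in the top-left entries. This matches $-d$ on the shifted copy of $P_\bullet$, using $h_{A\oplus B}=h_A\oplus h_B$.

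The main obstacle is the bookkeeping in this third step. The cone complex must be checked to match $\operatorname{cone}(\phi)$ in every degree, including how the triangle's own sign conventions interact with the cone's $-d$. The cone candidate triangle is only periodic up to $\Sigma$, whereas the rotation convention (TR2) puts the sign on the third-to-fourth map. If the signs disagree in some degrees, I would conjugate by the diagonal automorphisms $\pm1$ on summands. This changes the differentials of the complex only by an isomorphism of complexes, which preserves acyclicity.

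Finally, the long exact homology sequence of $0\to P'_\bullet\to\operatorname{cone}(\phi)\to P_\bullet[1]\to0$ shows that $\operatorname{cone}(\phi)$ is acyclic, because $P_\bullet$ and $P'_\bullet$ are. Hence the mapping-cone candidate triangle is Yoneda exact.

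It is also a candidate triangle: its consecutive composites vanish because the cone of a chain map is a complex, and $\alpha,\beta(h),\gamma$ are morphisms of $\T$ whose composites are detected by Yoneda.
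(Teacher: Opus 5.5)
Your proposal is correct and follows the paper's own proof. The paper also evaluates at each object, via $\T(A,-)$, identifies the cone candidate triangle with the ordinary mapping cone of the induced chain map between exact complexes, and concludes that it is exact. Your anticipated sign adjustment is not actually needed: the unrolled complexes in the definition of Yoneda exactness use the unsigned maps $\Sigma^n(u),\Sigma^n(v),\Sigma^n(w)$, so $\Sigma^n(\alpha),\Sigma^n(\beta(h)),\Sigma^n(\gamma)$ match the standard cone differential $\left[\begin{smallmatrix}-d&0\\ \phi&d'\end{smallmatrix}\right]$ exactly in every degree.
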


\begin{proof}
After applying $\T(A,-)$, the two candidate triangles become exact complexes
and \eqref{eq:mapping-cone} is, up to the signs in
\eqref{eq:cone-matrices}, the ordinary mapping cone of the induced chain map.
The mapping cone of a chain map between exact complexes is exact.
\end{proof}

\subsection{The Freyd category}
\label{sec:freyd}

\begin{definition}[{\normalfont cf.~\cite{Freyd1966} and
\cite[Example~4.8]{Posur2021}}]
\label{not:freyd}
Let $\T$ be a pre-triangulated category.  Put
\[
 h_X=\T(-,X).
\]
The \emph{Freyd category} $\mathsf{mod}\,\T$ is the full subcategory of
additive functors $\T^{\mathrm{op}}\to\mathsf{Ab}$ consisting of the
finitely presented ones, i.e.\ those admitting a presentation
\begin{equation*}
\xymatrix@C=2.3pc{
 h_{X_1}\ar[r]&h_{X_0}\ar[r]&M\ar[r]&0.
}
\end{equation*}
\end{definition}

The following standard fact is the only structural property of the Freyd
category that we need.

\begin{proposition}[{\normalfont see~\cite[Proposition~2.5(b)]{GKO2013};
cf.~\cite[Proposition~5.1.10 and Corollary~5.1.23]{Neeman2001}}]
\label{prop:freyd-category}
Let $\T$ be a pre-triangulated category.  Then
$\mathsf{mod}\,\T$ is an abelian
Frobenius category.  Its projective-injective objects are precisely the
direct summands of representable functors.
\end{proposition}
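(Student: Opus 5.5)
The proof has three parts: abelianness via weak kernels, enough projectives and injectives from the representables, and the identification of projectives with injectives. The first two use only the Yoneda exactness of distinguished triangles from \cref{lem:representable-exactness}.

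\textbf{Abelianness.} I would use Freyd's criterion: for an additive category $\T$, the category $\mathsf{mod}\,\T$ is abelian as soon as $\T$ has weak kernels. Given $u:X\to Y$, complete it to a distinguished triangle $\Sigma^{-1}(Z)\to X\xrightarrow{u}Y\to Z$, using (TR1) and rotation (TR2). \Cref{lem:representable-exactness} makes $h_{\Sigma^{-1}Z}\to h_X\to h_Y$ exact, so $\Sigma^{-1}Z\to X$ is a weak kernel of $u$.

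Then the kernel of a morphism of finitely presented functors is finitely presented. To see this, present the morphism by a square of representables, compute kernels by iterated weak kernels, and use the standard horseshoe-type argument. Cokernels of morphisms between finitely presented functors are always finitely presented. Hence $\mathsf{mod}\,\T$ is an exact abelian subcategory of the functor category.

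\textbf{Projectives and injectives.} By Yoneda, every $h_X$ is projective in $\mathsf{mod}\,\T$. Every object is a quotient of some $h_{X_0}$, so there are enough projectives, and the projectives are exactly the summands of representables.

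For injectivity of $h_X$, I need every monomorphism $M\hookrightarrow N$ to induce a surjection $\Hom(N,h_X)\to\Hom(M,h_X)$. Take a presentation $h_{X_1}\xrightarrow{h_c}h_{X_0}\to M\to 0$ and complete $c$ to a distinguished triangle $X_1\xrightarrow{c}X_0\xrightarrow{d}C\xrightarrow{}\Sigma X_1$. Yoneda exactness gives $M\cong\im h_d\subseteq h_C$, so every finitely presented functor embeds in a representable. A morphism $M\to h_X$ then corresponds, via the presentation, to a map $X_0\to X$ killing $c$. That map factors through $d$ by the exactness of $\T(-,X)$ applied to the triangle (\cref{lem:representable-exactness}, contravariant sequence). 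Therefore every morphism $M\to h_X$ extends along $M\hookrightarrow h_C$.

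The general extension along $M\hookrightarrow N$ is the step I expect to be the main obstacle. I would first reduce it to showing that $\Ext^1(L,h_X)=0$ for all finitely presented $L$. To prove that, present $L=\coker h_c$ for a map $c:X_1\to X_0$. Complete $c$ to a triangle, which yields an exact sequence
\[
0\to L\to h_{C}\to h_{\Sigma X_1}\to\cdots.
\]
Rotating and shifting this gives a projective resolution of $L$ whose Hom into $h_X$ is, by Yoneda, the contravariant long exact sequence of \cref{lem:representable-exactness}. Its exactness in the relevant degree gives $\Ext^1(L,h_X)=0$. The same triangle also shows that every object embeds into a representable, so there are enough injectives and injectives coincide with summands of representables.

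Consequently the category is Frobenius, and its projective-injective objects are exactly the direct summands of the $h_X$. I would cite \cite[Proposition~2.5(b)]{GKO2013} and \cite{Neeman2001} for the routine finite-presentation bookkeeping.
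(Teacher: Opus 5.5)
Your argument is correct and is the standard proof that the paper invokes by citation to \cite{GKO2013} and \cite{Neeman2001}: abelianness from Freyd's weak-kernel criterion, projectivity of $h_X$ by Yoneda, and injectivity of $h_X$ from $\Ext^1(L,h_X)=0$. For the last step, the resolution you want is the leftward continuation $\cdots\to h_{\Sigma^{-1}C}\to h_{X_1}\to h_{X_0}\to L\to0$ of the representable long exact sequence, which you then combine with the contravariant exactness in \cref{lem:representable-exactness}. Because you only ever use \cref{lem:representable-exactness}, your argument also makes explicit the paper's remark that only $\mathrm{TR1}$--$\mathrm{TR3}$ are needed.
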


Although Neeman states the result for triangulated categories, his proof
uses only $\mathrm{TR1}$--$\mathrm{TR3}$.  The same is true of
\cite[Proposition~2.5(b)]{GKO2013}.  Since representables are also injective,
applying $\Hom_{\mathsf{mod}\,\T}(-,h_A)$ to the associated exact complex shows
that every Yoneda-exact candidate triangle also gives an exact sequence
after applying $\T(-,A)$, for every $A\in\T$.  Thus the dual representable
sequence is automatic and need not be included in the definition.

\begin{definition}
\label{not:stable-freyd}
Let $\T$ be a pre-triangulated category.  Write
\[
 \underline{\mathsf{mod}}\,\T
 =\mathsf{mod}\,\T/
 \{\text{morphisms factoring through projective-injectives}\}.
\]
Its syzygy and cosyzygy functors are
$\Omega$ and $\Omega^{-1}$.  We write $\underline{\Hom}$,
$\underline{\End}$ and $\underline{\Aut}$ for morphisms in this stable
category.  The suspension of $\T$ induces the exact autoequivalence
\[
 \Sigma_*(M)=M\circ\Sigma^{-1}
\]
of $\mathsf{mod}\,\T$ and of its stable category.  Unless a different
abelian category is displayed, all $\Ext$-groups below are taken in
$\mathsf{mod}\,\T$.
\end{definition}

The stable Freyd category is triangulated by the standard Frobenius
construction \cite[Chapter~I, Section~2]{Happel1988}.

For later use, recall the natural isomorphisms in a Frobenius category
$\mathcal E$,
\begin{equation}
\label{eq:stable-dimension-shift}
 \Ext^n_{\mathcal E}(A,B)
 \xrightarrow{\sim}
 \underline{\Hom}_{\mathcal E}(A,\Omega^{-n}B),
 \qquad n\geq1.
\end{equation}
They are obtained by splicing the chosen projective-injective cosyzygy
sequences in the displayed order; see
\cite[Chapter~I, Section~2]{Happel1988}.  We use representable
projective-injective resolutions in $\mathsf{mod}\,\T$ and this convention
throughout.

\begin{notation}
\label{not:characteristic-functors}
For a morphism $c:C_0\to C_1$, put
\[
 B(c)=\ker h_c,\qquad
 K(c)=\coker h_c,\qquad
 L(c)=\ker h_{\Sigma(c)}.
\]
Completing $c$ to a distinguished triangle and using
\cref{lem:representable-exactness} shows that these functors are finitely
presented.
\end{notation}

\subsection{Inverse Heller comparisons and parameters}

Put
\[
 \mathcal E=\mathsf{mod}\,\T,
 \qquad
 \underline{\mathcal E}=\underline{\mathsf{mod}}\,\T.
\]
For convenience, every suspension is equipped with a fixed quasi-inverse,
and we use the chosen unit and counit to identify its two composites with
the identity.  Thus we write
\[
 \Sigma^{-1}\circ\Sigma=1_{\T},
 \qquad
 \Sigma\circ\Sigma^{-1}=1_{\T}.
\]
The same convention applies to the induced functors on $\mathcal E$ and
$\underline{\mathcal E}$.

If
\[
0\longrightarrow N\longrightarrow P_2\longrightarrow P_1
 \longrightarrow P_0\longrightarrow M\longrightarrow0
\]
is exact and the three middle terms are projective-injective, we write its
stable connecting morphism in the inverse direction
$\Omega^3(M)\to N$.

Let $\T$ be a pre-triangulated category and put
\[
 \mathsf F_\Delta=(\Sigma^{-1})_*:\mathcal E\longrightarrow\mathcal E.
\]
Every object of $\mathcal E$ is isomorphic to $B(c)$ for some morphism $c$
of $\T$: complete a morphism in a representable presentation to a
distinguished triangle and use representable exactness.  For
$M\in\mathcal E$, choose $c:C_0\to C_1$, an isomorphism
$\mathsf F_\Delta(M)\cong B(c)$, and a distinguished completion
\begin{equation*}
\xymatrix@C=2.2pc{
C_0\ar[r]^-{c}&C_1\ar[r]^-{d}&Z\ar[r]^-{e}&\Sigma(C_0)
}
\end{equation*}
of $c$.  After transporting along the chosen isomorphism, representable
exactness gives
\begin{equation}
\label{eq:intrinsic-heller-row}
\xymatrix@C=1.8pc{
0\ar[r]&\mathsf F_\Delta(M)\ar[r]&h_{C_0}\ar[r]^-{h_c}&h_{C_1}
\ar[r]^-{h_d}&h_Z\ar[r]&M\ar[r]&0.
}
\end{equation}
Let $\theta_{\Delta,M}:\Omega^3(M)\to\mathsf F_\Delta(M)$ be its stable
connecting morphism.  For convenience, we record and prove the following
fact.

\begin{proposition}
\label{prop:intrinsic-inverse-comparison}
The following assertions hold.
\begin{enumerate}[label=\textup{(\roman*)}]
 \item The morphism $\theta_{\Delta,M}$ is independent of all choices and
       natural in $M$.  Hence the maps $\theta_{\Delta,M}$ form a natural
       isomorphism
       \begin{equation}
       \label{eq:intrinsic-inverse-comparison}
        \theta_\Delta:\Omega^3\xrightarrow{\sim}\mathsf F_\Delta
       \end{equation}
       on $\underline{\mathcal E}$.
 \item Let
       \begin{equation}
\label{eq:representable-heller-row}
\xymatrix@C=1.8pc{
0\ar[r]&\mathsf F_\Delta(M)\ar[r]^-{\iota}&h_{X_2}\ar[r]^-{h_a}&
h_{X_1}\ar[r]^-{h_b}&h_{X_0}\ar[r]^-{p}&M\ar[r]&0.
}
\end{equation}
       be exact.  Let $d:X_0\to\Sigma(X_2)$ be the unique morphism with
       \[
        h_d=(\Sigma_*\iota)\circ p.
       \]
       Then the candidate triangle
       \begin{equation*}
\xymatrix@C=2.2pc{
X_2\ar[r]^-{a}&X_1\ar[r]^-{b}&X_0\ar[r]^-{d}&\Sigma(X_2)
}
\end{equation*}
       is Yoneda exact.  It is distinguished if and only if the stable
       connecting morphism of \eqref{eq:representable-heller-row} is
       $(\theta_\Delta)_M$.
\end{enumerate}
\end{proposition}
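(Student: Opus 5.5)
For (i), the plan is to compare two choices of data $(c,\varphi,\text{completion})$ and $(c',\varphi',\text{completion}')$ for objects $M$ and $M'$ together with a morphism $\mu:M\to M'$. This handles independence (take $\mu=1_M$) and naturality at the same time. Both rows \eqref{eq:intrinsic-heller-row} are exact complexes with projective-injective middle terms. The connecting morphism $\Omega^3(M)\to\mathsf F_\Delta(M)$ is then determined stably by any chain map between the rows that lifts $\mu$ on the right and restricts to $\mathsf F_\Delta(\mu)$ on the left. So the key point is to build such a chain map from morphisms in $\T$.

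I would construct it as follows.
\begin{itemize}
\item The map $h_Z\to M\to M'$ lifts through $h_{Z'}\to M'$ because representables are projective. By Yoneda this gives $z:Z\to Z'$.
\item Comparing the rotated triangles $\Sigma^{-1}Z\to C_0\to C_1\to Z$ and their primed versions, I apply (TR3) together with rotation (TR2) starting from $z$. This produces $\Sigma^{-1}(z)$ and maps $C_0\to C_0'$, $C_1\to C_1'$ forming a morphism of distinguished triangles.
\item The restriction of $h_{C_0}\to h_{C_0'}$ to the kernels is forced by the factorization through $h_{\Sigma^{-1}Z}$. Indeed, $\mathsf F_\Delta(M)=\ker h_c=\operatorname{im}h_{\Sigma^{-1}(e)}$ is the image of $h_{\Sigma^{-1}Z}\to h_{\Sigma^{-1}\Sigma C_0}$, which by exactness is $(\Sigma^{-1})_*$ of $\operatorname{im}(h_Z\to M)\cong M$. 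So the restriction equals $(\Sigma^{-1})_*(\mu)$, modulo checking that the chosen isomorphisms $\varphi,\varphi'$ are compatible.
\end{itemize}

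This compatibility is the subtle point. The identification $\mathsf F_\Delta(M)\cong B(c)$ in \eqref{eq:intrinsic-heller-row} must be the canonical one induced by $e$, namely $\ker h_c=\operatorname{im}h_{\Sigma^{-1}e}\cong\Sigma^{-1}_*M$. I read the transport as this canonical map. The sign from (TR2) rotation needs to be tracked, but it enters both rows symmetrically. Once independence and naturality hold, $\theta_\Delta$ is an isomorphism, since connecting morphisms of exact sequences with projective-injective middle terms are stable isomorphisms.

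For (ii), Yoneda exactness of the candidate triangle comes from exactness of \eqref{eq:representable-heller-row} at $h_{X_1}$ and $h_{X_0}$, together with the definition of $d$. Since $\iota$ and $\Sigma_*\iota$ are monic and $p$ is epic, we get $\ker h_d=\ker p=\operatorname{im}h_b$ and $\operatorname{im}h_d=\Sigma_*\operatorname{im}\iota=\Sigma_*\ker h_a$. Exactness further out follows by applying $\Sigma^n_*$ to the three middle positions.

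\textbf{Only if.} If the candidate triangle is distinguished, then (after rotation) it is a valid choice in the construction of (i), with $c=a$. Hence its connecting morphism is $(\theta_\Delta)_M$ by (i).

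\textbf{If.} Complete $a$ to a distinguished triangle $X_2\to X_1\to Z\to\Sigma X_2$. Both candidates are Yoneda exact, so $Z$ and $X_0$ both represent the cokernel of $h_a$ up to the extension data. The map $h_Z\to M$ (or its analogue) yields $t:X_0\to Z$ with $h_t$ an isomorphism compatible with $b$; $t$ is then an isomorphism by Yoneda. It remains to check that the third maps agree under $t$, i.e.\ that $d$ equals the third map $e$ composed with $t$. Both $d$ and $e\circ t$ are characterized by $h_{(-)}=(\Sigma_*\iota)\circ p$-type formulas, and these differ exactly by the discrepancy between the two connecting morphisms. So equality of the stable connecting morphisms should force the discrepancy to factor through a projective and then vanish.

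This last step is where I expect the main obstacle. A map $\Sigma^{-1}_*M\to\Sigma^{-1}_*M$ that is stably zero need not be zero. I would first try to absorb such a map into a modification of $t$, or of the middle automorphism of $X_1$, using that a stably trivial endomorphism of $\mathsf F_\Delta(M)$ lifts through $h_{X_2}\to h_{X_1}$. This would give an isomorphism of candidate triangles, and (TR1) closure under isomorphisms then finishes the argument.
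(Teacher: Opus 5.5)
Your treatment of (i), of Yoneda exactness, and of the ``only if'' half of (ii) agrees with the paper in substance. There is one small repair needed in (i). $\mathrm{TR3}$ extends a commutative square, not a single map $z$, so you must first produce $c_1:C_1\to C_1'$ with $d'\circ c_1=z\circ d$. This exists because $e'\circ z\circ d=0$. The paper instead works left to right, using injectivity of $h_{C_0'}$ and $h_{C_1'}$, so the right-hand component is automatically the given map.

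The genuine gap is the ``if'' half of (ii), which you leave open. Write the completion as $X_2\xrightarrow{a}X_1\xrightarrow{s}Z\xrightarrow{e}\Sigma(X_2)$, and let $p_Z:h_Z\to M$ be the epimorphism with $h_e=(\Sigma_*\iota)\circ p_Z$. A morphism $t:X_0\to Z$ with $t\circ b=s$ exists by injectivity of $h_Z$. But then $p_Z\circ h_t=m\circ p$ for an uncontrolled $m\in\End(M)$, so neither invertibility of $h_t$ nor $e\circ t=d$ follows. Nowhere do you turn the hypothesis on connecting morphisms into a usable statement. The discrepancy that actually arises is $m-1_M\in\End(M)$, not an endomorphism of $\mathsf F_\Delta(M)$, and no change to $X_1$ is needed.

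The missing step is that the two rows share $\iota$, $h_{X_2}$ and $h_{X_1}$. Via $\Ext^1(M,K(a))\cong\underline{\Hom}(\Omega M,K(a))$, their stable connecting morphisms therefore agree if and only if the tails $0\to K(a)\to h_{X_0}\xrightarrow{p}M\to0$ and $0\to K(a)\to h_Z\xrightarrow{p_Z}M\to0$ have the same Yoneda class. Equal classes give a morphism of short exact sequences which is the identity on $K(a)$ and on $M$. By the short five lemma and Yoneda, its middle component is $h_\phi$ for an isomorphism $\phi:Z\to X_0$. The two identities on the ends say exactly that $\phi\circ s=b$ and, since $h_d$ and $h_e$ are built from the same $\iota$, that $d\circ\phi=e$. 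This is the paper's argument, and it leaves nothing to absorb.

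Your own route can also be closed. Let $\kappa$ be the given connecting morphism. The morphism of rows $(1,1,1,h_t,m)$ and naturality of connecting morphisms give $\kappa=(\theta_\Delta)_M\circ\Omega^3(\underline m)$, so $\kappa=(\theta_\Delta)_M$ forces $\underline m=1_M$. Factor $1_M-m=v\circ u$ through a projective-injective, and lift $v$ along the epimorphism $p_Z$ to $\tilde v$. The map $h_t+\tilde v\circ u\circ p$ is still compatible with $h_b$, because $p\circ h_b=0$, and it now induces $1_M$. This yields $e\circ t=d$ and, by the five lemma, invertibility of $t$.
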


\begin{proof}
We prove \textup{(i)} by showing naturality for arbitrary choices.  Let
$\alpha:M\to N$, identify $\mathsf F_\Delta(M)\cong B(c)$ and
$\mathsf F_\Delta(N)\cong B(c')$, and write
$\bar\alpha:B(c)\to B(c')$ for the transported map
$\mathsf F_\Delta(\alpha)$.  Choose distinguished triangles beginning
with $c:C_0\to C_1$ and $c':C'_0\to C'_1$.  Since $h_{C'_0}$ is
injective, the composite
\[
 B(c)\xrightarrow{\bar\alpha}B(c')\hookrightarrow h_{C'_0}
\]
extends across $B(c)\hookrightarrow h_{C_0}$.  By the Yoneda lemma the
extension is induced by a morphism $a_0:C_0\to C'_0$.  The composite
$h_{c'}\circ h_{a_0}$ vanishes on $B(c)$, hence factors through
$\operatorname{im}h_c\subseteq h_{C_1}$.  Injectivity of $h_{C'_1}$ and
the Yoneda lemma give $a_1:C_1\to C'_1$ with
\[
 a_1\circ c=c'\circ a_0.
\]
Axiom $\mathrm{TR3}$ extends $(a_0,a_1)$ to a morphism of the chosen
distinguished triangles.  Applying the Yoneda embedding gives a morphism
between the rows \eqref{eq:intrinsic-heller-row}.  Its maps on the end terms
are $\mathsf F_\Delta(\alpha)$ and $\alpha$.  Naturality of stable
connecting morphisms yields
\[
 \mathsf F_\Delta(\alpha)\circ(\theta_\Delta)_M
 = (\theta_\Delta)_N\circ\Omega^3(\alpha).
\]
Taking $\alpha=1_M$ and interchanging the two choices proves independence.
Each row \eqref{eq:intrinsic-heller-row} has three projective-injective
middle terms, so its stable connecting morphism is an isomorphism.

For \textup{(ii)}, exactness identifies $\iota$ with the inclusion of
$B(a)=\ker h_a$.  Since $p$ is epic and $\Sigma_*\iota$ is monic, the
definition of $d$ gives
\[
 \ker h_d=\ker p=\operatorname{im}h_b,
 \qquad
 \operatorname{im}h_d=\operatorname{im}(\Sigma_*\iota)
 =\ker h_{\Sigma(a)}.
\]
Together with \eqref{eq:representable-heller-row} and its
$\Sigma_*$-translates, these identities prove Yoneda exactness.

Choose a distinguished completion
\[
 X_2\xrightarrow{a}X_1\xrightarrow{s}T
 \xrightarrow{t}\Sigma(X_2).
\]
The given row and the Heller row of this completion have the same first
two projective terms.  Via \eqref{eq:stable-dimension-shift}, their tails
\[
 0\longrightarrow K(a)\longrightarrow h_{X_0}
   \longrightarrow M\longrightarrow0,
 \qquad
 0\longrightarrow K(a)\longrightarrow h_T
   \longrightarrow M\longrightarrow0
\]
correspond respectively to the stable connecting morphism of
\eqref{eq:representable-heller-row} and to $(\theta_\Delta)_M$.  The two
stable maps therefore agree if and only if these extensions have the same
Yoneda class.  In that case there is a morphism of short exact sequences
which is the identity on $K(a)$ and $M$.  Its middle component is an
isomorphism by the short five lemma and, by the Yoneda lemma, is induced by
an isomorphism $T\to X_0$ identifying the two candidate triangles.  The
converse follows from the same comparison.
\end{proof}

\begin{definition}
\label{def:heller-resolution}
A \emph{Heller resolution of $M\in\mathcal E$ for $\Delta$} is an exact
sequence
\begin{equation*}
\xymatrix@C=1.8pc{
0\ar[r]&\mathsf F_\Delta(M)\ar[r]&P_2\ar[r]&P_1\ar[r]&P_0\ar[r]&M\ar[r]&0
}
\end{equation*}
whose three middle terms are projective-injective and whose stable
connecting morphism is $(\theta_\Delta)_M$.  It is
\emph{representable} if each $P_i$ is representable.
\end{definition}

The preceding proposition gives the following recognition criterion: a
representable exact row is a Heller resolution if and only if its associated
Yoneda-exact candidate triangle is distinguished.

We use throughout the inverse comparison
\[
 \theta_\Delta:\Omega^3\xrightarrow{\sim}(\Sigma^{-1})_*.
\]
The suspension comparison for $\Omega^3$ is $-1_{\Omega^2}$, and
$(\Sigma^{-1})_*$ has the exact structure induced from $\mathcal E$.
With these structures, $\theta_\Delta$ is the inverse form of the
isomorphism of triangulated functors $\Sigma_*\xrightarrow{\sim}\Omega^{-3}$ in
\cite[Lemma~3.2 and Proposition~3.4]{GKO2013}.

\begin{lemma}[{\normalfont see~\cite[Theorem~16.4]{Heller1968} and
\cite[Lemma~3.3 and Proposition~3.4]{GKO2013}}]
\label{lem:heller-parameters}
Let \(\T\) be an idempotent-complete triangulated category with suspension
\(\Sigma\) and triangulation \(\Delta_0\).
Put
\[
 \mathsf F_0=(\Sigma^{-1})_*:\mathcal E\longrightarrow\mathcal E,
\]
and let $\theta_0:\Omega^3\xrightarrow{\sim}\mathsf F_0$ be the inverse
Heller comparison of $\Delta_0$.  The functors $\Omega^3$ and $\mathsf F_0$
are then fixed triangulated autoequivalences of $\underline{\mathcal E}$.
For the fixed suspension $\Sigma$, the assignment
$\Delta\mapsto\theta_\Delta$ induces a bijection
\[
\left\{
 \begin{array}{c}
 \text{pre-triangulations on $(\T,\Sigma)$}
 \end{array}
\right\}
\longleftrightarrow
\left\{
 \begin{array}{c}
 \text{isomorphisms of triangulated functors}\\[-2pt]
 \theta:\Omega^3\xrightarrow{\sim}\mathsf F_0
 \end{array}
\right\}.
\]
\end{lemma}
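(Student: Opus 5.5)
The plan is to construct the map $\Delta\mapsto\theta_\Delta$ and its inverse explicitly, then check that the two composites are identities. Throughout we lean on \cref{prop:intrinsic-inverse-comparison}.

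\emph{Well-definedness.} Part~(i) of \cref{prop:intrinsic-inverse-comparison} already gives a natural isomorphism $\theta_\Delta:\Omega^3\xrightarrow{\sim}\mathsf F_0$ for any pre-triangulation $\Delta$ on $(\T,\Sigma)$. The functor $\mathsf F_0=(\Sigma^{-1})_*$ depends only on $\Sigma$, so it is the same for every such $\Delta$. What remains is compatibility with the suspension comparisons. These are $-1$ on $\Omega^2$ for $\Omega^3$, and the induced structure for $\mathsf F_0$.
\begin{itemize}
\item I would verify this by applying the Heller row \eqref{eq:intrinsic-heller-row} to a rotated distinguished triangle.
\item Rotation shifts the row by one position. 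It produces the sign $-\Sigma(u)$ from (TR2), and this sign matches the $-1$ convention.
\item This is the computation of \cite[Lemma~3.2, Proposition~3.4]{GKO2013}, and it uses only TR1--TR3.
\end{itemize}

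\emph{Injectivity.} Suppose $\theta_\Delta=\theta_{\Delta'}$. By part~(ii), a Yoneda-exact candidate triangle beginning with $a$ is distinguished if and only if its representable row has stable connecting morphism $(\theta)_M$. Every distinguished triangle of either class is Yoneda exact and arises this way, with $M=\coker h_a$ of its third-map row. Hence $\Delta$ and $\Delta'$ are determined by the same criterion, so $\Delta=\Delta'$.

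\emph{Surjectivity.} Given a triangulated isomorphism $\theta:\Omega^3\xrightarrow{\sim}\mathsf F_0$, define $\Delta_\theta$ to be the class of Yoneda-exact candidate triangles whose associated representable row \eqref{eq:representable-heller-row} has stable connecting morphism $\theta_M$. We then check the axioms.
\begin{itemize}
\item \textbf{Closure under isomorphism.} This follows from naturality of $\theta$.
\item \textbf{The trivial triangle.} For $X\xrightarrow{1}X\to0\to\Sigma X$ we have $M=0$, so the condition is automatic.
\item \textbf{Existence of cones.} Given $u:X\to Y$, take $M=\coker h_u$. Because $\theta_{\Delta_0}$ is an isomorphism, we can alter a $\Delta_0$-completion of $u$. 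Composing the row's identification $\mathsf F_0(M)\cong\ker h_u$ with the stable automorphism $\theta_M^{-1}\theta_{\Delta_0,M}$ does this. That stable automorphism must first be lifted to an actual automorphism of $\Omega^3M$, or absorbed into the choice of $P_0\to M$; this can be done because stable automorphisms lift up to projective summands, using idempotent completeness.
\item \textbf{TR2.} This is equivalent to $\theta$ commuting with the suspension comparisons, i.e.\ to $\theta$ being a morphism of triangulated functors. The sign bookkeeping is the same as in the well-definedness step.
\item \textbf{TR3.} Given a square on first maps, extend to a morphism of representable rows by injectivity of representables. The resulting map on $M$ is compatible with $\theta$ by naturality. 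This lets us pick the third component, as in the proof of part~(i).
\end{itemize}
Finally, $\theta_{\Delta_\theta}=\theta$ by construction and part~(ii).

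The main obstacle is the existence-of-cones step. There the prescribed stable connecting morphism has to be realized by an actual representable Heller row, which means lifting a stable isomorphism to genuine modifications of $h_{X_0}\to M$. I would try first to change the extension class of the tail $0\to K(a)\to P_0\to M\to0$ via \eqref{eq:stable-dimension-shift}. That would produce a new middle term $P_0$ which is projective. We would then need $P_0$ to be representable; up to projective summands it is, and idempotent completeness is needed precisely here. This step, together with the sign checks for TR2, is essentially Heller's theorem \cite[Theorem~16.4]{Heller1968}.
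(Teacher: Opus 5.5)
Your outline is correct, but it takes a different route from the paper. The paper gives no argument for \cref{lem:heller-parameters}: it imports the bijection from Heller's Theorem~16.4, and the triangulated compatibility of $\theta_\Delta$ from Geiss--Keller--Oppermann. You instead derive the bijection from the paper's own recognition criterion \cref{prop:intrinsic-inverse-comparison}\textup{(ii)}. That makes injectivity immediate, and shows it needs neither idempotent completeness nor the sign condition. In the surjectivity direction it isolates where each hypothesis enters:
\begin{itemize}
\item compatibility of $\theta$ with the suspension comparisons is exactly \textup{TR2} for $\Delta_\theta$;
\item naturality of $\theta$ gives \textup{TR3};
\item idempotent completeness is used only to produce cones.
\end{itemize}
The citation gets the sign bookkeeping for free. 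Your route gains transparency, although you still defer that bookkeeping (for well-definedness and \textup{TR2}) to the same source the paper cites.

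Two steps need to be stated more carefully.

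\emph{Cones.} For a triangle beginning with $u$, the end term of the Heller row is $M=L(u)=\coker h_v$, not $\coker h_u=K(u)$. Your first suggestion, twisting the identification $\mathsf F_0(M)\cong B(u)$ by $\theta_M^{-1}\circ\theta_{\Delta_0,M}$, does not type-check: that automorphism lives on $\Omega^3M$. In any case only a genuine automorphism could be used there, because $\iota$ determines the third map. The mechanism in your last paragraph is the right one. With $h_X\to h_Y$ fixed, the connecting morphism of the row is a fixed stable isomorphism composed with $\Omega^2$ of the connecting morphism $\Omega L(u)\to K(u)$ of the tail $0\to K(u)\to P\to L(u)\to0$. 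By \eqref{eq:stable-dimension-shift}, every stable isomorphism $\Omega L(u)\to K(u)$ is realized by some class in $\Ext^1(L(u),K(u))$. Any such extension has projective-injective middle term, since that term vanishes in the associated stable triangle. Idempotent completeness then makes this term representable outright, not merely up to summands, and Yoneda gives $v$ and $w$.

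\emph{TR3.} Here the proof of part~(i) cannot be copied, because there the third component is supplied by \textup{TR3} itself. You are missing a correction on the end term. The argument runs as follows.
\begin{itemize}
\item Extend $K(u)\to K(u')\hookrightarrow h_{Z'}$ across $K(u)\hookrightarrow h_Z$ by injectivity.
\item Naturality of connecting morphisms and of $\theta$, together with $\mathsf F_0$ being an equivalence, shows that the induced map $\psi:L(u)\to L(u')$ is stably equal to the restriction of $h_{\Sigma(f)}$.
\item Factor the difference through a projective $P$ and lift the second factor through $h_{Z'}\twoheadrightarrow L(u')$. Subtracting the resulting map $h_Z\to L(u)\to P\to h_{Z'}$ from the middle map makes $\psi$ equal to that restriction exactly.
\item Yoneda then yields $h$ with $h\circ v=v'\circ g$ and $w'\circ h=\Sigma(f)\circ w$.
\end{itemize}
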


\section{Characteristic extensions and local octahedral obstructions}
\label{sec:obstruction}

\subsection{Characteristic extensions}

We associate an extension class to a morphism $c$ by completing it to a
distinguished triangle.  This class characterizes the distinguished
triangles among all Yoneda-exact candidate triangles beginning with $c$.

\begin{definition}
\label{def:characteristic-class-candidate}
Let
\begin{equation*}
\xymatrix@C=2.2pc{
 E:\quad C_0\ar[r]^-{c}&C_1\ar[r]^-{d}&Z\ar[r]^-{e}&\Sigma(C_0).
}
\end{equation*}
be a Yoneda-exact candidate triangle in a pre-triangulated category
$(\T,\Sigma,\Delta)$.  Write
$q_c:h_{C_1}\twoheadrightarrow K(c)$ and
$i_c:L(c)\hookrightarrow h_{\Sigma(C_0)}$ for the canonical maps.
Yoneda exactness gives unique morphisms $\bar d$ and $\bar e$ such
that
\[
 h_d=\bar d\circ q_c,
 \qquad
 h_e=i_c\circ\bar e,
\]
and the resulting sequence
\begin{equation*}
\xymatrix@C=2.2pc{
0\ar[r]&K(c)\ar[r]^-{\bar d}&h_Z\ar[r]^-{\bar e}&L(c)\ar[r]&0.
}
\end{equation*}
is exact.  Its Yoneda class is the \emph{characteristic class of $E$}:
\[
 \chi(E):=
 \bigl[0\to K(c)\xrightarrow{\bar d}h_Z
       \xrightarrow{\bar e}L(c)\to0\bigr]
 \in\Ext^1_{\mathsf{mod}\,\T}(L(c),K(c)).
\]
\end{definition}

Axiom $\mathrm{TR1}$ supplies a distinguished completion
\begin{equation*}
\xymatrix@C=2.2pc{
E_c:\quad C_0\ar[r]^-{c}&C_1\ar[r]^-{s}&T\ar[r]^-{t}&\Sigma(C_0).
}
\end{equation*}

\begin{proposition}
\label{prop:characteristic-class-independence}
Let $\T$ be a pre-triangulated category.  If $E_c$ and
$E'_c$ are distinguished completions of the same morphism $c$, then
$\chi(E_c)=\chi(E'_c)$.
\end{proposition}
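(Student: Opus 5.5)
The plan is to compare the two completions by the morphism of triangles that $\mathrm{TR3}$ supplies over the identity square. Write
\[
E_c:\ C_0\xrightarrow{c}C_1\xrightarrow{s}T\xrightarrow{t}\Sigma(C_0),
\qquad
E'_c:\ C_0\xrightarrow{c}C_1\xrightarrow{s'}T'\xrightarrow{t'}\Sigma(C_0).
\]
The square with vertical maps $1_{C_0}$ and $1_{C_1}$ commutes trivially. Axiom $\mathrm{TR3}$ therefore gives $\varphi:T\to T'$ with
\[
\varphi\circ s=s',\qquad t'\circ\varphi=t .
\]

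Next apply the Yoneda embedding. This gives a morphism $h_\varphi:h_T\to h_{T'}$ between the middle terms of the two short exact sequences of \cref{def:characteristic-class-candidate}. I then check that it is compatible with the end terms. Write $\bar s,\bar t$ and $\bar s',\bar t'$ for the factored maps attached to $E_c$ and $E'_c$. From
\[
h_\varphi\circ\bar s\circ q_c=h_\varphi\circ h_s=h_{s'}=\bar s'\circ q_c
\]
and the fact that $q_c$ is epic, we get $h_\varphi\circ\bar s=\bar s'$. From
\[
i_c\circ\bar t'\circ h_\varphi=h_{t'}\circ h_\varphi=h_t=i_c\circ\bar t
\]
and the fact that $i_c$ is monic, we get $\bar t'\circ h_\varphi=\bar t$.

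So $(1_{K(c)},h_\varphi,1_{L(c)})$ is a morphism of short exact sequences in $\mathsf{mod}\,\T$. Such a morphism is an isomorphism by the short five lemma, so the two extensions are equivalent and have the same Yoneda class. Hence $\chi(E_c)=\chi(E'_c)$.

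I do not expect a real obstacle. The one point to watch is that both sequences really are exact with the same end terms $K(c)$ and $L(c)$. This holds because both $E_c$ and $E'_c$ begin with $c$, and distinguished triangles are Yoneda exact by \cref{lem:representable-exactness}.

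Note that we never need $\varphi$ to be an isomorphism in $\T$. The argument only uses the identities $\varphi\circ s=s'$ and $t'\circ\varphi=t$, read through $q_c$ and $i_c$. In particular, no octahedral-type compatibility is required, which is consistent with using only $\mathrm{TR1}$--$\mathrm{TR3}$.
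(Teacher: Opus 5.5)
Your proof is correct and follows the paper's argument: you extend the identity square on $c$ by $\mathrm{TR3}$, then apply the Yoneda embedding to obtain a morphism of characteristic sequences that is the identity on $K(c)$ and $L(c)$. The only difference is that the paper first cites \cite[Proposition~1.1.20]{Neeman2001} to know that the third component is an isomorphism, whereas you rightly note that a morphism of extensions restricting to the identity on both end terms already gives equality of Yoneda classes; you also spell out the compatibility checks via $q_c$ epic and $i_c$ monic, which the paper leaves implicit.
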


\begin{proof}
Axiom $\mathrm{TR3}$ extends the identity square on $c$ to a morphism of
triangles $(1_{C_0},1_{C_1},\psi):E_c\to E'_c$.  Its third component is an
isomorphism by \cite[Proposition~1.1.20]{Neeman2001}.  Applying the Yoneda
embedding gives an isomorphism between the characteristic short exact
sequences which is the identity on $K(c)$ and $L(c)$, so their Yoneda
classes agree.
\end{proof}

By \cref{prop:characteristic-class-independence}, the class $\chi(E_c)$ is
independent of the distinguished completion $E_c$ of $c$.  We may therefore
make the following definition.

\begin{definition}
\label{def:characteristic-class-pretriangulation}
Let $\T$ be a pre-triangulated category.  For a morphism
$c:C_0\to C_1$, define
\[
 \chi_\Delta(c):=\chi(E_c)
 \in\Ext^1_{\mathsf{mod}\,\T}(L(c),K(c)),
\]
where $E_c$ is any distinguished completion of $c$.
\end{definition}

\begin{proposition}
\label{prop:characteristic-extension}
Let $\T$ be a pre-triangulated category, and let
\[
 E:\quad C_0\xrightarrow{c}C_1\xrightarrow{d}Z
 \xrightarrow{e}\Sigma(C_0)
\]
be a Yoneda-exact candidate triangle.
\begin{enumerate}[label=\textup{(\roman*)}]
\item The candidate triangle $E$ is distinguished if and only if
$\chi(E)=\chi_\Delta(c)$.
\item Pushout along endomorphisms of $K(c)$ induces an isomorphism
\begin{equation}
\label{eq:stable-unit}
 \Phi_c:
 \underline{\End}_{\mathsf{mod}\,\T}(K(c))
 \xrightarrow{\sim}
 \Ext^1_{\mathsf{mod}\,\T}(L(c),K(c)),
 \qquad
 [a]\longmapsto a_*\chi_\Delta(c),
\end{equation}
which sends $[1_{K(c)}]$ to $\chi_\Delta(c)$.
\end{enumerate}
\end{proposition}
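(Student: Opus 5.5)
For part (i), the forward direction follows from \cref{prop:characteristic-class-independence}, since a distinguished $E$ is itself a distinguished completion of $c$, so $\chi(E)=\chi_\Delta(c)$. For the converse, fix a distinguished completion $E_c: C_0\xrightarrow{c}C_1\xrightarrow{s}T\xrightarrow{t}\Sigma(C_0)$. Equality of Yoneda classes gives a morphism of short exact sequences from
\[
0\to K(c)\xrightarrow{\bar s}h_T\xrightarrow{\bar t}L(c)\to0
\quad\text{to}\quad
0\to K(c)\xrightarrow{\bar d}h_Z\xrightarrow{\bar e}L(c)\to0
\]
which is the identity on both end terms. Its middle map is an isomorphism $h_T\to h_Z$ by the short five lemma. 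By the Yoneda lemma it equals $h_\psi$ for an isomorphism $\psi:T\to Z$, and then $\psi\circ s=d$ and $e\circ\psi=t$ follow from $h_d=\bar d\circ q_c$ and $h_e=i_c\circ\bar e$ together with faithfulness of Yoneda. Thus $(1,1,\psi)$ is an isomorphism of candidate triangles $E_c\to E$, and $E$ is distinguished by $\mathrm{TR1}$. This repeats the argument at the end of the proof of \cref{prop:intrinsic-inverse-comparison}(ii).

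For part (ii), the aim is to identify $\Phi_c$ with the dimension-shift isomorphism \eqref{eq:stable-dimension-shift}. Put $N=L(c)$. The representable exact sequence coming from $E_c$ and its $\Sigma$-translate gives
\[
0\to K(c)\to h_T\to L(c)\to0,\qquad 0\to L(c)\to h_{\Sigma C_0}\to h_{\Sigma C_1}\to\cdots,
\]
so $K(c)\cong\Omega L(c)$. More precisely, $h_T$ is projective-injective by \cref{prop:freyd-category}, so the characteristic sequence is a projective cover sequence of $L(c)$ up to stable equivalence. In a Frobenius category, for any short exact sequence $0\to\Omega N\to P\to N\to0$ with $P$ projective-injective, the map $\underline{\End}(\Omega N)\to\Ext^1(N,\Omega N)$, $[a]\mapsto a_*[\text{seq}]$, is an isomorphism. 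To see this, apply $\Hom(-,\Omega N)$ to obtain
\[
\Hom(P,\Omega N)\to\Hom(\Omega N,\Omega N)\xrightarrow{\delta}\Ext^1(N,\Omega N)\to\Ext^1(P,\Omega N)=0.
\]
The connecting map $\delta$ is exactly $a\mapsto a_*\xi$. Its kernel, the image of restriction from $P$, consists of the maps factoring through $P$. Since $P$ is injective, every map $\Omega N\to Q$ into a projective-injective $Q$ extends along $\Omega N\hookrightarrow P$, so this kernel is the whole ideal of maps factoring through projective-injectives. Hence $\delta$ descends to the required isomorphism, and $[1]\mapsto\xi=\chi_\Delta(c)$.

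The only point that needs care is this kernel identification, that is, that factoring through $P$ is the same as factoring through some projective-injective, which holds by injectivity of $P$. With that settled, well-definedness of $\Phi_c$ on stable classes and its bijectivity both follow directly.
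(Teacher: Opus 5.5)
Your proposal is correct and follows essentially the same route as the paper: part (i) uses the short five lemma and the Yoneda lemma on the morphism of characteristic sequences, and part (ii) applies $\Hom(-,K(c))$ to the characteristic sequence of a distinguished completion and identifies the kernel with the endomorphisms factoring through projective-injectives. The one slip is that extending a map $\Omega N\to Q$ along $\Omega N\hookrightarrow P$ uses injectivity of $Q$, not of $P$; this is harmless, since you took $Q$ projective-injective.
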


\begin{proof}
For~\textup{(i)}, choose a distinguished completion
$E_c=(C_0\xrightarrow{c}C_1\xrightarrow{s}T
\xrightarrow{t}\Sigma(C_0))$.
If $\chi(E)=\chi_\Delta(c)$, equality of the Yoneda classes gives a
morphism of short exact sequences which is the identity on the end terms:
\[
\begin{tikzcd}[column sep=large]
0 \arrow[r] & K(c) \arrow[r,"\bar s"] \arrow[d,equal]
  & h_T \arrow[r,"\bar t"] \arrow[d,"\theta"]
  & L(c) \arrow[r] \arrow[d,equal] &0\\
0 \arrow[r] & K(c) \arrow[r,"\bar d"']
  & h_Z \arrow[r,"\bar e"']
  & L(c) \arrow[r] &0.
\end{tikzcd}
\]
The short five lemma shows that $\theta$ is an isomorphism.
By the Yoneda lemma, $\theta=h_\phi$ for a unique morphism
$\phi:T\to Z$.  Since $q_c$ is epic and $i_c$ is monic, commutativity gives
$\phi\circ s=d$ and $e\circ\phi=t$.  The Yoneda embedding reflects
isomorphisms, so $\phi$ is an isomorphism.  Hence $E$ is isomorphic to
$E_c$ and is distinguished.  Conversely, if $E$ is distinguished, then
$\chi(E)=\chi(E_c)$ by
\cref{prop:characteristic-class-independence}.  This proves (i).

For~\textup{(ii)}, let $a\in\End(K(c))$ and form the pushout square
\[
\begin{tikzcd}[column sep=large,row sep=large]
 K(c) \arrow[r,"\bar s"] \arrow[d,"a"']
   & h_T \arrow[d,"j_a"] \\
 K(c) \arrow[r,"i_a"'] & P_a.
\end{tikzcd}
\]
The resulting exact sequence
$0\to K(c)\xrightarrow{i_a}P_a\to L(c)\to0$ represents
$a_*\chi_\Delta(c)$.  Apply
$\Hom_{\mathsf{mod}\,\T}(-,K(c))$ to the characteristic sequence of
$E_c$.  Since $h_T$ is projective, we obtain
\[
 \Hom(h_T,K(c))\longrightarrow\End(K(c))
 \xrightarrow{\partial}
 \Ext^1(L(c),K(c))\longrightarrow0,
\]
where the pushout identifies $\partial(a)=a_*\chi_\Delta(c)$.  Thus
$\partial$ is surjective.  Its kernel consists of the endomorphisms of
$K(c)$ which extend across $K(c)\hookrightarrow h_T$.
Such an extension gives a factorization through the projective-injective
object $h_T$.
Conversely, if an endomorphism factors as $K(c)\to P\to K(c)$ with $P$
projective-injective, injectivity of $P$ extends the first map across
$K(c)\hookrightarrow h_T$.
Thus the kernel consists precisely of the endomorphisms which factor
through a projective-injective object.  Passing to the stable quotient
gives \eqref{eq:stable-unit}.
\end{proof}

We record the following useful reformulation.

\begin{lemma}
\label{lem:two-arrow-recognition}
Let $\T$ be a pre-triangulated category.  Let
\begin{equation*}
\xymatrix@C=2.2pc{
C_0\ar[r]^-{c}&C_1\ar[r]^-{s}&T\ar[r]^-{t}&\Sigma(C_0).
}
\end{equation*}
be distinguished, and let
\begin{equation*}
\xymatrix@C=2.2pc{
 E:\quad C_0\ar[r]^-{c}&C_1\ar[r]^-{d}&Z\ar[r]^-{e}&\Sigma(C_0).
}
\end{equation*}
be Yoneda exact.  Then $E$ is distinguished if and only if there is a
morphism $\phi:T\to Z$ satisfying
\[
 \phi\circ s=d,
 \qquad
 e\circ\phi=t.
\]
Every such $\phi$ is an isomorphism.
\end{lemma}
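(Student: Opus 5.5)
The plan is to reduce both directions to the characteristic-class criterion of \cref{prop:characteristic-extension}, with one direct argument for the forward implication.

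\emph{Forward direction.} Suppose $\phi:T\to Z$ satisfies $\phi\circ s=d$ and $e\circ\phi=t$. Apply the Yoneda embedding. Since $h_d=\bar d\circ q_c$ and $h_s=\bar s\circ q_c$ with $q_c$ epic, the identity $h_\phi\circ h_s=h_d$ gives $h_\phi\circ\bar s=\bar d$. Since $h_e=i_c\circ\bar e$ and $h_t=i_c\circ\bar t$ with $i_c$ monic, the identity $h_e\circ h_\phi=h_t$ gives $\bar e\circ h_\phi=\bar t$. Thus $(1_{K(c)},h_\phi,1_{L(c)})$ is a morphism from the characteristic sequence of the distinguished triangle to that of $E$, and the two sequences are exact by \cref{def:characteristic-class-candidate}. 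The short five lemma then shows that $h_\phi$ is an isomorphism, so $\phi$ is an isomorphism because the Yoneda embedding is fully faithful. It follows that $E$ is isomorphic to the given distinguished triangle through $(1_{C_0},1_{C_1},\phi)$, so $E$ is distinguished by TR1. The same argument also proves the final clause: every such $\phi$ is an isomorphism.

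\emph{Converse.} Suppose $E$ is distinguished. Apply TR3 to the identity square on $c$ between the two distinguished triangles. This produces $\phi:T\to Z$ with $\phi\circ s=d$ (middle square) and $e\circ\phi=\Sigma(1_{C_0})\circ t=t$ (last square), which are exactly the two required identities.

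No step should be a real obstacle. The only point requiring care is the bookkeeping in the forward direction: one must cancel the epimorphism $q_c$ on the right and the monomorphism $i_c$ on the left to convert the two equations in $\T$ into commutativity of the diagram of characteristic sequences. This is the same computation already carried out in the proof of \cref{prop:characteristic-extension}(i), run in reverse.
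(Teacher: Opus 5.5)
Your proposal is correct and essentially matches the paper's proof: $\mathrm{TR3}$ applied to the identity square on $c$ gives one direction, and for the other you apply the short five lemma to $(1_{K(c)},h_\phi,1_{L(c)})$ between the characteristic sequences and then use the Yoneda lemma, exactly as the paper does. The only small difference is how you get the final clause: you derive it from the short-five-lemma argument, which applies to every such $\phi$ because it uses only Yoneda exactness of $E$, while the paper cites Neeman's Proposition~1.1.20 for the $\phi$ produced by $\mathrm{TR3}$.
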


\begin{proof}
If $E$ is distinguished, $\mathrm{TR3}$ extends the identity square on $c$
to a morphism of triangles $(1_{C_0},1_{C_1},\phi)$.  Its third component is
an isomorphism by \cite[Proposition~1.1.20]{Neeman2001}, and it gives the two
displayed equations.  Conversely, these equations make $h_\phi$ a morphism
between the characteristic short exact sequences of the two candidate
triangles,
with the identity on both end terms.  Hence $h_\phi$ is an isomorphism by
the short five lemma.  The Yoneda lemma then shows that $\phi$ is an
isomorphism, so $E$ is isomorphic to the distinguished triangle.
\end{proof}

\subsection{The local obstruction and composition}

We apply \cref{lem:two-arrow-recognition} to the mapping cone of an
initial square.

\begin{definition}[{\normalfont cf.~\cite[Definition~1.3.14]{Neeman2001}}]
\label{def:initial-square}
Let $\T$ be a pre-triangulated category.  Fix distinguished
triangles
\begin{equation}
\label{eq:local-rows}
 X\xrightarrow{u}Y\xrightarrow{v}Z\xrightarrow{w}\Sigma(X),
 \qquad
 X'\xrightarrow{u'}Y'\xrightarrow{v'}Z'\xrightarrow{w'}\Sigma(X')
\end{equation}
and a commutative square on their first maps,
\begin{equation}
\label{eq:initial-square}
\begin{tikzcd}[column sep=large,row sep=large]
X \arrow[r,"u"] \arrow[d,"f"'] & Y \arrow[d,"g"] \\
X'\arrow[r,"u'"'] &Y'.
\end{tikzcd}
\end{equation}
We call the data in \eqref{eq:local-rows}--\eqref{eq:initial-square} an
\emph{initial square} and denote it by $q$.  A \emph{completion} of $q$
is a morphism $h:Z\to Z'$ for which $(f,g,h)$ is a morphism of triangles.
It is \emph{good} if the mapping cone of $(f,g,h)$ is distinguished.
\end{definition}

\begin{notation}
\label{not:cone-data}
Let $\T$ be a pre-triangulated category, and let $q$ be an
initial square.  Axiom $\mathrm{TR3}$ gives at least one completion.  For any
completion $h:Z\to Z'$, denote the associated cone candidate triangle by $C(h)$ and
write it as
\begin{equation}
\label{eq:local-cone}
\xymatrix@C=2.2pc{
A\ar[r]^-{\alpha}&B\ar[r]^-{\beta(h)}&C\ar[r]^-{\gamma}&\Sigma(A).
}
\end{equation}
where
\[
 A=Y\oplus X',\qquad B=Z\oplus Y',\qquad
 C=\Sigma(X)\oplus Z'.
\]
In the notation of \eqref{eq:cone-matrices}, the three maps are
\[
 \alpha=\begin{bmatrix}-v&0\\ g&u'\end{bmatrix},\qquad
 \beta(h)=\begin{bmatrix}-w&0\\ h&v'\end{bmatrix},\qquad
 \gamma=\begin{bmatrix}-\Sigma(u)&0\\ \Sigma(f)&w'\end{bmatrix}.
\]
Thus $A,B,C,\alpha$, and $\gamma$ depend only on $q$, whereas $\beta(h)$
depends on the generally nonunique completion $h$.  Put
\[
 \beta_0=
 \begin{bmatrix}-w&0\\0&v'\end{bmatrix},
 \qquad
 J_q=\left\{
 \begin{bmatrix}0&0\\a&0\end{bmatrix}
 \,\middle|\,a\in\T(Z,Z')
 \right\}\subseteq\T(B,C).
\]
The subgroup $J_q$ consists of morphisms whose only possibly nonzero entry
is in position $(2,1)$, and it is canonically isomorphic to $\T(Z,Z')$.  We have $\beta(h)-\beta_0\in J_q$.  For $b\in\T(B,C)$,
write $\bar b$ for its class in $\T(B,C)/J_q$.
\end{notation}

Choose a distinguished triangle beginning with the fixed map $\alpha$,
\begin{equation}
\label{eq:alpha-triangle}
\xymatrix@C=2.2pc{
A\ar[r]^-{\alpha}&B\ar[r]^-{s}&E\ar[r]^-{t}&\Sigma(A).
}
\end{equation}
The candidate triangle $C(h)$ and \eqref{eq:alpha-triangle} have the same first
arrow.  A morphism $\phi:E\to C$ compares their remaining arrows through
$\phi\circ s=\beta(h)$ and $\gamma\circ\phi=t$.  Passing to the quotient
by \(J_q\) makes the first equation independent of the chosen completion
\(h\).

\begin{definition}
\label{def:octahedral-defect}
Let $\T$ be a pre-triangulated category, let $q$ be an
initial square, and use the choices above.  Define
\begin{equation}
\label{eq:boundary-map}
 \delta_q:\T(E,C)\longrightarrow
 \T(B,C)/J_q\oplus\T(E,\Sigma(A)),
 \qquad
 \phi\longmapsto(\overline{\phi\circ s},\gamma\circ\phi),
\end{equation}
and set
\begin{equation}
\label{eq:octahedral-defect}
 \operatorname{Obs}_\Delta(q)=\coker\delta_q
 =\frac{\T(B,C)/J_q\oplus\T(E,\Sigma(A))}{\operatorname{im}\delta_q}.
\end{equation}
Let
\[
 o_\Delta(q)=[(\bar\beta_0,t)]\in\operatorname{Obs}_\Delta(q)
\]
be the image under the canonical quotient map.  We call it the
\emph{local octahedral obstruction} of $q$.  The subscript records the
dependence on the pre-triangulation.  When $(\T,\Sigma,\Delta)$ is fixed, we
write simply $o(q)$.
\end{definition}

\begin{theorem}
\label{thm:boundary-lifting}
Let \(\T\) be a pre-triangulated category with suspension \(\Sigma\) and
pre-triangulation \(\Delta\), and let \(q\) be an initial square.  Then \(q\)
has a good completion if and only if \(o(q)=0\).
More precisely, any preimage $\phi:E\to C$ of
$(\bar\beta_0,t)$ under $\delta_q$ determines a unique completion $h:Z\to Z'$,
characterized by
\begin{equation}
\label{eq:recover-completion}
 \phi\circ s=\beta(h),
\end{equation}
and this completion is good.  Conversely, every good completion arises from at
least one lift.
\end{theorem}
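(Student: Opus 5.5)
The plan is to reduce everything to \cref{lem:two-arrow-recognition}, applied to the cone candidate triangle $C(h)$ of \eqref{eq:local-cone} and the fixed distinguished triangle \eqref{eq:alpha-triangle}. Both begin with the same morphism $\alpha$. Moreover $C(h)$ is Yoneda exact for every completion $h$, by \cref{lem:cone-exact}, because the two rows \eqref{eq:local-rows} are distinguished and hence Yoneda exact. So \cref{lem:two-arrow-recognition} says that $h$ is good if and only if there is $\phi:E\to C$ with $\phi\circ s=\beta(h)$ and $\gamma\circ\phi=t$.

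First observe that $o(q)=0$ means exactly that some $\phi$ satisfies $\delta_q(\phi)=(\bar\beta_0,t)$. Concretely, $\gamma\circ\phi=t$ and $\phi\circ s-\beta_0\in J_q$. For such a $\phi$, write $\phi\circ s=\beta_0+\left[\begin{smallmatrix}0&0\\ h&0\end{smallmatrix}\right]$ for a unique $h\in\T(Z,Z')$; uniqueness holds because $J_q\cong\T(Z,Z')$ canonically. Then $\phi\circ s=\beta(h)$ by definition of $\beta(h)$, which gives \eqref{eq:recover-completion} and the uniqueness of $h$.

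The key step is to check that this $h$ is really a completion, i.e.\ that $(f,g,h)$ is a morphism of triangles: $h\circ v=v'\circ g$ and $w'\circ h=\Sigma(f)\circ w$. I would extract these from the two identities satisfied by $\phi$.
\begin{itemize}
\item Since $s\circ\alpha=0$, we get $\beta(h)\circ\alpha=\phi\circ s\circ\alpha=0$. Computing the $(2,1)$ entry of this matrix product gives $-h\circ v+v'\circ g=0$.
\item From $\gamma\circ\beta(h)=\gamma\circ\phi\circ s=t\circ s=0$, the $(2,1)$ entry gives $-\Sigma(f)\circ w+w'\circ h=0$.
\end{itemize}
The remaining entries vanish automatically because the rows are candidate triangles and the square commutes. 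This matrix bookkeeping, keeping the sign conventions of \eqref{eq:cone-matrices} straight, is the only real obstacle, and I expect it to be routine. Once $h$ is a completion, $C(h)$ is Yoneda exact, and $\phi$ witnesses the criterion of \cref{lem:two-arrow-recognition}, so $C(h)$ is distinguished and $h$ is good.

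Conversely, let $h$ be a good completion. Then $C(h)$ is distinguished, and \cref{lem:two-arrow-recognition}, applied with the roles as above, gives $\phi:E\to C$ with $\phi\circ s=\beta(h)$ and $\gamma\circ\phi=t$. Since $\beta(h)-\beta_0\in J_q$, we have $\overline{\phi\circ s}=\bar\beta_0$, so $\delta_q(\phi)=(\bar\beta_0,t)$. Hence $\phi$ is a lift, it recovers $h$ via \eqref{eq:recover-completion}, and $o(q)=0$. This proves the equivalence, and since a completion exists by $\mathrm{TR3}$, it also proves all the ``more precisely'' claims.
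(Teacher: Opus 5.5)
Your proposal is correct and follows the paper's proof essentially step for step. You recover $h$ from the $(2,1)$-entry of $\phi\circ s-\beta_0\in J_q$, read off the completion identities from the $(2,1)$-entries of $\beta(h)\circ\alpha=0$ and $\gamma\circ\beta(h)=t\circ s=0$, and then use \cref{lem:cone-exact} and \cref{lem:two-arrow-recognition} in both directions, exactly as the paper does.
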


\begin{proof}
Let $\phi:E\to C$ be a lift, so
$\overline{\phi\circ s}=\bar\beta_0$ and $\gamma\circ\phi=t$.  The first equality says
that $\phi\circ s-\beta_0$ belongs to $J_q$.  Its $(2,1)$-entry determines
a unique morphism $h:Z\to Z'$ and gives \eqref{eq:recover-completion}.

It remains to show that $h$ is a completion.  Since $s\circ\alpha=0$,
\[
 \beta(h)\circ\alpha=(\phi\circ s)\circ\alpha=0.
\]
The $(2,1)$-entry of this equality is
$-h\circ v+v'\circ g=0$.  Similarly, $\gamma\circ\phi=t$ and $t\circ s=0$ give
\[
 \gamma\circ\beta(h)=\gamma\circ\phi\circ s=t\circ s=0;
\]
its $(2,1)$-entry is $-(\Sigma(f))\circ w+w'\circ h=0$.  Hence
\[
 h\circ v=v'\circ g,\qquad w'\circ h=(\Sigma(f))\circ w,
\]
so $(f,g,h)$ is a morphism of triangles.

Its mapping cone \eqref{eq:local-cone} is Yoneda exact by
\cref{lem:cone-exact}.  The equations
$\phi\circ s=\beta(h)$ and $\gamma\circ\phi=t$ give the following morphism
of candidate triangles:
\[
\begin{tikzcd}[column sep=large]
A \arrow[r,"\alpha"] \arrow[d,equal]
  & B \arrow[r,"s"] \arrow[d,equal]
  & E \arrow[r,"t"] \arrow[d,"\phi"]
  & \Sigma(A) \arrow[d,equal]\\
A \arrow[r,"\alpha"']
  & B \arrow[r,"\beta(h)"']
  & C \arrow[r,"\gamma"']
  & \Sigma(A).
\end{tikzcd}
\]
The top row is the chosen distinguished triangle and the bottom row is the
Yoneda-exact cone candidate triangle.  By \cref{lem:two-arrow-recognition}, the
bottom row is distinguished.

Conversely, let $h$ be good.  The same recognition result supplies
$\phi:E\to C$ with $\phi\circ s=\beta(h)$ and $\gamma\circ\phi=t$, so
$\phi$ is a lift.  Thus every lift gives a good completion, and every good
completion is obtained from a lift.
\end{proof}

We next extend $\delta_q$ to a three-term complex.  This separates the roles
of \textup{(TR3)} and the mapping-cone axiom \textup{(TR4$'$)}: the former
makes the distinguished element a cocycle, while the latter makes its class
vanish.

\begin{definition}
\label{def:local-obstruction-complex}
Let \(\T\) be a pre-triangulated category with suspension \(\Sigma\) and
pre-triangulation \(\Delta\), and let \(q\) be an initial square.  Put
\[
 \gamma J_q=\{\gamma\circ j\mid j\in J_q\}\subseteq
 \T(B,\Sigma(A)).
\]
The \emph{local obstruction complex} $\mathcal O_q^\bullet$ is concentrated
in degrees $0,1,2$ and is given by
\begin{equation*}
\xymatrix@C=2.2pc{
\mathcal O_q^0\ar[r]^{d^0}&\mathcal O_q^1\ar[r]^{d^1}&\mathcal O_q^2.
}
\end{equation*}
where
\[
 \begin{aligned}
 \mathcal O_q^0&=\T(E,C),\\
 \mathcal O_q^1&=\T(B,C)/J_q\oplus\T(E,\Sigma(A)),\\
 \mathcal O_q^2&=\T(B,\Sigma(A))/(\gamma J_q),
 \end{aligned}
 \qquad
 \begin{aligned}
 d^0(\phi)&=(\overline{\phi\circ s},\gamma\circ\phi),\\
 d^1(\bar b,r)&=[\gamma\circ b-r\circ s].
 \end{aligned}
\]
All other terms are zero.
\end{definition}

The formula for $d^1$ is independent of the representative of $\bar b$:
replacing $b$ by $b+j$ changes $\gamma\circ b-r\circ s$ by an element of
$\gamma J_q$.  Moreover,
\[
 (d^1\circ d^0)(\phi)
 =[\gamma\circ\phi\circ s-(\gamma\circ\phi)\circ s]=0,
\]
so $\mathcal O_q^\bullet$ is indeed a cochain complex.

By $\mathrm{TR3}$, the initial square has a completion $h$.  Its cone
candidate triangle is a complex, so $\gamma\circ\beta(h)=0$.  Since
$\beta(h)-\beta_0\in J_q$ and $t\circ s=0$, the element
\[
 x_q=(\bar\beta _0,t)\in\mathcal O_q^1
\]
is a cocycle.

\begin{definition}
\label{def:cohomological-local-obstruction}
Let \(\T\) be a pre-triangulated category with suspension \(\Sigma\) and
pre-triangulation \(\Delta\), and let \(q\) be an initial square.  The class
\[
 \omega(q)=[x_q]\in H^1(\mathcal O_q^\bullet)
\]
is the \emph{cohomological local obstruction} of $q$.
\end{definition}

\begin{proposition}
\label{prop:local-obstruction-complex}
Let \(\T\) be a pre-triangulated category with suspension \(\Sigma\) and
pre-triangulation \(\Delta\), and let \(q\) be an initial square.  Then
\(\omega(q)=0\) if and only if \(q\) has a good
completion.  Moreover, the inclusion
$\ker d^1\subseteq\mathcal O_q^1$ induces a canonical injection
\[
 H^1(\mathcal O_q^\bullet)\lhook\joinrel\longrightarrow
 \coker\delta_q
\]
which carries $\omega(q)$ to $o(q)$.
\end{proposition}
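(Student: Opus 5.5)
The argument will be formal, reducing everything to \cref{thm:boundary-lifting}.

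First, the injection. Since $\mathcal O_q^0\to\mathcal O_q^1$ is exactly $\delta_q$, we have $H^1(\mathcal O_q^\bullet)=\ker d^1/\operatorname{im}\delta_q$ and $\coker\delta_q=\mathcal O_q^1/\operatorname{im}\delta_q$. The inclusion $\ker d^1\subseteq\mathcal O_q^1$ contains $\operatorname{im}\delta_q$, because $d^1\circ d^0=0$ was checked after \cref{def:local-obstruction-complex}. It therefore descends to a map $H^1(\mathcal O_q^\bullet)\to\coker\delta_q$. This map is injective: its kernel is $(\ker d^1\cap\operatorname{im}\delta_q)/\operatorname{im}\delta_q$, and this quotient is zero. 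No choices enter, so the map is canonical; in fact it is the canonical map from the subquotient to the quotient.

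Next, tracking the classes. We showed above, using $\mathrm{TR3}$ to get a completion $h$ together with $\gamma\circ\beta(h)=0$, $\beta(h)-\beta_0\in J_q$ and $t\circ s=0$, that $x_q=(\bar\beta_0,t)$ lies in $\ker d^1$. So $\omega(q)=[x_q]$ is defined. Its image under the injection is the class of $x_q$ in $\coker\delta_q$, and by \cref{def:octahedral-defect} that class is $o(q)$.

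Finally, the equivalence. Injectivity gives $\omega(q)=0$ if and only if $o(q)=0$. By \cref{thm:boundary-lifting}, $o(q)=0$ holds if and only if $q$ has a good completion. Nothing here is a real obstacle. The only point needing care is that $x_q$ is a cocycle, and this is exactly where $\mathrm{TR3}$ enters; it is already recorded before \cref{def:cohomological-local-obstruction}. I will note that $o(q)$ and $\omega(q)$ use the same auxiliary triangle \eqref{eq:alpha-triangle}, so the comparison takes place for a fixed choice.
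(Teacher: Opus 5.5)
Your proposal is correct and follows the paper's proof: both identify $H^1(\mathcal O_q^\bullet)=\ker d^1/\operatorname{im}\delta_q$ as a subgroup of $\coker\delta_q$ that contains the class of the cocycle $x_q$, and both reduce the vanishing statement to \cref{thm:boundary-lifting}. The only cosmetic difference is that the paper reads off $\omega(q)=0\iff x_q\in\operatorname{im}\delta_q$ directly from the definition of $H^1$, whereas you obtain $\omega(q)=0\iff o(q)=0$ from injectivity.
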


\begin{proof}
The class of $x_q$ is zero precisely when
$x_q=d^0(\phi)=\delta_q(\phi)$ for some $\phi:E\to C$.  By
\cref{thm:boundary-lifting}, this is equivalent to the existence of a good
completion.  Finally,
\[
 H^1(\mathcal O_q^\bullet)
   =\ker d^1/\operatorname{im}d^0
   \subseteq \mathcal O_q^1/\operatorname{im}\delta_q,
\]
which proves the asserted injection and identifies the two distinguished
classes.
\end{proof}

The construction depends on the distinguished triangle chosen to complete
$\alpha$.  The following proposition shows that the resulting isomorphism
classes are independent of this choice.

\begin{proposition}
\label{prop:local-choice-independence}
Let \(\T\) be a pre-triangulated category with suspension \(\Sigma\) and
pre-triangulation \(\Delta\), and let \(q\) be an initial square.  Any two
distinguished triangles chosen to complete
$\alpha$ yield isomorphisms of the pairs
\[
 (\operatorname{Obs}_\Delta(q),o_\Delta(q))
 \quad\text{and}\quad
 (H^1(\mathcal O_q^\bullet),\omega(q))
\]
which carry each distinguished class to its counterpart.  In particular,
the isomorphism classes of these pairs are independent of the choice.
\end{proposition}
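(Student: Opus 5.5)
Let
\[
 A\xrightarrow{\alpha}B\xrightarrow{s}E\xrightarrow{t}\Sigma(A),
 \qquad
 A\xrightarrow{\alpha}B\xrightarrow{s'}E'\xrightarrow{t'}\Sigma(A)
\]
be two distinguished triangles completing $\alpha$. The plan is to compare them by an isomorphism of triangles that is the identity on $A$ and $B$, and then transport all the data along it.

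By $\mathrm{TR3}$, the identity square on $\alpha$ extends to a morphism of triangles $(1_A,1_B,\psi)$ with $\psi:E\to E'$. Thus $\psi\circ s=s'$ and $t'\circ\psi=t$. By \cite[Proposition~1.1.20]{Neeman2001}, or by \cref{lem:two-arrow-recognition} applied to the distinguished triangle $E'$, the map $\psi$ is an isomorphism.

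Define cochain maps $\Psi:\mathcal O'^\bullet_q\to\mathcal O^\bullet_q$ degreewise as follows:
\[
 \Psi^0(\phi')=\phi'\circ\psi,\qquad
 \Psi^1(\bar b,r')=(\bar b,\,r'\circ\psi),\qquad
 \Psi^2=\mathrm{id}.
\]
Here $\Psi^0:\T(E',C)\to\T(E,C)$ and $\Psi^1$ is the identity on the summand $\T(B,C)/J_q$. The terms $\mathcal O_q^2=\T(B,\Sigma(A))/\gamma J_q$ and $\T(B,C)/J_q$ depend only on $q$, not on the chosen triangle, which is why identities suffice there.

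Each $\Psi^i$ is bijective because $\psi$ is invertible. The inverse is given by the same formulas with $\psi^{-1}$; note that $\psi^{-1}$ also satisfies $\psi^{-1}\circ s'=s$ and $t\circ\psi^{-1}=t'$.

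The compatibilities with the differentials are short checks using $s'=\psi\circ s$:
\[
 \Psi^1 d'^0(\phi')
 =(\overline{\phi'\circ s'},\,\gamma\circ\phi'\circ\psi)
 =(\overline{\phi'\circ\psi\circ s},\,\gamma\circ\phi'\circ\psi)
 =d^0\Psi^0(\phi'),
\]
\[
 d^1\Psi^1(\bar b,r')
 =[\gamma\circ b-r'\circ\psi\circ s]
 =[\gamma\circ b-r'\circ s']
 =d'^1(\bar b,r').
\]
Hence $\Psi$ is an isomorphism of complexes. It induces an isomorphism on $H^1$, and, since $\Psi^1$ carries $\operatorname{im}\delta'_q$ onto $\operatorname{im}\delta_q$, also an isomorphism $\coker\delta'_q\cong\coker\delta_q$.

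For the distinguished classes, the relation $t'\circ\psi=t$ gives
\[
 \Psi^1(\bar\beta_0,t')=(\bar\beta_0,t).
\]
So $\Psi^1$ sends $x'_q$ to $x_q$. Consequently the induced isomorphisms carry $\omega'(q)$ to $\omega(q)$ and $o'(q)$ to $o(q)$.

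The argument has no genuine obstacle. The step requiring the most care is the direction of $\Psi$ and the placement of $\psi$, that is, precomposition on the $E$-slots. With that convention the two relations $\psi\circ s=s'$ and $t'\circ\psi=t$ are exactly what is used. The isomorphism obtained is not canonical, since it depends on the choice of $\psi$, but the statement claims only that an isomorphism of pairs exists.
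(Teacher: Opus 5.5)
Your proposal is correct and follows essentially the same argument as the paper: extend the identity square on $\alpha$ to an isomorphism $E\to E'$ and transport the obstruction complex, and hence $\coker\delta_q$, by precomposition on the $E$-slots and the identity elsewhere. The only difference is direction: the paper maps the unprimed complex to the primed one via $\theta^{-1}$, while you map primed to unprimed via $\psi$, which is immaterial.
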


\begin{proof}
Let $A\to B\xto{s'}E'\xto{t'}\Sigma(A)$ be another distinguished triangle.
The identity square on $\alpha$ extends to an isomorphism
$\theta:E\to E'$ satisfying $\theta\circ s=s'$ and $t'\circ\theta=t$.
Precomposition with $\theta^{-1}$ identifies the homomorphisms $\delta_q$
obtained from the two choices and
induces an isomorphism of the cokernels carrying $o_\Delta(q)$ to its
counterpart.

In degrees (0,1,2), respectively, the maps
\[
 \phi\longmapsto\phi\circ\theta^{-1},\qquad
 (\bar b,r)\longmapsto(\bar b,r\circ\theta^{-1}),\qquad
 [z]\longmapsto[z]
\]
form an isomorphism of obstruction complexes.  They carry
$(\bar\beta_0,t)$ to $(\bar\beta_0,t')$, and hence induce the required
isomorphism on $H^1$, carrying $\omega(q)$ to its counterpart.
\end{proof}

We now specialize the local obstruction to composable morphisms.  Let $\T$
be a pre-triangulated category.  For composable maps
\begin{equation*}
\xymatrix@C=2.4pc{
X\ar[r]^-{f}&Y\ar[r]^-{g}&Z.
}
\end{equation*}
fix distinguished triangles completing $f$ and $g\circ f$:
\begin{equation}
\label{eq:composition-boundary-triangles}
\xymatrix@C=1.55pc{X\ar[r]^-{f}&Y\ar[r]^-{a}&F\ar[r]^-{b}&\Sigma(X)}
\qquad
\xymatrix@C=1.55pc{X\ar[r]^-{g\circ f}&Z\ar[r]^-{c}&H\ar[r]^-{d}&\Sigma(X)}.
\end{equation}
Together with the commutative square
\begin{equation}
\label{eq:composition-square}
\begin{tikzcd}[column sep=large,row sep=large]
X \arrow[r,"f"] \arrow[d,equal] & Y \arrow[d,"g"]\\
X \arrow[r,"g\circ f"'] & Z.
\end{tikzcd}
\end{equation}
these triangles form the composition square $q(g\circ f)$ associated with
the displayed pair $(f,g)$.
Different choices are isomorphic by maps which are the identity on the first
two terms.  Transport along these isomorphisms preserves good completions.
Consequently the vanishing of $o(q(g\circ f))$ is independent of the choices.

We shall use the following classical composition form of $\mathrm{TR4}$.
For every composable pair $X\xrightarrow{f}Y\xrightarrow{g}Z$ and every choice
of \eqref{eq:composition-boundary-triangles}, there is $h:F\to H$ with
\[
 h\circ a=c\circ g,\qquad d\circ h=b,
\]
such that
\begin{equation}
\label{eq:homotopy-cartesian-composition}
\xymatrix@C=3.0pc{
Y\ar[r]^-{\left[\begin{smallmatrix}-a\\g\end{smallmatrix}\right]}&
F\oplus Z\ar[r]^-{[h\ \ c]}&H\ar[r]^-{(\Sigma(f))\circ d}&\Sigma(Y).
}
\end{equation}
is distinguished.  Under $\mathrm{TR1}$--$\mathrm{TR3}$ this is equivalent
to Verdier's octahedral axiom; see \cite[Theorem~1.8]{Neeman1991} and
\cite[Proposition~1.4.6]{Neeman2001}.

\begin{theorem}
\label{thm:composition-obstruction}
Let $\T$ be a pre-triangulated category.  The following are
equivalent:
\begin{enumerate}[label=\textup{(\roman*)}]
\item \(\Delta\) satisfies \(\mathrm{TR4}\);
\item every square \eqref{eq:composition-square} has a good completion;
\item $o(q(g\circ f))=0$ for every composable pair $(f,g)$.
\end{enumerate}
\end{theorem}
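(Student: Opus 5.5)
The equivalence (ii)$\Leftrightarrow$(iii) is immediate from \cref{thm:boundary-lifting} applied to each square $q(g\circ f)$. By the remark preceding the theorem, the vanishing of $o(q(g\circ f))$ does not depend on the chosen completions \eqref{eq:composition-boundary-triangles}. So the real content is (i)$\Leftrightarrow$(ii). I will compare good completions of \eqref{eq:composition-square} with the composition form of $\mathrm{TR4}$ involving \eqref{eq:homotopy-cartesian-composition}. Since that form is equivalent to $\mathrm{TR4}$ under $\mathrm{TR1}$--$\mathrm{TR3}$ (Neeman), it suffices to match the two conditions.

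First I write out the mapping cone of a completion $h:F\to H$ of $q(g\circ f)$, i.e.\ of the morphism $(1_X,g,h)$ with $h\circ a=c\circ g$ and $d\circ h=b$. By \eqref{eq:cone-matrices} it is
\[
 Y\oplus X\xrightarrow{\left[\begin{smallmatrix}-a&0\\ g&g\circ f\end{smallmatrix}\right]}
 F\oplus Z\xrightarrow{\left[\begin{smallmatrix}-b&0\\ h&c\end{smallmatrix}\right]}
 \Sigma(X)\oplus H\xrightarrow{\left[\begin{smallmatrix}-\Sigma(f)&0\\ 1&d\end{smallmatrix}\right]}
 \Sigma(Y)\oplus\Sigma(X).
\]
I then split off the contractible summand $X\xrightarrow{1}X\to0\to\Sigma(X)$, rotated so that it sits in the second and third positions. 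The plan is to find explicit automorphisms of $Y\oplus X$, $F\oplus Z$ and $\Sigma(X)\oplus H$, built from the unipotent matrices $\left[\begin{smallmatrix}1&0\\ -f&1\end{smallmatrix}\right]$, $\left[\begin{smallmatrix}1&d\\0&1\end{smallmatrix}\right]$ and their variants. These should identify the cone with the direct sum of \eqref{eq:homotopy-cartesian-composition}, possibly after a sign change, and a rotated trivial triangle on $X$. For example, the first map becomes $\left[\begin{smallmatrix}-a&0\\ g&0\end{smallmatrix}\right]$ after the column operation that subtracts $f$ times the first column from the second, using $a\circ f=0$. The third map, after adding $d$ times the second coordinate, should become $\left[\begin{smallmatrix}0&\Sigma(f)\circ d\\ 1&0\end{smallmatrix}\right]$ up to signs.

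Next I need a lemma: in a pre-triangulated category, a direct sum of candidate triangles is distinguished if and only if each summand is, when one summand is a (rotated) trivial triangle. The direction ``sum distinguished $\Rightarrow$ summand distinguished'' holds because a direct summand of a distinguished triangle is distinguished (Neeman, Proposition~1.2.3, which uses only $\mathrm{TR1}$--$\mathrm{TR3}$). The converse holds because direct sums of distinguished triangles are distinguished, again by Neeman. Alternatively, I can argue via \cref{lem:two-arrow-recognition} and Yoneda exactness. Combined with $\mathrm{TR1}$ and $\mathrm{TR2}$ for the trivial triangles, this shows that $h$ is a good completion exactly when \eqref{eq:homotopy-cartesian-composition} is distinguished with this same $h$. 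Sign discrepancies can be absorbed by the isomorphism $-1$ on a single object, which preserves distinguishedness by $\mathrm{TR1}$.

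With this in hand, (i)$\Rightarrow$(ii) goes as follows. The composition form of $\mathrm{TR4}$ supplies, for the chosen triangles, an $h$ satisfying exactly the completion equations, and it makes \eqref{eq:homotopy-cartesian-composition} distinguished, so $h$ is good. Conversely, (ii) gives a good $h$, and therefore the composition form of $\mathrm{TR4}$ holds, which yields (i). I expect the main obstacle to be the explicit matrix bookkeeping: choosing the three automorphisms so that the squares commute, and keeping the signs consistent with the rotation conventions of $\mathrm{TR2}$. If this becomes unwieldy, I would instead compare the two triangles directly through \cref{lem:two-arrow-recognition}, using that both are Yoneda exact by \cref{lem:cone-exact}.
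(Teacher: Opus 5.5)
Your proposal is correct and follows the paper's proof. The paper conjugates the cone by $R_A=\left[\begin{smallmatrix}1_Y&f\\0&1_X\end{smallmatrix}\right]$, $R_B=1$, $R_C=\left[\begin{smallmatrix}0&1_H\\1_{\Sigma(X)}&d\end{smallmatrix}\right]$ and $\Sigma(R_A)$, obtaining exactly $T(h)\oplus\bigl(X\to0\to\Sigma(X)\xrightarrow{1}\Sigma(X)\bigr)$ with no sign corrections, and then uses Neeman's Propositions~1.2.1 and~1.2.3, the composition form of $\mathrm{TR4}$, and \cref{thm:boundary-lifting} just as you do; the only slip is that $\left[\begin{smallmatrix}1&0\\-f&1\end{smallmatrix}\right]$ is not a well-typed endomorphism of $Y\oplus X$, and the column operation you actually describe corresponds to $R_A^{-1}=\left[\begin{smallmatrix}1&-f\\0&1\end{smallmatrix}\right]$.
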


\begin{proof}
Use the notation of \eqref{eq:composition-boundary-triangles}.  For a
completion $h:F\to H$, denote its mapping-cone candidate triangle by $M(h)$ and the
candidate triangle in \eqref{eq:homotopy-cartesian-composition} by $T(h)$.  Explicitly,
$M(h)$ is
\begin{equation*}
\xymatrix@C=3.5pc{
Y\oplus X\ar[r]^-{\left[\begin{smallmatrix}-a&0\\g&g\circ f\end{smallmatrix}\right]}&
F\oplus Z\ar[r]^-{\left[\begin{smallmatrix}-b&0\\h&c\end{smallmatrix}\right]}&
\Sigma(X)\oplus H\ar[r]^-{\left[\begin{smallmatrix}-\Sigma(f)&0\\1&d\end{smallmatrix}\right]}&
\Sigma(Y)\oplus\Sigma(X).
}
\end{equation*}
Apply the isomorphisms
\begin{equation}
\label{eq:composition-cone-change}
 R_A=\begin{bmatrix}1_Y&f\\0&1_X\end{bmatrix},\qquad
 R_B=1_{F\oplus Z},\qquad
 R_C=\begin{bmatrix}0&1_H\\1_{\Sigma(X)}&d\end{bmatrix},\qquad
 R_{\Sigma(A)}=\Sigma(R_A).
\end{equation}
Write the three cone maps as $D_0,D_1,D_2$.  Since
$a\circ f=0$, $d\circ c=0$, $h\circ a=c\circ g$, and
$d\circ h=b$, direct multiplication gives
\[
\begin{aligned}
 R_B\circ D_0\circ R_A^{-1}
   &=\begin{bmatrix}-a&0\\ g&0\end{bmatrix},\\
 R_C\circ D_1\circ R_B^{-1}
   &=\begin{bmatrix}h&c\\0&0\end{bmatrix},\\
 R_{\Sigma(A)}\circ D_2\circ R_C^{-1}
   &=\begin{bmatrix}(\Sigma(f))\circ d&0\\0&1_{\Sigma(X)}\end{bmatrix}.
\end{aligned}
\]
Thus the transformed cone is the direct sum of
$T(h)$ on the first summands and
\begin{equation*}
\xymatrix@C=2.2pc{
X\ar[r]&0\ar[r]&\Sigma(X)\ar[r]^{1_{\Sigma(X)}}&\Sigma(X).
}
\end{equation*}
on the complementary summands.  In other words,
\[
 M(h)\cong T(h)\oplus
 \bigl(X\to0\to\Sigma(X)\xrightarrow{1_{\Sigma(X)}}\Sigma(X)\bigr)
\]
as candidate triangles.  The complementary summand is isomorphic to a
rotation of the split triangle on $1_X$, and is therefore distinguished.
Consequently,
\cite[Propositions~1.2.1 and~1.2.3]{Neeman2001} imply that $M(h)$ is
distinguished if and only if $T(h)$ is distinguished.

Assume \textup{(i)}.  The classical composition form of $\mathrm{TR4}$
provides a completion $h$ for which $T(h)$ is distinguished.  Hence $M(h)$
is distinguished and $q(g\circ f)$ has a good completion.  This proves
\textup{(i)}$\Rightarrow$\textup{(ii)}.  Conversely, under \textup{(ii)} a
good completion makes $M(h)$, and therefore $T(h)$, distinguished.  The
same classical composition criterion gives $\mathrm{TR4}$, proving
\textup{(ii)}$\Rightarrow$\textup{(i)}.  Finally,
\textup{(ii)}$\Leftrightarrow$\textup{(iii)} is
\cref{thm:boundary-lifting}.
\end{proof}

For a composition square, an additional endomorphism in the $(2,1)$-entry
of the third mapping-cone morphism can be removed by changing the completion.
More precisely:

\begin{corollary}
\label{cor:cone-shear-removal}
Use the notation of \eqref{eq:composition-boundary-triangles}.  For a
morphism \(h:F\to H\) and a morphism
\(\lambda:\Sigma(X)\to\Sigma(X)\), put
\[
 \alpha_q=
 \begin{bmatrix}-a&0\\g&g\circ f\end{bmatrix},\qquad
 \beta(h)=
 \begin{bmatrix}-b&0\\h&c\end{bmatrix},\qquad
 \gamma_\lambda=
 \begin{bmatrix}-\Sigma(f)&0\\1_{\Sigma(X)}+\lambda&d\end{bmatrix}.
\]
Then \(q(g\circ f)\) has a good completion if and only if there exist a
completion \(h\) and a morphism \(\lambda\) for which the candidate triangle
\begin{equation}
\label{eq:lower-left-sheared-cone}
\xymatrix@C=2.0pc{
Y\oplus X\ar[r]^-{\alpha_q}&F\oplus Z\ar[r]^-{\beta(h)}&
\Sigma(X)\oplus H\ar[r]^-{\gamma_\lambda}&
\Sigma(Y)\oplus\Sigma(X)
}
\end{equation}
is distinguished.
\end{corollary}

\begin{proof}
A good completion gives \eqref{eq:lower-left-sheared-cone} with
\(\lambda=0\).  Conversely, suppose that the displayed candidate triangle is
distinguished.  The $(2,1)$-entry of
\(\Sigma(\alpha_q)\circ\gamma_\lambda=0\) gives
\[
 \Sigma(g\circ f)\circ\lambda=0.
\]
Exactness of \(\T(\Sigma(X),-)\) on the second triangle in
\eqref{eq:composition-boundary-triangles} therefore gives
\(c_0:\Sigma(X)\to H\) such that \(d\circ c_0=\lambda\).  Put
\[
 \phi=
 \begin{bmatrix}1_{\Sigma(X)}&0\\c_0&1_H\end{bmatrix},
 \qquad
 \gamma=
 \begin{bmatrix}-\Sigma(f)&0\\1_{\Sigma(X)}&d\end{bmatrix}.
\]
Direct multiplication gives
\[
 \gamma\circ\phi=\gamma_\lambda,
 \qquad
 \phi\circ\beta(h)=\beta(h-c_0\circ b).
\]
Take \eqref{eq:lower-left-sheared-cone} as the chosen distinguished
triangle beginning with \(\alpha_q\).  In the notation of
\cref{sec:obstruction}, the displayed identities say
\[
 \delta_q(\phi)=(\bar\beta_0,\gamma_\lambda).
\]
Thus \cref{thm:boundary-lifting} identifies \(h-c_0\circ b\) as a good
completion of \(q(g\circ f)\).
\end{proof}

\section{Relative stable comparisons and exact lifting}
\label{sec:relative-heller}

For \(i=0,1\), equip the same additive category \(\T\) with a suspension
\(\Sigma_i\) and a pre-triangulation \(\Delta_i\), and put
\[
 \mathcal E=\mathsf{mod}\,\T,
 \qquad
 \underline{\mathcal E}=\underline{\mathsf{mod}}\,\T.
\]
The inverse Heller comparisons recalled in
\cref{sec:preliminaries} determine a stable comparison between the two
structures.  We characterize \(\mathrm{TR4}\) for the second structure
by exact liftability of this relative comparison.

\subsection{The relative comparison and exact lifting}
\begin{definition}
\label{def:relative-comparison}
For \(i=0,1\), equip the additive category \(\T\) with a suspension
\(\Sigma_i\) and a pre-triangulation \(\Delta_i\).  Put
\[
 \mathsf F_i=(\Sigma_i^{-1})_*\qquad(i=0,1),
\]
and let \(\theta_i:\Omega^3\xrightarrow{\sim}\mathsf F_i\) be the
comparisons of \cref{prop:intrinsic-inverse-comparison}.  The
\emph{relative comparison} from \(\Delta_0\) to \(\Delta_1\) is the stable
natural isomorphism
\begin{equation}
\label{eq:relative-heller-comparison}
 \tau=\theta_1\circ\theta_0^{-1}:
 \mathsf F_0\xrightarrow{\sim}\mathsf F_1.
\end{equation}
\end{definition}

\begin{definition}
\label{def:exact-lift}
Let
\begin{equation*}
\xymatrix@C=2.2pc{
e:\quad0\ar[r]&A\ar[r]^-{i}&B\ar[r]^-{p}&C\ar[r]&0.
}
\end{equation*}
be short exact in \(\mathcal E\).  An \emph{exact lift of \(\tau\) along
\(e\)} is a morphism of short exact sequences
\begin{equation}
\label{eq:exact-lift-diagram}
\begin{tikzcd}[column sep=large]
0\arrow[r]&\mathsf F_0(A)\arrow[r,"\mathsf F_0(i)"]\arrow[d,"t_A"']&
\mathsf F_0(B)\arrow[r,"\mathsf F_0(p)"]\arrow[d,"t_B"']&
\mathsf F_0(C)\arrow[r]\arrow[d,"t_C"']&0\\
0\arrow[r]&\mathsf F_1(A)\arrow[r,"\mathsf F_1(i)"']&
\mathsf F_1(B)\arrow[r,"\mathsf F_1(p)"']&\mathsf F_1(C)\arrow[r]&0
\end{tikzcd}
\end{equation}
such that \(\underline{t_X}=\tau_X\) for \(X\in\{A,B,C\}\).  We say that
\(\tau\) \emph{admits exact lifts} if it admits one along every short exact
sequence in \(\mathcal E\).
\end{definition}

Exact liftability is invariant under isomorphism of short exact sequences.
Indeed, if \(\rho:e'\xrightarrow{\sim}e\), conjugation by
\(\mathsf F_i(\rho)\) transports exact lifts along \(e\) to exact lifts
along \(e'\).

\begin{definition}
\label{def:square-extension}
Let \(q\) be the initial square
\eqref{eq:local-rows}--\eqref{eq:initial-square}, and retain the notation
\(u,v,u',v',f,g\) used there.  Set
\[
 \alpha_q=
 \begin{bmatrix}-v&0\\ g&u'\end{bmatrix}:
 Y\oplus X'\longrightarrow Z\oplus Y'.
\]
For every \(T\in\T\),
\[
 B(\alpha_q)(T)=
 \bigl\{(y,x')\in\T(T,Y)\oplus\T(T,X')\mid
 v\circ y=0,\ g\circ y+u'\circ x'=0\bigr\}.
\]
Define natural transformations
\[
 \iota_q:B(u')\longrightarrow B(\alpha_q),
 \qquad
 \pi_q:B(\alpha_q)\longrightarrow B(v)
\]
by
\[
 \iota_q(T)(x')=(0,x'),
 \qquad
 \pi_q(T)(y,x')=y.
\]
We write
\begin{equation}
\label{eq:square-extension}
 e_q:\quad
 0\longrightarrow B(u')\xrightarrow{\iota_q}B(\alpha_q)
 \xrightarrow{\pi_q}B(v)\longrightarrow0
\end{equation}
for the resulting sequence, called the \emph{square extension} of \(q\).
\end{definition}

\begin{remark}
\label{rem:square-extension}
The sequence \eqref{eq:square-extension} is exact.  Indeed, let
\(y\in B(v)(T)\).  Exactness of the first distinguished triangle in
\eqref{eq:local-rows} gives \(y=u\circ x\) for some \(x:T\to X\), and then
\((y,-f\circ x)\in B(\alpha_q)(T)\).  Thus \(\pi_q(T)\) is surjective, while
the pointwise description of \(B(\alpha_q)(T)\) gives
\(\ker\pi_q(T)=\operatorname{im}\iota_q(T)\).

\end{remark}

\begin{theorem}
\label{thm:relative-heller-criterion}
For \(i=0,1\), equip the additive category \(\T\) with a suspension
\(\Sigma_i\) and a pre-triangulation \(\Delta_i\).  Assume that
\(\Delta_0\) satisfies
\(\mathrm{TR4}\).  The following are equivalent.
\begin{enumerate}[label=\textup{(\roman*)}]
 \item \(\Delta_1\) satisfies \(\mathrm{TR4}\).
 \item The relative comparison
       \(\tau:\mathsf F_0\xrightarrow{\sim}\mathsf F_1\) admits an exact
       lift along every short exact sequence in \(\mathcal E\).
\end{enumerate}
\end{theorem}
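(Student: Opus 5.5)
The plan is to translate $\mathrm{TR4}$ for $\Delta_1$, via the mapping-cone form (TR4$'$) and \cref{thm:boundary-lifting}, into a statement about Heller resolutions in $\mathcal E$. The bridge is the observation that distinguished triangles for $\Delta_i$ correspond exactly to representable Heller resolutions for $\theta_i$ (\cref{prop:intrinsic-inverse-comparison}(ii)).

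Concretely, fix an initial square $q$ for $\Delta_1$ with cone map $\alpha=\alpha_q$. A completion $h$ is good if and only if the Yoneda-exact cone $C(h)$ is $\Delta_1$-distinguished. By \cref{prop:intrinsic-inverse-comparison}(ii), this holds if and only if the stable connecting morphism of the representable row of $C(h)$ equals $\theta_1$ at $M=\coker h_{\beta(h)}$. The cone row is built from the two $\Delta_1$-Heller rows of the given triangles, which are Heller resolutions for $\theta_1$, glued along the square extension $e_q$ of \cref{def:square-extension}.

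The leftmost term of the cone row is $B(\alpha_q)$, an extension of $B(v)$ by $B(u')$. Because the vertical maps are fixed, the cone row's connecting morphism is compatible with those of the two rows along $e_q$ and along the analogous sequence $0\to \coker h_{v'}\to M\to\coker h_{w}\to0$ at the right end. The choice of $h$ is exactly the freedom to modify the middle term. I would show that the set of achievable connecting morphisms is the set of stable maps $\Omega^3 M\to \mathsf F_1 B(\alpha_q)$ that come from morphisms of short exact sequences between the $\Omega^3$-image and the $\mathsf F_1$-image of $e_q$ (up to rotation), with fixed end components. This identification is the key technical lemma: lifting a stable datum to a genuine morphism of short exact sequences is precisely the cokernel condition $o(q)=0$.

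Running the same analysis for $\Delta_0$, where every square has a good completion because $\Delta_0$ satisfies $\mathrm{TR4}$, produces an actual morphism of short exact sequences realizing $\theta_0$ along $e_q$. Transporting through $\theta_0$ then gives the following: $q$ has a good $\Delta_1$-completion if and only if $\tau$ admits an exact lift along $e_q$, or along the corresponding sequence for the shifted structures (the statement only involves $\mathsf F_i$).

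For (ii)$\Rightarrow$(i), apply this to the squares $q(g\circ f)$ and invoke \cref{thm:composition-obstruction}. For (i)$\Rightarrow$(ii), show that every short exact sequence $0\to A\to B\to C\to0$ in $\mathcal E$ is isomorphic to some $e_q$. Choose $c$ with $C\cong B(v)$, then realize $A$ and the extension class using surjectivity from \cref{prop:characteristic-extension}(ii) and \cref{rem:square-extension}; exact liftability is invariant under isomorphism of sequences.

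The main obstacle is the key lemma comparing connecting morphisms of glued resolutions with morphisms of short exact sequences. Here I would first work with representable resolutions, in which every stable map lifts to actual maps of projective-injective complexes by the Yoneda lemma, and check the signs in \eqref{eq:cone-matrices} against the horseshoe-type gluing.
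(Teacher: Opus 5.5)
Your architecture matches the paper's. You recognize good completions through the stable connecting morphism $\kappa_h$ of the middle row of the three-row diagram (\cref{lem:standard-three-row}). You compare the $\Delta_1$-diagram with a $\Delta_0$-diagram built from a good completion. Realization handles (i)$\Rightarrow$(ii), and composition squares together with \cref{thm:composition-obstruction} handle (ii)$\Rightarrow$(i).

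The gap is that the step you call the key technical lemma is asserted, not proved, and it is where the content of (ii)$\Rightarrow$(i) lies. The sentence ``the choice of $h$ is exactly the freedom to modify the middle term'' is not an argument. If you correct the middle row by an automorphism of $e_q$, \cref{prop:intrinsic-inverse-comparison}\textup{(ii)} returns a distinguished candidate triangle whose \emph{third} map differs from $\gamma$. So a priori it is not the mapping cone of any completion of $q$. Write $L=B(u')$, $M=B(\alpha_q)$, $R=B(v)$ and $e_q^1=\Sigma_{1*}(e_q)$. Three steps are missing.
\begin{itemize}
\item[\textup{(a)}] Compose the comparison map $e_q\to\mathsf F_0(e_q^1)$ with the exact lift to get $r$. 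Normalize $r$ to $(1_L,\varphi_M,1_R)$ with $\underline{\varphi_M}=\underline{r_M}$ by subtracting a map through $0\to I_L\to I_L\oplus I_R\to I_R\to0$. Then $\varphi_M=1_M+\iota_q\circ\xi\circ\pi_q$.
\item[\textup{(b)}] Show that replacing $j_M$ by $j_M\circ\varphi_M^{-1}$ leaves $\alpha_q$ and $\beta(h)$ unchanged. It alters the third map only by a shear $\lambda$ in its $(2,1)$-entry, with $\Sigma(u')\circ\lambda=0$ (\cref{lem:middle-row-correction}).
\item[\textup{(c)}] Absorb the shear: factor $\lambda=w'\circ c_0$ and pass to the completion $h-c_0\circ w$ (\cref{cor:cone-shear-removal}, via \cref{thm:boundary-lifting}).
\end{itemize}
Even to build the comparison map, you need that split sequences of projective-injectives are projective in $\mathsf{SES}(\mathcal E)$, so that partial resolutions of $e_q^1$ can be compared columnwise.

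Two smaller points. First, ``running the same analysis for $\Delta_0$'' cannot be done on $q$ itself, since its rows need not be $\Delta_0$-distinguished. You must first realize $e_q^1$ as $\Sigma_{0*}(e_{q_0})$ for a $\Delta_0$-initial square $q_0$ (\cref{lem:realize-short-exact-sequence}). So realization is needed in both directions, not only for (i)$\Rightarrow$(ii). Second, the identification of the lifting problem with ``the cokernel condition $o(q)=0$'' is neither established nor needed; $o(q)$ enters only through \cref{thm:boundary-lifting}.

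Once \textup{(a)}--\textup{(c)} are supplied, your equivalence does hold for every initial square, not only composition squares: $q$ has a good $\Delta_1$-completion if and only if $\tau$ lifts along $e_q^1$. Step \textup{(c)} uses only $\Sigma(u')\circ\lambda=0$, so this is slightly more than the paper states.
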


\subsection{Proof of the relative criterion}

\begin{definition}
\label{def:ses-category}
We define \(\mathsf{SES}(\mathcal E)\) to be the category whose objects
are the short exact sequences
\[
 0\longrightarrow A\longrightarrow B\longrightarrow C\longrightarrow0
\]
in \(\mathcal E\).  A morphism is a commutative diagram between two such
sequences.  We equip \(\mathsf{SES}(\mathcal E)\) with the componentwise
exact structure: a composable pair of morphisms is a conflation precisely
when its sequences of left, middle, and right terms are short exact in
\(\mathcal E\).
\end{definition}

A split short exact sequence of projective-injective objects is
projective-injective in \(\mathsf{SES}(\mathcal E)\).  Indeed, it is
isomorphic to a direct sum of sequences of the forms
\[
 0\longrightarrow I\xrightarrow{1_I}I\longrightarrow0\longrightarrow0,
 \qquad
 0\longrightarrow0\longrightarrow J\xrightarrow{1_J}J\longrightarrow0,
\]
where \(I,J\) are projective-injective in \(\mathcal E\).
The lifting and extension properties for these sequences follow from
projectivity and injectivity of \(I\) and \(J\), respectively.
Thus a commutative
three-row diagram with exact rows and columns, whose internal columns have
this form, gives a partial projective resolution in
\(\mathsf{SES}(\mathcal E)\) when read by columns.

\begin{lemma}
\label{lem:standard-three-row}
Let \(\T\) be a pre-triangulated category.  Denote its suspension by
\(\Sigma\) and its pre-triangulation by \(\Delta\).  Let \(q\) be the initial square
\eqref{eq:local-rows}--\eqref{eq:initial-square}, and let
\(h:Z\to Z'\) be a completion.  Put
\[
 L=B(u'),\qquad M=B(\alpha_q),\qquad R=B(v).
\]
The cone matrices induce the following commutative diagram with exact rows
and columns:
\begin{equation}
\label{eq:standard-heller-square}
\begin{tikzcd}[column sep=1.8em,row sep=1.8em]
0 \arrow[r] &
 L \arrow[r,"j_L"] \arrow[d,"\iota_q"'] &
 h_{X'} \arrow[r,"h_{u'}"] \arrow[d,hook] &
 h_{Y'} \arrow[r,"h_{v'}"] \arrow[d,hook] &
 h_{Z'} \arrow[r,"\varepsilon_L"] \arrow[d,hook] &
 \Sigma_*(L) \arrow[r] \arrow[d,"\Sigma_*(\iota_q)"] &0\\
0 \arrow[r] &
 M \arrow[r,"j_M"] \arrow[d,"\pi_q"'] &
 h_{Y\oplus X'} \arrow[r,"h_{\alpha_q}"] \arrow[d,two heads] &
 h_{Z\oplus Y'} \arrow[r,"h_{\beta(h)}"] \arrow[d,two heads] &
 h_{\Sigma(X)\oplus Z'} \arrow[r,"\varepsilon_M"] \arrow[d,two heads] &
 \Sigma_*(M) \arrow[r] \arrow[d,"\Sigma_*(\pi_q)"] &0\\
0 \arrow[r] &
 R \arrow[r,"j_R"] & h_Y \arrow[r,"h_{-v}"] &
 h_Z \arrow[r,"h_{-w}"] & h_{\Sigma(X)} \arrow[r,"\varepsilon_R"] &
 \Sigma_*(R) \arrow[r] &0.
\end{tikzcd}
\end{equation}
The top and bottom rows are Heller resolutions of \(\Sigma_*(L)\) and \(\Sigma_*(R)\),
respectively.
The middle row is a representable exact row ending in \(\Sigma_*(M)\).  Denote its
stable connecting morphism by
\[
 \kappa_h:\Omega^3(\Sigma_*(M))\longrightarrow M.
\]
Then
\begin{equation}
\label{eq:middle-row-recognition}
 h\text{ is a good completion}
 \quad\Longleftrightarrow\quad
 \kappa_h=(\theta_\Delta)_{\Sigma_*(M)}.
\end{equation}
\end{lemma}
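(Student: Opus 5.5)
The plan is to build the diagram row by row from the two distinguished triangles and the cone candidate triangle $C(h)$, and then read off \eqref{eq:middle-row-recognition} from \cref{prop:intrinsic-inverse-comparison}(ii).

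\textbf{Top row.} Apply Yoneda to the distinguished triangle $X'\xrightarrow{u'}Y'\xrightarrow{v'}Z'\xrightarrow{w'}\Sigma(X')$. By \cref{lem:representable-exactness} the sequence $h_{X'}\to h_{Y'}\to h_{Z'}$ is exact with kernel $B(u')=L$ on the left. The cokernel of $h_{v'}$ is identified with $\operatorname{im}h_{w'}=\ker h_{\Sigma(u')}=\Sigma_*(B(u'))=\Sigma_*(L)$. With this identification, $\varepsilon_L$ is the corestriction of $h_{w'}$, so $h_{w'}=(\Sigma_*j_L)\circ\varepsilon_L$. This is exactly the relation defining $d$ in \cref{prop:intrinsic-inverse-comparison}(ii).

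\textbf{Bottom row.} Do the same with the left rotation-compatible signs $-v,-w$. The triangle $Y\xrightarrow{-v}Z\xrightarrow{-w}\Sigma(X)\xrightarrow{\Sigma(u)}\Sigma(Y)$ is distinguished: it is isomorphic to the rotation of the first triangle via $-1$ on $Z$, or one can simply note that sign changes on two maps give an isomorphic triangle. Hence both outer rows are representable exact rows whose associated candidate triangles are distinguished. By \cref{prop:intrinsic-inverse-comparison}(ii), together with the observation that $\mathsf F_\Delta(\Sigma_*N)=N$, both rows are Heller resolutions.

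\textbf{Middle row.} Use $C(h)$. It is a Yoneda-exact candidate triangle by \cref{lem:cone-exact}, since it is the cone of a morphism between distinguished triangles. So $0\to M\to h_A\to h_B\to h_C\to\Sigma_*(M)\to0$ is exact, with $\varepsilon_M$ the corestriction of $h_\gamma$ through $\Sigma_*(j_M)$.

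\textbf{Commutativity and exactness of columns.} The vertical maps on representables are the split inclusions and projections of the direct sums $Y\oplus X'$, $Z\oplus Y'$, $\Sigma(X)\oplus Z'$. The lower-triangular shape of $\alpha_q,\beta(h),\gamma$ makes the squares commute: the inclusion of the second summand and the projection to the first summand intertwine the matrices with $u',v',w'$ and with $-v,-w,-\Sigma(u)$. The induced maps on kernels are $\iota_q$ and $\pi_q$, and the maps on cokernels are $\Sigma_*(\iota_q)$ and $\Sigma_*(\pi_q)$, because $\Sigma_*$ is exact and $\gamma$ restricts and projects compatibly. The representable columns are split exact. The left column is $e_q$, which is exact by \cref{rem:square-extension}. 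The right column is $\Sigma_*(e_q)$, which is exact since $\Sigma_*$ is exact.

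\textbf{Recognition.} By \cref{prop:intrinsic-inverse-comparison}(ii), applied to the middle row with $a=\alpha_q$, $b=\beta(h)$, the unique $d$ with $h_d=\Sigma_*(j_M)\circ\varepsilon_M$ is $\gamma$. So the associated candidate triangle is exactly $C(h)$. That proposition then says $C(h)$ is distinguished if and only if $\kappa_h=(\theta_\Delta)_{\Sigma_*(M)}$. By \cref{def:initial-square}, $C(h)$ being distinguished means precisely that $h$ is good. This gives \eqref{eq:middle-row-recognition}.

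\textbf{Main obstacle.} The bookkeeping is delicate in two places. The first is checking that the identification $\ker h_{\Sigma(\alpha_q)}\cong\Sigma_*(M)$, together with the signs in the bottom row, makes the rightmost squares commute on the nose. The second is confirming that the bottom row really is a Heller resolution despite the sign twist. Rather than juggling signs directly, I would verify the latter via the candidate-triangle criterion in \cref{prop:intrinsic-inverse-comparison}(ii).
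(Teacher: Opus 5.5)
Your proposal is correct and follows the paper's proof, with the bookkeeping written out. The outer rows come from the two distinguished triangles (the bottom one via rotation), the middle row is the Yoneda-exact cone $C(h)$, the outer columns are $e_q$ and $\Sigma_*(e_q)$, and \eqref{eq:middle-row-recognition} follows from \cref{prop:intrinsic-inverse-comparison}(ii) because the candidate triangle recovered from the middle row is exactly $C(h)$.

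There is one sign slip. The third map of the bottom row's candidate triangle is the $(1,1)$-entry of $\gamma$, namely $-\Sigma(u)$, not $\Sigma(u)$. With $-\Sigma(u)$, your isomorphism $(1_Y,-1_Z,1_{\Sigma(X)})$ from the rotated triangle works. By contrast, $(-v,-w,\Sigma(u))$ differs from the rotation in all three signs and need not be distinguished.
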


\begin{proof}
The three internal columns are the canonical split sequences associated with
the displayed biproducts, and direct multiplication of the cone matrices
shows that all internal squares commute.  The two outer columns are
\(e_q\) and \(\Sigma_*(e_q)\), which are exact by
\cref{rem:square-extension}.  The top and bottom rows are the representable
resolutions associated with the two distinguished triangles in
\eqref{eq:local-rows}.  The middle row is exact because the cone candidate triangle is
Yoneda exact by \cref{lem:cone-exact}.  By construction, the candidate triangle recovered from the middle row by
\cref{prop:intrinsic-inverse-comparison}\textup{(ii)} is the cone candidate triangle
\(C(h)\).  The assertions about stable connecting morphisms, including
\eqref{eq:middle-row-recognition}, follow.
\end{proof}

Let \(q=q(g\circ f)\) be the composition square of
\eqref{eq:composition-square}, and fix a completion \(h\).
In the notation of \cref{lem:standard-three-row}, we have
\[
 \alpha_q=\begin{bmatrix}-a&0\\ g&g\circ f\end{bmatrix},
 \qquad L=B(g\circ f),\quad M=B(\alpha_q),\quad R=B(a).
\]
Thus
\begin{equation}
\label{eq:relative-composition-extension}
 e_q:\quad0\longrightarrow L\xrightarrow{\iota_q}M
 \xrightarrow{\pi_q}R\longrightarrow0.
\end{equation}
The two fixed distinguished triangles are
\eqref{eq:composition-boundary-triangles}.
Given \(\xi:R\to L\), set
\[
 \varphi_M=1_M+\iota_q\circ\xi\circ\pi_q.
\]
Since \(\pi_q\circ\iota_q=0\), the triple
\(\varphi=(1_L,\varphi_M,1_R)\) is an automorphism of \(e_q\), and
\[
 \varphi_M^{-1}=1_M-\iota_q\circ\xi\circ\pi_q.
\]
Let \(E_h^\varphi\) denote the lower row in the commutative diagram
\begin{equation}
\label{eq:middle-row-change}
\begin{tikzcd}[column sep=1.6em,row sep=3em]
0\arrow[r]&M\arrow[r,"j_M"]\arrow[d,"\varphi_M"']&
h_{Y\oplus X}\arrow[r]\arrow[d,equal]&h_{F\oplus Z}\arrow[r]\arrow[d,equal]&
h_{\Sigma(X)\oplus H}\arrow[r]\arrow[d,equal]&\Sigma_*(M)\arrow[r]\arrow[d,equal]&0\\
0\arrow[r]&M\arrow[r,"j_M\circ\varphi_M^{-1}"']&
h_{Y\oplus X}\arrow[r]&h_{F\oplus Z}\arrow[r]&h_{\Sigma(X)\oplus H}\arrow[r]&\Sigma_*(M)\arrow[r]&0.
\end{tikzcd}
\end{equation}
By \cref{prop:intrinsic-inverse-comparison}\textup{(ii)}, the exact row
\(E_h^\varphi\) determines a Yoneda-exact candidate triangle
\begin{equation}
\label{eq:middle-row-candidate}
\begin{tikzcd}[column sep=2.2em]
Y\oplus X\arrow[r,"\alpha_q"]&
F\oplus Z\arrow[r,"\beta(h)"]&
\Sigma(X)\oplus H\arrow[r,"\gamma_\varphi"]&
\Sigma(Y)\oplus\Sigma(X).
\end{tikzcd}
\end{equation}
We denote this candidate triangle by \(C_h^\varphi\).

\begin{lemma}
\label{lem:middle-row-correction}
The stable connecting morphism of \(E_h^\varphi\) is
\[
 \underline{\varphi_M}\circ\kappa_h:
 \Omega^3(\Sigma_*(M))\longrightarrow M.
\]
Moreover, there is a morphism
\(\lambda:\Sigma(X)\to\Sigma(X)\) for which
\begin{equation}
\label{eq:corrected-third-cone-map}
 \gamma_\varphi=
 \begin{bmatrix}
  -\Sigma(f)&0\\
  1_{\Sigma(X)}+\lambda&d
 \end{bmatrix}.
\end{equation}
\end{lemma}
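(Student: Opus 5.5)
The statement has two parts: the stable connecting morphism of \(E_h^\varphi\), and the shape of \(\gamma_\varphi\). I plan to handle them separately.

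\emph{Connecting morphism.} Diagram \eqref{eq:middle-row-change} is a morphism of exact rows from the original middle row of \eqref{eq:standard-heller-square} to \(E_h^\varphi\). Its components are \(\varphi_M\) on the left end term and identities on the three projective-injective middle terms and on \(\Sigma_*(M)\). Naturality of stable connecting morphisms, as used in the proof of \cref{prop:intrinsic-inverse-comparison}(i), then gives the identity
\[
 \underline{\varphi_M}\circ\kappa_h=\kappa^\varphi\circ\Omega^3(1_{\Sigma_*(M)}),
\]
where \(\kappa^\varphi\) denotes the stable connecting morphism of \(E_h^\varphi\). This proves the first claim.

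\emph{Computing \(\gamma_\varphi\).} By \cref{prop:intrinsic-inverse-comparison}(ii), \(\gamma_\varphi\) is the unique map with
\[
 h_{\gamma_\varphi}=\Sigma_*(j_M\circ\varphi_M^{-1})\circ\varepsilon_M,
\]
while the original \(\gamma\) satisfies \(h_\gamma=\Sigma_*(j_M)\circ\varepsilon_M\). Substituting \(\varphi_M^{-1}=1-\iota_q\xi\pi_q\) gives
\[
 h_{\gamma_\varphi}=h_\gamma-\Sigma_*(j_M\iota_q\xi\pi_q)\circ\varepsilon_M .
\]
I will evaluate the correction term using the commutativity of \eqref{eq:standard-heller-square}. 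First, \(\Sigma_*(\pi_q)\circ\varepsilon_M\) factors as \(\varepsilon_R\) composed with the projection \(h_{\Sigma(X)\oplus H}\to h_{\Sigma(X)}\). Second, \(j_M\circ\iota_q\) is \(j_L\) followed by the inclusion \(h_X\hookrightarrow h_{Y\oplus X}\).

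It follows that the correction term is the composite
\[
 h_{\Sigma(X)\oplus H}\to h_{\Sigma(X)}\xrightarrow{\varepsilon_R}\Sigma_*(R)\xrightarrow{\Sigma_*\xi}\Sigma_*(L)\xrightarrow{\Sigma_*j_L}h_{\Sigma(X)}\hookrightarrow h_{\Sigma(Y)\oplus\Sigma(X)}.
\]
The middle part \(\Sigma_*(j_L)\circ\Sigma_*(\xi)\circ\varepsilon_R\) is a natural transformation \(h_{\Sigma(X)}\to h_{\Sigma(X)}\). By the Yoneda lemma it equals \(h_{\mu}\) for a unique \(\mu:\Sigma(X)\to\Sigma(X)\). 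Hence \(\gamma_\varphi=\gamma-\iota\mu\pi\), where \(\pi\) is the projection onto the \(\Sigma(X)\) summand of the source and \(\iota\) is the inclusion of the \(\Sigma(X)\) summand of the target. The only change is therefore in the \((2,1)\)-entry, which becomes \(1_{\Sigma(X)}-\mu\). Setting \(\lambda=-\mu\) yields \eqref{eq:corrected-third-cone-map}.

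\emph{Main obstacle.} The delicate step is tracking signs and the exact form of the outer columns. In the composition square the bottom row of \eqref{eq:standard-heller-square} is built from the maps \(-a,-b\) rather than \(a,b\), so I must confirm that the vertical maps in the \(h_{\Sigma(X)}\) column are the plain projection and inclusion. The conclusion is only that some \(\lambda\) exists, however, so any sign produced there is absorbed into \(\lambda\). All that has to be verified is that the correction is supported in the \((2,1)\)-entry, and this follows from the factorization through \(\pi_q\) and \(\iota_q\).
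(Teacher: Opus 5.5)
Your proposal is correct and follows the paper's proof essentially step for step. Both arguments first apply naturality of stable connecting morphisms to the morphism of rows in \eqref{eq:middle-row-change}. Both then combine \cref{prop:intrinsic-inverse-comparison}(ii) with the outer squares of \eqref{eq:standard-heller-square} to obtain $h_{\gamma_\varphi-\gamma}=-h_i\circ\Sigma_*(j_L)\circ\Sigma_*(\xi)\circ\varepsilon_R\circ h_p$, and extract $\lambda$ by the Yoneda lemma. The sign concern you raise is already settled by \cref{lem:standard-three-row}, whose internal columns are the canonical split sequences, and in any case it would only change $\lambda$.
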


\begin{proof}
The vertical maps in \eqref{eq:middle-row-change} give a morphism from the
upper exact row to \(E_h^\varphi\).  Naturality of stable connecting
morphisms gives the first assertion.  Since the three projective terms and
the epimorphism to \(\Sigma_*(M)\) are unchanged, the first two morphisms in
\eqref{eq:middle-row-candidate} are \(\alpha_q\) and \(\beta(h)\).

Let
\[
 p=\begin{bmatrix}1_{\Sigma(X)}&0\end{bmatrix}:
   \Sigma(X)\oplus H\longrightarrow\Sigma(X),
 \qquad
 i=\begin{bmatrix}0\\1_{\Sigma(X)}\end{bmatrix}:
   \Sigma(X)\longrightarrow\Sigma(Y)\oplus\Sigma(X).
\]
Using the two outer columns of \eqref{eq:standard-heller-square} in the
construction of \cref{prop:intrinsic-inverse-comparison}\textup{(ii)} gives
\[
 h_{\gamma_\varphi-\gamma}
 =-h_i\circ\Sigma_*(j_L)\circ\Sigma_*(\xi)
   \circ\varepsilon_R\circ h_p,
\]
where
\[
 \gamma=
 \begin{bmatrix}
  -\Sigma(f)&0\\ 1_{\Sigma(X)}&d
 \end{bmatrix}
\]
is the third morphism of the original cone candidate triangle.  By the Yoneda lemma
there is a unique \(\lambda:\Sigma(X)\to\Sigma(X)\) such that
\[
 h_\lambda
 =-\Sigma_*(j_L)\circ\Sigma_*(\xi)\circ\varepsilon_R.
\]
Hence \(\gamma_\varphi-\gamma=i\circ\lambda\circ p\), which is precisely
\eqref{eq:corrected-third-cone-map}.
\end{proof}

\begin{lemma}
\label{lem:realize-short-exact-sequence}
Let \(\T\) be a pre-triangulated category.  Denote its suspension by
\(\Sigma\) and its pre-triangulation by \(\Delta\).  Every short exact sequence in
\(\mathcal E\) is isomorphic to \(\Sigma_*(e_q)\) for some initial square
\(q\).
\end{lemma}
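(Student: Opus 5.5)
The plan is to reduce the statement to sequences of the form \(e_q\), realize a given sequence by a horseshoe construction with representable injective objects, and read off the initial square from that construction.

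\textbf{Reduction.} Since \(\Sigma_*\) is an exact autoequivalence of \(\mathcal E\), it suffices to show that every short exact sequence
\[
0\to L\xrightarrow{i}M\xrightarrow{p}R\to0
\]
in \(\mathcal E\) is isomorphic to \(e_q\) for some initial square \(q\). Then apply this to \((\Sigma^{-1})_*\) of the given sequence and transport back.

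\textbf{Copresentations of the end terms.} By \cref{prop:freyd-category}, \(\mathcal E\) is Frobenius and the representables are injective. Choose representable injective copresentations
\[
0\to R\to h_Y\xrightarrow{h_{v}}h_Z,\qquad 0\to L\to h_{X'}\xrightarrow{h_{u'}}h_{Y'},
\]
so that \(R\cong B(v)\) and \(L\cong B(u')\). To get them, embed the object into a representable functor, then embed the cokernel into another representable, and use the Yoneda lemma to write the composite as \(h_v\).

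Complete \(u'\) to a distinguished triangle \(X'\xrightarrow{u'}Y'\xrightarrow{v'}Z'\xrightarrow{w'}\Sigma(X')\). Complete \(v\) to a distinguished triangle and rotate it backwards using \(\mathrm{TR2}\). Adjusting signs by an isomorphism of triangles, this gives a distinguished triangle \(X\xrightarrow{u}Y\xrightarrow{v}Z\xrightarrow{w}\Sigma(X)\) whose second map is \(v\). Replacing \(v\) by \(-v\) where convenient does not change \(B(v)\).

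\textbf{Horseshoe.} Run the dual horseshoe lemma with these injective coresolutions.

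1. By injectivity of \(h_{X'}\), the embedding \(L\hookrightarrow h_{X'}\) extends along \(i\) to some \(m:M\to h_{X'}\). Then \(M\to h_Y\oplus h_{X'}\), given by (composite \(M\to R\to h_Y\), \(m\)), is monic.
2. Its cokernel is an extension of the two cokernels. Embedding that extension into \(h_Z\oplus h_{Y'}\), again by injectivity, produces a morphism which by the Yoneda lemma has the triangular form \(\left[\begin{smallmatrix}-v&0\\ g&u'\end{smallmatrix}\right]=\alpha_q\) for some \(g:Y\to Y'\).
3. The horseshoe exactness then identifies \(M\) with \(B(\alpha_q)\). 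Under this identification, \(i\) and \(p\) become \(\iota_q\) and \(\pi_q\), which are the inclusion of and projection to the coordinates of the pointwise description in \cref{def:square-extension}.

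\textbf{The map \(f\).} It remains to find \(f:X\to X'\) with \(g\circ u=u'\circ f\). I would carry the horseshoe one more step, to the representables \(h_{\Sigma(X)}\oplus h_{Z'}\). This yields a map \(h:Z\to Z'\) with \(v'\circ g=h\circ v\), up to the sign convention. Then \(v'\circ g\circ u=h\circ v\circ u=0\), so exactness of \(\T(X,-)\) on the \(u'\)-triangle (\cref{lem:representable-exactness}) gives \(f\) with \(u'\circ f=g\circ u\). Hence \(q=(u,u',f,g)\) is an initial square.

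Alternatively, one can extract \(f\) directly from surjectivity of \(\pi_q\) as in \cref{rem:square-extension}, applied to the element \(u\in B(v)(X)\).

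\textbf{Main obstacle.} I expect the hardest part to be the bookkeeping in the horseshoe step. One must check that the induced map really has zero \((1,2)\)-entry and diagonal entries exactly \(-v\) and \(u'\), rather than these maps twisted by automorphisms of \(h_Z\) and \(h_{Y'}\). One must also check that the resulting isomorphism \(M\cong B(\alpha_q)\) is compatible with \(\iota_q\) and \(\pi_q\) on the nose. I would handle this by fixing the outer copresentations first, so that the diagonal entries are forced, and then choosing only the off-diagonal extension maps.
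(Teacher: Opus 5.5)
Your proof is correct, but it takes a different route from the paper's.

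\emph{The paper's route.} The paper fixes the two end triangles and works with Yoneda classes. The sequence \(0\to B(u)\to h_X\to B(v)\to0\) gives a connecting isomorphism \(\partial:\underline{\Hom}_{\mathcal E}(B(u),B(u'))\to\Ext^1_{\mathcal E}(B(v),B(u'))\). The paper uses it to choose \(\bar f\) with \(-\partial(\underline{\bar f})\) equal to the class of the transported sequence. It then extends \(\bar f\) to \(f:X\to X'\), obtains \(g\) by injectivity of \(h_{Y'}\), and uses \(\psi_T(x)=(u\circ x,-f\circ x)\) to show that \(e_q\) is the pushout along \(-\bar f\). Equality of classes then gives the isomorphism.

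\emph{Your route.} You build \(g\) first, from a horseshoe on fixed representable copresentations. This produces the isomorphism \(M\cong B(\alpha_q)\) directly, and it is compatible with \(\iota_q\) and \(\pi_q\) on the nose. The bookkeeping you worry about is harmless. Once the cokernel embeddings are fixed, the diagonal entries are forced to be the copresentation maps and the \((1,2)\)-entry vanishes. The sign on \(v\) is absorbed by \(\mathrm{diag}(-1,1)\) on \(h_Z\oplus h_{Y'}\).

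Both of your ways of obtaining \(f\) work, and the second is cleaner. Note that surjectivity of \(\pi_q\) is inherited here from the epimorphism \(p\), not from \cref{rem:square-extension}, whose argument presupposes \(f\). So a preimage \((u,x')\) of \(u\in B(v)(X)\) gives \(f=-x'\) without circularity.

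\emph{What each buys.} Your argument is more elementary: it needs only injectivity of representables and avoids the stable dimension shift and the sign in \(-\partial\). The paper's argument records more information. For fixed end triangles, the class of \(e_q\) is \(-\partial\) of the stable class of the map \(B(u)\to B(u')\) induced by \(f\). Hence every extension class is realized by varying \(f\) alone.
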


\begin{proof}
Let
\[
 e:\quad0\longrightarrow A\longrightarrow B\longrightarrow C
 \longrightarrow0
\]
be short exact, and put \(\mathsf F_\Delta=(\Sigma^{-1})_*\).  As observed
before \cref{prop:intrinsic-inverse-comparison}, every object
of \(\mathcal E\) is isomorphic to \(B(c)\) for a morphism \(c\) of \(\T\).
Choose distinguished triangles
\begin{equation}
\label{eq:realization-two-triangles}
 X\xrightarrow{u}Y\xrightarrow{v}Z\xrightarrow{w}\Sigma(X),
 \qquad
 X'\xrightarrow{u'}Y'\xrightarrow{v'}Z'\xrightarrow{w'}\Sigma(X')
\end{equation}
such that \(\mathsf F_\Delta(A)\cong B(u')\) and
\(\mathsf F_\Delta(C)\cong B(v)\).  For \(C\), a rotation of a distinguished
completion places the realizing morphism in the second position \(v\).
Transporting \(\mathsf F_\Delta(e)\) along
these isomorphisms on the end terms, write it as
\begin{equation}
\label{eq:transported-short-exact-sequence}
 0\longrightarrow B(u')\longrightarrow N\longrightarrow B(v)
 \longrightarrow0.
\end{equation}

The first triangle in \eqref{eq:realization-two-triangles} yields
\[
 0\longrightarrow B(u)\xrightarrow{j_u}h_X
 \xrightarrow{q_u}B(v)\longrightarrow0.
\]
Pushout along a morphism \(B(u)\to B(u')\) induces the connecting
isomorphism
\begin{equation}
\label{eq:square-connecting-isomorphism}
 \partial:
 \underline{\Hom}_{\mathcal E}(B(u),B(u'))
 \xrightarrow{\sim}\Ext^1_{\mathcal E}(B(v),B(u')),
\end{equation}
because \(h_X\) is projective-injective.  Choose a representative
\(\bar f:B(u)\to B(u')\) satisfying
\begin{equation}
\label{eq:realization-extension-class}
 -\partial(\underline{\bar f})=[\mathsf F_\Delta(e)].
\end{equation}

Since \(h_{X'}\) is injective, the composite
\(B(u)\xrightarrow{\bar f}B(u')\hookrightarrow h_{X'}\) extends across
\(j_u\) to a map \(h_f:h_X\to h_{X'}\), induced by a morphism
\(f:X\to X'\).  The map \(h_{u'}\circ h_f\) vanishes on \(B(u)\), hence
factors through \(q_u\) as a morphism \(\rho:B(v)\to h_{Y'}\).  Since
\(h_{Y'}\) is injective, \(\rho\) extends across
\(B(v)\hookrightarrow h_Y\) to a map \(h_g:h_Y\to h_{Y'}\), induced by
\(g:Y\to Y'\).  Thus
\[
 h_{u'}\circ h_f=h_g\circ h_u,
\]
and faithfulness of the Yoneda embedding gives
\(u'\circ f=g\circ u\).  Hence \((f,g)\) is an initial square \(q\).

Define \(\psi:h_X\to B(\alpha_q)\) by
\[
 \psi_T(x)=(u\circ x,-f\circ x).
\]
Then \(\psi\circ j_u=\iota_q\circ(-\bar f)\) and
\(\pi_q\circ\psi=q_u\), so there is a commutative diagram
\[
\begin{tikzcd}[column sep=2.4em,row sep=1.35em]
0\arrow[r]&B(u)\arrow[r,"j_u"]\arrow[d,"-\bar f"']&
 h_X\arrow[r,"q_u"]\arrow[d,"\psi"]&
 B(v)\arrow[r]\arrow[d,equal]&0\\
0\arrow[r]&B(u')\arrow[r,"\iota_q"']&
 B(\alpha_q)\arrow[r,"\pi_q"']&B(v)\arrow[r]&0.
\end{tikzcd}
\]
The induced map from the pushout of the top row along \(-\bar f\) to the
bottom row is the identity on the two end terms, and hence is an isomorphism.
Therefore
\[
 [e_q]=-\partial(\underline{\bar f})=[\mathsf F_\Delta(e)].
\]
Thus \(\mathsf F_\Delta(e)\cong e_q\).  Applying \(\Sigma_*\) gives
\(e\cong\Sigma_*(e_q)\).
\end{proof}

For \(i=0,1\) and an initial square \(q_i\) for \(\Delta_i\), write
\begin{equation}
\label{eq:shifted-composition-extension}
 e_{q_i}^{\,i}:=\Sigma_{i*}(e_{q_i}).
\end{equation}
For a composition square \(q\) for \(\Delta_1\), we retain
\(L,M,R\) from \eqref{eq:relative-composition-extension}.
Thus \(e_q^1\) has terms \(\Sigma_{1*}(L),\Sigma_{1*}(M),\Sigma_{1*}(R)\).

\begin{proposition}
\label{prop:exact-lift-kills-obstruction}
Assume that \(\Delta_0\) satisfies \(\mathrm{TR4}\), and let
\(q=q(g\circ f)\) be a composition square for \(\Delta_1\).  If the relative
comparison \(\tau\) admits an exact lift along \(e_q^1\), then \(q\) has a
good completion.
\end{proposition}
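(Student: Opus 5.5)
The plan is to compare two column-wise partial projective resolutions of the short exact sequence \(e_q^1=\Sigma_{1*}(e_q)\) in \(\mathsf{SES}(\mathcal E)\): one coming from \(\Delta_1\) and the given composition square, one coming from \(\Delta_0\). The exact lift \(t\) will glue them together. The mismatch in the middle component is then absorbed by an automorphism \(\varphi=(1_L,\varphi_M,1_R)\) of \(e_q\), and \cref{lem:middle-row-correction} together with \cref{cor:cone-shear-removal} turn this into a good completion.

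\emph{Step 1 (the \(\Delta_1\) side).} Fix any completion \(h\) of \(q=q(g\circ f)\), which exists by \(\mathrm{TR3}\). Apply \cref{lem:standard-three-row} for \(\Delta_1\). Read by columns, the diagram \eqref{eq:standard-heller-square} is a partial projective resolution of \(e_q^1\) in \(\mathsf{SES}(\mathcal E)\) ending in \(e_q\), since its internal columns are split sequences of representables. The stable connecting morphisms of its three rows are \((\theta_1)_{\Sigma_{1*}L}\), \(\kappa_h\) and \((\theta_1)_{\Sigma_{1*}R}\), because the outer rows are Heller resolutions.

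\emph{Step 2 (the \(\Delta_0\) side).} By \cref{lem:realize-short-exact-sequence} applied to \(\Delta_0\), the sequence \(e_q^1\) is isomorphic to \(\Sigma_{0*}(e_{q'})\) for some initial square \(q'\) for \(\Delta_0\). Since \(\Delta_0\) satisfies \(\mathrm{TR4}\), \cref{thm:boundary-lifting} and \cref{thm:composition-obstruction}(ii) give a good completion \(h'\) of \(q'\). By \eqref{eq:middle-row-recognition}, all three rows of the corresponding three-row diagram are then Heller resolutions for \(\Delta_0\). This gives a second column resolution of \(e_q^1\), ending in \(\mathsf F_0(e_q^1)\), whose connecting morphisms are the components of \(\theta_0\). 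Postcomposing with the exact lift \(t:\mathsf F_0(e_q^1)\to\mathsf F_1(e_q^1)=e_q\) yields a column resolution ending in \(e_q\) whose three row-connecting morphisms are \(\tau\circ\theta_0=\theta_1\) at \(\Sigma_{1*}L\), \(\Sigma_{1*}M\) and \(\Sigma_{1*}R\).

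\emph{Step 3 (comparison).} Use projectivity and injectivity of split sequences of projective-injectives in \(\mathsf{SES}(\mathcal E)\) to lift \(1_{e_q^1}\) to a morphism of column resolutions from the \(\Delta_1\)-diagram to the glued \(\Delta_0\)-diagram. Its last component is a morphism of short exact sequences \(\psi=(\psi_L,\psi_M,\psi_R):e_q\to e_q\) with
\[
 \underline{\psi_L}\circ(\theta_1)=\theta_1,\qquad
 \underline{\psi_M}\circ\kappa_h=(\theta_1)_{\Sigma_{1*}M},\qquad
 \underline{\psi_R}\circ(\theta_1)=\theta_1 .
\]
The relation in the middle uses naturality of stable connecting morphisms row by row. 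Hence \(\psi_L\) and \(\psi_R\) are stably the identity.

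\emph{Step 4 (main obstacle).} I must replace \(\psi\) by one with \(\psi_L=1_L\) and \(\psi_R=1_R\) exactly, while keeping the middle identity stably. I would first try to modify the chosen lift by a homotopy through the projective-injective columns. Concretely, \(1-\psi_L\) factors through \(L\hookrightarrow h_{X}\) and \(1-\psi_R\) through a representable covering \(R\); I would absorb these factorizations into the projective terms of the column resolution, using that the internal columns are split. Checking that this correction does not spoil the middle relation is the delicate point.

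\emph{Step 5 (conclusion).} Once \(\psi_L=1\) and \(\psi_R=1\), the identities \(\psi_M\iota_q=\iota_q\) and \(\pi_q\psi_M=\pi_q\) give \(\psi_M=1_M+\iota_q\xi\pi_q\) for some \(\xi:R\to L\), so \(\psi=\varphi\). By \cref{lem:middle-row-correction}, the connecting morphism of \(E_h^\varphi\) is \(\underline{\varphi_M}\kappa_h=(\theta_1)_{\Sigma_{1*}M}\). Hence \(E_h^\varphi\) is a Heller resolution, and \(C_h^\varphi\) is distinguished by \cref{prop:intrinsic-inverse-comparison}(ii). By \eqref{eq:corrected-third-cone-map}, \(C_h^\varphi\) is the sheared cone \eqref{eq:lower-left-sheared-cone}, so \cref{cor:cone-shear-removal} yields a good completion of \(q\).
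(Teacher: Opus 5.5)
Your Steps 1--3 and 5 match the paper's proof. You fix a completion \(h\), realize \(e_q^1\) by a \(\Delta_0\)-square with a good completion, compare the two column resolutions over \(1_{e_q^1}\), compose with the exact lift to get \(\psi=t\circ u\), and finish with \cref{lem:middle-row-correction} and \cref{cor:cone-shear-removal}. The gap is Step 4, which you leave open; without it Step 5 cannot start.

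Moreover, the route you sketch does not work in general. After postcomposing with \(t\), the target diagram is no longer a resolution, so homotopies can only be applied to the comparison map \(u\) itself. Any such modification keeps the endomorphism in the form \(t\circ u'\). But \((t\circ u')_L=1_L\) forces \(t_L\) to be a split epimorphism. An exact lift is only required to represent a stable isomorphism, so \(t_L\) need not be split epi.

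The missing idea is to correct \(\psi\) directly as an endomorphism of \(e_q\), discarding the resolutions, since afterwards you only use the exact end components and the stable class of the middle one. Factor \(\psi_L-1_L=v_L\circ u_L\) through a projective-injective \(I_L\), and \(\psi_R-1_R=v_R\circ u_R\) through a projective-injective \(I_R\). Injectivity of \(I_L\) extends \(u_L\) across \(\iota_q\) to \(\tilde u_L:M\to I_L\). Projectivity of \(I_R\) lifts \(v_R\) across \(\pi_q\) to \(\tilde v_R:I_R\to M\). Then
\[
 \Bigl(u_L,\ \begin{bmatrix}\tilde u_L\\ u_R\circ\pi_q\end{bmatrix},\ u_R\Bigr)
 \qquad\text{and}\qquad
 \Bigl(v_L,\ \begin{bmatrix}\iota_q\circ v_L & \tilde v_R\end{bmatrix},\ v_R\Bigr)
\]
are morphisms of short exact sequences \(e_q\to(0\to I_L\to I_L\oplus I_R\to I_R\to0)\to e_q\). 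Their composite has end components \(\psi_L-1_L\) and \(\psi_R-1_R\). Subtracting it from \(\psi\) gives \(\varphi=(1_L,\varphi_M,1_R)\). The subtracted middle component factors through \(I_L\oplus I_R\), so \(\underline{\varphi_M}=\underline{\psi_M}\). Hence \(\underline{\varphi_M}\circ\kappa_h=(\theta_1)_{\Sigma_{1*}(M)}\) holds automatically, and there is no delicate point. This is exactly how the paper closes the argument.

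A smaller issue is in Step 2. The square \(q'\) produced by \cref{lem:realize-short-exact-sequence} is an arbitrary initial square, not a composition square. So \cref{thm:composition-obstruction}(ii) does not give it a good completion. You need the mapping-cone axiom \(\mathrm{TR4}'\) for \(\Delta_0\), which follows from \(\mathrm{TR4}\) by Neeman's theorem.
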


\begin{proof}
Choose a completion \(h\) of \(q\), and let
\(\kappa_h:\Omega^3(\Sigma_{1*}(M))\to M\) be the stable connecting morphism of the
middle row of \eqref{eq:standard-heller-square}.

Apply \cref{lem:realize-short-exact-sequence} to \(e_q^1\) with respect to
\(\Delta_0\).  We obtain an initial square \(q_0\) and an isomorphism
\[
 \rho:e_{q_0}^0\xrightarrow{\sim}e_q^1.
\]
Since \(\Delta_0\) satisfies \(\mathrm{TR4}\), choose a good completion of
\(q_0\) and transport its standard three-row diagram along \(\rho\).  Reading
this diagram and the \(\Delta_1\)-diagram for \(h\) by columns gives two
partial projective resolutions of \(e_q^1\) in
\(\mathsf{SES}(\mathcal E)\).  Using projectivity of the internal
columns, we construct, successively from right to left, a morphism over
\(1_{e_q^1}\):
\begin{equation}
\label{eq:composition-comparison-diagram}
\begin{tikzcd}[column sep=1.15em,row sep=1.3em]
0\arrow[r]&e_q\arrow[r]\arrow[d,"u"']&
\mathcal Q_2\arrow[r]\arrow[d,"u_2"]&\mathcal Q_1\arrow[r]\arrow[d,"u_1"]&
\mathcal Q_0\arrow[r]\arrow[d,"u_0"]&e_q^1\arrow[r]\arrow[d,equal]&0\\
0\arrow[r]&\mathsf F_0(e_q^1)\arrow[r]&
\mathcal P_2\arrow[r]&\mathcal P_1\arrow[r]&\mathcal P_0\arrow[r]&e_q^1\arrow[r]&0.
\end{tikzcd}
\end{equation}
Here \(\mathcal Q_j\) and \(\mathcal P_j\) are the split internal columns of the two standard
diagrams.

Let
\[
 t:\mathsf F_0(e_q^1)\longrightarrow
   \mathsf F_1(e_q^1)=e_q
\]
be the given exact lift, and put
\[
 r=(r_L,r_M,r_R):=t\circ u:e_q\longrightarrow e_q.
\]
Naturality of stable connecting morphisms in
\eqref{eq:composition-comparison-diagram}, followed by
\(\underline t=\tau=\theta_1\circ\theta_0^{-1}\), gives
\begin{equation}
\label{eq:exact-lift-closes-comparison}
 \underline{r_N}\circ(\theta_1)_{\Sigma_{1*}(N)}=(\theta_1)_{\Sigma_{1*}(N)}
 \quad(N=L,R),\qquad
 \underline{r_M}\circ\kappa_h=(\theta_1)_{\Sigma_{1*}(M)}.
\end{equation}
Hence \(\underline{r_L}=1_L\) and \(\underline{r_R}=1_R\).

Choose factorizations of \(r_L-1_L\) and \(r_R-1_R\) through
projective-injective objects \(I_L\) and \(I_R\), respectively.  Injectivity
of \(I_L\) and projectivity of \(I_R\) extend them to morphisms
of short exact sequences whose composite is
\[
 e_q\longrightarrow
 \bigl(0\longrightarrow I_L\longrightarrow I_L\oplus I_R
 \longrightarrow I_R\longrightarrow0\bigr)
 \longrightarrow e_q
\]
and has components on the end terms \(r_L-1_L\) and \(r_R-1_R\).  Subtracting
this composite from \(r\), we obtain an endomorphism
\[
 \varphi=(1_L,\varphi_M,1_R):e_q\longrightarrow e_q
 \qquad\text{with}\qquad
 \underline{\varphi_M}=\underline{r_M}.
\]
Since \(\varphi\) is the identity on the end terms, exactness of \(e_q\)
gives a unique \(\xi:R\to L\) such that
\[
 \varphi_M=1_M+\iota_q\circ\xi\circ\pi_q.
\]

Let \(E_h^\varphi\) be the lower exact row in
\eqref{eq:middle-row-change}.  By \cref{lem:middle-row-correction}, its stable
connecting morphism is
\[
 \underline{\varphi_M}\circ\kappa_h
 =\underline{r_M}\circ\kappa_h
 =(\theta_1)_{\Sigma_{1*}(M)}.
\]
Hence the candidate triangle \(C_h^\varphi\) in
\eqref{eq:middle-row-candidate} is distinguished by
\cref{prop:intrinsic-inverse-comparison}\textup{(ii)}.  Its third morphism is
\[
 \begin{bmatrix}
  -\Sigma_1(f)&0\\
  1_{\Sigma_1(X)}+\lambda&d
 \end{bmatrix}
\]
for some \(\lambda:\Sigma_1(X)\to\Sigma_1(X)\).  Applying
\cref{cor:cone-shear-removal} gives a good completion of \(q\).  The
proof is complete.
\end{proof}

\begin{proof}[Proof of \cref{thm:relative-heller-criterion}]
Assume first that \(\Delta_1\) satisfies \(\mathrm{TR4}\), and let
\[
e:\quad0\longrightarrow A\longrightarrow B\longrightarrow C
\longrightarrow0
\]
be short exact in \(\mathcal E\). For \(i=0,1\),
\cref{lem:realize-short-exact-sequence} gives an initial square \(q_i\)
for \(\Delta_i\) and an isomorphism
\[
\rho_i:e_{q_i}^{\,i}\xrightarrow{\sim}e.
\]
Both pre-triangulations satisfy \(\mathrm{TR4}\), so choose a good
completion of each \(q_i\). In the diagram \eqref{eq:standard-heller-square} associated with
\(q_i\) and the chosen good completion, transport the right-hand
column along \(\rho_i\) and the left-hand column along
\(\mathsf F_i(\rho_i)\).

By \cref{lem:standard-three-row}, the chosen good completions make
all three rows Heller resolutions. Naturality of \(\theta_i\) ensures
that, after transport, they are Heller resolutions of \(A,B,C\),
respectively. The internal columns are split short exact sequences of
projective-injective objects. Thus, reading either diagram by columns
gives a partial projective resolution of \(e\) in
\(\mathsf{SES}(\mathcal E)\).

Using projectivity of the internal columns in
\(\mathsf{SES}(\mathcal E)\), we construct, successively from right
to left, a morphism from the first partial resolution to the second
whose component at \(e\) is \(1_e\). The induced morphism between
the leftmost kernels is
\[
t:\mathsf F_0(e)\longrightarrow\mathsf F_1(e).
\]
Since the comparison is the identity on the right-hand column,
naturality of stable connecting morphisms gives
\[
\underline{t_X}\circ(\theta_0)_X=(\theta_1)_X
\qquad(X\in\{A,B,C\}).
\]
Hence \(\underline{t_X}=\tau_X\) for every \(X\in\{A,B,C\}\), so \(t\)
is an exact lift of \(\tau\) along \(e\).

Conversely, assume that \(\tau\) admits exact lifts, and let \(q\) be a
composition square for \(\Delta_1\).  Apply the hypothesis to \(e_q^1\).
By \cref{prop:exact-lift-kills-obstruction}, \(q\) has a good completion.
The composition criterion \cref{thm:composition-obstruction} then implies
that \(\Delta_1\) satisfies \(\mathrm{TR4}\).
\end{proof}

\subsection{Defects of exact lifts and central twists}

We first consider endomorphisms of a short exact sequence whose first two
stable components are identities.  For central Heller twists, the possible
last components determine a local obstruction in a quotient of a stable
endomorphism space.  We then test its nonvanishing using Serre traces.

Consider a short exact sequence in \(\mathcal E\),
\begin{equation*}
\xymatrix@C=2.2pc{
e:\quad0\ar[r]&A\ar[r]^-{i}&B\ar[r]^-{p}&M\ar[r]&0.
}
\end{equation*}
It determines the distinguished
triangle
\begin{equation}
\label{eq:stable-triangle-of-short-exact-sequence}
\xymatrix@C=2.1pc{
A\ar[r]^-{\underline i}&B\ar[r]^-{\underline p}&M\ar[r]^-{\partial_e}&\Omega^{-1}(A).
}
\end{equation}
in the stable category.  The last map \(\partial_e\) is the stable connecting
morphism of \(e\).

\begin{definition}
\label{def:endpoint-defect}
Let \(t=(t_A,t_B,t_M):e\to e\) be an endomorphism of the short exact
sequence \(e\).  If \(\underline{t_A}=1_A\) and
\(\underline{t_B}=1_B\), we call
\[
 \underline{t_M}-1_M\in\underline{\End}(M)
\]
the \emph{defect of \(t\) at \(M\)}.  The set of all such defects is
\[
 D_e=\bigl\{\underline{t_M}-1_M\bigm|
 t:e\to e,\ \underline{t_A}=1_A,\ \underline{t_B}=1_B\bigr\}.
\]
\end{definition}

\begin{lemma}
\label{lem:endpoint-defect}
The set \(D_e\) is the additive subgroup
\begin{equation}
\label{eq:endpoint-defect-subgroup}
 D_e=
 \bigl\{
 \underline p\circ a\circ\partial_e
 \mid
 a\in\underline{\Hom}(\Omega^{-1}(A),B)
 \bigr\}
 \subseteq\underline{\End}(M).
\end{equation}
\end{lemma}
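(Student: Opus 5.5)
We prove the two inclusions separately, working throughout with the stable triangle \eqref{eq:stable-triangle-of-short-exact-sequence} and the fact that morphisms of short exact sequences in the Frobenius category \(\mathcal E\) induce morphisms of the associated stable triangles.

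\emph{Every defect has the stated form.} Let \(t=(t_A,t_B,t_M)\) be an endomorphism of \(e\) with \(\underline{t_A}=1_A\) and \(\underline{t_B}=1_B\). Naturality of stable connecting morphisms gives \(\partial_e\circ\underline{t_M}=\Omega^{-1}(\underline{t_A})\circ\partial_e=\partial_e\). Hence \(x=\underline{t_M}-1_M\) satisfies \(\partial_e\circ x=0\). We also have \(x\circ\underline p=\underline p\circ\underline{t_B}-\underline p=0\). Since \(\Hom\) in the triangulated stable category is cohomological, \(x\circ\underline p=0\) means that \(x\) factors through \(\partial_e\): there is \(y:\Omega^{-1}(A)\to M\) with \(x=y\circ\partial_e\). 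From \(\partial_e\circ x=0\) we get \(\partial_e\circ y\circ\partial_e=0\). This alone does not force \(y\) to factor through \(\underline p\). Instead we exploit the freedom in choosing \(y\): \(y\) is determined modulo maps that vanish on \(\partial_e\), that is, modulo \(z\circ\Omega^{-1}(\underline i)\).

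This is the step I expect to be the main obstacle, and I would attack it directly at the level of \(\mathcal E\). Set \(s_B=t_B-1_B\), which factors through a projective-injective object, and \(s_A=t_A-1_A\). Subtracting a morphism of short exact sequences that factors through a split sequence of projective-injectives (as in the proof of \cref{prop:exact-lift-kills-obstruction}), we may assume \(t_A=1_A\). This subtraction changes \(t_M\) only by a stably zero map. Then \(s_B\circ i=0\), so \(s_B\) factors as \(b\circ p\) with \(b:M\to B\), and \(t_M-1_M=p\circ b\). Since \(\underline{s_B}=0\), the map \(b\circ p\) factors through an injective hull \(B\hookrightarrow I\), and therefore extends along \(i\) to something killing \(i\). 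Unwinding this, \(\underline b\) vanishes after composing with \(\underline p\). The triangle then yields \(\underline b=a\circ\partial_e\) for some \(a:\Omega^{-1}(A)\to B\). Concretely, I would use the pushout of \(e\) along \(A\hookrightarrow I(A)\) to identify \(\partial_e\) and to produce \(a\). This gives \(\underline{t_M}-1_M=\underline p\circ a\circ\partial_e\).

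\emph{Every element of the stated form is a defect.} Given \(a\), represent \(a\circ\partial_e\) by an honest map \(b:M\to B\); this is possible because \(\underline{\Hom}\) is a quotient of \(\Hom\). We may choose \(b\) so that \(b\circ p\) factors through a projective-injective object, since \(\underline b\circ\underline p=a\circ\partial_e\circ\underline p=0\). Adjust \(b\) by a map through a projective so that \(b\circ p\) is stably zero. Then \(t=(1_A,\,1_B+b\circ p,\,1_M+p\circ b)\) is an endomorphism of \(e\). Its first two components are stably the identity, and its defect is \(\underline p\circ a\circ\partial_e\).

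Closure under addition and negation is then immediate from this description, so \(D_e\) is an additive subgroup of \(\underline{\End}(M)\).
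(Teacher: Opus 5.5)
Your proof is correct and follows the paper's argument. Normalizing \(t_A=1_A\) by subtracting a morphism through the split sequence \(0\to I\to I\to0\to0\) is exactly the paper's correction \(k=i\circ v\circ\widetilde u\); after it, \(s_B=b\circ p\), \(t_M-1_M=p\circ b\), and exactness of \(\underline{\Hom}(-,B)\) give \(\underline b=a\circ\partial_e\), while your converse construction \((1_A,1_B+b\circ p,1_M+p\circ b)\) is also the paper's. The opening stable-category attempt and the detour through an injective hull are unnecessary, since \(\underline b\circ\underline p=\underline{s_B}=0\) holds directly, and for the same reason no adjustment of the representative \(b\) is needed in the converse.
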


\begin{proof}
We first prove the inclusion
\[
 D_e\subseteq
 \bigl\{
 \underline p\circ a\circ\partial_e
 \mid
 a\in\underline{\Hom}(\Omega^{-1}(A),B)
 \bigr\}.
\]
Let \(t=(t_A,t_B,t_M):e\to e\) satisfy
\(\underline{t_A}=1_A\) and \(\underline{t_B}=1_B\), and put
\(r_X=t_X-1_X\).  Then
\((r_A,r_B,r_M)\) is again an endomorphism of the short exact sequence
\(e\).  Choose a factorization
\[
 r_A=v\circ u,
 \qquad
 A\xrightarrow{u}P\xrightarrow{v}A,
\]
through a projective-injective object \(P\).  Since \(P\) is injective,
\(u\) extends across the monomorphism \(i:A\hookrightarrow B\) to a morphism
\(\widetilde u:B\to P\) satisfying \(\widetilde u\circ i=u\).
Define
\[
 k=i\circ v\circ\widetilde u:B\longrightarrow B.
\]
Since
\[
 k\circ i=i\circ r_A=r_B\circ i,
\]
the morphism \(r_B-k\) vanishes on \(\operatorname{im}i\).  Because
\(p\) is the cokernel of \(i\), there is a unique morphism
\(w:M\to B\) such that
\begin{equation}
\label{eq:endpoint-defect-factor}
 r_B-k=w\circ p.
\end{equation}
The right square of the endomorphism \(t\) gives
\(r_M\circ p=p\circ r_B\).  Since \(p\circ i=0\), we have
\(p\circ k=0\).  Using \eqref{eq:endpoint-defect-factor}, we obtain
\[
 r_M\circ p=p\circ r_B
 =p\circ(r_B-k)=p\circ w\circ p.
\]
As \(p\) is an epimorphism, it follows that
\[
 r_M=p\circ w.
\]
Passing to the stable category, we compute
\[
 \underline w\circ\underline p
 =\underline{r_B}-\underline k=0,
\]
because both \(r_B\) and \(k\) factor through projective-injective
objects.  Exactness of \(\underline{\Hom}(-,B)\) applied to the triangle
\eqref{eq:stable-triangle-of-short-exact-sequence} therefore yields
\(a\in\underline{\Hom}(\Omega^{-1}(A),B)\) with
\[
 \underline w=a\circ\partial_e.
\]
Consequently
\[
 \underline{t_M}-1_M
 =\underline{r_M}
 =\underline p\circ\underline w
 =\underline p\circ a\circ\partial_e.
\]
This proves the inclusion ``\(\subseteq\)''.

Conversely, let
\(a\in\underline{\Hom}(\Omega^{-1}(A),B)\), and choose a morphism
\(w:M\to B\) representing \(a\circ\partial_e\).  Set
\[
 r_A=0,
 \qquad r_B=w\circ p,
 \qquad r_M=p\circ w.
\]
Then
\[
 r_B\circ i=0=i\circ r_A,
 \qquad
 p\circ r_B=r_M\circ p,
\]
so \((r_A,r_B,r_M)\) is an endomorphism of the short exact sequence
\(e\).  Moreover,
\[
 \underline{r_B}
 =\underline w\circ\underline p
 =a\circ\partial_e\circ\underline p=0,
\]
so \((1_A,1_B+r_B,1_M+r_M)\) has identity components on the first two
terms in the stable category and defect at \(M\) equal to
\(\underline p\circ a\circ\partial_e\).  This proves the inclusion
``\(\supseteq\)'', and the right-hand side of
\eqref{eq:endpoint-defect-subgroup} is an additive subgroup.
\end{proof}

Let \(\kk\) be a field, and let \(\T\) be an idempotent-complete
\(\kk\)-linear triangulated category with suspension \(\Sigma\) and
triangulation \(\Delta_0\).  Put
\(\mathcal E=\mathsf{mod}\,\T\) and
\(\mathsf F_0=(\Sigma^{-1})_*\), and let
\(\theta_0:\Omega^3\xrightarrow{\sim}\mathsf F_0\) be the inverse Heller
comparison associated with \(\Delta_0\).  In this \(\kk\)-linear setting,
\cref{lem:endpoint-defect} also shows that each \(D_e\) is a
\(\kk\)-subspace of the relevant stable endomorphism space.

\begin{definition}[{\normalfont cf.~\cite[Definition~2.1]{KrauseYe2011}}]
\label{def:degree-zero-center}
Let \(\mathcal C\) be a triangulated category with suspension \(S\).  Its
degree-zero center is
\[
 Z^0(\mathcal C)=
 \bigl\{\delta:\operatorname{Id}_{\mathcal C}\Rightarrow
 \operatorname{Id}_{\mathcal C}
 \mid
 \delta_{S(X)}=S(\delta_X)\text{ for every }X\in\mathcal C
 \bigr\}.
\]
\end{definition}

\begin{definition}
\label{def:central-twist}
Retain the preceding setup, and let
\(\delta\in Z^0(\underline{\mathcal E})\) be square-zero; explicitly,
\begin{equation}
\label{eq:stable-square-zero}
 \delta_X\circ\delta_X=0
 \quad\text{in }\underline{\End}_{\mathcal E}(X)
 \quad\text{for every }X\in\underline{\mathcal E}.
\end{equation}
For \(\lambda\in\kk\), define
\begin{equation}
\label{eq:central-heller-comparison}
 (\eta_{\lambda,\delta})_M
 =1_{\Omega^3(M)}+\lambda\delta_{\Omega^3(M)},
 \qquad
 \theta_\lambda=\theta_0\circ\eta_{\lambda,\delta}.
\end{equation}
The pre-triangulation corresponding to \(\theta_\lambda\) by
\cref{lem:heller-parameters} is denoted by
\(\Delta_\lambda^\delta\) and is called the \emph{central Heller twist}
of \(\Delta_0\) by \(\lambda\delta\).
\end{definition}

\begin{remark}
Naturality of \(\delta\) makes
\(\eta_{\lambda,\delta}\) a natural endomorphism of \(\Omega^3\), and
\eqref{eq:stable-square-zero} gives
\[
 \eta_{\lambda,\delta}^{-1}
 =1_{\Omega^3}-\lambda\delta_{\Omega^3}.
\]
Since \(\delta\in Z^0(\underline{\mathcal E})\), the natural automorphism
\(\eta_{\lambda,\delta}\) is compatible with the stable suspension
\(\Omega^{-1}\).  Thus
\(\eta_{\lambda,\delta}\in\operatorname{Aut}_{\mathrm{tri}}(\Omega^3)\),
and \(\theta_\lambda\) is an isomorphism of triangulated functors.  The bijection
in \cref{lem:heller-parameters} therefore yields the stated
pre-triangulation.
\end{remark}

The suspension is not changed by the central twist.  Hence the two induced
exact functors are equal,
\[
 \mathsf F_{\Delta_\lambda^\delta}
 = (\Sigma^{-1})_*=\mathsf F_0,
\]
and the relative comparison of \eqref{eq:relative-heller-comparison} is
the automorphism
\[
 \tau_\lambda
 =\theta_\lambda\circ\theta_0^{-1}
 =\theta_0\circ\eta_{\lambda,\delta}\circ\theta_0^{-1}
 \quad\text{of }\mathsf F_0.
\]
For every \(M\in\underline{\mathcal E}\), naturality of \(\delta\) gives
\[
 \delta_{\mathsf F_0(M)}\circ(\theta_0)_M
 =(\theta_0)_M\circ\delta_{\Omega^3(M)}.
\]
Conjugating by \((\theta_0)_M\) therefore yields
\begin{equation}
\label{eq:central-relative-comparison}
 (\tau_\lambda)_M
 =1_{\mathsf F_0(M)}+\lambda\delta_{\mathsf F_0(M)}.
\end{equation}

For a short exact sequence
\[
 e:\quad0\longrightarrow A\longrightarrow B\longrightarrow M
 \longrightarrow0
\]
and \(\delta\in Z^0(\underline{\mathcal E})\) satisfying
\(\delta_A=0=\delta_B\), write
\begin{equation}
\label{eq:central-endpoint-class}
 [\delta_M]_e=\delta_M+D_e
 \quad\in\quad
 \underline{\End}(M)/D_e.
\end{equation}
This is the local obstruction to lifting the corresponding central twist.

\begin{proposition}
\label{prop:central-twist-obstruction}
Let \(\T\) be an idempotent-complete \(\kk\)-linear triangulated category
with suspension \(\Sigma\) and triangulation \(\Delta_0\), let
\(\delta\in Z^0(\underline{\mathcal E})\) be square-zero, and let
\[
 e:\quad0\longrightarrow A\longrightarrow B\longrightarrow M
 \longrightarrow0
\]
be short exact with \(\delta_A=0=\delta_B\).  For \(\lambda\in\kk\), the
relative comparison from \(\Delta_0\) to \(\Delta_\lambda^\delta\) admits
an exact lift along \(\mathsf F_0^{-1}(e)\) if and only if
\begin{equation}
\label{eq:central-endpoint-obstruction}
 \lambda[\delta_M]_e=0
 \quad\text{in}\quad
 \underline{\End}(M)/D_e.
\end{equation}
Consequently, if \([\delta_M]_e\ne0\), then the twists
\(\Delta_\lambda^\delta\) are pairwise distinct and
\[
 \Delta_\lambda^\delta\text{ satisfies }\mathrm{TR4}
 \quad\Longleftrightarrow\quad
 \lambda=0.
\]
\end{proposition}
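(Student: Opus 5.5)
Put \(e_0=\mathsf F_0^{-1}(e)\), so that \(\mathsf F_0(e_0)=e\) and \(\mathsf F_{\Delta_\lambda^\delta}=\mathsf F_0\). By \eqref{eq:central-relative-comparison}, an exact lift of \(\tau_\lambda\) along \(e_0\) is an endomorphism \(t=(t_A,t_B,t_M)\) of \(e\) with \(\underline{t_X}=1_X+\lambda\delta_X\) for \(X=A,B,M\). Since \(\delta_A=0=\delta_B\), the conditions on the first two terms read \(\underline{t_A}=1_A\) and \(\underline{t_B}=1_B\). The remaining condition is \(\underline{t_M}-1_M=\lambda\delta_M\). So an exact lift exists if and only if \(\lambda\delta_M\in D_e\). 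By \cref{lem:endpoint-defect}, \(D_e\) is a (\(\kk\)-)subgroup, so this is exactly \eqref{eq:central-endpoint-obstruction}. This settles the first assertion; apart from unwinding \cref{def:exact-lift}, it needs no computation.

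For the consequence, assume \([\delta_M]_e\neq0\). If \(\lambda\neq0\), then \(\lambda[\delta_M]_e\neq0\), because \(\underline{\End}(M)/D_e\) is a \(\kk\)-vector space and \(\lambda\) is invertible. Hence no exact lift exists along \(e_0\). Since \(\Delta_0\) satisfies \(\mathrm{TR4}\), \cref{thm:relative-heller-criterion} shows that \(\Delta_\lambda^\delta\) fails \(\mathrm{TR4}\). For \(\lambda=0\), we have \(\Delta_0^\delta=\Delta_0\), because \(\theta_0\circ\eta_{0,\delta}=\theta_0\) and \cref{lem:heller-parameters} is a bijection. Thus \(\Delta_0^\delta\) satisfies \(\mathrm{TR4}\).

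It remains to check that the twists are pairwise distinct. By the bijection of \cref{lem:heller-parameters}, \(\Delta_\lambda^\delta=\Delta_\mu^\delta\) forces \(\theta_\lambda=\theta_\mu\). This gives \(\eta_{\lambda,\delta}=\eta_{\mu,\delta}\), i.e. \((\lambda-\mu)\delta_{\Omega^3(N)}=0\) for all \(N\). Taking \(N=\Omega^{-3}(M)\) and using \(\Omega^3\Omega^{-3}\cong 1\) together with naturality of \(\delta\), we get \((\lambda-\mu)\delta_M=0\). But \(\delta_M\neq0\), since otherwise \([\delta_M]_e=0\). Hence \(\lambda=\mu\). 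The main point requiring care here is this transport of \(\delta\) along the identification \(\Omega^3\Omega^{-3}M\cong M\), which holds because \(\delta\) is a natural transformation of the identity functor. Alternatively, one can argue directly: \(\Delta_\lambda^\delta=\Delta_\mu^\delta\) would make the relative comparison between them the identity. That relative comparison is \(\tau_{\lambda}^{-1}\tau_\mu=1+(\mu-\lambda)\delta\), so its obstruction is \((\mu-\lambda)[\delta_M]_e\). Since the identity trivially lifts exactly, this obstruction must vanish, which again gives \(\lambda=\mu\).
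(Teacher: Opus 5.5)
Your proposal is correct and follows essentially the same route as the paper. You unwind the exact lift via \eqref{eq:central-relative-comparison} to reduce to $\lambda\delta_M\in D_e$, apply the necessity direction of \cref{thm:relative-heller-criterion} when $\lambda\ne0$, and evaluate the bijection of \cref{lem:heller-parameters} at $\Omega^{-3}(M)$ to get pairwise distinctness; your alternative distinctness argument through the comparison $1+(\mu-\lambda)\delta$ is also valid, though it is only a detour around the direct conclusion $(\mu-\lambda)\delta=0$.
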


\begin{proof}
In view of \eqref{eq:central-relative-comparison}, an exact lift along
\(\mathsf F_0^{-1}(e)\) is precisely an endomorphism \(t:e\to e\) with
\[
 \underline{t_X}=1_X+\lambda\delta_X
 \qquad(X=A,B,M).
\]
Since \(\delta_A=0=\delta_B\), subtracting the identity endomorphism of
\(e\) shows that such a \(t\) exists exactly when its defect at \(M\),
namely \(\lambda\delta_M\), belongs to \(D_e\).  By
\eqref{eq:central-endpoint-class}, this is equivalent to
\eqref{eq:central-endpoint-obstruction}.

If \([\delta_M]_e\ne0\), no nonzero \(\lambda\) admits this exact lift.
The necessity in \cref{thm:relative-heller-criterion} therefore excludes
\(\mathrm{TR4}\) for \(\Delta_\lambda^\delta\), while \(\lambda=0\) is the
base triangulation.  Finally, equality of
\(\Delta_\lambda^\delta\) and \(\Delta_\mu^\delta\) would imply
\(\theta_\lambda=\theta_\mu\) by
\cref{lem:heller-parameters}.  Evaluating at
\(\Omega^{-3}(M)\) gives
\((\lambda-\mu)\delta_M=0\).  Since
\([\delta_M]_e\ne0\), this forces \(\lambda=\mu\).
\end{proof}

\subsection{A criterion using Serre traces}
\label{sec:serre-traces}

Throughout this subsection, retain the preceding setup and put
\[
 \C=\underline{\mathcal E},
 \qquad
 S=\Omega^{-1}.
\]
Assume that \(\C\), with suspension \(S\), is a \(\kk\)-linear,
Hom-finite, idempotent-complete triangulated category.  Fix an integer
\(n\ge1\) and an
\(n\)-Calabi--Yau structure on \(\C\).  Thus \(S^n\) is equipped with the
structure of a Serre functor.  In particular, \(\C\) is Krull--Schmidt.
Serre duality gives natural perfect pairings; see
\cite[Section~I.1, especially equations~(I.1.1)--(I.1.4)]
{ReitenVanDenBergh2002}:
\[
 \langle-,-\rangle_{X,Y}:
 \underline{\Hom}(X,Y)\times
 \underline{\Hom}(Y,S^n(X))\longrightarrow\kk.
\]
For \(u:X\to S^n(X)\), the associated \emph{Serre trace} is
\[
 \operatorname{Tr}_X(u)=\langle1_X,u\rangle_{X,X}.
\]
It satisfies the cyclicity identity
\begin{equation}
\label{eq:serre-trace-cyclicity}
 \operatorname{Tr}_X(g\circ f)
 =\operatorname{Tr}_Y(S^n(f)\circ g)
\end{equation}
for \(f:X\to Y\) and \(g:Y\to S^n(X)\).

\begin{definition}
\label{def:trace-functional}
Let
\begin{equation*}
\xymatrix@C=2.2pc{
e:\quad0\ar[r]&A\ar[r]&B\ar[r]^-{p}&M\ar[r]&0
}
\end{equation*}
be short exact.  We say that \(e\) has \emph{compatible
\(n\)-period identifications} if there are isomorphisms in \(\C\),
\[
 \phi_B:S^n(B)\xrightarrow{\sim}B,
 \qquad
 \phi_M:S^n(M)\xrightarrow{\sim}M,
\]
such that the following square commutes:
\begin{equation}
\label{eq:relative-period-square}
\begin{tikzcd}[column sep=large,row sep=large]
S^n(B)\arrow[r,"S^n(\underline p)"]
 \arrow[d,"\phi_B"']&
S^n(M)\arrow[d,"\phi_M"]\\
B\arrow[r,"\underline p"']&M.
\end{tikzcd}
\end{equation}
For such identifications, define
\[
 \ell_M:\underline{\End}(M)\longrightarrow\kk,
 \qquad
 \ell_M(a)=\operatorname{Tr}_M(\phi_M^{-1}\circ a).
\]
\end{definition}

The compatibility with \(\underline p\) in
\eqref{eq:relative-period-square} ensures that \(\ell_M\) vanishes on \(D_e\).

\begin{proposition}
\label{prop:trace-separation}
In the setting of \cref{def:trace-functional}, one has
\[
 D_e\subseteq\ker\ell_M.
\]
\end{proposition}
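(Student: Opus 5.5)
By \cref{lem:endpoint-defect}, every element of \(D_e\) has the form \(\underline p\circ a\circ\partial_e\) with \(a\in\underline{\Hom}(S(A),B)\), where \(S=\Omega^{-1}\). It therefore suffices to show
\[
 \operatorname{Tr}_M\bigl(\phi_M^{-1}\circ\underline p\circ a\circ\partial_e\bigr)=0
\]
for every such \(a\). The plan is to move \(\underline p\) across using the compatible period square, then use cyclicity \eqref{eq:serre-trace-cyclicity} to bring two consecutive maps of the stable triangle \eqref{eq:stable-triangle-of-short-exact-sequence} together, where their composite vanishes.

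First, from the commutativity of \eqref{eq:relative-period-square} we get \(\underline p\circ\phi_B=\phi_M\circ S^n(\underline p)\). Hence
\[
 \phi_M^{-1}\circ\underline p=S^n(\underline p)\circ\phi_B^{-1}
 \qquad\text{as maps }B\to S^n(M).
\]
The element to be traced thus becomes \(S^n(\underline p)\circ g\), where
\[
 g=\phi_B^{-1}\circ a\circ\partial_e:M\longrightarrow S^n(B).
\]

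Next, apply \eqref{eq:serre-trace-cyclicity} with \(X=M\), \(Y=S^n(B)\)? That does not match the required shape directly. Instead, I would regroup:
\[
 \operatorname{Tr}_M\bigl(S^n(\underline p)\circ\phi_B^{-1}\circ a\circ\partial_e\bigr)
 =\operatorname{Tr}_M(G\circ F),
\]
with \(F=\partial_e:M\to S(A)\) and \(G=S^n(\underline p)\circ\phi_B^{-1}\circ a:S(A)\to S^n(M)\). Cyclicity with \(X=M\), \(Y=S(A)\), \(f=\partial_e\) and \(g=G\) turns this into
\[
 \operatorname{Tr}_{S(A)}\bigl(S^n(\partial_e)\circ S^n(\underline p)\circ\phi_B^{-1}\circ a\bigr).
\]
Since \(\partial_e\circ\underline p=0\), as consecutive maps in a distinguished triangle of \(\C\), the factor \(S^n(\partial_e\circ\underline p)\) vanishes, and the trace is \(0\).

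The main obstacle is bookkeeping: the target of \(G\) must be exactly \(S^n(M)=S^n(X)\) for the cyclicity formula to apply as stated. The identification \(\phi_M^{-1}\circ\underline p=S^n(\underline p)\circ\phi_B^{-1}\) is precisely what makes this hold, and it is the only use of the compatibility hypothesis. One should also confirm that the Serre trace is computed with the same \(n\)-Calabi--Yau pairing on both sides of the cyclicity identity, and that stable representatives may be used throughout, since all maps live in \(\C\). No sign from the rotation of the triangle enters, because only the vanishing composite \(\partial_e\circ\underline p=0\) is used.
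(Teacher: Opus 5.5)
Your proposal is correct and is essentially the paper's proof: use \cref{lem:endpoint-defect}, rewrite $\phi_M^{-1}\circ\underline p$ as $S^n(\underline p)\circ\phi_B^{-1}$ via the period square, apply the cyclicity identity \eqref{eq:serre-trace-cyclicity}, and conclude from $\partial_e\circ\underline p=0$. The only difference is cosmetic. You cycle $\partial_e$ alone, so the trace lands on $\Omega^{-1}(A)$, whereas the paper cycles the whole composite $M\to\Omega^{-1}(A)\to B$ and lands on $B$; both are valid instances of the same identity.
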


\begin{proof}
For \(a=\underline p\circ v\circ\partial_e\in D_e\),
the commutative square \eqref{eq:relative-period-square} gives
\[
 \phi_M^{-1}\circ\underline p
 =S^n(\underline p)\circ\phi_B^{-1}.
\]
Using this identity and \eqref{eq:serre-trace-cyclicity}, we obtain
\[
\begin{aligned}
 \ell_M(a)
 &=\operatorname{Tr}_M
   \bigl(S^n(\underline p)\circ\phi_B^{-1}\circ
   v\circ\partial_e\bigr)\\
 &=\operatorname{Tr}_B
   \bigl(S^n(v\circ\partial_e)\circ
   S^n(\underline p)\circ\phi_B^{-1}\bigr)\\
 &=\operatorname{Tr}_B
   \bigl(S^n(v\circ\partial_e\circ\underline p)
   \circ\phi_B^{-1}\bigr)=0,
\end{aligned}
\]
because \(\partial_e\circ\underline p=0\) in
\eqref{eq:stable-triangle-of-short-exact-sequence}.
\end{proof}

\begin{corollary}
\label{cor:trace-detector}
Retain the hypotheses of \cref{prop:central-twist-obstruction} and the
standing \(n\)-Calabi--Yau assumptions of this subsection.  Assume that
\(e\) has compatible \(n\)-period identifications as in
\cref{def:trace-functional} and that
\[
 \ell_M(\delta_M)\ne0.
\]
Then \([\delta_M]_e\ne0\), and hence the twists are pairwise distinct with
\[
 \Delta_\lambda^\delta\text{ satisfying }\mathrm{TR4}
 \quad\Longleftrightarrow\quad
 \lambda=0.
\]
\end{corollary}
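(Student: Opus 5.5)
The plan is to deduce the corollary directly from \cref{prop:trace-separation} and \cref{prop:central-twist-obstruction}; no new construction is needed.

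First, $\ell_M:\underline{\End}(M)\to\kk$ is $\kk$-linear, because composition with $\phi_M^{-1}$ is linear and the Serre trace $\operatorname{Tr}_M=\langle 1_M,-\rangle_{M,M}$ is linear in its second argument. By \cref{prop:trace-separation}, $D_e\subseteq\ker\ell_M$. Hence $\ell_M$ descends to a well-defined linear functional
\[
 \bar\ell_M:\underline{\End}(M)/D_e\longrightarrow\kk,
 \qquad
 \bar\ell_M\bigl(a+D_e\bigr)=\ell_M(a).
\]
Evaluating it on the class $[\delta_M]_e=\delta_M+D_e$ of \eqref{eq:central-endpoint-class} gives $\bar\ell_M([\delta_M]_e)=\ell_M(\delta_M)\neq0$. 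Therefore $[\delta_M]_e\neq0$ in $\underline{\End}(M)/D_e$.

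The class $[\delta_M]_e$ is defined only when $\delta_A=0=\delta_B$. This hypothesis is part of those of \cref{prop:central-twist-obstruction}, which the corollary retains, so the class makes sense. We also use that $\delta_M$ lies in $\underline{\End}(M)$, the domain of $\ell_M$.

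With $[\delta_M]_e\neq0$ established, the consequence clause of \cref{prop:central-twist-obstruction} applies verbatim. It gives that the twists $\Delta_\lambda^\delta$ are pairwise distinct, and that $\Delta_\lambda^\delta$ satisfies $\mathrm{TR4}$ if and only if $\lambda=0$.

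There is no real obstacle. The only point to check is that the linearity of $\ell_M$ is compatible with the $\kk$-subspace structure of $D_e$, noted after \cref{lem:endpoint-defect}, so that passing to the quotient is legitimate.
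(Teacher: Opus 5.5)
Your proposal is correct and follows the paper's own argument: \cref{prop:trace-separation} gives \(D_e\subseteq\ker\ell_M\), so \(\ell_M(\delta_M)\neq0\) forces \(\delta_M\notin D_e\), i.e.\ \([\delta_M]_e\neq0\). The conclusion then follows from the consequence clause of \cref{prop:central-twist-obstruction}. Descending \(\ell_M\) to a functional on the quotient is a harmless repackaging of the same step; for it you only need that \(\ell_M\) is additive and vanishes on \(D_e\), not the \(\kk\)-subspace structure.
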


\begin{proof}
By \cref{prop:trace-separation}, every element of \(D_e\) is annihilated by
\(\ell_M\).  Thus \(\ell_M(\delta_M)\ne0\) implies
\(\delta_M\notin D_e\), and the result follows from
\cref{prop:central-twist-obstruction}.
\end{proof}

An \emph{\(S\)-orbit} means an orbit of the action of \(S\) on the
isomorphism classes of indecomposable objects of \(\C\).  We choose
representatives and suppress the resulting identifications.

\begin{definition}
\label{def:periodic-socle-orbit}
A \emph{socle orbit} consists of an \(S\)-orbit
\(\mathcal O\) and maps
\[
 0\ne\rho_X\in\operatorname{rad}\bigl(\End_{\C}(X)\bigr)
 \qquad(X\in\mathcal O)
\]
such that
\[
 \rho_{S(X)}=S(\rho_X),\qquad
 \End_{\C}(X)/
 \operatorname{rad}\bigl(\End_{\C}(X)\bigr)\cong\kk,\qquad
 \operatorname{soc}\bigl(\End_{\C}(X)\bigr)=\kk\rho_X
\]
for every \(X\in\mathcal O\).
\end{definition}

\pagebreak[3]
The following proposition is motivated by the twisting construction in
\cite[Section~3.2]{ChenLiuLuZhang2026}.

\begin{proposition}
\label{prop:periodic-orbit-detector}
Fix a socle orbit \((\mathcal O,\rho)\) in \(\C\).  Assume that there is a
short exact sequence in \(\mathcal E\),
\begin{equation*}
	\xymatrix@C=2.2pc{
		\quad0\ar[r]&A\ar[r]&B\ar[r]^-{p}&M\ar[r]&0
	}
\end{equation*}
satisfying the following conditions:
\begin{enumerate}[label=\textup{(\arabic*)}]
\item \(M\in\mathcal O\);
\item neither \(A\) nor \(B\), regarded as an object of \(\C\), has a
 direct summand isomorphic to an object of \(\mathcal O\);
\item the sequence has compatible \(n\)-period identifications
\[
 \phi_B:S^n(B)\xrightarrow{\sim}B,
 \qquad
 \phi_M:S^n(M)\xrightarrow{\sim}M
\]
in the sense of \cref{def:trace-functional}.
\end{enumerate}
Then the maps \(\rho_X\), extended by zero away from \(\mathcal O\),
define a square-zero element
\(\delta_{\mathcal O}\in Z^0(\C)\).  The associated central Heller twists
\(\Delta_\lambda^{\delta_{\mathcal O}}\) of \(\Delta_0\) are pairwise
distinct, and
\[
 \Delta_\lambda^{\delta_{\mathcal O}}
 \text{ satisfies }\mathrm{TR4}
 \quad\Longleftrightarrow\quad
 \lambda=0.
\]
\end{proposition}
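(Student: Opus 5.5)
The plan is to reduce everything to \cref{cor:trace-detector}, applied to the given sequence \(e\) and to \(\delta=\delta_{\mathcal O}\). This requires three things. First, \(\delta_{\mathcal O}\) is a square-zero element of \(Z^0(\C)\). Second, \(\delta_A=0=\delta_B\). Third, \(\ell_M(\delta_M)\ne0\).

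\emph{Step 1: naturality of \(\delta_{\mathcal O}\).} Since \(\C\) is Krull--Schmidt, it suffices to define \(\delta\) on indecomposables and check naturality on morphisms between them; for finite direct sums we take the diagonal extension. We must show \(\rho_Y\circ f=f\circ\rho_X\) for \(f:X\to Y\) with \(X,Y\) indecomposable.
\begin{itemize}
\item If neither \(X\) nor \(Y\) lies in \(\mathcal O\), both sides vanish.
\item If \(X\in\mathcal O\) and \(Y\notin\mathcal O\) (or the reverse), we must show \(f\circ\rho_X=0\) (respectively \(\rho_Y\circ f=0\)). The socle condition gives \(\rho_X\in\operatorname{soc}\End(X)\), so \(\rho_X\) is killed by the radical on both sides. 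Since \(X\not\cong Y\), every composite \(g\circ f\) with \(g:Y\to X\) lies in \(\operatorname{rad}\End(X)\). Hence \(\rho_X\circ g\circ f=0\) for all such \(g\). By Serre duality, \(\langle g,f\circ\rho_X\rangle\) equals a trace of \(\rho_X\circ(\ldots)\) or similar by cyclicity. Concretely, \(\operatorname{Tr}_X(S^n(g)\circ f\circ\rho_X)\) up to a shift is \(\operatorname{Tr}\) of \(\rho\) composed with a radical map. I expect to show that this vanishes using \(\rho_{S^n X}=S^n(\rho_X)\) and the socle property in \(\End(S^nX)\), which is again in \(\mathcal O\). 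Nondegeneracy of the pairing then gives \(f\circ\rho_X=0\).
\item If both \(X,Y\in\mathcal O\) and \(X\not\cong Y\), both composites vanish by the same argument. If \(X=Y\), we write \(f=c\cdot1+r\) with \(r\) radical, using \(\End/\operatorname{rad}\cong\kk\). Then both sides equal \(c\rho_X\), because \(\rho_X r=r\rho_X=0\).
\end{itemize}
Compatibility \(\delta_{SX}=S(\delta_X)\) is exactly the hypothesis \(\rho_{S(X)}=S(\rho_X)\), together with \(S\)-stability of \(\mathcal O\). Square-zero holds because \(\rho_X^2\in\rho_X\operatorname{rad}=0\).

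I expect the mixed case of this naturality argument to be the main obstacle. The cleanest route is probably to argue that \(\operatorname{rad}(X,Y)\circ\rho_X=0\) using the duality \(\underline{\Hom}(X,Y)\cong D\underline{\Hom}(Y,S^nX)\). Under this duality, \(\rho\) corresponds to the functional vanishing on the radical, since the socle of a local \(\End\) is dual to its top.

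\emph{Step 2: vanishing on \(A\) and \(B\).} By condition (2), every indecomposable summand of \(A\) and of \(B\) lies outside \(\mathcal O\). Hence \(\delta_A=0=\delta_B\).

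\emph{Step 3: the trace is nonzero.} Here \(M\in\mathcal O\) is indecomposable and local, and \(\phi_M^{-1}\circ\rho_M\) is a nonzero element of \(\underline{\Hom}(M,S^nM)\). The functional \(a\mapsto\operatorname{Tr}_M(\phi_M^{-1}\circ a)=\langle1_M,\phi_M^{-1}a\rangle\) on \(\End(M)\) is nonzero by nondegeneracy. It cannot vanish on the simple socle \(\kk\rho_M\): otherwise, since the socle of a local Frobenius-type algebra lies in every nonzero ideal, the induced pairing \(\langle b,\phi_M^{-1}c\rangle=\operatorname{Tr}(\phi_M^{-1}cb)\) on \(\End(M)\) would have a nonzero two-sided kernel. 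Therefore \(\ell_M(\delta_M)=\ell_M(\rho_M)\ne0\).

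\cref{cor:trace-detector} then gives pairwise distinctness and \(\mathrm{TR4}\Leftrightarrow\lambda=0\).
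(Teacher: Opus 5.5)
\textbf{Gap: the mixed case of Step 1.} The gap is naturality of $\delta_{\mathcal O}$ for $f:X\to Y$ with $X\not\cong Y$ indecomposable and at least one of them in $\mathcal O$. You flag this case but do not prove it, and your sketch cannot close it because it never uses hypothesis (3). The socle-orbit conditions only control composites of $\rho_X$ with \emph{endomorphisms} of $X$ or of $S^nX$. To test $f\circ\rho_X$ against the Serre pairing you must consider $g\circ f\circ\rho_X$ for $g:Y\to S^nX$. This is a morphism $X\to S^nX$, not an endomorphism unless $S^nX\cong X$. So neither $\operatorname{soc}\End_\C(X)$ nor $\operatorname{soc}\End_\C(S^nX)$ says anything about it. Your displayed expression $\operatorname{Tr}_X(S^n(g)\circ f\circ\rho_X)$ with $g:Y\to X$ does not even compose. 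Likewise, the heuristic that ``$\rho$ corresponds to the functional vanishing on the radical'' only makes sense after $\Hom_\C(X,S^nX)$ has been identified with $\End_\C(X)$. Nothing in your Step 1 supplies that identification.

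\textbf{The repair: periodicity.} By (1), every $X\in\mathcal O$ is $S^kM$, so $\psi_X=S^k(\phi_M):S^nX\xrightarrow{\sim}X$. With this, your duality route works.

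For indecomposable $Y\not\cong X$, take $f:X\to Y$ and any $g:Y\to S^nX$. The endomorphism $\psi_X\circ g\circ f$ factors through $Y$, so it is radical. Hence $g\circ f\circ\rho_X=\psi_X^{-1}\circ(\psi_X\circ g\circ f)\circ\rho_X=0$. Thus $\operatorname{Tr}_X(g\circ f\circ\rho_X)=0$ for all $g$, and perfectness gives $f\circ\rho_X=0$.

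Dually, if $Y\in\mathcal O$, cyclicity gives $\operatorname{Tr}_X(g\circ\rho_Y\circ f)=\operatorname{Tr}_Y(S^n(f)\circ g\circ\rho_Y)$. Here $\psi_Y\circ S^n(f)\circ g$ is radical: it factors through $S^nX$, and $S^nX\not\cong Y$ because $S^nY\cong Y$. Therefore $\rho_Y\circ f=0$.

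These two facts say exactly that each $\rho_X$ is almost vanishing. The paper gets this differently. It takes the Auslander--Reiten triangle $S^{n-1}(M)\to E_M\to M\xrightarrow{\alpha_M}S^n(M)$ and observes that $\phi_M\circ\alpha_M$ is killed by the radical on both sides. Hence $\phi_M\circ\alpha_M=\kappa\rho_M$ with $\kappa\ne0$, and the property is transported along $S$.

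\textbf{Steps 2 and 3.} Your Step 2 and the case $X=Y$ of Step 1 are fine. Step 3 is also correct and slicker than the paper's route via $\operatorname{Tr}_M(\alpha_M)\ne0$. The form $(b,c)\mapsto\operatorname{Tr}_M(\phi_M^{-1}\circ c\circ b)$ on $\End_\C(M)$ is nondegenerate. Since $\End_\C(M)\rho_M=\kk\rho_M$, vanishing of $\ell_M(\rho_M)$ would put $\rho_M$ in its kernel. With the repair above, your proof avoids constructing the Auslander--Reiten triangle altogether.
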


\begin{proof}
Since \(\C\) is Hom-finite and idempotent-complete, it is
Krull--Schmidt.  As \(S^n\) is a Serre functor, the
Auslander--Reiten triangle ending at \(M\) has the form
\[
 S^{n-1}(M)\longrightarrow E_M\longrightarrow M
 \xrightarrow{\alpha_M}S^n(M);
\]
see \cite[Proposition~I.2.3 and Theorem~I.2.4]
{ReitenVanDenBergh2002}.  The construction in the proof of
Proposition~I.2.3 applies over \(\kk\) here, since
\(\End_\C(M)/\operatorname{rad}\End_\C(M)\cong\kk\) by
\cref{def:periodic-socle-orbit}.
The morphism \(\alpha_M\) is almost vanishing:
\(\alpha_M\circ g=0\) for every non-retraction \(g:X\to M\), and
\(h\circ\alpha_M=0\) for every non-section
\(h:S^n(M)\to Y\).

It follows that \(\phi_M\circ\alpha_M\) is nonzero and is annihilated on
both sides by \(\operatorname{rad}\End_{\C}(M)\).  Hence the socle-orbit
condition gives
\[
 \phi_M\circ\alpha_M=\kappa\rho_M
 \qquad\text{for some }\kappa\in\kk^\times.
\]
In particular, \(\rho_M\) is almost vanishing.  Since
\(\rho_{S(X)}=S(\rho_X)\), the same is true of every \(\rho_X\) with
\(X\in\mathcal O\).

We next extend these maps by zero.  If
\(f\in\End_{\C}(X)\) for \(X\in\mathcal O\), write
\(f=a\,1_X+r\), where \(a\in\kk\) and
\(r\in\operatorname{rad}\End_{\C}(X)\).  The almost-vanishing property
then gives
\[
 f\circ\rho_X=a\rho_X=\rho_X\circ f.
\]
Now let \(X\ncong Y\) be indecomposable and let \(f:X\to Y\).  If
\(X\in\mathcal O\) and \(Y\notin\mathcal O\), then
\(f\circ\rho_X=0\); if \(X\notin\mathcal O\) and
\(Y\in\mathcal O\), then \(\rho_Y\circ f=0\); and if
\(X,Y\in\mathcal O\), both composites vanish.  Thus the rule
\[
 (\delta_{\mathcal O})_X=
 \begin{cases}
  \rho_X,&X\in\mathcal O,\\
  0,&X\notin\mathcal O,
 \end{cases}
\]
on indecomposable representatives, extended additively to direct sums,
defines a natural endomorphism of the identity functor on \(\C\).
The equality \(\rho_{S(X)}=S(\rho_X)\) shows that
\(\delta_{\mathcal O}\in Z^0(\C)\).  Since each \(\rho_X\) is radical
and almost vanishing, \(\rho_X^2=0\).  Hence
\(\delta_{\mathcal O}\) is square-zero.

Condition~\textup{(2)} gives
\((\delta_{\mathcal O})_A=0=(\delta_{\mathcal O})_B\), whereas
\((\delta_{\mathcal O})_M=\rho_M\).  It remains to verify the trace
condition in \cref{cor:trace-detector}.  The Serre pairing is perfect and
\[
 \operatorname{Tr}_M(\alpha_M\circ u)
 =\langle u,\alpha_M\rangle_{M,M}.
\]
Since \(\alpha_M\ne0\), there is \(u\in\End_{\C}(M)\) for which this
scalar is nonzero.  Write
\(u=c\,1_M+r\), where \(r\in\operatorname{rad}\End_{\C}(M)\).
Almost vanishing gives \(\alpha_M\circ r=0\), and hence
\[
 0\ne\operatorname{Tr}_M(\alpha_M\circ u)
 =c\operatorname{Tr}_M(\alpha_M).
\]
Thus \(\operatorname{Tr}_M(\alpha_M)\ne0\).  Since
\(\phi_M^{-1}\circ\rho_M=\kappa^{-1}\alpha_M\), it follows that
\(\ell_M(\rho_M)\ne0\).  The result follows from
\cref{cor:trace-detector}.
\end{proof}

\section{The generalized type-\texorpdfstring{$\mathbb{L}_2$}{L2}
category}
\label{sec:L2-test}

\subsection{The algebra and the results of D\'iaz Cabrera--Muro}

Throughout this section, \(\kk\) is algebraically closed and
\[
 \Lambda=P(\mathbb L_2).
\]
D\'iaz Cabrera--Muro present \(\Lambda\) as the finite-dimensional
symmetric algebra given by the quiver
\begin{equation*}
 \xymatrix@C=3.6pc@R=2pc{
  1\ar@(ul,dl)[]_{b}\ar@<0.55ex>[r]^{a_1}&
  2\ar@<0.55ex>[l]^{a_2}
 }
\end{equation*}
with relations
\[
 a_1a_2=b^2,
 \qquad
 a_2a_1=0;
\]
see \cite[pp.~2--3]{DiazCabreraMuro2026}.  All modules in this section
are finite-dimensional.  Write
\[
 P_i=e_i\Lambda,
 \qquad
 S_i=P_i/\bigl(P_i\operatorname{rad}(\Lambda)\bigr).
\]
The following basis will be used repeatedly:
\[
\begin{array}{c|c|c|c}
e_1\Lambda e_1&e_1\Lambda e_2&e_2\Lambda e_1&e_2\Lambda e_2\\ \hline
e_1,b,b^2,b^3&a_1,ba_1&a_2,a_2b&e_2,a_2ba_1.
\end{array}
\]
In particular,
\begin{equation}
\label{eq:L2-elementary-relations}
 b^4=0,
 \qquad
 b^2a_1=0,
 \qquad
 a_2b^2=0,
\end{equation}
and \(b^2\) is central.  Since \(\Lambda\) is a finite-dimensional
symmetric algebra, \(\operatorname{mod}\Lambda\) is a Hom-finite
Krull--Schmidt Frobenius category; see
\cite[Chapter~I, Section~2]{Happel1988}.

Put
\[
 \T=\operatorname{proj}\Lambda,
 \qquad
 \mathcal E=\mathsf{mod}\,\T.
\]
Since the regular module \(\Lambda\) is an
additive generator of \(\T\), evaluation at
\(\Lambda\) is an exact equivalence
\begin{equation}
\label{eq:L2-Freyd-evaluation}
 \operatorname{ev}_\Lambda:
 \mathcal E\xrightarrow{\ \sim\ }\operatorname{mod}\Lambda,
 \qquad
 \mathcal M\longmapsto\mathcal M(\Lambda).
\end{equation}
For example,
\(\operatorname{ev}_\Lambda(h_P)=\Hom_\Lambda(\Lambda,P)\cong P\).
Thus \(\mathcal E\) is Frobenius, and we use
\eqref{eq:L2-Freyd-evaluation} to identify
\(\underline{\mathcal E}\) with
\(\underline{\operatorname{mod}}\Lambda\).

For \(\alpha\in\operatorname{Aut}(\Lambda)\), let
\[
 G_\alpha:\T\longrightarrow\T,
 \qquad P\longmapsto P_\alpha,
 \qquad\text{and}\qquad
 F_\alpha:\operatorname{mod}\Lambda\longrightarrow
 \operatorname{mod}\Lambda,
 \qquad M\longmapsto M_\alpha
\]
be restriction of scalars.  The underlying vector space of \(M_\alpha\)
is \(M\), with right action
\begin{equation}
\label{eq:L2-module-twist}
 m\mathbin{\cdot_\alpha}a=m\alpha(a)
 \qquad(m\in M,\ a\in\Lambda).
\end{equation}
On morphisms, \(F_\alpha\) is the identity on the underlying linear map.
It is therefore an exact autoequivalence.  We use the convention
\begin{equation}
\label{eq:L2-twist-composition}
 (M_\alpha)_\beta=M_{\alpha\circ\beta}.
\end{equation}

The autoequivalence \(G_\alpha\) induces
\((G_\alpha)_*(\mathcal M)=\mathcal M\circ G_\alpha^{-1}\) on
\(\mathcal E\).  Under evaluation, this becomes \(F_\alpha\).  Indeed,
\[
 (G_\alpha)_*(h_P)\cong h_{G_\alpha(P)}=h_{P_\alpha},
\]
whose value at \(\Lambda\) is \(P_\alpha\).  Exactness extends the
identification from representable presentations to all of \(\mathcal E\).
Consequently the following diagram commutes up to natural isomorphism:
\begin{equation}
\label{eq:L2-twist-evaluation-square}
\begin{tikzcd}[column sep=large,row sep=large]
\mathcal E\arrow[r,"(G_\alpha)_*"]
 \arrow[d,"\operatorname{ev}_\Lambda"']&
\mathcal E\arrow[d,"\operatorname{ev}_\Lambda"]\\
\operatorname{mod}\Lambda\arrow[r,"F_\alpha"']&
\operatorname{mod}\Lambda.
\end{tikzcd}
\end{equation}

Following \cite[Section~2]{DiazCabreraMuro2026}, we describe the
pre-triangulations by their inverse suspensions.  Thus
\(\Sigma_\alpha^{-1}=G_\alpha\) means that triangles have the form
\[
 P_\alpha\longrightarrow Q\longrightarrow R\longrightarrow P.
\]
Under \eqref{eq:L2-Freyd-evaluation}, the target of the inverse Heller
comparison
\(\Omega^3\to(\Sigma_\alpha^{-1})_*\) is exactly \(F_\alpha\).

\begin{definition}
\label{def:L2-reduced}
A module is \emph{projective-free} if it has no nonzero projective direct
summand.  A short exact sequence is \emph{reduced} if both of its end
terms are projective-free.
\end{definition}

We now record the results of D\'iaz Cabrera--Muro that will be used.  This
also fixes the comparisons.

\begin{enumerate}[label=\textup{(\roman*)},leftmargin=2.1em]
\item The automorphism
\begin{equation}
\label{eq:L2-nu}
 \nu(e_i)=e_i,
 \qquad
 \nu(a_i)=-a_i,
 \qquad
 \nu(b)=-b
\end{equation}
determines the algebraic triangulation \(\Delta_\nu\) on \(\T\)
\cite[Theorem~2.6 and (3.2)]{DiazCabreraMuro2026}.  Its
inverse suspension is \(G_\nu\), and we write its inverse Heller
comparison as
\begin{equation}
\label{eq:L2-algebraic-heller}
 \theta_\nu:\Omega^3\xrightarrow{\sim}F_\nu
 \quad\text{on }\underline{\operatorname{mod}}\Lambda.
\end{equation}

\item The automorphism
\begin{equation}
\label{eq:L2-phi}
 \varphi(e_i)=e_i,
 \qquad
 \varphi(a_i)=a_i,
 \qquad
 \varphi(b)=b+b^3
\end{equation}
represents a nontrivial class in \(\operatorname{Out}(\Lambda)\), but it
induces an exact autoequivalence of the stable module category which is
isomorphic to the identity.  We fix the isomorphism of triangulated functors
\begin{equation}
\label{eq:L2-stable-zeta}
 \zeta:\operatorname{Id}\xrightarrow{\sim}F_\varphi
 \quad\text{on }\underline{\operatorname{mod}}\Lambda
\end{equation}
constructed in \cite[Proposition~3.4]{DiazCabreraMuro2026}.  Its proof
also records that
\begin{equation}
\label{eq:L2-socle}
 \operatorname{soc}(\Lambda)=(b^3,a_2ba_1).
\end{equation}
This ideal annihilates every indecomposable nonprojective module.
Krull--Schmidt decomposition therefore gives
\begin{equation}
\label{eq:L2-socle-annihilation}
 M\operatorname{soc}(\Lambda)=0
\end{equation}
for every projective-free module \(M\).  Since \(\varphi\) differs from
the identity only by the \(b^3\)-term in \(\varphi(b)\), the chosen
natural isomorphism satisfies
\begin{equation}
\label{eq:L2-zeta-on-projective-free}
 F_\varphi(M)=M,
 \qquad
 \zeta_M=\underline{1_M}
\end{equation}
on every projective-free module.

\item Put
\[
 \sigma=\nu\circ\varphi.
\]
D\'iaz Cabrera--Muro construct a pre-triangulation \(\Delta_\sigma\) on
\(\T\), with inverse suspension \(G_\sigma\), whose inverse Heller
comparison is
\begin{equation}
\label{eq:L2-new-heller}
 (\theta_\sigma)_M
 =\zeta_{F_\nu(M)}\circ(\theta_\nu)_M:
 \Omega^3(M)\xrightarrow{\sim}F_\sigma(M).
\end{equation}
This is the construction in
\cite[Theorem~2.6 and Corollary~3.8]{DiazCabreraMuro2026}.  The resulting
pre-triangulation is neither algebraic nor topological by
\cite[Corollary~4.2]{DiazCabreraMuro2026}.
\end{enumerate}

For the rest of this section, set
\[
 \mathsf F=F_\varphi.
\]
By \eqref{eq:L2-twist-composition},
\begin{equation}
\label{eq:L2-composition-of-twists}
 G_\varphi\circ G_\nu=G_\sigma,
 \qquad
 \mathsf F\circ F_\nu=F_\sigma.
\end{equation}
Our task is to prove that \(\Delta_\sigma\) satisfies \(\mathrm{TR4}\).

\subsection{The relative exact-lifting problem}

By \eqref{eq:L2-new-heller}, the relative comparison from
\(\Delta_\nu\) to \(\Delta_\sigma\) has component
\begin{equation}
\label{eq:L2-relative-comparison}
 \tau_M=\zeta_{F_\nu(M)}:
 F_\nu(M)\xrightarrow{\sim}F_\sigma(M).
\end{equation}
Since \(F_\nu\) is an exact autoequivalence and
\(F_\sigma=\mathsf F\circ F_\nu\), it is enough, by
\cref{thm:relative-heller-criterion}, to lift
\(\zeta:\operatorname{Id}\to\mathsf F\) along every short exact sequence.
Thus, for
\[
 e:\quad
 0\longrightarrow A\xrightarrow{j}B\xrightarrow{s}C\longrightarrow0,
\]
we seek a commutative diagram
\begin{equation}
\label{eq:L2-zeta-lift}
\begin{tikzcd}[column sep=large]
0\arrow[r]&A\arrow[r,"j"]\arrow[d,"t_A"']&
B\arrow[r,"s"]\arrow[d,"t_B"']&
C\arrow[r]\arrow[d,"t_C"']&0\\
0\arrow[r]&\mathsf F(A)\arrow[r,"\mathsf F(j)"']&
\mathsf F(B)\arrow[r,"\mathsf F(s)"']&
\mathsf F(C)\arrow[r]&0
\end{tikzcd}
\end{equation}
with \(\underline{t_X}=\zeta_X\) for \(X=A,B,C\).

\subsection{Exact lifts}

Whenever a module is written as \(X=X'\oplus X''\), we denote the
canonical projections by \(\operatorname{pr}_{X'}\) and
\(\operatorname{pr}_{X''}\).

For a module \(M\), put
\[
 T_1(M)=\bigl(M/M\operatorname{rad}(\Lambda)\bigr)e_1
\]
and write
\[
 \pi_M:M\twoheadrightarrow T_1(M)
\]
for the canonical epimorphism.  The assignment \(M\mapsto T_1(M)\) is a
right-exact functor to \(\operatorname{add}(S_1)\), and the maps \(\pi_M\)
are natural.

For a module \(M\), let
\[
 \mu_M:M\longrightarrow M,
 \qquad
 \mu_M(x)=xb^2.
\]
This is a module homomorphism because \(b^2\) is central.  If \(M\) is
projective-free, then \(Mb^3=0\) by
\eqref{eq:L2-socle-annihilation}, and
\[
 \ker \pi_M=M\operatorname{rad}(\Lambda)+Me_2,
 \qquad
 \operatorname{rad}(\Lambda)b^2=\kk b^3,
 \qquad
 e_2b^2=0,
\]
so \(\mu_M\) vanishes on \(\ker\pi_M\).  The universal property of
the quotient therefore gives a unique module homomorphism \(\beta_M\)
for which the following diagram commutes:
\begin{equation}
\label{eq:L2-beta}
\begin{tikzcd}[column sep=large,row sep=large]
 M\arrow[r,two heads,"\pi_M"]
  \arrow[dr,"\mu_M"']&
 T_1(M)\arrow[d,dashed,"\beta_M"]\\
 &M.
\end{tikzcd}
\end{equation}
Thus \(\beta_M(\pi_M(x))=xb^2\) for every \(x\in M\).
For such \(M\), set
\begin{equation}
\label{eq:L2-top-flag}
 K_M=\ker\beta_M,
 \qquad
 G_M=K_M+\pi_M\bigl(\{x\in Me_1\mid xba_1=0\}\bigr).
\end{equation}
Thus \(K_M\subseteq G_M\subseteq T_1(M)\).

The following factorization will be used to construct the right component
of an exact lift.

\begin{lemma}
\label{lem:L2-top-flag-factorization}
Let \(C\) be projective-free.  Suppose that
\(u:T_1(C)\to T_1(C)\) satisfies
\[
 u(K_C)\subseteq K_C,
 \qquad
 u(G_C)\subseteq G_C.
\]
Then
\[
 \beta_C\circ u\circ\pi_C:C\longrightarrow C
\]
factors through an object of \(\operatorname{add}(P_1)\).
\end{lemma}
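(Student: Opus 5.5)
The plan is to use the Frobenius structure of \(\operatorname{mod}\Lambda\). An endomorphism \(g\) of \(C\) factors through an object of \(\operatorname{add}(P_1)\) as soon as it factors through the \(P_1\)-part of a projective cover. So we choose a projective cover
\[
 p:P=P_1^{m}\oplus P_2^{k}\twoheadrightarrow C,
\]
so that \(T_1(p)\) identifies \(T_1(P)=T_1(P_1^m)\cong S_1^m\) with \(T_1(C)\). Put \(g=\beta_C\circ u\circ\pi_C\). We will construct a module homomorphism \(h:C\to P_1^m\) with \(p|_{P_1^m}\circ h=g\). Since \(g(x)=yb^2\) for any \(y\in Ce_1\) with \(\pi_C(y)=u(\pi_C(x))\), the first candidate is as follows. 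Lift \(u\) to a linear map \(\tilde u:T_1(C)\to P_1^m e_1\) whose value is a top-lifting, i.e. \(\pi_{P}\circ\tilde u=T_1(p)^{-1}\circ u\). Then set
\[
 h_0(x)=\tilde u(\pi_C(x))\,b^2 .
\]
By construction \(p\circ h_0=g\), using \(b^2\) central. The whole proof then reduces to correcting \(h_0\) into a module map without changing \(p\circ h_0\).

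Next we test \(h_0\) on the generators \(e_1,e_2,a_1,a_2,b\). For \(e_2\) and \(a_2\) there is nothing to check, because \(\pi_C\) kills \(Ce_2\) and \(\operatorname{rad}\). For \(a_1\) the relation \(b^2a_1=0\) gives \(h_0(x)a_1=0=h_0(xa_1)\). The only failure is at \(b\): here \(h_0(xb)=0\), while \(h_0(x)b=\tilde u(\pi_Cx)b^3\) lies in \(\operatorname{soc}(P_1^m)\). The defect is therefore the linear map
\[
 x\mapsto \tilde u(\pi_C x)b^3,
\]
which factors through \(T_1(C)\). We must cancel it by adding a correction \(h_1:C\to P_1^m\) such that \(p\circ h_1=0\), i.e. \(h_1\) takes values in \(\ker p\cap P_1^m\), which contains \(\operatorname{soc}(P_1^m)\) by \eqref{eq:L2-socle-annihilation}. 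The correction \(h_1\) must also satisfy \(h_1(xb)-h_1(x)b=\tilde u(\pi_Cx)b^3\).

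The natural choice for \(h_1\) is a map defined on \(C\operatorname{rad}(\Lambda)\) with values in \(\operatorname{soc}(P_1^m)=(P_1^m)b^3\). Such a map sends \(xb\mapsto \tilde u(\pi_Cx)b^3\) and sends \(xba_1\), \(xa_1\) and similar elements to appropriate socle values, and it is extended by zero on a complement. This is well defined exactly when the assignment \(xb\mapsto\tilde u(\pi_C x)b^3\) respects the linear relations among the elements \(xb\) in \(C\). Using \(Cb^3=0\), a relation \(xb\in C\operatorname{rad}^2\) forces a condition of two kinds. If \(xb^2=0\), i.e. \(\pi_C x\in K_C\), the hypothesis \(u(K_C)\subseteq K_C\) makes \(\tilde u(\pi_Cx)b^2\in\ker p\) lift compatibly. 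If \(xba_1=0\), i.e. \(\pi_C(x)\in G_C\), we use \(u(G_C)\subseteq G_C\); here the alternative relation \(a_1a_2=b^2\) lets us route the correction through the \(a_1\)/\(a_2\) paths. Thus \(K_C\) and \(G_C\) are precisely the loci where the socle-valued defect can be absorbed, and their \(u\)-invariance is what we need.

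The main obstacle is this well-definedness check for \(h_1\). In particular, \(u\) mixes indecomposable summands of \(C\), so we cannot simply reduce to the finitely many indecomposable nonprojective \(\Lambda\)-modules. We would first try to choose a basis of \(T_1(C)\) adapted to the flag \(K_C\subseteq G_C\subseteq T_1(C)\), so that \(u\) is block upper triangular. We would then verify the relations block by block, using only the basis \eqref{eq:L2-elementary-relations} of \(\Lambda\) and \(C\operatorname{soc}(\Lambda)=0\). If this becomes unwieldy, the fallback is to treat each block \(\operatorname{Hom}(T_1(C'),T_1(C''))\) between indecomposable summands separately, using the classification of indecomposables from \cite{DiazCabreraMuro2026}.
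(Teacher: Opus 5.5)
There is a genuine gap. Your reduction is sound: producing a module map \(h:C\to P_1^m\) with \(p|_{P_1^m}\circ h=\beta_C\circ u\circ\pi_C\) suffices. The gap is the construction of \(h\), which you leave open as the ``main obstacle''. Moreover, the ansatz you propose for it cannot work.

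Since \(\tilde u\) is a top-lift of \(u\), the element \(\tilde u(t)b^3\) depends only on the top of \(\tilde u(t)\), and it vanishes only when \(u(t)=0\). A socle-valued correction must satisfy \(h_1(xb)=\tilde u(\pi_Cx)b^3\). It is therefore well defined on \((Ce_1)b\) only if \(u(\pi_Cx)=0\) whenever \(xb=0\). The invariance \(u(K_C)\subseteq K_C\) does not give this.

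Concretely, take \(C=S_1\) and \(u=1\). Then \(K_C=G_C=T_1(C)\) and \(\beta_C\circ u\circ\pi_C=0\). Yet the generator \(x\) has \(xb=0\), so you would need \(h_1(0)=b^3\). In this example only a non-socle correction, with a \(-b^2\) term, repairs \(h_0\). Also, a socle-valued \(h_1\) must vanish on \(Ce_2\), so it cannot send \(xa_1\) or \(xba_1\) to nonzero values.

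The missing idea is to lift \(w=\beta_C\circ u\) rather than \(u\). Because \(u(K_C)\subseteq K_C\), the map \(w\) kills \(K_C\), so one can write \(w=\sum_i f_i(\cdot)\,y_ib^2\) with \(f_i(K_C)=0\). For such \(f\), the assignment \(xb\mapsto f(\pi_Cx)\) is well defined on \((Ce_1)b\) and vanishes on \((Ce_1)b^2\) and \((Ce_2)a_2b\). This is the paper's \(r_f\); let \(\ell\) be a linear extension of it to \(Ce_1\).

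The remaining relation is the overlap \((Ce_1)b\cap(Ce_2)a_2\). If \(xb=za_2\), then \(xba_1=0\), hence \(\pi_Cx\in G_C\). Only this implication is available, not the equivalence you wrote. The paper therefore splits \(w=w_0+w_1\), where \(w_1(G_C)=0\), \(w_0(K_C)=0\), and \(w_0\) takes values in \(\beta_C(G_C)=\{yb^2\mid yba_1=0\}\). This is where \(u(G_C)\subseteq G_C\) enters.

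For \(w_1\), one may take \(\ell\) to vanish on \((Ce_2)a_2\), and your socle-valued correction then works. For \(w_0\), the correction is \(\ell(x)b^3\) on \(Ce_1\) together with the non-socle term \(z\mapsto\ell(za_2)\,ba_1\) on \(Ce_2\). The latter is killed after composing with \(v_y\), because \(yba_1=0\). This gives explicit factorizations \(d_{f,y}=v_y\circ h_\ell\) through \(P_1\). No adapted basis and no classification of indecomposables is needed.
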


\begin{proof}
	Put \(T=T_1(C)\), \(K=K_C\), and \(G=G_C\).  For a linear
	functional \(f:T\to\kk\) and \(y\in Ce_1\), set
	\[
	d_{f,y}(c)=f(\pi_C(c))yb^2
	\qquad(c\in C).
	\]
	We claim that \(d_{f,y}\) factors through \(P_1\) whenever either
	\[
	f(K)=0\ \text{ and }\ yba_1=0,
	\qquad\text{or}\qquad
	f(G)=0.
	\]
	
	To prove the claim, consider the linear functional
	\[
	r_f:(Ce_1)b\longrightarrow\kk,
	\qquad
	r_f(xb)=f(\pi_C(x)).
	\]
	It is well defined: if \(xb=0\), then
	$
	\beta_C(\pi_C(x))=xb^2=0,
	$
	so \(\pi_C(x)\in K\), and \(f(K)=0\) in both cases.  Moreover,
	\[
	(Ce_1)b^2\subseteq (Ce_1)b,
	\qquad
	(Ce_2)a_2b\subseteq (Ce_1)b,
	\]
	and
	\[
	r_f(xb^2)=f(\pi_C(xb))=0,
	\qquad
	r_f(za_2b)=f(\pi_C(za_2))=0
	\]
	for \(x\in Ce_1\) and \(z\in Ce_2\).
	
	Choose an extension
	$
	\ell\in\Hom_\kk(Ce_1,\kk)
	$
	of \(r_f\).  In the second case, the extension may be chosen to vanish
	on \((Ce_2)a_2\).  Indeed, put
	\[
	V=(Ce_1)b,
	\qquad
	W=(Ce_2)a_2.
	\]
	If \(xb=za_2\in V\cap W\), then
	$
	xba_1=za_2a_1=0,
	$
	and hence \(\pi_C(x)\in G\).  Since \(f(G)=0\), we have
	\(r_f(xb)=0\).  Thus \(r_f\) on \(V\) and the zero functional on \(W\)
	agree on \(V\cap W\).  Together they define a linear functional on
	\(V+W\), which may be extended to \(Ce_1\).
	
	Define a \(\kk\)-linear map
	\(h_\ell:C=Ce_1\oplus Ce_2\to P_1\) by
	\[
	\begin{aligned}
		h_\ell(x)&=\ell(x)b^3+\ell(xb)b^2
		&&(x\in Ce_1),\\
		h_\ell(z)&=\ell(za_2)ba_1
		&&(z\in Ce_2).
	\end{aligned}
	\]
	This map respects the vertex decompositions.  Since \(\ell\) extends
	\(r_f\), it vanishes on \((Ce_1)b^2\) and \((Ce_2)a_2b\).  Using
	\(a_1a_2=b^2\) and \eqref{eq:L2-elementary-relations}, we obtain
	\[
	\begin{aligned}
		h_\ell(xb)
		&=\ell(xb)b^3+\ell(xb^2)b^2
		=\ell(xb)b^3
		=h_\ell(x)b,\\
		h_\ell(xa_1)
		&=\ell(xa_1a_2)ba_1
		=\ell(xb^2)ba_1
		=0
		=h_\ell(x)a_1,\\
		h_\ell(za_2)
		&=\ell(za_2)b^3+\ell(za_2b)b^2
		=\ell(za_2)b^3
		=h_\ell(z)a_2.
	\end{aligned}
	\]
	Thus \(h_\ell\) is a \(\Lambda\)-module homomorphism.
	
	Let \(v_y:P_1\to C\) be the \(\Lambda\)-module homomorphism determined
	by \(v_y(e_1)=y\).  For \(x\in Ce_1\) and \(z\in Ce_2\), we have
	\[
	(v_y\circ h_\ell)(x+z)
	=\ell(x)yb^3+\ell(xb)yb^2+\ell(za_2)yba_1.
	\]
	The first term vanishes because \(Cb^3=0\).  The last term vanishes by
	\(yba_1=0\) in the first case and by
	\(\ell((Ce_2)a_2)=0\) in the second.  Since
	$
	\ell(xb)=r_f(xb)=f(\pi_C(x))
	$
	and
	$
	\pi_C(z)=0,
	$
	the middle term is \(d_{f,y}(x+z)\).  Consequently,
	$
	d_{f,y}=v_y\circ h_\ell
	$
	is a factorization by \(\Lambda\)-module homomorphisms through \(P_1\),
	proving the claim.
	
	Now put
	\[
	w=\beta_C\circ u:T\longrightarrow\beta_C(T).
	\]
	Then \(w(K)=0\) and \(w(G)\subseteq\beta_C(G)\).  Since \(\pi_C\) is
	surjective and \(\beta_C\circ\pi_C=\mu_C\),
	\[
	\beta_C(T)=\{yb^2\mid y\in Ce_1\},
	\qquad
	\beta_C(G)=
	\{yb^2\mid y\in Ce_1,\ yba_1=0\}.
	\]
	Let \(q_K:T\to T/K\) be the quotient map.  The restriction \(w|_G\)
	induces a linear map
	\[
	\overline w_G:G/K\longrightarrow\beta_C(G),
	\qquad
	g+K\longmapsto w(g).
	\]
	Since \(G/K\) is a subspace of \(T/K\), choose a linear extension
	$
	\overline w_0:T/K\longrightarrow\beta_C(G)
	$
	of \(\overline w_G\), and put
	$
	w_0=\overline w_0\circ q_K,
	w_1=w-w_0.
	$
	Then
	$
	w_0(K)=0,
	w_1(G)=0.
	$
	
	Since \(w_0\) factors through \(T/K\) and has image in
	\(\beta_C(G)\), while \(w_1\) factors through \(T/G\) and has image
	in \(\beta_C(T)\), we may choose functionals
	\(f_i,g_\ell:T\to\kk\) and elements \(y_i,z_\ell\in Ce_1\) such that
	\[
	\begin{aligned}
		w_0(t)&=\sum_{i=1}^r f_i(t)y_i b^2,
		& f_i(K)&=0,
		& y_i ba_1&=0,\\
		w_1(t)&=\sum_{\ell=1}^s g_\ell(t)z_\ell b^2,
		& g_\ell(G)&=0.
	\end{aligned}
	\]
	Consequently,
	\[
	\beta_C\circ u\circ\pi_C
	=\sum_{i=1}^r d_{f_i,y_i}
	+\sum_{\ell=1}^s d_{g_\ell,z_\ell}.
	\]
	Every summand factors through \(P_1\) by the claim above.  Hence
	\(\beta_C\circ u\circ\pi_C\) factors through an object of
	\(\operatorname{add}(P_1)\).
\end{proof}

\begin{lemma}
	\label{lem:L2-adapted-top-splitting}
	Let \(C\) be projective-free and let
	\(\rho:T_1(C)\twoheadrightarrow R\) be an epimorphism with
	\(R\in\operatorname{add}(S_1)\).  There is a section
	\(\eta:R\to T_1(C)\) such that
	\begin{equation}
		\label{eq:L2-adapted-top-section}
		\eta(\rho(K_C))\subseteq K_C,
		\qquad
		\eta(\rho(G_C))\subseteq G_C,
	\end{equation}
	and the endomorphism
	\begin{equation}
		\label{eq:L2-top-defect}
		\beta_C\circ\eta\circ\rho\circ\pi_C:C\longrightarrow C
	\end{equation}
	factors through an object of \(\operatorname{add}(P_1)\).
\end{lemma}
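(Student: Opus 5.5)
The plan is to reduce everything to linear algebra on the finite-dimensional space \(T=T_1(C)\), which carries the flag \(K_C\subseteq G_C\subseteq T\). The factorization assertion will then follow directly from \cref{lem:L2-top-flag-factorization}. Indeed, once \(\eta\) satisfies \eqref{eq:L2-adapted-top-section}, the endomorphism \(u=\eta\circ\rho\) of \(T\) satisfies
\[
 u(K_C)=\eta(\rho(K_C))\subseteq K_C,
 \qquad
 u(G_C)=\eta(\rho(G_C))\subseteq G_C.
\]
Hence \(\beta_C\circ u\circ\pi_C\), which is \eqref{eq:L2-top-defect}, factors through an object of \(\operatorname{add}(P_1)\). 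So the only real task is to construct a section \(\eta\) of \(\rho\) compatible with the flag.

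Since \(\Lambda\) is basic and \(S_1\) is one-dimensional over \(\kk\), modules in \(\operatorname{add}(S_1)\) are just \(\kk\)-vector spaces on which \(\operatorname{rad}\Lambda\) and \(e_2\) act by zero. Any \(\kk\)-linear map between them is therefore a module homomorphism, and a section means a linear map \(\eta:R\to T\) with \(\rho\circ\eta=1_R\).

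To build \(\eta\), I will use the filtration of \(R\) given by the images,
\[
 R_0=\rho(K_C)\subseteq R_1=\rho(G_C)\subseteq R_2=R .
\]
First choose a basis of \(R_0\) and lift each basis vector to an element of \(K_C\). This is possible because \(R_0=\rho(K_C)\), and it defines \(\eta\) on \(R_0\) with values in \(K_C\). Next extend to a basis of \(R_1\) and lift the new basis vectors into \(G_C\), using \(R_1=\rho(G_C)\). Finally extend to a basis of \(R\) and lift the remaining vectors arbitrarily through the surjection \(\rho\).

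The resulting linear map \(\eta\) satisfies \(\rho\circ\eta=1_R\), since each basis vector is sent to a lift of itself. It also satisfies \(\eta(R_0)\subseteq K_C\) and \(\eta(R_1)\subseteq G_C\): the lifts of the later basis vectors of \(R_1\) lie in \(G_C\), and \(K_C\subseteq G_C\). This is exactly \eqref{eq:L2-adapted-top-section}.

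I do not expect a genuine obstacle. The one point needing care is that the lifts must be chosen in the correct order: first into \(K_C\), then into \(G_C\). This ordering is what makes the section respect both subspaces simultaneously; an arbitrary section would not do, which is why the lemma is stated with the adapted condition. The factorization through \(\operatorname{add}(P_1)\) then follows from \cref{lem:L2-top-flag-factorization} as explained in the first paragraph.
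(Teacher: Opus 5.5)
Your proposal is correct and follows essentially the same route as the paper. The paper also notes that $\kk$-linear maps between objects of $\operatorname{add}(S_1)$ are automatically $\Lambda$-linear, and it chooses a section of $\rho|_{K_C}$, extends it to a section of $\rho|_{G_C}$ and then to all of $R$ (your ordered basis-lifting is the same construction), before applying \cref{lem:L2-top-flag-factorization} to $u=\eta\circ\rho$.
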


\begin{proof}
	Put \(T=T_1(C)\), \(K=K_C\), and \(G=G_C\).  The action of \(\Lambda\)
	on every object of \(\operatorname{add}(S_1)\) factors through
	\(\Lambda/\operatorname{rad}(\Lambda)\) and its \(e_1\)-component.
	Consequently every \(\kk\)-linear map between \(T\) and \(R\) is
	\(\Lambda\)-linear, and it is enough to work with finite-dimensional
	\(\kk\)-vector spaces.  Choose a section of
	\(\rho|_K:K\twoheadrightarrow\rho(K)\), extend it to a section of
	\(\rho|_G:G\twoheadrightarrow\rho(G)\), and then extend it to a
	section \(\eta:R\to T\) of \(\rho\).  Then
	\[
	\eta(\rho(K))\subseteq K,
	\qquad
	\eta(\rho(G))\subseteq G.
	\]
	Thus \(u=\eta\circ\rho\) preserves \(K\) and \(G\), and
	\cref{lem:L2-top-flag-factorization} shows that
	\(\beta_C\circ\eta\circ\rho\circ\pi_C\) factors through an object of
	\(\operatorname{add}(P_1)\).
\end{proof}

\begin{lemma}
\label{lem:L2-projective-comparison}
Let
\[
 P=P_1^{\oplus r_1}\oplus P_2^{\oplus r_2}.
\]
There is an isomorphism
\[
 \xi_P:P\xrightarrow{\sim}\mathsf F(P)
\]
which is the identity on \(\operatorname{rad}(P)\) and satisfies
\begin{equation}
\label{eq:L2-projective-difference}
 \xi_P(x)-x+xb^2\in Pb^3
 \qquad(x\in P).
\end{equation}
\end{lemma}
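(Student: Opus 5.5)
The plan is to construct \(\xi_P\) by hand, one indecomposable summand at a time, using that a homomorphism out of \(e_i\Lambda\) is fixed by the image of \(e_i\). We need \(\xi_P\) to respect the vertex decompositions, to be the identity on radicals, and to have the error term \eqref{eq:L2-projective-difference}. All three properties are compatible with direct sums, so it is enough to treat \(P=P_1\) and \(P=P_2\) and then set \(\xi_P=\bigoplus\xi_{P_i}\). By \eqref{eq:L2-module-twist}, \(\mathsf F(P_i)=(P_i)_\varphi\) has the same underlying space as \(P_i\) with action \(x\cdot_\varphi a=x\varphi(a)\). So a morphism \(P_i\to\mathsf F(P_i)\) is the same as the choice of an element \(m_i\in e_i\Lambda\), namely \(m_i=m_i\varphi(e_i)\), and it sends \(a\in e_i\Lambda\) to \(m_i\varphi(a)\).

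For \(P_2\), I take \(m_2=e_2\), so \(\xi(a)=\varphi(a)\) for \(a\in e_2\Lambda\). I will check on the basis \(e_2,a_2ba_1\) of \(e_2\Lambda e_2\) and \(a_2,a_2b\) of \(e_2\Lambda e_1\) that \(\varphi\) fixes each element. The only correction terms are \(a_2b^3\) and \(a_2b^3a_1\), and both vanish by \eqref{eq:L2-elementary-relations}, since \(a_2b^2=0\). Hence \(\xi\) is the identity on \(P_2\). The error condition then asks that \(xb^2\in P_2b^3\), and in fact \(xb^2=0\) for all \(x\in e_2\Lambda\): we have \(e_2b^2=0\), \(a_2b^2=0\), and the other basis elements end at vertex \(2\) or are killed in the same way.

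For \(P_1\), I take \(m_1=e_1-b^2\), which corrects the \(b^3\)-term in \(\varphi(b)\). Then \(\xi(e_1)=e_1-b^2\), and on the radical basis I compute:
\[
 \xi(b)=(e_1-b^2)(b+b^3)=b,\quad
 \xi(b^2)=(e_1-b^2)(b+b^3)^2=b^2,\quad
 \xi(b^3)=b^3,
\]
\[
 \xi(a_1)=(e_1-b^2)a_1=a_1,\quad
 \xi(ba_1)=(e_1-b^2)(b+b^3)a_1=ba_1.
\]
Each of these uses \(b^4=0\) and \(b^2a_1=0\). So \(\xi\) is the identity on \(\operatorname{rad}(P_1)\) and unipotent with respect to the radical filtration, hence bijective. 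Since it is a module map, it is an isomorphism.

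It remains to verify \eqref{eq:L2-projective-difference}. Write \(x=\lambda e_1+r\) with \(r\in\operatorname{rad}(P_1)\). Then
\[
 \xi(x)-x+xb^2=-\lambda b^2+\lambda b^2+rb^2=rb^2,
\]
and \(rb^2\in\kk b^3\subseteq P_1b^3\), because \(b\cdot b^2=b^3\) while \(b^3\cdot b^2\), \(a_1b^2\) and \(ba_1b^2\) all vanish. The only real point to watch is the choice of \(m_1\): the naive choice \(m_1=e_1\) gives \(\xi(b)=b+b^3\) and so fails to be the identity on the radical. Subtracting \(b^2\) is exactly what cancels this term, and the rest is routine bookkeeping with the basis table.
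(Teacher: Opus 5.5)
Your proposal is correct and follows the paper's proof essentially verbatim: the paper also defines \(\xi_{P_1}(x)=(e_1-b^2)\varphi(x)\), takes \(\xi_{P_2}\) to be the identity (using \(P_2b^3=0\)), and obtains the error term \(rb^2\in P_1b^3\) from \(x=\lambda e_1+r\). The only cosmetic difference is that you deduce bijectivity of \(\xi_{P_1}\) from unipotence, whereas the paper uses that \(e_1-b^2\) is invertible in \(e_1\Lambda e_1\) with inverse \(e_1+b^2\).
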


\begin{proof}
It is enough to consider \(P_1\) and \(P_2\).  Define
\[
 \xi_{P_1}(x)=(e_1-b^2)\varphi(x),
 \qquad
 \xi_{P_2}=1_{P_2}.
\]
For \(a\in\Lambda\),
\[
 \xi_{P_1}(xa)
 =(e_1-b^2)\varphi(x)\varphi(a)
 =\xi_{P_1}(x)\mathbin{\cdot_\varphi}a.
\]
Since \(e_1-b^2\) is invertible in \(e_1\Lambda e_1\), with inverse
\(e_1+b^2\), the first map is an isomorphism.  Moreover \(P_2b^3=0\), so
\(\mathsf F(P_2)=P_2\) and \(1_{P_2}:P_2\to\mathsf F(P_2)\) is a module
isomorphism.  The map \(\xi_{P_1}\) fixes
\(\operatorname{rad}(P_1)\).  Thus, if \(x=\lambda e_1+r\) with
\(r\in\operatorname{rad}(P_1)\), then
\[
 \xi_{P_1}(x)-x+xb^2=rb^2\in P_1b^3.
\]
On \(P_2\), right multiplication by \(b^2\) is zero.  Taking direct
sums gives \(\xi_P\).
\end{proof}

\begin{proposition}
\label{prop:L2-exact-lifting}
Let
\begin{equation}
\label{eq:L2-reduced-sequence}
 e:\quad
 0\longrightarrow A\xrightarrow{j}B\xrightarrow{s}C\longrightarrow0
\end{equation}
be reduced.  Then the stable natural isomorphism
\(\zeta:\operatorname{Id}\xrightarrow{\sim}\mathsf F\) admits an exact
lift along \(e\).
\end{proposition}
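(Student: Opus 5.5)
The plan is to build the lift by hand, starting from the identity on the underlying vector spaces and correcting only on the projective part of \(B\). Since \(e\) is reduced, \(A\) and \(C\) are projective-free. By \eqref{eq:L2-socle-annihilation} they are annihilated by \(b^3\), so \(\mathsf F(A)=A\), \(\mathsf F(C)=C\), and \(\zeta_A=\underline{1_A}\), \(\zeta_C=\underline{1_C}\) by \eqref{eq:L2-zeta-on-projective-free}.

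Write \(B=B'\oplus P\) with \(B'\) projective-free and \(P=P_1^{\oplus r_1}\oplus P_2^{\oplus r_2}\). Then \(\mathsf F(B)=B'\oplus\mathsf F(P)\), and \(\zeta_B\) is \(\underline{1}\) on \(B'\); there is no stable condition on \(P\). As a first candidate take
\[
 t_B=1_{B'}\oplus\xi_P,
\]
with \(\xi_P\) from \cref{lem:L2-projective-comparison}. By \eqref{eq:L2-projective-difference},
\[
 t_B(x)=x-\operatorname{pr}_P(x)b^2+(\text{an element of }Pb^3).
\]
The \(Pb^3\)-term is killed by \(s\) and lies in \(\operatorname{soc}\), so it should be harmless. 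The main error term is therefore \(x\mapsto-\operatorname{pr}_P(x)b^2\).

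Next I push this error to the two ends. On the right, the induced map would be \(t_C=1_C+d_C\) with \(d_C(s(x))=-s(\operatorname{pr}_P(x)b^2)\). Only the \(e_1\)-top of \(P\) contributes, since \(b^2\) kills \(\operatorname{rad}\) up to \(b^3\) and kills \(P_2\). So \(d_C\) should factor as \(-\beta_C\circ u\circ\pi_C\), where \(u\) is an endomorphism of \(T_1(C)\) built from the composite
\[
 \rho:T_1(C)\twoheadrightarrow R=T_1(P_1^{\oplus r_1})
\]
(via \(T_1(s)\) and the splitting) together with a section \(\eta\). The freedom is in the choice of the complement \(P\subseteq B\), equivalently the choice of lifts of the top of \(P\). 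I will choose this complement using the section \(\eta\) of \cref{lem:L2-adapted-top-splitting}. That lemma then says \(d_C\) factors through \(\operatorname{add}(P_1)\), so \(\underline{t_C}=\zeta_C\).

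On the left, I need \(t_B\circ j=j\circ t_A\) with \(t_A-1_A\) factoring through a projective. Equivalently, I need \(\operatorname{pr}_P(j(a))b^2\in j(A)\), modulo corrections of \(t_B\) by maps factoring through projectives. Here the flag conditions in \eqref{eq:L2-adapted-top-section} should be used. Elements of \(j(A)\) map under \(s\) to zero, so their \(P\)-top components are constrained. The condition \(\eta\rho(K_C)\subseteq K_C\) should force the \(b^2\)-multiple to vanish modulo \(j(A)\). The condition involving \(G_C\) should handle the \(ba_1\)-interaction between \(Ce_1b\) and \(Ce_2a_2\). I would then define \(t_A\) as the restriction of \(t_B\), after subtracting from \(t_B\) a map \(B\to P\to\mathsf F(B)\) that does not change \(\zeta_B\). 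Finally I check that \(t_A-1_A\) factors through \(P\) by a construction like the \(h_\ell\) of \cref{lem:L2-top-flag-factorization}.

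The main obstacle is this left-hand step. I must make the adapted choice of complement \(P\) (from \cref{lem:L2-adapted-top-splitting}) simultaneously guarantee that the correction restricts to \(j(A)\) and is stably trivial there. The right-hand end is essentially already packaged by the two preceding lemmas. If the direct approach fails, I would fall back on adjusting \(t_B\) by \(w\circ s\) with \(w:C\to\mathsf F(B)\), in the spirit of the proof of \cref{lem:endpoint-defect}. This moves the defect between the ends, and the remaining defect could then be killed by \cref{lem:L2-top-flag-factorization}.
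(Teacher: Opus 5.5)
The gap is the left-hand step. You call it the main obstacle and leave it open, but it closes on the nose with $t_A=1_A$, and the tools you propose for it are the wrong ones.

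Since $A$ is projective-free, $j_P=\operatorname{pr}_P\circ j$ has image in $\operatorname{rad}(P)$. Otherwise some composite $A\to P\to P_i$ onto an indecomposable summand would be surjective by Nakayama's lemma, and would split because $P_i$ is projective, giving a projective summand of $A$. As $\xi_P$ is the identity on $\operatorname{rad}(P)$ (\cref{lem:L2-projective-comparison}) and $\mathsf F$ leaves underlying maps unchanged, $(1_{B'}\oplus\xi_P)\circ j=\mathsf F(j)$.

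Your reformulation ``$\operatorname{pr}_P(j(a))b^2\in j(A)$'' comes from discarding the $Pb^3$-term. For $x\in\operatorname{rad}(P)$ that term equals exactly $xb^2$ and cancels the error. So it is not harmless on the left; it is the whole point.

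The same containment $j_P(A)\subseteq\operatorname{rad}(P)$ makes $\pi_P\circ\operatorname{pr}_P$ vanish on $j(A)$. That is what defines $\rho$ through $\rho\circ\pi_C\circ s=\pi_P\circ\operatorname{pr}_P$, equivalently $\rho\circ T_1(s)=\operatorname{pr}_R$. It is also what makes your $d_C$ well defined on $C$. You use both facts without this justification. The conditions on $K_C$ and $G_C$ play no role on the left; they are used only inside \cref{lem:L2-top-flag-factorization}.

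Your right-hand plan is sound and is the paper's construction in other coordinates, but you should spell out why the adapted complement exists. Because $T_1(s)$ splits in $\operatorname{add}(S_1)$ and $\rho\circ T_1(s)=\operatorname{pr}_R$, the section $\eta$ of \cref{lem:L2-adapted-top-splitting} has the form $\eta=T_1(s)\circ\binom{\gamma}{1_R}$ with $\gamma:R\to T_1(B')$. Projectivity of $P$ then gives $\psi:P\to B'$ with $\pi_{B'}\circ\psi=\gamma\circ\pi_P$.

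Replace $P$ by the graph $P'$ of $\psi$. This keeps $B'$ as the kernel of the projection, so $\rho$ is unchanged. Using $B'b^3=0$, the map $1_{B'}\oplus\xi_{P'}$ becomes, in the old coordinates,
\[
 t_B=\begin{bmatrix}1_{B'}&-\chi\\0&\xi_P\end{bmatrix},
 \qquad
 \chi(x)=\psi(x)b^2=\beta_{B'}\bigl(\gamma(\pi_P(x))\bigr).
\]
This is exactly the paper's middle component; the paper keeps $B=B'\oplus P$ and adds the off-diagonal correction $-\chi$ directly.

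Then $\mathsf F(s)\circ t_B-s=-d_C\circ s$ with $d_C=\beta_C\circ\eta\circ\rho\circ\pi_C$, which factors through $\operatorname{add}(P_1)$. Hence $(1_A,t_B,1_C-d_C)$ is the required exact lift. With the one-line left-hand argument above, you do not need the fallback via \cref{lem:endpoint-defect}.
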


\begin{proof}
\smallskip\noindent\emph{Step 1: The top of the projective summand.}
Write \(B=N\oplus P\), where \(N\) is projective-free and
\[
 P=P_1^{\oplus r_1}\oplus P_2^{\oplus r_2}.
\]
Write \(j_N=\operatorname{pr}_N\circ j\) and
\(j_P=\operatorname{pr}_P\circ j\).  Then
\begin{equation}
\label{eq:L2-radical-component}
 j_P(A)\subseteq\operatorname{rad}(P).
\end{equation}
Indeed, otherwise a component \(A\to P_i\) would be surjective by
Nakayama's lemma.  Since \(P_i\) is projective, it would split and give a
projective direct summand of \(A\).

Put \(R=T_1(P)\).  By \eqref{eq:L2-radical-component},
\(\pi_P\circ\operatorname{pr}_P\) annihilates \(j(A)\).  Hence there is a
unique epimorphism \(r:C\twoheadrightarrow R\) such that
\begin{equation}
\label{eq:L2-module-quotient}
 r\circ s=\pi_P\circ\operatorname{pr}_P.
\end{equation}
Since \(R\operatorname{rad}(\Lambda)=0\) and \(Re_2=0\), the map \(r\)
annihilates
\[
 C\operatorname{rad}(\Lambda)+Ce_2=\ker\pi_C.
\]
It therefore induces a unique epimorphism
\[
 \rho:T_1(C)\twoheadrightarrow R
\]
such that \(r=\rho\circ\pi_C\).  Thus
\begin{equation}
\label{eq:L2-top-quotient}
\rho\circ\pi_C\circ s
 =\pi_P\circ\operatorname{pr}_P.
\end{equation}
The quotient maps constructed above are summarized by the commutative
diagram
\begin{equation*}
\begin{tikzcd}[column sep=large,row sep=large]
A\arrow[r,"j"]&
N\oplus P\arrow[r,two heads,"s"]
 \arrow[d,two heads,"\pi_P\circ\operatorname{pr}_P"']&
C\arrow[dl,two heads,"r"]
 \arrow[d,two heads,"\pi_C"]\\
&R=T_1(P)&T_1(C)\arrow[l,two heads,"\rho"]
\end{tikzcd}
\end{equation*}
By naturality of \(\pi\),
\[
 \pi_C\circ s
 =T_1(s)\circ(\pi_N\oplus\pi_P).
\]
Since \(\pi_N\oplus\pi_P\) is an epimorphism,
\eqref{eq:L2-top-quotient} is equivalent to
\begin{equation}
\label{eq:L2-top-square}
 \rho\circ T_1(s)=\operatorname{pr}_R.
\end{equation}

\smallskip\noindent\emph{Step 2: The left and middle components.}
Choose the section \(\eta:R\to T_1(C)\) supplied by
\cref{lem:L2-adapted-top-splitting}.  The epimorphism \(T_1(s)\) splits
in the semisimple category \(\operatorname{add}(S_1)\), so \(\eta\)
lifts to a map
\[
 \widetilde\eta:R\longrightarrow T_1(N)\oplus R.
\]
Equation \eqref{eq:L2-top-square} gives
\[
 \operatorname{pr}_R\circ\widetilde\eta
 =\rho\circ T_1(s)\circ\widetilde\eta
 =\rho\circ\eta
 =1_R.
\]
Hence there is a unique morphism
\[
 \gamma:R\longrightarrow T_1(N)
\]
such that
\[
 \widetilde\eta=
 \begin{bmatrix}\gamma\\1_R\end{bmatrix}.
\]
Consequently,
\begin{equation}
\label{eq:L2-top-lift}
 T_1(s)\circ
 \begin{bmatrix}\gamma\\1_R\end{bmatrix}=\eta.
\end{equation}

Set
\[
 \chi=\beta_N\circ\gamma\circ\pi_P:P\longrightarrow N.
\]
Let \(\xi_P\) be the isomorphism from
\cref{lem:L2-projective-comparison}.  Using \(\mathsf F(N)=N\), define
\begin{equation}
\label{eq:L2-middle-comparison}
 t_B=
 \begin{bmatrix}
  1_N&-\chi\\
  0&\xi_P
 \end{bmatrix}:
 N\oplus P\longrightarrow N\oplus\mathsf F(P)=\mathsf F(B).
\end{equation}
By \eqref{eq:L2-radical-component},
\(\pi_P\circ j_P=0\), and hence \(\chi\circ j_P=0\).  Moreover,
\(\xi_P\circ j_P=\mathsf F(j_P)\), because \(\xi_P\) is the identity
on \(\operatorname{rad}(P)\) and \(\mathsf F\) leaves the underlying
\(\kk\)-linear maps unchanged.  Since \(\mathsf F(A)=A\) and
\(\mathsf F(N)=N\), we obtain
\begin{equation}
\label{eq:L2-left-comparison}
 t_B\circ j
 =
 \begin{bmatrix}
  j_N-\chi\circ j_P\\
  \xi_P\circ j_P
 \end{bmatrix}
 =
 \begin{bmatrix}
  \mathsf F(j_N)\\
  \mathsf F(j_P)
 \end{bmatrix}
 =\mathsf F(j).
\end{equation}
The decompositions \(B=N\oplus P\) and
\(\mathsf F(B)=N\oplus\mathsf F(P)\) identify both objects with \(N\) in
the stable category.  Under these identifications,
\(\underline{t_B}=1_{\underline N}\).  Naturality of \(\zeta\) and
\eqref{eq:L2-zeta-on-projective-free} show that \(\zeta_B\) is represented
by the same identity.  Hence
\begin{equation}
\label{eq:L2-stable-middle-comparison}
 \underline{t_B}=\zeta_B.
\end{equation}

\smallskip\noindent\emph{Step 3: The right component.}
Since \(C\) is projective-free, \(\mathsf F(C)=C\) by
\eqref{eq:L2-zeta-on-projective-free}.  Thus \(\mathsf F(s)\circ t_B\)
and \(s\) have the same codomain.
Consider the square
\begin{equation}
\label{eq:L2-beta-compatibility}
\begin{tikzcd}[column sep=huge,row sep=large]
 T_1(N)\oplus P
  \arrow[r,"T_1(s)\circ(1\oplus\pi_P)"]
  \arrow[d,"\beta_N\oplus\mu_P"']&
 T_1(C)\arrow[d,"\beta_C"]\\
 N\oplus P\arrow[r,"s"']&C.
\end{tikzcd}
\end{equation}
It commutes: after precomposition with the epimorphism
\(\pi_N\oplus1_P:N\oplus P\to T_1(N)\oplus P\), both paths are
\(\mu_C\circ s\).  Precomposing this square with
\[
 \begin{bmatrix}\gamma\circ\pi_P\\1_P\end{bmatrix}:
 P\longrightarrow T_1(N)\oplus P
\]
and using \eqref{eq:L2-top-lift} gives
\begin{equation}
\label{eq:L2-beta-top-lift}
 \beta_C\circ\eta\circ\pi_P
 =
 s\circ
 \begin{bmatrix}
  \chi\\ \mu_P
 \end{bmatrix}.
\end{equation}

Define
\begin{equation}
\label{eq:L2-right-defect}
 d_C=\beta_C\circ\eta\circ\rho\circ\pi_C:C\longrightarrow C.
\end{equation}
By \cref{lem:L2-adapted-top-splitting}, \(d_C\) factors through an object
of \(\operatorname{add}(P_1)\).  By
\eqref{eq:L2-projective-difference}, for \(x\in P\) write
\[
 \xi_P(x)-x=-xb^2+z_x
 \qquad\text{for some }z_x\in Pb^3.
\]
The functor \(\mathsf F\) leaves the underlying linear maps unchanged.
Thus, for \(n\in N\) and \(x\in P\),
\[
\begin{aligned}
 \bigl(\mathsf F(s)\circ t_B-s\bigr)(n,x)
 &=s\bigl(-\chi(x),\xi_P(x)-x\bigr)\\
 &=-s\bigl(\chi(x),xb^2\bigr),
\end{aligned}
\]
where the last equality follows from
\(s(Pb^3)\subseteq Cb^3=0\).  Therefore
\[
 \mathsf F(s)\circ t_B-s
 =
 -s\circ
 \begin{bmatrix}
  \chi\\ \mu_P
 \end{bmatrix}
 \circ\operatorname{pr}_P.
\]
Equations \eqref{eq:L2-beta-top-lift} and
\eqref{eq:L2-top-quotient} therefore give
\[
\begin{aligned}
 \mathsf F(s)\circ t_B-s
 &=-\beta_C\circ\eta\circ\pi_P\circ\operatorname{pr}_P\\
 &=-\beta_C\circ\eta\circ\rho\circ\pi_C\circ s\\
 &=-d_C\circ s.
\end{aligned}
\]

Together with \eqref{eq:L2-left-comparison}, this gives a morphism of
short exact sequences
\begin{equation}
\label{eq:L2-final-exact-lift}
\begin{tikzcd}[column sep=large]
0\arrow[r]&A\arrow[r,"j"]\arrow[d,"1_A"']&
B\arrow[r,"s"]\arrow[d,"t_B"']&
C\arrow[r]\arrow[d,"1_C-d_C"']&0\\
0\arrow[r]&\mathsf F(A)=A\arrow[r,"\mathsf F(j)"']&
\mathsf F(B)\arrow[r,"\mathsf F(s)"']&
\mathsf F(C)=C\arrow[r]&0.
\end{tikzcd}
\end{equation}
Since \(d_C\) factors through a projective module,
\(\underline{1_C-d_C}=\underline{1_C}=\zeta_C\).  Together with
\eqref{eq:L2-stable-middle-comparison} and
\(\zeta_A=\underline{1_A}\), this proves that
\eqref{eq:L2-final-exact-lift} is an exact lift of \(\zeta\).
\end{proof}

\begin{theorem}
\label{thm:L2-octahedral}
For every algebraically closed field \(\kk\), the
D\'iaz Cabrera--Muro pre-triangulation \(\Delta_\sigma\) on
\(\operatorname{proj}P(\mathbb L_2)\) satisfies \(\mathrm{TR4}\).
Consequently,
\((\operatorname{proj}P(\mathbb L_2),\Delta_\sigma)\) is an exotic
triangulated category.
\end{theorem}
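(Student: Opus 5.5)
We will apply \cref{thm:relative-heller-criterion} with \(\Delta_0=\Delta_\nu\), which is algebraic and hence satisfies \(\mathrm{TR4}\), and \(\Delta_1=\Delta_\sigma\). By \eqref{eq:L2-relative-comparison}, the relative comparison is \(\tau_M=\zeta_{F_\nu(M)}\). Since \(F_\nu\) is an exact autoequivalence of \(\mathcal E\) (identified with \(\operatorname{mod}\Lambda\) via \eqref{eq:L2-Freyd-evaluation} and \eqref{eq:L2-twist-evaluation-square}) and \(F_\sigma=\mathsf F\circ F_\nu\), an exact lift of \(\tau\) along \(e\) is the same as an exact lift of \(\zeta:\operatorname{Id}\to\mathsf F\) along \(F_\nu(e)\). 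So it suffices to lift \(\zeta\) along every short exact sequence of \(\Lambda\)-modules, as in \eqref{eq:L2-zeta-lift}.

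\cref{prop:L2-exact-lifting} handles reduced sequences, so the main step is to reduce an arbitrary short exact sequence to a reduced one. Write \(A=A'\oplus Q_A\) and \(C=C'\oplus Q_C\), where \(A',C'\) are projective-free and \(Q_A,Q_C\) are projective. Since \(\Lambda\) is self-injective, \(Q_A\) is injective and \(Q_C\) is projective. Hence the inclusion of \(Q_A\) into \(B\) splits off, and so does a lift of \(Q_C\). The sequence \(e\) is therefore isomorphic to the direct sum of a reduced sequence \(0\to A'\to B'\to C'\to 0\) with the split sequences \(0\to Q_A\to Q_A\to0\to0\) and \(0\to0\to Q_C\to Q_C\to0\). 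Exact liftability is invariant under isomorphism of short exact sequences (the remark after \cref{def:exact-lift}), and it is additive under direct sums, because \(\mathsf F\) is additive and \(\zeta\) is natural, so \(\zeta\) of a direct sum is the direct sum of the components after fixing the identifications. It therefore remains to treat the split projective pieces.

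For \(0\to Q\xrightarrow{1}Q\to0\to0\) with \(Q\) projective, take \(t_Q=\xi_Q\) from \cref{lem:L2-projective-comparison}, or indeed any isomorphism \(Q\to\mathsf F(Q)\). Both squares commute trivially, and every map between projectives is stably zero, so these components represent \(\zeta_Q=0\). The same applies to \(0\to0\to Q\to Q\to0\).

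Combining the three pieces gives an exact lift of \(\zeta\) along every short exact sequence, hence of \(\tau\). \cref{thm:relative-heller-criterion} then shows that \(\Delta_\sigma\) satisfies \(\mathrm{TR4}\). Exoticity follows from \cite[Corollary~4.2]{DiazCabreraMuro2026}, which says \(\Delta_\sigma\) is neither algebraic nor topological.

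The only point needing care is the direct-sum compatibility in the stable category. The stable components of the summed lift must equal \(\zeta\) on the summed objects. This holds by naturality of \(\zeta\) applied to the inclusion and projection morphisms of each summand, together with the identification \(\mathsf F(X\oplus Y)=\mathsf F(X)\oplus\mathsf F(Y)\).
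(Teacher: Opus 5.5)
Your proof is correct and follows the paper's argument essentially step for step. You reduce to lifting $\zeta$ along $F_\nu(e)$, split each short exact sequence into a reduced part (handled by \cref{prop:L2-exact-lifting}) plus split projective pieces, and conclude with \cref{thm:relative-heller-criterion} and \cite[Corollary~4.2]{DiazCabreraMuro2026}. The only cosmetic difference is that the paper uses zero vertical maps on the projective pieces rather than $\xi_Q$; either choice works because projectives vanish in the stable category.
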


\begin{proof}
Since \(\Lambda\) is symmetric, projective modules are injective.  It
follows by splitting off projective summands from the two end terms that
every short exact sequence is isomorphic to a finite direct sum of a
reduced sequence and sequences of the two forms
\[
 0\longrightarrow P\xrightarrow{1_P}P\longrightarrow0\longrightarrow0,
 \qquad
 0\longrightarrow0\longrightarrow P\xrightarrow{1_P}P\longrightarrow0,
\]
with \(P\) projective.  By \cref{prop:L2-exact-lifting}, \(\zeta\) admits
an exact lift along the reduced summand.  On either displayed summand, the
zero vertical maps give
an exact lift because projective modules vanish in the stable category.
Exact lifts are preserved by finite direct sums and isomorphisms of short
exact sequences.  Hence \(\zeta\) admits an exact lift along every short
exact sequence.

Now let \(e\) be any short exact sequence and apply this conclusion to
\(F_\nu(e)\).  By
\eqref{eq:L2-composition-of-twists} and
\eqref{eq:L2-relative-comparison}, this gives an exact lift
\[
 F_\nu(e)\longrightarrow F_\sigma(e)
\]
whose stable component at each term \(M\) of \(e\) is
\(\zeta_{F_\nu(M)}=\tau_M\).  The relative criterion
\cref{thm:relative-heller-criterion} therefore proves
\(\mathrm{TR4}\) for \(\Delta_\sigma\).  Its exoticity follows from
\cite[Corollary~4.2]{DiazCabreraMuro2026}.
\end{proof}

\section{Separable descent and applications}
\label{sec:transfer}

An exact functor induces a homomorphism between the local obstruction
groups.  We show that this homomorphism is split injective when the functor
is separable, and use this to prove the descent theorem and its applications.

\begin{definition}[{\normalfont cf.~\cite[Definition~3.7]{Balmer2011}}]
\label{def:separable}
Let \(\D\) and \(\C\) be additive categories.  An additive functor
\(F:\D\to\C\) is \emph{separable} if there are group homomorphisms
\[
 H_{X,Y}:\C(F(X),F(Y))\longrightarrow\D(X,Y)
\]
such that
\[
 H_{X,Y}(F(a))=a
\]
for every \(a:X\to Y\), and
\begin{equation}
\label{eq:separable-bimodule-identity}
 H_{X',Y'}\bigl(F(v)\circ b\circ F(u)\bigr)
 =v\circ H_{X,Y}(b)\circ u
\end{equation}
whenever \(u:X'\to X\), \(b:F(X)\to F(Y)\), and \(v:Y\to Y'\).
\end{definition}

For an initial square \(q\) in a pre-triangulated category, we use the
notation of \cref{not:cone-data}, with subscripts added when necessary:
\[
\begin{gathered}
 A_q=Y\oplus X',\qquad B_q=Z\oplus Y',\qquad
 C_q=\Sigma(X)\oplus Z',\\
 \alpha_q=
 \begin{bmatrix}-v&0\\ g&u'\end{bmatrix},\qquad
 \beta_q^0=
 \begin{bmatrix}-w&0\\0&v'\end{bmatrix},\qquad
 \gamma_q=
 \begin{bmatrix}-\Sigma(u)&0\\ \Sigma(f)&w'\end{bmatrix}.
\end{gathered}
\]
We choose a distinguished triangle
\[
 A_q\xrightarrow{\alpha_q}B_q\xrightarrow{s_q}E_q
 \xrightarrow{t_q}\Sigma(A_q)
\]
when forming the obstruction group of \(q\).

\begin{lemma}
\label{lem:functorial-local-obstruction}
Let \(\D\) and \(\C\) be pre-triangulated categories, and let
\(F:\D\to\C\) be exact.  For every initial square \(q\) in \(\D\), there
is a homomorphism
\[
 \Phi_q:\operatorname{Obs}_{\D}(q)
 \longrightarrow\operatorname{Obs}_{\C}(F(q))
\]
which sends \(o_{\D}(q)\) to \(o_{\C}(F(q))\).  If \(F\) is separable,
then \(\Phi_q\) is split injective.
\end{lemma}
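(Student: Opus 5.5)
We construct \(\Phi_q\) by applying \(F\) to every datum defining \(\operatorname{Obs}_{\D}(q)\) and then correcting by the suspension comparison \(\epsilon\).

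\emph{The image square.} Let \(F(q)\) denote the image initial square. Its rows are the images of the two distinguished rows of \(q\), with third maps \(\epsilon\circ F(w)\) and \(\epsilon\circ F(w')\); these rows are distinguished by \cref{def:exact-functor}. Its vertical maps are \(F(f)\) and \(F(g)\). Since \(F\) is additive, we identify \(F(A_q)=A_{F(q)}\) and \(F(B_q)=B_{F(q)}\), and \(F(\alpha_q)=\alpha_{F(q)}\). On the third terms, \(\epsilon\oplus 1\) identifies \(F(C_q)\) with \(C_{F(q)}\).

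\emph{Compatibility of the cone data.} Naturality of \(\epsilon\) gives \(\epsilon_{X'}\circ F(\Sigma f)=\Sigma'F(f)\circ\epsilon_X\) and the analogous identity for \(u\). Together with the identity \((\epsilon\oplus1)\circ F(\beta^0_q)=\beta^0_{F(q)}\), this yields
\[
\gamma_{F(q)}\circ(\epsilon_X\oplus 1)=\epsilon_{A_q}\circ F(\gamma_q).
\]
For the completion of \(\alpha_{F(q)}\) we take the image of the chosen triangle,
\[
F(A_q)\to F(B_q)\xrightarrow{F(s_q)}F(E_q)\xrightarrow{\epsilon_{A_q}\circ F(t_q)}\Sigma'F(A_q).
\]
This is legitimate by \cref{prop:local-choice-independence}. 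Finally, the correction \(\epsilon\oplus1\) is the identity on the \(Z'\)-summand, so it carries \(F(J_q)\) into \(J_{F(q)}\).

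\emph{Definition of \(\Phi_q\).} Define the map on \(\mathcal O^1\) by
\[
(\bar b,r)\mapsto\bigl(\overline{(\epsilon_X\oplus1)\circ F(b)},\ \epsilon_{A_q}\circ F(r)\bigr),
\]
and the map on \(\mathcal O^0\) by \(\phi\mapsto(\epsilon_X\oplus1)\circ F(\phi)\). The two compatibility identities above make this square commute with the \(\delta\)'s, so it descends to the cokernels. It sends \((\bar\beta^0_q,t_q)\) to \((\bar\beta^0_{F(q)},t_{F(q)})\), and hence sends \(o_{\D}(q)\) to \(o_{\C}(F(q))\).

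\emph{Retraction for separable \(F\).} For splitting we define \(\Psi\) in the opposite direction. Send \((\bar b',r')\) to
\[
\Bigl(\overline{H\bigl((\epsilon_X^{-1}\oplus1)\circ b'\bigr)},\ H\bigl(\epsilon_{A_q}^{-1}\circ r'\bigr)\Bigr).
\]
Here \(H\) is the separability retraction of \cref{def:separable}, applied to morphisms \(F(B_q)\to F(C_q)\) and \(F(E_q)\to F(\Sigma A_q)=F(\Sigma(A_q))\). Then \(\Psi\circ\Phi=1\) on \(\mathcal O^1\) because \(H\circ F=1\).

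Three properties must be checked for \(\Psi\). First, \(\Psi\) preserves the \(J\)-subgroups. This follows from the bimodule identity \eqref{eq:separable-bimodule-identity} applied with the biproduct inclusions and projections: the \((2,1)\)-entry of \(H(b')\) is \(H\) of the \((2,1)\)-entry of \(b'\), since inclusions and projections lie in the image of \(F\). Second, \(\Psi\) carries \(\operatorname{im}\delta_{F(q)}\) into \(\operatorname{im}\delta_q\). Given \(\phi'\colon F(E_q)\to F(C_q)\), set \(\phi=H\bigl((\epsilon^{-1}\oplus1)\circ\phi'\bigr)\). Then \(\phi\circ s_q=H\bigl((\epsilon^{-1}\oplus 1)\circ\phi'\circ F(s_q)\bigr)\) by the bimodule identity with \(u=s_q\). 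Third, and this is the main obstacle, we must control \(\gamma_q\circ\phi\). The map \(\epsilon_{A_q}\circ F(\gamma_q)\circ(\epsilon^{-1}\oplus1)\) equals \(\gamma_{F(q)}\), so \(\gamma_{F(q)}\circ\phi'=\epsilon_{A_q}\circ F(\gamma_q)\circ(\epsilon^{-1}\oplus1)\circ\phi'\). Applying \(H\) after \(\epsilon_{A_q}^{-1}\) and using the bimodule identity with \(v=\gamma_q\) gives \(\gamma_q\circ\phi\).

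These checks show that \(\Psi\) descends to a homomorphism of cokernels. Since \(\Psi\circ\Phi_q=1\) already holds on \(\mathcal O^1\), we get \(\Psi\circ\Phi_q=1\) on \(\operatorname{Obs}_{\D}(q)\), so \(\Phi_q\) is split injective.

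All steps use only naturality of \(\epsilon\) and the fact that the biproduct structure maps are images under \(F\), so the bookkeeping of \(\epsilon\) factors is the only point needing care.
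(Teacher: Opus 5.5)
Your proposal is correct and follows the paper's proof essentially step for step: the same comparison \(\kappa_q=\epsilon_X\oplus1\), the same image triangle chosen for \(F(q)\), the same chain-level map \(\Phi^1_q\), and the same retraction obtained by applying \(H\) after \(\kappa_q^{-1}\) and \(\epsilon_{A_q}^{-1}\), with the bimodule identity used exactly as in the paper to check well-definedness modulo \(J\) and compatibility with \(\delta\). One small slip in wording: the identity \(\gamma_{F(q)}\circ(\epsilon_X\oplus1)=\epsilon_{A_q}\circ F(\gamma_q)\) follows from naturality of \(\epsilon\) alone (including its compatibility with biproducts), and the \(\beta^0\) identity plays no role in it.
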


\begin{proof}
Write the suspension comparison of \(F\) as
\[
 \varepsilon_X:F(\Sigma_\D X)\xrightarrow{\sim}\Sigma_\C F(X).
\]
The initial square \(F(q)\) is obtained by applying \(F\) to the square
and its two chosen distinguished triangles, using \(\varepsilon\) on their last
arrows.  In particular,
\[
 C_{F(q)}=\Sigma_\C F(X)\oplus F(Z').
\]
After the canonical biproduct identification of \(F(C_q)\), put
\[
 \kappa_q=
 \begin{bmatrix}\varepsilon_X&0\\0&1_{F(Z')}\end{bmatrix}:
 F(C_q)\xrightarrow{\sim}C_{F(q)}.
\]
We use the image under \(F\) of the chosen triangle for \(q\), with its
last arrow composed with \(\varepsilon_{A_q}\), as the chosen triangle
for \(F(q)\).  The resulting cone data satisfy
\[
 \beta^0_{F(q)}=\kappa_q\circ F(\beta_q^0),\qquad
 \gamma_{F(q)}
 =\varepsilon_{A_q}\circ F(\gamma_q)\circ\kappa_q^{-1},\qquad
 t_{F(q)}=\varepsilon_{A_q}\circ F(t_q).
\]
Define
\[
 \Phi_q^0(\phi)=\kappa_q\circ F(\phi),
 \qquad
 \Phi_q^1(\bar b,r)=
 \bigl(\overline{\kappa_q\circ F(b)},
       \varepsilon_{A_q}\circ F(r)\bigr).
\]
The second formula is well defined because
\(j\mapsto\kappa_q\circ F(j)\) carries \(J_q^\D\) into
\(J_{F(q)}^\C\).  The
displayed identities give
\[
 \Phi_q^1\circ\delta_q^\D
 =\delta_{F(q)}^\C\circ\Phi_q^0.
\]
Thus \(\Phi_q^0\) and \(\Phi_q^1\) induce \(\Phi_q\) on cokernels, and
\(\Phi_q(o_\D(q))=o_\C(F(q))\).

Suppose now that \(F\) is separable, with maps \(H_{X,Y}\) as in
\cref{def:separable}.  On the two Hom-group terms above define
\[
 R_q^0(\psi)=H_{E_q,C_q}(\kappa_q^{-1}\circ\psi),
 \qquad
 R_q^1(\bar b,r)=
 \bigl(\overline{H_{B_q,C_q}(\kappa_q^{-1}\circ b)},
 H_{E_q,\Sigma_\D A_q}(\varepsilon_{A_q}^{-1}\circ r)\bigr).
\]
Write \(\iota\) and \(\operatorname{pr}\) for the relevant canonical
inclusions and projections.  For \(a:F(Z)\to F(Z')\), the bimodule identity gives
\[
 H_{B_q,C_q}\bigl(\kappa_q^{-1}\circ\iota_{F(Z')}\circ a
 \circ\operatorname{pr}_{F(Z)}\bigr)
 =\iota_{Z'}\circ H_{Z,Z'}(a)\circ\operatorname{pr}_Z.
\]
Thus \(R_q^1\) is well defined on the quotient by \(J_{F(q)}^\C\), and the same
identity shows that
\[
 R_q^1\circ\delta_{F(q)}^\C=\delta_q^\D\circ R_q^0,
 \qquad
 R_q^i\circ\Phi_q^i=1\quad(i=0,1).
\]
Hence \(R_q^0\) and \(R_q^1\) induce a retraction of \(\Phi_q\), which is
therefore split injective.
\end{proof}

\begin{theorem}
\label{thm:separable-descent}
Let \(\D\) be a pre-triangulated category, let \(\C\) be a triangulated
category, and let
$F:\D\to\C$ be exact and separable.  Then the given pre-triangulation on
$\D$ satisfies $\mathrm{TR4}$; in particular, $\D$ is a triangulated
category.
\end{theorem}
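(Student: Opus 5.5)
The plan is to reduce the statement to the local criterion of \cref{thm:composition-obstruction}.  By that theorem it suffices to show that $o_{\D}(q)=0$ for every composition square $q=q(g\circ f)$ in $\D$.  The argument in fact gives $o_\D(q)=0$ for every initial square, which by \cref{thm:boundary-lifting} yields a good completion of each.

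Fix an initial square $q$ in $\D$, with its two distinguished triangles and the chosen triangle completing $\alpha_q$.  Since $F$ is exact, the image $F(q)$ is an initial square in $\C$.  Its two rows are the images of the rows of $q$, with last arrows twisted by the suspension comparison $\varepsilon$, so they are distinguished in $\C$.  The commutativity of the image square is clear.  As $\C$ satisfies $\mathrm{TR4}$, \cref{thm:composition-obstruction} (implication (i)$\Rightarrow$(ii) of its general form, i.e.\ Theorem~A) gives $o_\C(F(q))=0$.  At this point I would pause to check one detail: Theorem~A(ii) asserts vanishing for \emph{every} initial square, not only composition squares.  This follows from (TR4$'$), which is equivalent to $\mathrm{TR4}$ under $\mathrm{TR1}$--$\mathrm{TR3}$, together with \cref{thm:boundary-lifting}.

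Next I would apply \cref{lem:functorial-local-obstruction}.  It provides a homomorphism $\Phi_q:\operatorname{Obs}_\D(q)\to\operatorname{Obs}_\C(F(q))$ with $\Phi_q(o_\D(q))=o_\C(F(q))=0$.  Because $F$ is separable, the same lemma says $\Phi_q$ is split injective, so $o_\D(q)=0$.  One point needs care here.  The obstruction group for $F(q)$ in the lemma is formed using the image under $F$ of the chosen $\alpha_q$-triangle; this image is distinguished because $F$ is exact.  By \cref{prop:local-choice-independence}, vanishing of the obstruction does not depend on which distinguished completion of $\alpha_{F(q)}$ is used, so the vanishing obtained from TR4 in $\C$ applies to this choice.

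Hence every initial square, and in particular every composition square, has vanishing obstruction in $\D$.  \Cref{thm:composition-obstruction} then shows that the pre-triangulation of $\D$ satisfies $\mathrm{TR4}$.  The main obstacle is already absorbed by \cref{lem:functorial-local-obstruction}: verifying that the separability retraction respects the quotient by $J_q$ and commutes with $\delta_q$.  The remaining work is the bookkeeping that $F(q)$ is a genuine initial square with compatible choices, which I have outlined above.
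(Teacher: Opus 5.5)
Your proposal is correct and follows the paper's proof essentially verbatim: \(\mathrm{TR4}'\) in \(\C\) together with \cref{thm:boundary-lifting} gives \(o_\C(F(q))=0\); the split injection \(\Phi_q\) of \cref{lem:functorial-local-obstruction} then forces \(o_\D(q)=0\) for every initial square; and \cref{thm:composition-obstruction} yields \(\mathrm{TR4}\). Your appeal to \cref{prop:local-choice-independence} is harmless but unnecessary, since \cref{thm:boundary-lifting} already holds for whichever distinguished completion of \(\alpha_{F(q)}\) is chosen.
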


\begin{proof}
Let \(q\) be an initial square in \(\D\).  Since \(\C\) is triangulated,
axiom \(\mathrm{TR4}'\) supplies a good completion of \(F(q)\), and
\cref{thm:boundary-lifting} gives \(o_\C(F(q))=0\).  By
\cref{lem:functorial-local-obstruction}, the
homomorphism
\[
 \Phi_q:\operatorname{Obs}_\D(q)\longrightarrow
 \operatorname{Obs}_\C(F(q))
\]
is split injective and sends \(o_\D(q)\) to \(o_\C(F(q))\).  Hence
\(o_\D(q)=0\).  By \cref{thm:boundary-lifting}, every initial square in
\(\D\) has a good completion.  In particular, every composition square
does, so \cref{thm:composition-obstruction} shows that the given
pre-triangulation on \(\D\) satisfies \(\mathrm{TR4}\).
\end{proof}

In particular, a pre-triangulated category which fails \(\mathrm{TR4}\)
admits no exact separable functor to a triangulated category, and hence no
fully faithful exact functor to one.

We next use separable descent when a canonical pre-triangulation is already
available and only $\mathrm{TR4}$ remains.

\subsection{Separable monads and comonads}

\begin{definition}[{\normalfont cf.~\cite[Definitions~2.1, 2.4, and~3.5]
{Balmer2011}}]
A \emph{monad} \(\mathbb M=(M,\eta,\mu)\) on an additive category
\(\C\) consists of an additive endofunctor \(M\) and natural
transformations
\[
 \eta:\operatorname{Id}_\C\Rightarrow M,
 \qquad
 \mu:M^2\Rightarrow M
\]
such that, for every \(X\in\C\),
\[
 \mu_X\circ M(\mu_X)=\mu_X\circ\mu_{M(X)},\qquad
 \mu_X\circ M(\eta_X)=1_{M(X)}
 =\mu_X\circ\eta_{M(X)}.
\]
An \(M\)-module is a pair \((X,\rho)\) with \(\rho:M(X)\to X\) such that
\[
 \rho\circ\eta_X=1_X,
 \qquad
 \rho\circ M(\rho)=\rho\circ\mu_X.
\]
A morphism \((X,\rho_X)\to(Y,\rho_Y)\) is a map \(a:X\to Y\) satisfying
\(a\circ\rho_X=\rho_Y\circ M(a)\).  These objects and morphisms form the
Eilenberg--Moore category \(\mathbb M\text{-}\mathsf{Mod}_\C\).  Its
forgetful functor sends \((X,\rho)\) to \(X\), and its free functor sends
\(X\) to \((M(X),\mu_X)\).

The monad is \emph{separable} if there is a natural transformation
\(\sigma:M\Rightarrow M^2\) such that, for every \(X\in\C\),
\begin{equation}
\label{eq:separable-monad-identities}
 \mu_X\circ\sigma_X=1_{M(X)},\qquad
 M(\mu_X)\circ\sigma_{M(X)}
 =\sigma_X\circ\mu_X
 =\mu_{M(X)}\circ M(\sigma_X).
\end{equation}
Thus \(\sigma\) is a section of \(\mu\) and a morphism of
\(M\)-bimodules.  If \(\C\) is triangulated, \(M\) is exact, and
\(\eta,\mu,\sigma\) are morphisms of triangulated functors, the monad is
called \emph{stably separable}.
\end{definition}

\begin{corollary}
\label{cor:separable-monad}
Let $\C$ be an idempotent-complete triangulated category and let
$\mathbb M$ be a stably separable monad on $\C$.  Then the
Eilenberg--Moore category $\mathbb M\text{-}\mathsf{Mod}_\C$ has a unique
triangulation such that a triangle is distinguished if and only if its
image under the forgetful functor is distinguished in $\C$.  The free and
forgetful functors are triangulated functors.
\end{corollary}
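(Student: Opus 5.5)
The plan is to define the pre-triangulation on the Eilenberg--Moore category, check \(\mathrm{TR1}\)--\(\mathrm{TR3}\) there, and then get \(\mathrm{TR4}\) from \cref{thm:separable-descent} applied to the forgetful functor. The pre-triangulation itself comes from Balmer's construction \cite{Balmer2011}. Put \(\D=\mathbb M\text{-}\mathsf{Mod}_\C\), with forgetful functor \(U\) and free functor \(F\). Stable separability means that \(M\) commutes with \(\Sigma\) compatibly with \(\eta,\mu,\sigma\). So \(\Sigma\) lifts to \(\D\) via \((X,\rho)\mapsto(\Sigma X,\Sigma\rho)\), twisted by the comparison \(M\Sigma\cong\Sigma M\), and we declare a candidate triangle in \(\D\) distinguished exactly when its image under \(U\) is distinguished.

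The axioms \(\mathrm{TR1}\)--\(\mathrm{TR3}\) are Balmer's argument. Closure under isomorphism and rotation are immediate from the definition. For cones, take \(u:(X,\rho_X)\to(Y,\rho_Y)\). Apply \(M\) to a distinguished triangle on \(u\). Then use the retraction \(\rho\) and the section \(\sigma\) to exhibit the module triangle as a direct summand of the free triangle \(F(\text{triangle})\). Idempotent completeness of \(\C\) then produces a triangle of modules whose underlying triangle is distinguished.

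For \(\mathrm{TR3}\), begin with a square in \(\D\). Complete it in \(\C\) to some \(h\), which need not be \(M\)-linear. Replace it by the averaged map \(\rho'\circ M(h)\circ(\text{section})\), which uses the separability section \(\sigma\) composed with \(\eta\). This yields a module morphism that still fills the square. The check uses that \(\eta\) and \(\mu\) are morphisms of triangulated functors, so the connecting maps are \(M\)-linear.

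Next, \(U\) is exact by construction. It is also separable in the sense of \cref{def:separable}. For \(b:UX\to UY\), set
\[
 H(b)=\rho_Y\circ M(b)\circ\sigma'_X,
\]
where \(\sigma'_X:X\to M X\) is the \(M\)-bilinear splitting of \(\rho_X\). Concretely, \(\sigma'_X=M(\rho_X)\circ\sigma_X\circ\eta_X\), suitably arranged. Then \(H(U a)=a\), and naturality of \(\sigma\) gives the bimodule identity. Since \(\C\) is triangulated, \cref{thm:separable-descent} shows that \(\D\) satisfies \(\mathrm{TR4}\).

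Uniqueness is clear, because the class of distinguished triangles is forced by the stated condition. \(U\) is triangulated by definition. \(F\) is triangulated because \(UF=M\) is exact and \(F(X)\) carries the \(M\)-action \(\mu\), which is compatible with the suspension.

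The main obstacle is checking that the averaging operator really is a module map and satisfies the bimodule identity \eqref{eq:separable-bimodule-identity}. It is also the \(\mathrm{TR3}\) averaging step, since it must respect the third components and their suspension comparisons. I would verify these by diagram chases using \eqref{eq:separable-monad-identities} together with the triangulated compatibility of \(\sigma\).
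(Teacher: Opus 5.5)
Your proposal is correct and follows essentially the paper's route. The pre-triangulation, and the exactness of the free and forgetful functors, come from Balmer's Theorem~4.1 and Corollary~4.3. The separability of the forgetful functor is Balmer's Proposition~3.11, and your averaging operator $H(b)=\rho_Y\circ M(b)\circ M(\rho_X)\circ\sigma_X\circ\eta_X$ does satisfy \cref{def:separable}. \cref{thm:separable-descent} then supplies $\mathrm{TR4}$.

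The paper simply cites Balmer where you sketch his arguments. If you write out your $\mathrm{TR1}$ sketch in full, one step needs care: the averaged third component $e$ of the endomorphism of the free triangle is only known to satisfy $(e^2-e)^2=0$. You must replace it by $3e^2-2e^3$, which is idempotent, before splitting it in the idempotent-complete module category.
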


\begin{proof}
By \cite[Theorem~4.1 and Corollary~4.3]{Balmer2011}, the
Eilenberg--Moore category has the unique pre-triangulation detected by its
forgetful functor, and the free and forgetful functors are exact.  By
\cite[Proposition~3.11]{Balmer2011}, the forgetful functor is separable.
\Cref{thm:separable-descent} supplies \(\mathrm{TR4}\).
\end{proof}

\begin{remark}
Balmer's construction gives the canonical pre-triangulation above from an
ordinary triangulation
\cite[Theorem~4.1 and Corollary~4.3]{Balmer2011}.  His higher-order
transfer theorem yields the octahedral axiom when \(\C\) carries a
triangulation of order at least three and \(\mathbb M\) is exact to that
order \cite[Theorem~5.17]{Balmer2011}.  The preceding corollary requires
only the ordinary triangulation on \(\C\).  Here \(\mathrm{TR4}\) follows
from \cref{thm:separable-descent}.
\end{remark}

\begin{definition}
An \emph{additive comonad} \(\mathbb L=(L,\epsilon,\delta)\) on an
additive category \(\C\) consists of an additive endofunctor \(L\) and
natural transformations
\[
 \epsilon:L\Rightarrow\operatorname{Id}_\C,
 \qquad
 \delta:L\Rightarrow L^2
\]
such that, for every \(X\in\C\),
\[
 L(\delta_X)\circ\delta_X=\delta_{L(X)}\circ\delta_X,\qquad
 L(\epsilon_X)\circ\delta_X=1_{L(X)}
 =\epsilon_{L(X)}\circ\delta_X.
\]
An \(L\)-comodule is a pair \((X,\rho)\) with \(\rho:X\to L(X)\) such
that
\[
 \epsilon_X\circ\rho=1_X,
 \qquad
 \delta_X\circ\rho=L(\rho)\circ\rho.
\]
A comodule morphism \((X,\rho_X)\to(Y,\rho_Y)\) is a map \(a:X\to Y\)
with \(\rho_Y\circ a=L(a)\circ\rho_X\).  The resulting Eilenberg--Moore
category is denoted by \(\C^{\mathbb L}\).  The comonad is
\emph{separable} if there is a natural transformation
\(\widehat\delta:L^2\Rightarrow L\) such that, for every \(X\in\C\),
\begin{equation}
\label{eq:separable-comonad-identities}
 \widehat\delta_X\circ\delta_X=1_{L(X)},\qquad
 \widehat\delta_{L(X)}\circ L(\delta_X)
 =\delta_X\circ\widehat\delta_X
 =L(\widehat\delta_X)\circ\delta_{L(X)}.
\end{equation}
\end{definition}

\begin{lemma}[{\normalfont see~\cite[Remark~5.19]{Balmer2011}}]
\label{lem:separable-comonad-forgetful}
Let \(\mathbb L\) be an additive separable comonad on an additive category
\(\C\).
Then the forgetful functor \(U:\C^{\mathbb L}\to\C\) is separable.
\end{lemma}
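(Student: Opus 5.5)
The plan is to write down explicit maps \(H_{X,Y}\) for the forgetful functor \(U\) and check the two conditions of \cref{def:separable}. For comodules \((X,\rho_X)\) and \((Y,\rho_Y)\) and a morphism \(b:X\to Y\) in \(\C\), I would first cofree \(b\), then project with \(\widehat\delta\), then apply the counit. Concretely, put
\[
 H_{X,Y}(b)=\epsilon_Y\circ\widehat\delta_Y\circ L(\rho_Y)\circ L(b)\circ\rho_X .
\]
The construction parallels the monad case of \cite[Proposition~3.11]{Balmer2011}, with the arrows dualized.

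The first step is to show that \(H_{X,Y}(b)\) is a comodule morphism, i.e.
\[
 \rho_Y\circ H(b)=L(H(b))\circ\rho_X .
\]
The key tool is naturality of \(\widehat\delta\), \(\delta\) and \(\epsilon\), together with the comodule identities \(\delta_X\circ\rho_X=L(\rho_X)\circ\rho_X\) and \(\epsilon_X\circ\rho_X=1_X\). The separability relations \eqref{eq:separable-comonad-identities} then let one move \(\delta\) past \(\widehat\delta\), and both sides reduce to a common expression.

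It is cleaner to organize this by a reformulation. The map \(\Psi_Y=\widehat\delta_Y\circ L(\rho_Y):L(Y)\to L(Y)\) is an idempotent comodule endomorphism of the cofree comodule \((L(Y),\delta_Y)\), and \(\rho_Y:Y\to L(Y)\) is a comodule map. The \(\mathbb L\)-bimodule conditions on \(\widehat\delta\) are exactly what make \(\Psi_Y\) colinear. With this in hand, \(H(b)=\epsilon_Y\circ\Psi_Y\circ L(b)\circ\rho_X\). The map \(L(b)\circ\rho_X\) is the cofree comodule map corresponding to \(b\), and composing with the colinear \(\Psi_Y\) keeps it colinear. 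Passing back through the cofree adjunction (\(\epsilon_Y\circ-\) corresponds to colinear maps into \(L(Y)\)), I need to show that \(\Psi_Y\) has image in the "retract" \(\rho_Y(Y)\). Concretely, the goal is the identity
\[
 \rho_Y\circ\epsilon_Y\circ\Psi_Y=\Psi_Y .
\]
I expect this identity to be the main computational obstacle. I would try to derive it from the middle equality in \eqref{eq:separable-comonad-identities} applied to \(\rho_Y\).

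The second step is to check the two axioms. If \(a\) is already colinear, then \(L(a)\circ\rho_X=\rho_Y\circ a\), and the comodule identities for \(\rho_Y\) give \(L(\rho_Y)\circ\rho_Y=\delta_Y\circ\rho_Y\). Hence \(\widehat\delta_Y\circ\delta_Y\circ\rho_Y=\rho_Y\), and applying \(\epsilon_Y\) returns \(a\). So \(H(U(a))=a\).

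The bimodule identity \eqref{eq:separable-bimodule-identity} comes from the same substitutions. For colinear \(u:X'\to X\) we have \(L(u)\circ\rho_{X'}=\rho_X\circ u\). For colinear \(v:Y\to Y'\), naturality of \(\widehat\delta\) and \(\epsilon\) lets \(v\) pass to the outside, since \(L(\rho_{Y'})\circ L(v)=L(L(v))\circ L(\rho_Y)\). Additivity of \(H\) is immediate because \(L\) is additive.
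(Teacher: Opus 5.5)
Your proposal is correct and takes essentially the same route as the paper, which gives no proof of its own beyond citing \cite[Remark~5.19]{Balmer2011}; your $H_{X,Y}$ is the comonad dual of the retraction in \cite[Proposition~3.11]{Balmer2011}, which is the argument behind that citation. The identity you flag as the main obstacle holds by the route you suggest: the relation $\delta_Y\circ\widehat\delta_Y=\widehat\delta_{L(Y)}\circ L(\delta_Y)$, coassociativity of $\rho_Y$ and naturality of $\widehat\delta$ give $\delta_Y\circ\Psi_Y=\widehat\delta_{L(Y)}\circ L(\delta_Y\circ\rho_Y)=\widehat\delta_{L(Y)}\circ L^2(\rho_Y)\circ L(\rho_Y)=L(\rho_Y)\circ\Psi_Y$, and applying $\epsilon_{L(Y)}$ yields $\Psi_Y=\rho_Y\circ\epsilon_Y\circ\Psi_Y$.
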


\subsection{Equivariantization}

\begin{definition}[{\normalfont cf.~\cite[Definitions~2.5 and~3.1]
{Sun2019}}]
We use the right-action convention.  An admissible action of a
finite group \(G\) on a triangulated category \(\C\) consists of exact
autoequivalences \((F_g,\theta_g)\), where
\(\theta_g:F_g\Sigma\xrightarrow{\sim}\Sigma F_g\), and natural isomorphisms
\[
 \delta_{g,h}:F_gF_h\xrightarrow{\sim}F_{hg}
\]
such that
\[
 \delta_{hg,\ell}\circ(\delta_{g,h}F_\ell)
 =\delta_{g,\ell h}\circ(F_g\delta_{h,\ell})
\]
and
\[
 \theta_{hg}\circ(\delta_{g,h}\Sigma)
 =(\Sigma\delta_{g,h})\circ(\theta_gF_h)\circ(F_g\theta_h).
\]
The cocycle condition determines a unique unit isomorphism
\(F_e\xrightarrow{\sim}\operatorname{Id}_{\C}\); see
\cite[Remark~2.7]{Sun2019}.  Thus every \(\delta_{g,h}\) is an isomorphism of triangulated functors.  An equivariant object is a
pair
\[
 (X,\{\varphi_g^X\}_{g\in G}),
\]
where \(\varphi_g^X:F_g(X)\xrightarrow{\sim}X\) and
\[
 \varphi_g^X\circ F_g(\varphi_h^X)
 =\varphi_{hg}^X\circ\delta_{g,h,X}.
\]
An equivariant morphism \(a:X\to Y\) satisfies
\[
 a\circ\varphi_g^X=\varphi_g^Y\circ F_g(a)
 \qquad(g\in G).
\]
The resulting category is denoted by \(\C^G\), and
\(\omega:\C^G\to\C\) denotes the forgetful functor.
\end{definition}

An integer \(n\) is \emph{invertible in an additive category \(\C\)} if
multiplication by \(n\) is an automorphism of \(\C(X,Y)\) for every
\(X,Y\in\C\).  For a \(\kk\)-linear category, this holds whenever
\(\operatorname{char}\kk\nmid n\).

\begin{proposition}
\label{prop:equivariant-triangulations}
Let $G$ be a finite group acting admissibly on a triangulated category
$\C$, and assume that \(|G|\) is invertible in \(\C\).  Then
\(\C^G\), endowed with its canonical pre-triangulation, is a triangulated
category.  A triangle in \(\C^G\) is distinguished if and only if its
image under the forgetful functor \(\omega:\C^G\to\C\) is distinguished.
\end{proposition}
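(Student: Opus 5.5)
The plan is to realize \(\C^G\) as the Eilenberg--Moore category of a separable comonad (or monad) on \(\C\) and then combine the existing pre-triangulation with \cref{thm:separable-descent}. The canonical pre-triangulation on \(\C^G\) for non-modular \(G\) is supplied by \cite{Sun2019}: a triangle in \(\C^G\) is distinguished if and only if its image under \(\omega\) is distinguished in \(\C\), and with this structure \(\omega\) is exact. I will take this pre-triangulation and the description of distinguished triangles from Sun's construction, checking only that the suspension on \(\C^G\) is \(\Sigma\) with equivariant structure transported by \(\theta_g\). Under that check, \(\omega\) is exact with the identity suspension comparison. Only \(\mathrm{TR4}\) then remains to be proved.

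For \(\mathrm{TR4}\), it suffices by \cref{thm:separable-descent} to show that \(\omega\) is separable in the sense of \cref{def:separable}. I will construct the maps \(H_{X,Y}\) directly by averaging over \(G\). For equivariant objects \((X,\varphi^X)\), \((Y,\varphi^Y)\) and \(b\in\C(X,Y)\), put
\[
 H_{X,Y}(b)=\frac1{|G|}\sum_{g\in G}\varphi_g^Y\circ F_g(b)\circ(\varphi_g^X)^{-1}.
\]
This is well defined because \(|G|\) is invertible in \(\C\). The first check is that \(H(b)\) is equivariant. For this I apply \(F_h\), use the cocycle identity \(\varphi_g\circ F_g(\varphi_h)=\varphi_{hg}\circ\delta_{g,h}\) together with naturality of \(\delta_{g,h}\), and reindex the sum \(g\mapsto hg\). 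The second check is that \(H(\omega(a))=a\) for an equivariant \(a\); here every summand equals \(a\). The third check is the bimodule identity \eqref{eq:separable-bimodule-identity}. It follows because, for equivariant \(u\) and \(v\), the maps \(F_g(v)\) and \(F_g(u)\) pass through \(\varphi_g\) summand by summand. All three are additive computations.

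The alternative route, which I treat as a fallback, goes through \cref{lem:separable-comonad-forgetful}. One identifies \(\C^G\) with the comodules over the comonad \(L=\bigoplus_g F_g\), whose separability splitting again uses \(1/|G|\), and this yields separability of \(\omega\).

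The main obstacle is the careful equivariance check for \(H(b)\) with the right-action convention. One must handle \(\delta_{g,h}\) and the reindexing correctly, so that \(F_h(H(b))\) conjugated by \(\varphi_h\) recovers \(H(b)\). I would first verify this for a strict action, where \(\delta=1\), and then insert \(\delta\) using only its naturality and the defining equation of equivariant objects. The exactness of \(\omega\), including compatibility of suspensions, is inherited from Sun's construction. Once these checks are done, \cref{thm:separable-descent} gives \(\mathrm{TR4}\) on \(\C^G\), and the description of distinguished triangles is the one defining the canonical pre-triangulation.
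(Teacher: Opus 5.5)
Your proposal is correct and follows the paper's proof essentially verbatim: Sun's Proposition~3.3 supplies the canonical pre-triangulation and the exactness of $\omega$, the averaging (Reynolds) operator $H_{X,Y}$ shows that $\omega$ is separable, and \cref{thm:separable-descent} then gives $\mathrm{TR4}$. One minor point: with the right-action convention, the cocycle identity in the form $\varphi_h\circ F_h(\varphi_g)=\varphi_{gh}\circ\delta_{h,g}$ makes the summands match under the reindexing $g\mapsto gh$ rather than $g\mapsto hg$; this does not affect the argument.
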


\begin{proof}
By \cite[Proposition~3.3]{Sun2019}, $\C^G$ has its canonical
pre-triangulation and $\omega$ is exact.  For $X,Y\in\C^G$, write
\(\varphi_g^X:F_g(\omega(X))\xrightarrow{\sim}\omega(X)\) for the
equivariant structure.  For $a:\omega(X)\to\omega(Y)$ in \(\C\), the
Reynolds operator
\[
 H_{X,Y}(a)=\frac1{|G|}\sum_{g\in G}
 \varphi_g^Y\circ F_g(a)\circ(\varphi_g^X)^{-1}
\]
is equivariant and fixes every equivariant \(a\).  Here \(|G|^{-1}\) is the
inverse of multiplication by \(|G|\) on the relevant Hom group.  Moreover,
for equivariant morphisms \(u:X'\to X\) and \(v:Y\to Y'\), one has
\[
 H_{X',Y'}(v\circ a\circ u)=v\circ H_{X,Y}(a)\circ u.
\]
Thus the maps \(H_{X,Y}\) satisfy \cref{def:separable}.  Hence \(\omega\)
is separable and
\cref{thm:separable-descent} applies.
\end{proof}

In the \(\kk\)-linear setting, Elagin proved the result when \(\C\)
admits a dg enhancement and \(|G|\) is invertible in \(\kk\)
\cite[Corollary~6.10]{Elagin2014}.
Sun proved uniqueness once the canonical triangulation exists and left its
existence open in general \cite[Corollary~3.6 and the paragraph following
Example~3.8]{Sun2019}.  Thus the proposition answers the existence question
under the non-modular hypothesis of \cite[Proposition~3.3]{Sun2019}.  The
consequences in \cite[Sections~3.3--3.5]{Sun2019} then apply under their
remaining hypotheses.

\subsection{Deformations of K3 categories}

We finally apply separable descent to the deformation categories of
Markman--Mehrotra.  Let \(\pi:\mathcal M\to U\) be the family of
irreducible holomorphic symplectic manifolds in
\cite[Theorem~1.8]{MarkmanMehrotra2015}.
For a contractible Stein open subset \(V\subset U\), write
\(\mathcal M_V=\pi^{-1}(V)\), and let \(\theta\) be the Brauer class on
\(\mathcal M_V\) supplied by part~\textup{(2)} of that theorem.
For \(u\in V\), let \(\mathcal M_u\) be the fiber and \(\theta_u\) the
restriction of \(\theta\) to it.  The bounded derived categories of
\(\theta\)-twisted and \(\theta_u\)-twisted coherent sheaves are denoted by
\[
 \C_V=D^b(\mathcal M_V,\theta),
 \qquad
 \C_u=D^b(\mathcal M_u,\theta_u).
\]
Their construction gives exact comonads
\(\mathbb L=(L,\epsilon,\delta)\) on \(\C_V\) and
\(\mathbb L_u=(L_u,\epsilon_u,\delta_u)\) on \(\C_u\).  Let
\[
 \D_V=\C_V^{\mathbb L},
 \qquad
 \D_u=\C_u^{\mathbb L_u}
\]
be the corresponding Eilenberg--Moore categories.  By
\cite[Theorem~1.8(3) and Remark~6.12]{MarkmanMehrotra2015}, they carry
canonical pre-triangulations and the forgetful functors
\[
 U_V:\D_V\longrightarrow\C_V,
 \qquad
 U_u:\D_u\longrightarrow\C_u
\]
are exact.  The geometric hypotheses of
\cite[Theorem~1.8]{MarkmanMehrotra2015} are unconditional by
\cite[Theorem~1.11]{MarkmanMehrotraVerbitsky2019}.

\begin{theorem}
\label{thm:k3-categories}
Retain the notation and geometric setup of
\cite[Theorem~1.8]{MarkmanMehrotra2015}.  Then \(\D_V\) and \(\D_u\),
endowed with their canonical pre-triangulations, are triangulated
categories.
\end{theorem}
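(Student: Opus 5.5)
The plan is to deduce the statement from \cref{thm:separable-descent}, applied to the forgetful functors \(U_V:\D_V\to\C_V\) and \(U_u:\D_u\to\C_u\). Three things need to be in place.

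\begin{enumerate}[label=\textup{(\arabic*)}]
\item \(\D_V\) and \(\D_u\) are pre-triangulated and \(U_V,U_u\) are exact. This is recorded above from \cite[Theorem~1.8(3) and Remark~6.12]{MarkmanMehrotra2015}, with the geometric hypotheses guaranteed by \cite[Theorem~1.11]{MarkmanMehrotraVerbitsky2019}.
\item The targets \(\C_V=D^b(\mathcal M_V,\theta)\) and \(\C_u=D^b(\mathcal M_u,\theta_u)\) are triangulated, being bounded derived categories of abelian categories of twisted coherent sheaves.
\item \(U_V\) and \(U_u\) are separable.
\end{enumerate}

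Granting these, \cref{thm:separable-descent} gives \(\mathrm{TR4}\) on \(\D_V\) and \(\D_u\) directly.

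For separability I would use \cref{lem:separable-comonad-forgetful}. It suffices to show that the comonads \(\mathbb L\) and \(\mathbb L_u\) are separable, that is, to produce \(\widehat\delta:L^2\Rightarrow L\) satisfying \eqref{eq:separable-comonad-identities}. I would extract this from the Markman--Mehrotra construction. There \(L\) has the form \(\Phi^R\circ\Phi\) (or \(\Phi\circ\Phi^R\)) for a Fourier--Mukai type functor whose unit or counit splits. Concretely, the relevant natural transformation on \(\Phi\Phi^R\) or \(\Phi^R\Phi\) admits a natural section coming from a splitting of the form \(\mathcal O\oplus\omega[-2]\) or a similar direct-sum decomposition of the kernel. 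That splitting supplies a bimodule section of the comultiplication.

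The standard recipe is as follows. If the comonad comes from an adjunction \(F\dashv G\) with \(L=FG\), \(\delta=F\eta G\), and the unit \(\eta:1\Rightarrow GF\) admits a natural retraction \(r:GF\Rightarrow1\) with \(r\circ\eta=1\), then \(\widehat\delta=FrG\) satisfies \eqref{eq:separable-comonad-identities}. The first identity is immediate. The bimodule identities follow from naturality of \(r\) together with the triangle identities, provided \(r\) is compatible with \(GF\) applied to it, for instance if \(r\) is a morphism of monads or is induced from a retraction at the level of kernels.

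The main obstacle is exactly this: locating in \cite{MarkmanMehrotra2015} a retraction of the unit (or a section of the counit) that is natural and satisfies the bimodule compatibility. A naive splitting of the unit need not be a bimodule map. My first attempt would be to build the retraction at the level of Fourier--Mukai kernels on \(\mathcal M_V\times_V\mathcal M_V\). Functoriality of convolution of kernels then makes the two compatibility identities automatic. If that fails, I would use the inverse of the invertible factor in the decomposition of \(\Phi^R\Phi\) and check the identities directly, using that all the maps involved are induced by kernel morphisms.

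The fiber case \(\D_u\) is handled identically, either by repeating the argument over the point \(u\) or by base change of the kernels along \(\{u\}\hookrightarrow V\).
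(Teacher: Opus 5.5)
Your plan is the paper's proof: \cref{thm:separable-descent} applied to \(U_V\) and \(U_u\), with exactness from \cite[Theorem~1.8(3)]{MarkmanMehrotra2015} and separability from \cref{lem:separable-comonad-forgetful} once \(\mathbb L\) and \(\mathbb L_u\) are shown to be separable. The step you single out as the main obstacle needs no new construction: Markman--Mehrotra themselves build the natural retraction \(\widehat\delta:L^2\Rightarrow L\) of \(\delta\) and verify both bicomodule identities in the proof of \cite[Theorem~1.8(3)]{MarkmanMehrotra2015} (diagram~(6.12)), and restricting it to the fibre handles \(\mathbb L_u\); incidentally, your adjunction recipe needs no extra compatibility condition on \(r\), because naturality of \(r\) and of \(\eta\) alone gives both identities.
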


\begin{proof}
In the proof of \cite[Theorem~1.8(3)]{MarkmanMehrotra2015}, Markman and
Mehrotra construct a natural retraction
\(\widehat\delta:L^2\Rightarrow L\) of the comultiplication
\(\delta:L\Rightarrow L^2\) and verify the two bicomodule identities; see
\cite[diagram~(6.12) and the subsequent argument]{MarkmanMehrotra2015}.
Hence \(\mathbb L\) is a separable comonad.  Restricting this retraction
to the fiber over \(u\) shows that \(\mathbb L_u\) is separable as well.

By \cref{lem:separable-comonad-forgetful}, the forgetful functors \(U_V\)
and \(U_u\) are separable.  They are exact by
\cite[Theorem~1.8(3)]{MarkmanMehrotra2015}.  Since \(\C_V\) and
\(\C_u\) are triangulated, \cref{thm:separable-descent} applies.
\end{proof}

\section{Scalar families in type-\texorpdfstring{$A_{3a-1}$}{A(3a-1)}
over arbitrary fields}
\label{sec:deformations}

Over an algebraically closed field, Amiot constructed triangulated
structures on categories of finite-dimensional projective modules over
deformed preprojective algebras of generalized Dynkin type
\cite[Theorem~8.1 and Corollary~9.3]{Amiot2007}.  We use the stable
Auslander realization to obtain the base triangulation for type
\(A_{3a-1}\) over an arbitrary field.

By \cref{prop:periodic-orbit-detector}, we need a socle orbit and a short
exact sequence satisfying its hypotheses.  For \(\Pi_\kk(A_{3a-1})\),
an acyclic two-periodic complex of indecomposable projectives gives the
orbit.  Two successive mapping cones give the sequence.  We use the same
construction with period one in the next section.

Throughout this section and \cref{sec:ade-lines}, an underline denotes
passage to the relevant stable module category.

For later applications, if \(\Lambda\) is a finite-dimensional algebra, write
\(\mathcal P(\Lambda)=\operatorname{proj}\Lambda\).

\subsection{Two successive mapping cones}
\label{sec:two-successive-cones}

\begin{lemma}[{\normalfont see~\cite[Lemma~1.3.2 and
Section~1.5]{Weibel1994}}]
\label{lem:cycles-split-cones}
Let \(f:X^\bullet\to Y^\bullet\) be a chain map between acyclic cochain
complexes in an abelian category.  With
\(\operatorname{Cone}(f)^n=X^{n+1}\oplus Y^n\), the degreewise split
sequence
\[
0\longrightarrow Y^\bullet\longrightarrow\operatorname{Cone}(f)
 \longrightarrow X^\bullet[1]\longrightarrow0
\]
induces, for every \(n\), a short exact sequence
\begin{equation}
\label{eq:cycles-of-cone}
0\longrightarrow Z^n(Y^\bullet)\longrightarrow
 Z^n(\operatorname{Cone}(f))
 \longrightarrow Z^{n+1}(X^\bullet)\longrightarrow0.
\end{equation}
\end{lemma}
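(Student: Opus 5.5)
The plan is to read \eqref{eq:cycles-of-cone} off the long exact cohomology sequence attached to the degreewise split sequence of complexes. Both \(X^\bullet\) and \(Y^\bullet\) are acyclic, so \(\operatorname{Cone}(f)\) is acyclic as well; alternatively, \(H^n(\operatorname{Cone}(f))\) sits between \(H^n(Y^\bullet)=0\) and \(H^{n+1}(X^\bullet)=0\). Consequently all the boundary maps and all the cohomology groups in play vanish, and we have \(Z^n=B^n=\operatorname{im}d^{n-1}\) in each of the three complexes.

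The first step is to define the maps and check that the sequence is a complex. The inclusion \(Y^n\to X^{n+1}\oplus Y^n\) and the projection \(X^{n+1}\oplus Y^n\to X^{n+1}\) are chain maps, the latter up to the sign convention for \(X^\bullet[1]\). Chain maps send cycles to cycles, so they restrict to maps on \(Z^n\), and the composite of the restrictions is zero. Injectivity on the left is immediate. Exactness in the middle also follows: a cycle \((0,y)\) of the cone has \(d_Y y=0\), because the differential of the cone is \((x,y)\mapsto(-d_X x,\,f(x)+d_Y y)\). Here \(Z^n(Y^\bullet)\) is exactly the kernel of the restricted projection.

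The only substantive step is surjectivity onto \(Z^{n+1}(X^\bullet)\). Take \(x\in Z^{n+1}(X^\bullet)\). The element \((x,0)\) is not a cycle in general, since its differential is \((0,f(x))\). However, \(f(x)\in Z^{n+1}(Y^\bullet)=B^{n+1}(Y^\bullet)\) by acyclicity, so \(f(x)=d_Y y\) for some \(y\) (working in an abelian category, this is read as a factorization through the epimorphism \(Y^n\twoheadrightarrow B^{n+1}\), possibly after passing to a pullback). Then \((x,-y)\) is a cycle mapping to \(x\).

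In a general abelian category the element argument can be replaced either by the Freyd--Mitchell embedding or by the snake lemma. For the snake-lemma route, apply it to the map of short exact sequences \(0\to Y^n\to \operatorname{Cone}^n\to X^{n+1}\to0\) over the corresponding sequence in degree \(n+1\), with vertical maps the differentials. The snake sequence gives \(0\to Z^n(Y)\to Z^n(\operatorname{Cone})\to Z^{n+1}(X)\xrightarrow{\partial} \coker d_Y^n\). The connecting map \(\partial\) is induced by \(f\). Its target \(\coker d_Y^n\) contains \(H^{n+1}(Y)=0\) via the image of \(Z^{n+1}(Y)\), and \(\partial\) lands there. Hence \(\partial=0\), and this is the point that needs care.
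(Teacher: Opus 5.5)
Your proof is correct and is essentially the argument the paper intends: the paper gives no proof of its own beyond citing Weibel's snake lemma (Lemma~1.3.2) and his mapping-cone section. Your snake-lemma route, in which the connecting map \(x\mapsto[f(x)]\) lands in \(H^{n+1}(Y^\bullet)=0\), is exactly that argument; note that only the acyclicity of \(Y^\bullet\) is actually used.
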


For a finite-dimensional self-injective algebra \(\Lambda\), write
\(\mathsf K_{\mathrm{ac}}(\operatorname{proj}\Lambda)\) for the homotopy
category of acyclic complexes in \(\operatorname{proj}\Lambda\).
Taking degree-zero cycles gives the standard triangulated equivalence
\[
 Z^0:\mathsf K_{\mathrm{ac}}(\operatorname{proj}\Lambda)
 \xrightarrow{\sim}\underline{\operatorname{mod}}\Lambda,
 \qquad P^\bullet\longmapsto Z^0(P^\bullet);
\]
see \cite[Sections~I.2--I.3]{Happel1988}.  We use the cochain convention
\(P^\bullet[1]^n=P^{n+1}\) and
\(d_{P^\bullet[1]}^n=-d_P^{n+1}\).  A \emph{strict period} \(p\) on a
complex \(X^\bullet\) means a fixed equality
\(X^\bullet[p]=X^\bullet\).  A chain map is compatible with strict
periods if its shifted components agree with its original components.

Let \(P^\bullet\) be an acyclic complex of finitely generated projective
\(\Lambda\)-modules, and let \(h:P^\bullet\to P^\bullet\) be a chain map
with \(h^2=0\).  Put
\(C^\bullet=\operatorname{Cone}(-h)\), ordered as
\(C^n=P^{n+1}\oplus P^n\), and let
\(\pi_C:C^\bullet\to P^\bullet[1]\) be the standard projection.  In components,
the differential, the projection, and a map \(f:P^\bullet[1]\to C^\bullet\) are given by
\begin{equation}
\label{eq:cone-sign-certificate}
 d_C^n=\begin{bmatrix}-d_P^{n+1}&0\\-h^{n+1}&d_P^n\end{bmatrix},
 \qquad
 \pi_C^n=\begin{bmatrix}1&0\end{bmatrix},
 \qquad
 f^n=\begin{bmatrix}h^{n+1}\\0\end{bmatrix}.
\end{equation}
The equality \(h^2=0\) makes \(f\) a chain map, and direct multiplication
gives
\[
 \pi_C\circ f=h[1],
 \qquad
 f\circ h[1]=0.
\]

Suppose that \(Z^0(P^\bullet)=M\) and \(Z^1(P^\bullet)=N\).  Put
\[
 A=Z^0(C^\bullet),
 \qquad
 D^\bullet=\operatorname{Cone}(f),
 \qquad
 B=Z^0(D^\bullet).
\]
Let \(\pi_D:D^\bullet\to P^\bullet[2]\) be the standard projection.
Then
\cref{lem:cycles-split-cones} gives
\begin{equation}
\label{eq:double-cone-cycle-rows}
\begin{tikzcd}[column sep=small,row sep=small]
\text{complexes:}&0\arrow[r]&P^\bullet\arrow[r]&C^\bullet\arrow[r,"\pi_C"]&
 P^\bullet[1]\arrow[r]&0\\
Z^0\text{:}&0\arrow[r]&M\arrow[r]&A\arrow[r]&N\arrow[r]&0\\[2pt]
\text{complexes:}&0\arrow[r]&C^\bullet\arrow[r]&D^\bullet\arrow[r,"\pi_D"]&
 P^\bullet[2]\arrow[r]&0\\
Z^0\text{:}&0\arrow[r]&A\arrow[r]&B\arrow[r]&
 Z^2(P^\bullet)\arrow[r]&0.
\end{tikzcd}
\end{equation}
The rows of complexes are degreewise split and acyclic.  Suppose now that
\[
 P^\bullet[2]=P^\bullet,
 \qquad h[2]=h.
\]
Then the matrices above give
\[
 C^\bullet[2]=C^\bullet,
 \qquad D^\bullet[2]=D^\bullet,
\]
and every morphism in the two rows of complexes in
\eqref{eq:double-cone-cycle-rows} is unchanged by shifting twice.  In particular,
\(Z^2(P^\bullet)=Z^0(P^\bullet)=M\).

If \(z:P^\bullet\to P^\bullet\) is a chain map satisfying
\(h\circ z=0\), then
\begin{equation}
\label{eq:double-cone-lift}
 \sigma_z=\binom{z[2]}{0}:P^\bullet[2]\longrightarrow D^\bullet
\end{equation}
is a chain map and \(\pi_D\circ\sigma_z=z[2]\).  Indeed, the only additional
chain-map equation is \(f[1]\circ z[2]=0\), which is precisely \(h\circ z=0\) by
\eqref{eq:cone-sign-certificate}.  If, in addition, \(z[2]=z\), then the
map induced by \(\sigma_z\) on degree-zero cycles is a lift
\[
 M\longrightarrow B
\]
of \(z|_M:M\to M\) through \(B\to M\).  If also \(z\circ h=0\), then
\(\sigma_z\circ h[2]=0\), so this lift annihilates \(h|_M\).

\begin{lemma}
\label{lem:socle-annihilation}
Let \(M,X\) be objects of an additive category.  If
\(0\ne\rho\in\End(M)\) and
\[
 \Hom(M,X)\circ\rho=0,
\]
then \(X\) has no direct summand isomorphic to \(M\).
\end{lemma}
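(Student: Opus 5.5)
I will argue by contradiction, using a retraction onto a summand isomorphic to \(M\).

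Suppose \(X\) has a direct summand isomorphic to \(M\). Then there are morphisms \(\iota:M\to X\) and \(p:X\to M\) with \(p\circ\iota=1_M\). I take \(\iota\) to be the inclusion of the summand composed with the chosen isomorphism, and \(p\) the corresponding projection.

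The key step is to apply the hypothesis to the particular morphism \(\iota\in\Hom(M,X)\). This gives \(\iota\circ\rho=0\). Composing on the left with \(p\) then yields
\[
 \rho=1_M\circ\rho=p\circ\iota\circ\rho=0,
\]
which contradicts \(\rho\ne0\). Hence no direct summand of \(X\) is isomorphic to \(M\).

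There is no real obstacle. The only point to check is that the composition convention \(f\circ g\) (first \(g\), then \(f\)) makes \(\Hom(M,X)\circ\rho\) mean the maps \(\iota\circ\rho\) with \(\iota:M\to X\). This matches the statement, so the argument goes through without using any earlier result of the paper.
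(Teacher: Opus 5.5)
Your proposal is correct and is essentially the paper's own argument: choose a section $s\colon M\to X$ and retraction $q\colon X\to M$ with $q\circ s=1_M$, apply the hypothesis to get $s\circ\rho=0$, and conclude $\rho=q\circ(s\circ\rho)=0$, a contradiction. Your check of the composition convention is also consistent with the paper's usage.
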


\begin{proof}
Otherwise there are maps \(s:M\to X\) and \(q:X\to M\) with
\(q\circ s=1_M\).  Then
\(\rho=q\circ(s\circ\rho)=0\), a contradiction.
\end{proof}

\subsection{The stable Auslander model}
\label{sec:uniform-a3a}

Let \(\kk\) be a field.  Fix \(a\ge2\), put \(m=3a\) and
\(n=m-1\), and set
\[
 R_m=\kk[t]/(t^m),\qquad U_i=R_m/(t^i),\qquad
 \Lambda=\Pi_\kk(A_n).
\]
For \(1\le i<m\), let \(e_i\) be the primitive idempotent of \(\Lambda\)
corresponding to vertex \(i\), and put
\[
 P_i=e_i\Lambda.
\]
Thus \(P_i\) is the corresponding indecomposable projective right
\(\Lambda\)-module.  By
\cite[Theorem~4.8]{BrennerButlerKing2002}, \(\Lambda\) is Frobenius,
hence self-injective, so the construction of the preceding subsection
applies.  Write \(S=\Omega_\Lambda^{-1}\) for the suspension of
\(\underline{\operatorname{mod}}\Lambda\).

The algebra \(R_m\) is symmetric, so \(\operatorname{mod}R_m\) is a
Frobenius category and
\(\underline{\operatorname{mod}}R_m\) has suspension
\(\Omega_{R_m}^{-1}\).  For \(R_m\)-modules \(X,Y\), write
\[
 \underline{\Hom}_{R_m}(X,Y)
 =\Hom_{R_m}(X,Y)/
 \{\text{maps factoring through a projective \(R_m\)-module}\}.
\]
Since \(R_m\) is local, its finitely generated projective modules are
free.  Let \(G_m=U_1\oplus\cdots\oplus U_{m-1}\).  The standard stable
Auslander realization gives
\begin{equation}
\label{eq:a3a-stable-auslander}
 \Lambda\xrightarrow{\sim}
 \underline{\End}_{R_m}(G_m)^{\mathrm{op}},
 \qquad
 e_i\Lambda e_j\xrightarrow{\sim}
 \underline{\Hom}_{R_m}(U_i,U_j);
\end{equation}
see \cite[Section~7, pp.~219--220]{DlabRingel1992}.  The opposite algebra
accounts for the reversal of indices.

\begin{remark}
\label{rem:a3a-opposite-convention}
The realization \eqref{eq:a3a-stable-auslander} reverses arrows and
compositions.  More precisely, if
\[
 a:P_j\longrightarrow P_i
 \quad\longleftrightarrow\quad
 \underline{\widetilde a}:U_i\longrightarrow U_j
\]
and
\[
 b:P_\ell\longrightarrow P_j
 \quad\longleftrightarrow\quad
 \underline{\widetilde b}:U_j\longrightarrow U_\ell,
\]
then \(a\circ b\) corresponds to the stable class
\(\underline{\widetilde b\circ\widetilde a}\).  This convention will
also be used in \cref{sec:ade-lines}.
\end{remark}

The modules \(U_1,\ldots,U_{m-1}\) represent all indecomposable
nonprojective \(R_m\)-modules.  Thus \(G_m\) is an additive generator of
\(\underline{\operatorname{mod}}R_m\), and the isomorphisms in
\eqref{eq:a3a-stable-auslander} define a contravariant additive
equivalence
\[
 \mathsf A:\mathcal P(\Lambda)\xrightarrow{\sim}
 \underline{\operatorname{mod}}R_m,
 \qquad
 \mathsf A(P_i)=U_i.
\]

We now record the elementary morphism calculations used below.
\begin{fact}
\label{fact:a3a-cyclic-homs}
\begin{enumerate}[label=\textup{(\arabic*)},leftmargin=2.1em]
\item Let \(1\le i,j<m\).  For \(0\le q<j\), the assignment
\[
 \mu_{ij}^q:U_i\longrightarrow U_j,
 \qquad
 \mu_{ij}^q(1)=t^q,
\]
defines a homomorphism precisely when \(j\le i+q\).  Indeed, a
homomorphism \(U_i\to U_j\) is determined by the image of \(1\), and
\(\mu_{ij}^q\) is well defined precisely when
\(t^{i+q}=0\) in \(U_j\).  Consequently,
\[
 \bigl\{\mu_{ij}^q\bigm|0\le q<j,\ j\le i+q\bigr\}
\]
is a \(\kk\)-basis of \(\Hom_{R_m}(U_i,U_j)\).

\item A homomorphism \(u:U_i\to U_j\) factors through a projective
\(R_m\)-module if and only if
\[
 u(1)\in t^{m-i}U_j.
\]
Indeed, if \(u\) factors through \(R_m^{\oplus d}\), every coordinate of
the image of \(1\) in \(R_m^{\oplus d}\) is annihilated by \(t^i\), and hence belongs to
\(t^{m-i}R_m\).  Conversely, if \(u(1)=t^{m-i}v\), then \(u\) factors as
\[
 U_i\xrightarrow{\alpha}R_m\xrightarrow{\beta}U_j,
 \qquad
 \alpha(1)=t^{m-i},\quad\beta(1)=v.
\]
It follows that \(\mu_{ij}^q\) factors through \(R_m\) precisely when
\(i+q\ge m\).  Hence
\[
 \bigl\{\underline\mu_{ij}^q\bigm|
 0\le q<j,\ j\le i+q<m\bigr\}
\]
is a \(\kk\)-basis of
\(\underline{\Hom}_{R_m}(U_i,U_j)\).

\item For composable monomial maps,
\[
 \underline\mu_{j\ell}^{\,r}\circ\underline\mu_{ij}^{\,q}
 =
 \begin{cases}
  \underline\mu_{i\ell}^{\,q+r},
   &q+r<\ell\text{ and }i+q+r<m,\\
  0,&\text{otherwise}.
 \end{cases}
\]
Indeed, the composite sends \(1\) to \(t^{q+r}\).  This element either
vanishes in \(U_\ell\), represents the displayed stable basis map, or
gives a map factoring through \(R_m\).
\end{enumerate}
\end{fact}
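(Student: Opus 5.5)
The statement is Fact~\ref{fact:a3a-cyclic-homs}, and I will prove its three parts in order. All three reduce to elementary facts about cyclic modules over the truncated polynomial ring \(R_m\).

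\emph{Part (1).} Since \(U_i=R_m/(t^i)\) is cyclic with generator \(1\), a homomorphism \(u:U_i\to U_j\) is determined by the element \(u(1)\in U_j\). Such an element extends to a homomorphism exactly when \(t^i u(1)=0\) in \(U_j\). Writing \(u(1)=\sum_{q<j}c_qt^q\), the condition \(t^iu(1)=0\) in \(U_j\) splits monomial by monomial, because the \(t^q\) with \(q<j\) form a basis of \(U_j\) and multiplication by \(t^i\) shifts degrees injectively until the exponent reaches \(j\). Hence it is equivalent to requiring \(c_q=0\) whenever \(i+q<j\). This shows that the monomials \(\mu^q_{ij}\) with \(j\le i+q\) form a basis of \(\Hom_{R_m}(U_i,U_j)\).

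\emph{Part (2).} Every finitely generated projective \(R_m\)-module is free, so a factorization through a projective means a factorization \(U_i\to R_m^{\oplus d}\to U_j\). For the forward direction, each coordinate of the image of \(1\) in \(R_m^{\oplus d}\) is annihilated by \(t^i\). In \(R_m\) the annihilator of \(t^i\) is \(t^{m-i}R_m\), so every coordinate lies in \(t^{m-i}R_m\), and therefore \(u(1)\in t^{m-i}U_j\). For the converse, if \(u(1)=t^{m-i}v\), I will use the explicit factorization through \(R_m\) given by \(\alpha(1)=t^{m-i}\) and \(\beta(1)=v\). For the basis statement, the subspace \(t^{m-i}U_j\) is spanned by the monomials \(t^q\) with \(q\ge m-i\) that survive in \(U_j\). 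Consequently the stable quotient has basis the classes \(\underline\mu^q_{ij}\) with \(j\le i+q<m\).

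\emph{Part (3).} The composite of monomial maps sends \(1\) to \(t^{q+r}\). If \(q+r\ge \ell\), this element is zero in \(U_\ell\). If \(q+r<\ell\), the composite is \(\mu^{q+r}_{i\ell}\), which by part (2) vanishes stably exactly when \(i+q+r\ge m\).

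The only point that needs any care is the annihilator computation \(\operatorname{ann}_{R_m}(t^i)=t^{m-i}R_m\) in part (2). I expect no real obstacle.
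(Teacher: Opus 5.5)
Your proof is correct and follows the same argument the paper embeds in the statement. A homomorphism out of the cyclic module $U_i$ is determined by the image of $1$. Factorization through a projective is detected by $\operatorname{ann}_{R_m}(t^i)=t^{m-i}R_m$ together with the explicit factorization $U_i\to R_m\to U_j$. The composition rule follows by reading off the image $t^{q+r}$.

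One point is worth stating explicitly. The maps factoring through a projective are spanned by the subset $\{\mu^q_{ij}\mid i+q\ge m\}$ of the basis from part (1), and this is why the remaining classes $\underline\mu^q_{ij}$ form a basis of the stable Hom space.
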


\begin{notation}
\label{not:a3a-generators}
Put \(R_a=\kk[t]/(t^a)\).  Define \(t_i:P_i\to P_i\) for \(i=a,2a\),
\(x:P_a\to P_{2a}\), and \(y:P_{2a}\to P_a\) by
\[
 \mathsf A(t_i)=\underline\mu_{ii}^{1},\qquad
 \mathsf A(x)=\underline\mu_{2a,a}^{0},\qquad
 \mathsf A(y)=\underline\mu_{a,2a}^{a}.
\]
Parts~\textup{(2)}--\textup{(3)} of \cref{fact:a3a-cyclic-homs}
give, for \(i=a,2a\),
\[
 \End_\Lambda(P_i)
 =\bigoplus_{q=0}^{a-1}\kk\,t_i^q
 \cong R_a,
\]
where the isomorphism \(R_a\to\End_\Lambda(P_i)\) sends \(t\) to
\(t_i\).  Applying part~\textup{(2)}
to \((i,j)=(2a,a)\) and \((a,2a)\) gives
\[
\underline{\Hom}_{R_m}(U_{2a},U_a)
 =\bigoplus_{q=0}^{a-1}\kk\,\underline\mu_{2a,a}^q,\qquad
\underline{\Hom}_{R_m}(U_a,U_{2a})
 =\bigoplus_{q=a}^{2a-1}\kk\,\underline\mu_{a,2a}^q.
\]
Via \eqref{eq:a3a-stable-auslander}, these bases yield
\begin{equation}
\label{eq:a3a-four-corners}
\begin{aligned}
 \Hom_\Lambda(P_a,P_{2a})
 &=\bigoplus_{q=0}^{a-1}\kk\,(t_{2a}^q\circ x)
  =\bigoplus_{q=0}^{a-1}\kk\,(x\circ t_a^q),\\
 \Hom_\Lambda(P_{2a},P_a)
 &=\bigoplus_{q=0}^{a-1}\kk\,(t_a^q\circ y)
  =\bigoplus_{q=0}^{a-1}\kk\,(y\circ t_{2a}^q).
\end{aligned}
\end{equation}
Moreover, the generators satisfy
\begin{equation}
\label{eq:a3a-corner-relations}
 t_{2a}\circ x=x\circ t_a,
 \qquad
 t_a\circ y=y\circ t_{2a},
 \qquad
 x\circ y=0=y\circ x.
\end{equation}
The first two identities follow from part~\textup{(3)}.  Under
\eqref{eq:a3a-stable-auslander}, the composite
\(y\circ x\) corresponds to
\(\underline\mu_{2a,a}^0\circ\underline\mu_{a,2a}^a\), represented by
the zero map \(U_a\to U_a\), since \(t^a=0\) in \(U_a\).
The composite \(x\circ y\) corresponds to
\(\underline\mu_{a,2a}^a\circ\underline\mu_{2a,a}^0\).
This stable class is represented by the map \(U_{2a}\to U_{2a}\) sending
\(1\) to \(t^a\), which factors through \(R_m\) because \(2a+a=m\).
\end{notation}

\subsection{Periodicity and exact sequences}
\label{sec:a3a-periodic-detector}

We now construct the socle orbit and the short exact sequence
required by \cref{prop:periodic-orbit-detector}.  Consider the strict
two-periodic complex
\begin{equation}
\label{eq:a3a-periodic-complex}
P^\bullet:\quad
\xymatrix@C=1.8pc{
\cdots\ar[r]&P_a\ar[r]^{x}&P_{2a}\ar[r]^{y}&P_a\ar[r]^{x}&
P_{2a}\ar[r]^{y}&P_a\ar[r]&\cdots
}
\end{equation}
where the first displayed copy of \(P_{2a}\) is placed in degree zero.  Put
\[
 M=\im x\subseteq P_{2a},\qquad N=\im y\subseteq P_a,
 \qquad r=t^{a-1}\in R_a,
\]
and define \(h:P^\bullet\to P^\bullet\) by the following two-periodic
diagram:
\[
\begin{tikzcd}[column sep=2.5em,row sep=large]
\cdots\arrow[r]&
P_a\arrow[r,"x"]\arrow[d,"t_a^{a-1}"']&
P_{2a}\arrow[r,"y"]\arrow[d,"t_{2a}^{a-1}"']&
P_a\arrow[r]\arrow[d,"t_a^{a-1}"']&
\cdots\\
\cdots\arrow[r]&
P_a\arrow[r,"x"']&
P_{2a}\arrow[r,"y"']&
P_a\arrow[r]&
\cdots .
\end{tikzcd}
\]
The relations in \eqref{eq:a3a-corner-relations} make the squares
commute, so \(h\) is a strict two-periodic chain map.  Moreover,
\(h^2=0\), since \(t_i^a=0\) and \(2a-2\ge a\).  Set
\[
 \rho_M=\underline{h|_M},\qquad \rho_N=\underline{h|_N}.
\]

\begin{lemma}
\label{lem:a3a-periodic-exact}
The complex \(P^\bullet\) is exact.
\end{lemma}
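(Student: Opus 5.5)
The plan is to check exactness vertex by vertex, using the monomial bases of \cref{fact:a3a-cyclic-homs}. A complex of finitely generated projective right \(\Lambda\)-modules is exact exactly when \(\Hom_\Lambda(P_j,-)=(-)e_j\) makes it exact for every vertex \(1\le j<m\). The relations \(x\circ y=0=y\circ x\) of \eqref{eq:a3a-corner-relations} already make \(P^\bullet\) a complex. Because \(P^\bullet\) is strictly two-periodic, it suffices to prove two things for each \(j\): exactness at \(P_{2a}\), namely \(\ker(y\circ-)=\im(x\circ-)\) inside \(\Hom_\Lambda(P_j,P_{2a})\), and exactness at \(P_a\), namely \(\ker(x\circ-)=\im(y\circ-)\) inside \(\Hom_\Lambda(P_j,P_a)\).

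To compute, I would transport everything along the contravariant equivalence \(\mathsf A\) of \eqref{eq:a3a-stable-auslander}, keeping track of the reversal in \cref{rem:a3a-opposite-convention}.

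\emph{Exactness at \(P_{2a}\).} Here \(\Hom_\Lambda(P_j,P_{2a})\) corresponds to \(\underline{\Hom}_{R_m}(U_{2a},U_j)\). By \cref{fact:a3a-cyclic-homs}(2) it has basis \(\underline\mu^q_{2a,j}\) with \(0\le q<j\) and \(j\le 2a+q<m\); since \(m=3a\), the last condition forces \(q<a\). Postcomposition with \(y\) becomes precomposition with \(\underline\mu^a_{a,2a}\). By \cref{fact:a3a-cyclic-homs}(3) this sends \(\underline\mu^q_{2a,j}\) to \(\underline\mu^{q+a}_{a,j}\) when \(q+a<j\) (the other condition \(2a+q<m\) holds automatically), and to \(0\) otherwise. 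Distinct \(q\) go to distinct basis vectors, so the kernel is spanned by the \(\underline\mu^q_{2a,j}\) with \(q<a\) and \(j\le a+q\).

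Postcomposition with \(x\) becomes precomposition with \(\underline\mu^0_{2a,a}\). It sends the basis vector \(\underline\mu^r_{a,j}\) of \(\underline{\Hom}(U_a,U_j)\) (here \(r<j\), \(j\le a+r\), \(a+r<m\)) to \(\underline\mu^r_{2a,j}\) when \(2a+r<m\), that is \(r<a\), and to \(0\) otherwise. Hence the image is spanned by the \(\underline\mu^r_{2a,j}\) with \(r<a\), \(r<j\) and \(j\le a+r\). This is exactly the kernel basis.

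\emph{Exactness at \(P_a\).} The computation is symmetric. Now \(\underline{\Hom}(U_a,U_j)\) has basis \(\underline\mu^q_{a,j}\) with \(j\le a+q\), \(q<j\), \(q<2a\). Precomposition with \(\underline\mu^0_{2a,a}\) kills exactly those with \(q\ge a\). Precomposition with \(\underline\mu^a_{a,2a}\) sends \(\underline\mu^{s}_{2a,j}\) (where \(s<a\), \(j\le 2a+s\)) to \(\underline\mu^{s+a}_{a,j}\) when \(s+a<j\). These images are exactly the \(\underline\mu^q_{a,j}\) with \(a\le q<j\), and the constraint \(j\le a+q\) then holds automatically.

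The main obstacle is bookkeeping rather than any conceptual step: getting the direction of composition right under \(\mathsf A\), and checking the boundary inequalities in \cref{fact:a3a-cyclic-homs}(2)--(3) carefully. In particular, in \(\Hom(P_j,P_{2a})\) every \(q\) already satisfies \(q<a\), and monomials that land outside the stable basis vanish, so the identification of kernels with images is an equality of index sets.
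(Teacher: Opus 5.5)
Your proof is correct, but it takes a different route from the paper's.

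\textbf{Your route.} You verify exactness of each \(P^\bullet e_j\) by hand, using the monomial bases of \cref{fact:a3a-cyclic-homs}. Under \(\mathsf A\), the maps \(x\circ-\) and \(y\circ-\) become precomposition with \(\underline\mu^0_{2a,a}\) and \(\underline\mu^a_{a,2a}\). Each sends a basis vector either to zero or to a distinct basis vector, so kernels and images are spans of explicitly indexed basis vectors, and you compare the index sets. I checked the inequalities at both positions and they match. One small point: in the \(P_a\) case, the condition \(j\le a+q\) on the images comes from the source condition \(j\le 2a+s\) with \(q=a+s\); it does not follow from \(a\le q<j\) alone. It holds either way, so nothing breaks.

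\textbf{The paper's route.} The paper does no basis bookkeeping. It notes that \(\underline\mu^0_{2a,a}\) and \(\underline\mu^a_{a,2a}\) are the stable classes of the maps \(\pi\) and \(\iota\) in the short exact sequence
\[
0\to U_a\xrightarrow{\iota}U_{2a}\xrightarrow{\pi}U_a\to0 .
\]
Comparing this with the cosyzygy sequence \(0\to U_a\to R_m\to U_{2a}\to0\) identifies \(\Omega^{-1}_{R_m}U_a\) with \(U_{2a}\) and the connecting morphism with \(\underline\iota\). This gives a periodic distinguished triangle
\[
U_a\to U_{2a}\to U_a\to U_{2a}
\]
in \(\underline{\operatorname{mod}}R_m\). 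Exactness of \(P^\bullet e_j\) is then the long exact sequence of \(\underline{\Hom}_{R_m}(-,U_j)\), up to signs.

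\textbf{What each buys.} The paper's argument is index-free and explains the two-periodicity: \(P^\bullet e_j\) is literally the Hom-image of a periodic triangle. The price is computing the connecting map and tracking signs. Your argument is entirely elementary. It needs only \cref{fact:a3a-cyclic-homs} and the composition reversal of \cref{rem:a3a-opposite-convention}, with no triangulated structure and no sign issues. However, it relies on every map being monomial and on the numerology \(m=3a\) (for example, \(q<a\) is forced in \(\underline{\Hom}_{R_m}(U_{2a},U_j)\)), so it generalizes less readily.
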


\begin{proof}
Let \(\iota:U_a\to U_{2a}\) be given by \(\iota(1)=t^a\), and let
\(\pi:U_{2a}\to U_a\) be the canonical quotient.  Consider the short exact
sequence
\begin{equation*}
\xymatrix@C=2.2pc{
0\ar[r]&U_a\ar[r]^{\iota}&U_{2a}\ar[r]^{\pi}&U_a\ar[r]&0.
}
\end{equation*}
Comparing it with the chosen cosyzygy sequence gives
\[
\begin{tikzcd}[column sep=2.4em]
0\arrow[r]&U_a\arrow[r,"\iota"]\arrow[d,equal]&
U_{2a}\arrow[r,"\pi"]\arrow[d,"\gamma"]&
U_a\arrow[r]\arrow[d,"\partial"]&0\\
0\arrow[r]&U_a\arrow[r,"\jmath"]&
R_m\arrow[r]&U_{2a}\arrow[r]&0,
\end{tikzcd}
\qquad
\jmath(1)=t^{2a},\quad \gamma(1)=t^a.
\]
The bottom row identifies \(\Omega^{-1}_{R_m}U_a\) with \(U_{2a}\), and
the induced map on cokernels satisfies \(\partial(1)=t^a\).  Thus
\(\partial=\iota\).  The top short exact sequence therefore induces the
distinguished triangle
\[
 U_a\xrightarrow{\underline\iota}U_{2a}\xrightarrow{\underline\pi}U_a
 \xrightarrow{\underline\iota}\Omega^{-1}_{R_m}U_a=U_{2a}
\]
in \(\underline{\operatorname{mod}}R_m\).

For \(1\le j<m\), applying
\(\underline{\Hom}_{R_m}(-,U_j)\) to this triangle gives the exact
sequence
\[
 \cdots\xrightarrow{\iota^*}\underline{\Hom}_{R_m}(U_a,U_j)
 \xrightarrow{\pi^*}\underline{\Hom}_{R_m}(U_{2a},U_j)
 \xrightarrow{\iota^*}\underline{\Hom}_{R_m}(U_a,U_j)
 \xrightarrow{\pi^*}\cdots,
\]
where \(\iota^*\) and \(\pi^*\) denote precomposition with
\(\underline\iota\) and \(\underline\pi\), respectively.  Under
\(P_i e_j=e_i\Lambda e_j\cong
\underline{\Hom}_{R_m}(U_i,U_j)\), this is precisely the complex obtained
from \eqref{eq:a3a-periodic-complex} by applying \((-)e_j\): up to signs,
the maps \(x\) and \(y\) correspond to \(\pi^*\) and \(\iota^*\),
respectively.  These signs do not affect exactness.  Since
\(1=e_1+\cdots+e_{m-1}\), exactness of all the complexes
\(P^\bullet e_j\) implies that \(P^\bullet\) is exact.
\end{proof}

The relations in \eqref{eq:a3a-corner-relations} show that \(t_{2a}\)
preserves \(M\) and \(t_a\) preserves \(N\).  Restriction, followed by
passage to the stable category, therefore defines algebra homomorphisms
\begin{equation}
\label{eq:a3a-stable-homs}
\begin{aligned}
 \operatorname{res}_M:R_a&\longrightarrow
 \underline{\End}_\Lambda(M),
 &t&\longmapsto\underline{t_{2a}|_M},\\
 \operatorname{res}_N:R_a&\longrightarrow
 \underline{\End}_\Lambda(N),
 &t&\longmapsto\underline{t_a|_N}.
\end{aligned}
\end{equation}

\begin{lemma}
\label{lem:a3a-rank-one}
\begin{enumerate}[label=\textup{(\arabic*)}]
\item The two maps in \eqref{eq:a3a-stable-homs} are isomorphisms, and
\[
 \underline{\Hom}_\Lambda(M,N)=0
 =\underline{\Hom}_\Lambda(N,M).
\]
\item Under these isomorphisms, \(r=t^{a-1}\) corresponds to
\(\rho_M\) and \(\rho_N\).  In particular, they are nonzero and generate
the respective socles:
\[
 \operatorname{soc}\bigl(\underline{\End}_\Lambda(M)\bigr)=\kk\rho_M,
 \qquad
 \operatorname{soc}\bigl(\underline{\End}_\Lambda(N)\bigr)=\kk\rho_N.
\]
The strict period of \(P^\bullet\) identifies
\[
 S(M)\cong N,\qquad S(N)\cong M,
\]
and, under these identifications,
\[
 S(\rho_M)=\rho_N,\qquad S(\rho_N)=\rho_M.
\]
\end{enumerate}
\end{lemma}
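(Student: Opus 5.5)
The plan is to compute everything through the stable Auslander realization \eqref{eq:a3a-stable-auslander} and \cref{fact:a3a-cyclic-homs}, after first identifying \(M\) and \(N\) with cokernels of maps between projectives.

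\textbf{Presentations and stable Homs.}
By \cref{lem:a3a-periodic-exact} we have \(M=\im x\cong\coker y\) and \(N=\im y\cong\coker x\). Hence \(M\) has the minimal projective presentation \(P_a\xrightarrow{y\circ?}\cdots\); more usefully, it has the periodic resolution
\[
\cdots\to P_{2a}\xrightarrow{y}P_a\xrightarrow{x}M\to0,
\]
and dually for \(N\). In a self-injective algebra, stable morphisms between such modules are computed as homotopy classes of chain maps between the acyclic complexes, via the equivalence \(Z^0\). So \(\underline{\End}(M)\) is the group of two-periodic chain endomorphisms of \(P^\bullet\) modulo null-homotopies. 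I would work instead with degreewise data. A chain map is a pair \((\phi_a,\phi_{2a})\in\End(P_a)\times\End(P_{2a})\) commuting with \(x,y\); a homotopy is a pair \((s,s')\) with \(s:P_{2a}\to P_a\) and \(s':P_a\to P_{2a}\).

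By \eqref{eq:a3a-four-corners}, \(\End(P_i)\cong R_a\). Commutation with \(x\) forces \(\phi_{2a}=p(t_{2a})\) when \(\phi_a=p(t_a)\), since \(x\circ t_a^q\) for \(q<a\) is a basis. Null-homotopic maps have the form \(y\circ s'+s\circ x\). By \eqref{eq:a3a-four-corners}, \(s'=\sum c_q\,x\circ t_a^q\), so \(y\circ s'=0\) by \eqref{eq:a3a-corner-relations}; the other composite vanishes similarly. Hence homotopies are zero and \(\operatorname{res}_M\) is an isomorphism. The same argument, using strict periodicity, applies to \(\operatorname{res}_N\).

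For \(\underline{\Hom}(M,N)\), the corresponding chain maps \(P^\bullet\to P^\bullet[1]\) have components \(P_{2a}\to P_a\) and \(P_a\to P_{2a}\), i.e.\ elements \(\sum c_q t_a^q y\) and \(\sum d_q x t_a^q\). Commutation with the differentials of \(P^\bullet[1]\), including the sign, compares \(x\circ(\text{component})\) with \((\text{component})\circ y\); both vanish by \(xy=0=yx\). So every pair is a chain map. These must then be shown null-homotopic, with homotopies built from \(\End(P_i)\). This step is the main obstacle: the naive count suggests the homotopies \(t\mapsto\) components \(s\circ x\) hit exactly the \(x\)-multiples. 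I would check carefully, using the basis descriptions in \eqref{eq:a3a-four-corners}, that every pair \((\sum c_q t^q_a y,\sum d_q x t^q_a)\) equals \((s_a\circ y\pm y\circ s_{2a},\,x\circ s_a\pm s_{2a}\circ x)\) for suitable \(s_a\in R_a\) and \(s_{2a}\in R_a\). This requires independent choices, which is plausible since the two equations involve the disjoint corner spaces. The vanishing of \(\underline{\Hom}(N,M)\) is symmetric.

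\textbf{Socles and shift.}
Under \(\operatorname{res}_M\), the class \(\rho_M=\underline{h|_M}\) is the image of \(t^{a-1}=r\), since \(h\) acts by \(t_{2a}^{a-1}\) on the degree-zero term \(P_{2a}\supseteq M\). The socle of \(R_a\) is \(\kk r\), giving the socle statements, and \(r\ne0\) because \(a\ge2\). Finally, \(Z^0(P^\bullet[1])=Z^1(P^\bullet)=N\), so the strict two-periodicity gives \(S(M)=Z^0(P^\bullet[1])\cong N\) and \(S(N)\cong Z^0(P^\bullet[2])=M\). Because \(h\) is strictly periodic, \(S(\rho_M)\) is represented by \(h[1]\) restricted to \(Z^0(P^\bullet[1])\), which is \(h|_N=\rho_N\). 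The same holds for \(S(\rho_N)=\rho_M\).
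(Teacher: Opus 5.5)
\paragraph{Endomorphisms, socles and shifts.}
Your computation of \(\underline{\End}_\Lambda(M)\) and \(\underline{\End}_\Lambda(N)\) through \(Z^0\) is sound. It differs from the paper, which shows directly that restriction \(\End_\Lambda(P_{2a})\to\End_\Lambda(M)\) is bijective and that an endomorphism of \(M\) factoring through a projective is zero. Morphisms in \(\mathsf K_{\mathrm{ac}}(\operatorname{proj}\Lambda)\) are arbitrary \(\mathbb Z\)-indexed families, not two-periodic ones. For endomorphisms this causes no harm: the chain-map condition forces the same \(p\in R_a\) in every degree, and each term of a null-homotopy vanishes separately. Your socle and shift arguments agree with the paper's.

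\paragraph{The gap: \(\underline{\Hom}_\Lambda(M,N)=0\).}
The step you call plausible is false as formulated. Take a homotopy given by a single pair \(s_a,s_{2a}\in R_a\), acting on \(P_a\) and \(P_{2a}\). Use the convention \(d^n_{P^\bullet[1]}=-d^{n+1}_P\) and the relations \(t_a\circ y=y\circ t_{2a}\), \(t_{2a}\circ x=x\circ t_a\). Then the null-homotopic map has degree-\(0\) and degree-\(1\) components \((s_a-s_{2a})(t_a)\circ y\) and \(-x\circ(s_a-s_{2a})(t_a)\). Both are governed by the one element \(s_a-s_{2a}\), so the corner spaces are coupled, not independent. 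Two-periodic homotopies therefore reach only the pairs \((c\,y,\,x\,d)\) with \(d=-c\). For example, the chain map with \(\psi^{2k}=y\) and \(\psi^{2k+1}=0\) is null-homotopic through no two-periodic homotopy. Taking periodic chain maps modulo periodic homotopies would give \(R_a\ne0\), not \(0\).

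\paragraph{Two ways to repair it.}
First, you can allow arbitrary homotopies \(s^n=p_n(t)\in\End(P^n)\). The null-homotopy condition in degree \(n\) then reads \(\psi^n=d^n\circ(p_{n+1}-p_n)(t)\). Since \(\Hom(P^n,P^{n+1})=d^n\circ\End(P^n)\) by \eqref{eq:a3a-four-corners}, this can be solved recursively from \(p_0=0\) in both directions. In the example above, \(p_{2k}=k\) and \(p_{2k+1}=k+1\), which is not periodic.

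Second, and more simply, you can avoid homotopies altogether. By fullness of \(Z^0\), or by injectivity of \(P_a\) as in the paper, every map \(M\to N\) is induced by some \(P_{2a}\to P_a\). Such a map is a combination of the \(t_a^q\circ y\), so it vanishes on \(M=\im x\). This even gives the ordinary \(\Hom_\Lambda(M,N)=0\), and \(\Hom_\Lambda(N,M)=0\) follows symmetrically.
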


\begin{proof}
For~\textup{(1)}, consider first the restriction map
\[
 \End_\Lambda(P_{2a})\longrightarrow\End_\Lambda(M),
 \qquad v\longmapsto v|_M.
\]
It is surjective because \(P_{2a}\) is injective.  If \(v|_M=0\), then
\(v\circ x=0\).  By \eqref{eq:a3a-four-corners}, precomposition with
\(x\) sends the basis \(1,t_{2a},\ldots,t_{2a}^{a-1}\) to a basis of
\(\Hom_\Lambda(P_a,P_{2a})\).  Hence \(v=0\).  Thus restriction is an
isomorphism.

We next show that every endomorphism of \(M\) factoring through a
projective module is zero.  Let
\(\iota_M:M\hookrightarrow P_{2a}\) and
\(\bar x:P_a\twoheadrightarrow M\) be the maps satisfying
\(x=\iota_M\circ\bar x\).  Suppose that
\(u=b'\circ a':M\to M\) factors through a projective module \(Q\).
Since \(\Lambda\) is self-injective, \(a'\) extends to a map
\(\widetilde a:P_{2a}\to Q\), so that
\(a'=\widetilde a\circ\iota_M\).  Projectivity of \(Q\) lifts \(b'\)
through \(\bar x\) to a map \(\widetilde b:Q\to P_a\) satisfying
\(\bar x\circ\widetilde b=b'\).  Put
\(w=\widetilde b\circ\widetilde a:P_{2a}\to P_a\).  Then
\[
 \iota_M\circ u=x\circ w\circ\iota_M.
\]
By \eqref{eq:a3a-four-corners}, \(w\) is a linear combination of the
maps \(t_a^q\circ y\).  Hence \(x\circ w=0\) by
\eqref{eq:a3a-corner-relations}, and therefore \(u=0\).
This proves that \(\operatorname{res}_M\) is an isomorphism.  A similar
argument proves that \(\operatorname{res}_N\) is an isomorphism.

We next prove that \(\Hom_\Lambda(M,N)=0\).  Let \(u:M\to N\).  The composite
\(M\xrightarrow{u}N\hookrightarrow P_a\) extends across
\(M\hookrightarrow P_{2a}\) to a map \(P_{2a}\to P_a\).  Every such map
is a linear combination of the maps \(t_a^q\circ y\), all of which
vanish on \(M=\im x\).  Thus \(\Hom_\Lambda(M,N)=0\).  A similar argument
gives \(\Hom_\Lambda(N,M)=0\).

For~\textup{(2)}, the definition of \(h\) shows that \(r=t^{a-1}\)
corresponds to \(\rho_M\) and \(\rho_N\).  Since
\(\operatorname{soc}(R_a)=\kk t^{a-1}\), the two displayed socle
identities follow.  Under the triangulated equivalence \(Z^0\) recalled
above, shifting \(P^\bullet\) once identifies \(S(M)\) with \(N\) and
\(S(N)\) with \(M\).  The strict two-periodicity of \(h\) gives
\(S(\rho_M)=\rho_N\) and \(S(\rho_N)=\rho_M\).
\end{proof}

By \cref{lem:a3a-periodic-exact}, we may apply the two successive mapping-cone constructions in
\eqref{eq:double-cone-cycle-rows} to \eqref{eq:a3a-periodic-complex} and
\(h\).  Thus
\(C^\bullet=\operatorname{Cone}(-h)\), \(A=Z^0(C^\bullet)\),
\(D^\bullet=\operatorname{Cone}(f)\), and
\(B=Z^0(D^\bullet)\).  Let
\[
 p=Z^0(\pi_D):B\longrightarrow Z^2(P^\bullet)=M.
\]
The last row of \eqref{eq:double-cone-cycle-rows} is the short exact
sequence
\begin{equation}
\label{eq:a3a-detector-sequence}
\begin{tikzcd}[column sep=2.5em]
0\arrow[r]&A\arrow[r]&B\arrow[r,"p"]&M\arrow[r]&0.
\end{tikzcd}
\end{equation}

\begin{lemma}
\label{lem:a3a-two-cones}
Neither \(A\) nor \(B\) has a direct summand isomorphic, in the stable
category, to \(M\) or \(N\).  There are isomorphisms
\[
 \phi_B:S^2(B)\xrightarrow{\sim}B,
 \qquad
 \phi_M:S^2(M)\xrightarrow{\sim}M
\]
satisfying
\[
 \phi_M\circ S^2(\underline p)=\underline p\circ\phi_B.
\]
\end{lemma}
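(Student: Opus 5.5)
The statement has two parts: the period identifications, which should come out of the strict two-periodicity of the double-cone construction, and the absence of summands isomorphic to \(M\) or \(N\) in \(A\) and \(B\), which I plan to reduce to \cref{lem:socle-annihilation} using the chain-level lift \eqref{eq:double-cone-lift}. I treat the period identifications first.

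\emph{Period identifications.} By construction \(P^\bullet[2]=P^\bullet\) and \(h[2]=h\). Hence \(C^\bullet[2]=C^\bullet\) and \(D^\bullet[2]=D^\bullet\), and \(\pi_D[2]=\pi_D\). Under the triangulated equivalence \(Z^0:\mathsf K_{\mathrm{ac}}(\operatorname{proj}\Lambda)\xrightarrow{\sim}\underline{\operatorname{mod}}\Lambda\), the shift \([1]\) corresponds to \(S\). So \(S^2(B)\cong Z^0(D^\bullet[2])=Z^0(D^\bullet)=B\), and similarly \(S^2(M)\cong M\). Take \(\phi_B\) and \(\phi_M\) to be these comparisons composed with the equalities.

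Naturality of the comparison \(Z^0\circ[2]\cong S^2\circ Z^0\), applied to the chain map \(\pi_D\) with \(\pi_D[2]=\pi_D\), gives \(\phi_M\circ S^2(\underline p)=\underline p\circ\phi_B\). This is routine once one observes that the sign conventions for \([1]\) cancel in \([2]\).

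\emph{No summands from the orbit.} Recall that \(\operatorname{Hom}(M,N)\) and \(\operatorname{Hom}(N,M)\) vanish stably by \cref{lem:a3a-rank-one}(1). By \cref{lem:socle-annihilation}, it suffices to show the following two facts.
\begin{itemize}
\item Every stable map \(M\to X\) with \(X\in\{A,B\}\) kills \(\rho_M\).
\item Every stable map \(N\to X\) kills \(\rho_N\).
\end{itemize}
I plan to use the triangles coming from \eqref{eq:double-cone-cycle-rows}. The first row gives a triangle \(M\to A\to N\xrightarrow{\underline h}S(M)\), whose connecting morphism is induced by \(h[1]\) up to sign, i.e.\ essentially \(\rho\). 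The second row gives \(A\to B\to M\xrightarrow{\partial}S(A)\), whose connecting morphism is induced by \(f\).

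\emph{Maps out of \(N\).} Take \(g:N\to A\). Composing with \(A\to N\) gives an element \(c\in\underline{\End}(N)\cong R_a\). Since \(\underline h\circ c=c\circ\underline h\) and \(\underline h\) composed with \(A\to N\) is zero, we get \(c\rho_N\in\) image of a map through \(A\to N\to S(M)\), which is \(0\). So \(c\) lies in the radical of \(R_a\). Then \(g\circ\rho_N\) factors back through \(M\to A\) via \(\underline{\Hom}(N,M)=0\) after subtraction, and should vanish. I expect to make this precise with the long exact Hom sequences.

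\emph{Maps out of \(M\).} By the Hom exact sequence, maps \(M\to A\) come from \(\underline{\End}(M)\) together with lifts; these are controlled by the relation \(\pi_C\circ f=h[1]\).

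\emph{Maps into \(B\).} Here I would use the lift \(\sigma_z\) with \(z=h\): it satisfies \(h\circ h=0=h\circ h\) and \(h[2]=h\). This gives a lift of \(\rho_M\) through \(p\) annihilating \(h|_M\), and it is meant to show that the connecting morphism \(\partial\) absorbs the socle.

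The main obstacle is this Hom bookkeeping for \(B\). I would try first to show \(\underline{\Hom}(M,B)\circ\rho_M=0\) by the following route. Write any \(g:M\to B\). Then \(\underline p\circ g\in R_a\) lies in \(\underline{\End}(M)\), and the lift \(\sigma_z\) for general \(z=\sum c_q t^q\) exists only when \(h\circ z=0\), i.e.\ when \(z\) lies in the radical. Therefore \(\underline p\circ g\) is radical, so \(\underline p\circ g\circ\rho_M=0\). It follows that \(g\circ\rho_M\) factors through \(A\). That leaves \(\underline{\Hom}(M,A)\circ\rho_M\), which is handled by the previous case.
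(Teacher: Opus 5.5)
Your period identifications and your overall plan match the paper: reduce to \cref{lem:socle-annihilation}, then use the two cone triangles and the chain-level lifts \(\sigma_z\). Your argument for \(\underline{\Hom}_\Lambda(N,A)\circ\rho_N=0\) also works. The gap is in the decisive case \(\underline{\Hom}_\Lambda(M,B)\circ\rho_M=0\), where both of your steps fail. Let \(k:A\to B\) be the map induced by \(C^\bullet\to D^\bullet\), let \(b:A\to N\) be the first-row projection, and put \(z=\underline p\circ\underline g\in\underline{\End}_\Lambda(M)\cong R_a\).

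First, the fact that \(\sigma_z\) is a chain map only when \(h\circ z=0\) constrains nothing about an arbitrary \(g\). Indeed \(g\) is itself a lift of \(z\), so "therefore \(\underline p\circ g\) is radical" is a non sequitur. The correct reason is exactness of \(\underline{\Hom}_\Lambda(M,-)\) on the triangle \(N\xrightarrow{\underline f}A\xrightarrow{\underline k}B\xrightarrow{\underline p}M\xrightarrow{\pm S(\underline f)}S(A)\), with \(M\cong S(N)\). This gives \(S(\underline f)\circ z=0\). Since \(\underline b\circ\underline f=\rho_N\neq0\) (this is where \(\pi_C\circ f=h[1]\) enters), \(\underline f\neq0\). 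Hence \(z\) is not a unit, and \(z\in tR_a=\operatorname{Ann}_{R_a}(r)\).

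Second, and more seriously, \(z\circ\rho_M=0\) only yields \(\underline g\circ\rho_M=\underline k\circ u\) for some \(u\in\underline{\Hom}_\Lambda(M,A)\). Nothing makes \(u\) of the form \(u'\circ\rho_M\), so the \(A\)-case does not apply. In fact \(\underline k\circ-\) is injective on the nonzero space \(\underline{\Hom}_\Lambda(M,A)\ni\underline{a_0}\), because \(\underline{\Hom}_\Lambda(M,N)=0\). So membership in its image forces nothing.

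The missing idea is to decompose \(g\) itself, before composing with \(\rho_M\), using a lift of the whole element \(z\) that kills \(\rho_M\). Write \(z=\sum_{q\ge1}c_qt^q\), and let \(z^\bullet\) be the strict two-periodic chain map with components \(\sum_{q\ge1}c_qt_i^q\). It satisfies \(h\circ z^\bullet=0=z^\bullet\circ h\). Hence \(\sigma_{z^\bullet}\) induces \(\widetilde z:M\to B\) with \(\underline p\circ\underline{\widetilde z}=z\) and \(\underline{\widetilde z}\circ\rho_M=0\). Your lift for \(z=h\) only lifts \(\rho_M\), which is not what is needed. Exactness then gives \(\underline g-\underline{\widetilde z}=\underline k\circ u\), and so \(\underline g\circ\rho_M=\underline k\circ u\circ\rho_M=0\).

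This last step uses the \(M\to A\) case, which you have not actually argued, and \(\pi_C\circ f=h[1]\) plays no role there. Since \(\underline{\Hom}_\Lambda(M,N)=0\), we have \(\underline u=\underline{a_0}\circ c\) with \(c\in R_a\), where \(a_0:M\to A\) is the first-row inclusion. Moreover \(\underline{a_0}\circ\rho_M=0\), because \(\pm\rho_M\) precedes \(\underline{a_0}\) in the rotated first triangle. Commutativity of \(R_a\) then gives \(\underline u\circ\rho_M=0\).

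The case \(N\to B\), which you omit, is immediate. Since \(\underline{\Hom}_\Lambda(N,M)=0\), every stable map \(N\to B\) factors through \(\underline k\), and the \(N\to A\) case applies.
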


\begin{proof}
Let \(a_0:M\to A\) and \(b:A\to N\) be the maps in the first short exact
sequence of \eqref{eq:double-cone-cycle-rows}.  Under the identification
\(S(M)\cong N\) induced by \(P^\bullet\), its connecting morphism is
\(\rho_N\) by \eqref{eq:cone-sign-certificate}.  Hence this short exact
sequence induces the triangle
\begin{equation}
\label{eq:a3a-first-cone-triangle}
\begin{tikzcd}[column sep=2.1em]
M\arrow[r,"\underline{a_0}"]&A\arrow[r,"\underline b"]&N\arrow[r,"\rho_N"]&N.
\end{tikzcd}
\end{equation}
The chain map \(f:P^\bullet[1]\to C^\bullet\) induces a module
homomorphism \(f:N\to A\) satisfying
\[
 \underline b\circ\underline f=\rho_N.
\]
By \cref{lem:socle-annihilation}, it is enough to prove
\begin{equation}
\label{eq:a3a-annihilation-identities}
\begin{aligned}
 \underline{\Hom}_\Lambda(M,A)\circ\rho_M&=0,&
 \underline{\Hom}_\Lambda(N,A)\circ\rho_N&=0,\\
 \underline{\Hom}_\Lambda(M,B)\circ\rho_M&=0,&
 \underline{\Hom}_\Lambda(N,B)\circ\rho_N&=0.
\end{aligned}
\end{equation}

We begin with \(A\).  If \(u:M\to A\) is a module homomorphism, then
\(\underline{\Hom}_\Lambda(M,N)=0\) and exactness applied to
\eqref{eq:a3a-first-cone-triangle} give
\(\underline u=\underline{a_0}\circ c\) for some \(c\in R_a\).
The map preceding \(\underline{a_0}\) in the triangle is \(\pm\rho_M\),
so \(\underline{a_0}\circ\rho_M=0\).  Since \(R_a\) is commutative,
\[
 \underline u\circ\rho_M
 =\underline{a_0}\circ c\circ\rho_M
 =\underline{a_0}\circ\rho_M\circ c=0.
\]
If \(u:N\to A\) is a module homomorphism, exactness and
\(\underline{\Hom}_\Lambda(N,M)=0\) show that
\(\underline b\circ-:\underline{\Hom}_\Lambda(N,A)\to
\underline{\End}_\Lambda(N)\) is injective.
Consecutive maps in \eqref{eq:a3a-first-cone-triangle} have zero composite,
so \(\rho_N\circ\underline b=0\).  Commutativity of
\(\underline{\End}_\Lambda(N)\cong R_a\) gives
\[
 \underline b\circ\underline u\circ\rho_N
 =\rho_N\circ\underline b\circ\underline u=0.
\]
Injectivity now yields \(\underline u\circ\rho_N=0\).

Let \(g:A\to B\) be induced by the inclusion \(C^\bullet\to D^\bullet\).
The second cone construction gives the triangle
\begin{equation}
\label{eq:a3a-second-cone-triangle}
\begin{tikzcd}[column sep=2.2em]
N\arrow[r,"\underline f"]&A\arrow[r,"\underline g"]&B\arrow[r,"\underline p"]&M.
\end{tikzcd}
\end{equation}
Since \(\underline{\Hom}_\Lambda(N,M)=0\), the stable class of every
module homomorphism \(N\to B\) has the form
\(\underline g\circ\underline u\) for some \(u:N\to A\).
The result for \(A\) therefore gives
\(\underline{\Hom}_\Lambda(N,B)\circ\rho_N=0\).

It remains to treat morphisms \(M\to B\).
Let \(v:M\to B\) be a module homomorphism and set
\(z=\underline p\circ\underline v\in
\underline{\End}_\Lambda(M)\cong R_a\).
Exactness of \eqref{eq:a3a-second-cone-triangle} gives
\(S(\underline f)\circ z=0\).
Since \(\underline b\circ\underline f=\rho_N\ne0\),
\(\underline f\ne0\).  Consequently \(z\) is not a unit.
Thus \(z\in tR_a=\operatorname{Ann}_{R_a}(r)\), and we may write
\[
 z=\sum_{q=1}^{a-1}c_qt^q.
\]
Define a strict two-periodic endomorphism
\(z^\bullet:P^\bullet\to P^\bullet\) by taking its component on \(P_i\)
to be \(\sum_{q=1}^{a-1}c_qt_i^q\), for \(i=a,2a\).
The relations \eqref{eq:a3a-corner-relations} show that this is a chain map.
Its restriction to \(M\) represents \(z\) under \(\operatorname{res}_M\).
Since \(rz=0=zr\), we have
\(h\circ z^\bullet=0=z^\bullet\circ h\).
Hence \(\sigma_{z^\bullet}\) from \eqref{eq:double-cone-lift} induces a
module homomorphism \(\widetilde z:M\to B\) satisfying
\[
 \underline p\circ\underline{\widetilde z}=z,
 \qquad
 \underline{\widetilde z}\circ\rho_M=0.
\]
Now \(\underline p\circ(\underline v-\underline{\widetilde z})=0\).
Exactness gives
\(\underline v-\underline{\widetilde z}
 =\underline g\circ\underline u\) for some \(u:M\to A\).
The result for \(A\) and the displayed properties of \(\widetilde z\) give
\[
 \underline v\circ\rho_M
 =\underline{\widetilde z}\circ\rho_M
  +\underline g\circ\underline u\circ\rho_M=0.
\]
This proves \eqref{eq:a3a-annihilation-identities}, and hence the first
assertion.

Finally, \(P^\bullet[2]=P^\bullet\) and \(h[2]=h\).  Since the cone
constructions are defined by the matrices in
\eqref{eq:cone-sign-certificate}, these equalities give
\[
 C^\bullet[2]=C^\bullet,\qquad
 f[2]=f,\qquad
 D^\bullet[2]=D^\bullet.
\]
Under these identifications, the projection \(\pi_D\) is
two-periodic, as expressed by the following commutative square:
\[
\begin{tikzcd}[column sep=large,row sep=large]
D^\bullet[2]\arrow[r,"{\pi_D[2]}"]\arrow[d,equal]&
P^\bullet[4]\arrow[d,equal]\\
D^\bullet\arrow[r,"\pi_D"']&P^\bullet[2]
\end{tikzcd}
\]
The triangulated equivalence \(Z^0\) identifies the shift \([2]\) with
\(S^2\).  Hence the strict equalities above induce isomorphisms
\[
 \phi_B:S^2(B)\xrightarrow{\sim}B,
 \qquad
 \phi_M:S^2(M)\xrightarrow{\sim}M.
\]
Applying \(Z^0\), together with its suspension comparison, to the last
diagram gives
\[
\begin{tikzcd}[column sep=large,row sep=large]
S^2(B)\arrow[r,"S^2(\underline p)"]\arrow[d,"\phi_B"']&
S^2(M)\arrow[d,"\phi_M"]\\
B\arrow[r,"\underline p"']&M,
\end{tikzcd}
\]
which proves the second assertion.
\end{proof}

\begin{theorem}
\label{thm:scalar-family}
Let \(\kk\) be a field and \(a\ge2\).  For
\(\Lambda=\Pi_\kk(A_{3a-1})\), the Hom-finite idempotent-complete
\(\kk\)-linear category \(\mathcal P(\Lambda)\) carries a family
\(\{\Delta_{\kk,\lambda}\}_{\lambda\in\kk}\) of pairwise distinct
pre-triangulations such that
\[
 \Delta_{\kk,\lambda}\text{ satisfies }\mathrm{TR4}
 \quad\Longleftrightarrow\quad\lambda=0.
\]
\end{theorem}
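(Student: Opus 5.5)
The plan is to feed the data of \cref{sec:uniform-a3a,sec:a3a-periodic-detector} into \cref{prop:periodic-orbit-detector}. That proposition applies once three things are in place: a base triangulation \(\Delta_0\) on \(\T=\mathcal P(\Lambda)\), Calabi--Yau hypotheses on \(\C=\underline{\mathcal E}\), and a socle orbit with a detecting short exact sequence.

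\emph{The base triangulation and the Calabi--Yau structure.} Since \(\Lambda\) is self-injective, evaluation at \(\Lambda\) identifies \(\mathcal E=\mathsf{mod}\,\T\) with \(\operatorname{mod}\Lambda\), as in \eqref{eq:L2-Freyd-evaluation}. The contravariant equivalence \(\mathsf A\) identifies \(\T\) with \(\underline{\operatorname{mod}}R_m\), which is triangulated with suspension \(\Omega^{-1}_{R_m}\). I would transport this structure along \(\mathsf A\) to a triangulation \(\Delta_0\) on \(\mathcal P(\Lambda)\). Contravariance requires passing to the opposite category; the opposite of a triangulated category is triangulated with inverse suspension, so there is no difficulty here. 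This works over any field and needs no algebraic closure.

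For the hypotheses of \cref{sec:serre-traces}, note that \(\C=\underline{\operatorname{mod}}\Lambda\) is Hom-finite and idempotent-complete, hence Krull--Schmidt. The preprojective algebra of Dynkin type is stably \(2\)-Calabi--Yau over any field: \(\Omega^3\) is the Nakayama twist and \(\Lambda\) is symmetric up to that twist. So I take \(n=2\). I expect this to be the main obstacle. Over an arbitrary field the \(2\)-CY property of \(\underline{\operatorname{mod}}\Pi(A_n)\) must be quoted carefully, for example from Brenner--Butler--King's periodicity together with Auslander--Reiten duality \(\underline{\Hom}(X,Y)\cong D\underline{\Hom}(Y,\Omega^{-1}\tau X)\) and \(\tau=\Omega^{2}\nu\) with \(\nu\) acting trivially on objects in type \(A\) up to the diagram automorphism. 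I would first try to check directly that the Serre functor is \(S^2\) on the relevant objects, which is all that \cref{prop:periodic-orbit-detector} uses. Alternatively, I would verify the needed pairing only on the orbit \(\{M,N\}\).

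\emph{The socle orbit and the detecting sequence.} By \cref{lem:a3a-rank-one}, the pair \(\mathcal O=\{M,N\}\) together with \(\rho_M,\rho_N\) is an \(S\)-orbit. The lemma gives the required properties directly: \(\underline{\End}\cong R_a\), which is local with residue field \(\kk\); the socles are \(\kk\rho\); each \(\rho\) lies in the radical; and \(S(\rho_M)=\rho_N\), \(S(\rho_N)=\rho_M\). Indecomposability of \(M\) and \(N\) follows because \(R_a\) is local. The sequence \eqref{eq:a3a-detector-sequence} ends at \(M\in\mathcal O\). By \cref{lem:a3a-two-cones}, its first two terms have no summands in \(\mathcal O\), and it carries compatible \(2\)-period identifications. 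This verifies conditions (1)--(3) of \cref{prop:periodic-orbit-detector}.

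\emph{Conclusion.} \Cref{prop:periodic-orbit-detector} now produces the central twists \(\Delta_\lambda^{\delta_{\mathcal O}}\). They are pairwise distinct, and one of them satisfies \(\mathrm{TR4}\) exactly when \(\lambda=0\). Setting \(\Delta_{\kk,\lambda}=\Delta_\lambda^{\delta_{\mathcal O}}\) gives the statement. The proposition's use of Auslander--Reiten triangles needs the residue field of \(\End(M)\) to be \(\kk\), which holds here, so no algebraic closure is needed.
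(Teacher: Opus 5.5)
Your proposal is correct and is essentially the paper's proof. The paper transports the canonical triangulation along the covariant equivalence $D\circ\mathsf A$, with $D=\Hom_\kk(-,\kk)$, instead of passing to the opposite category; the two are interchangeable. It settles what you call the main obstacle simply by citing Keller \cite{Keller2005} for the $2$-Calabi--Yau property of $\underline{\operatorname{mod}}\Lambda$ over an arbitrary field, and then applies \cref{lem:a3a-rank-one,lem:a3a-two-cones} and \cref{prop:periodic-orbit-detector} exactly as you do.

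Do not rely on your fallback of checking the Serre pairing only on $\{M,N\}$. \Cref{prop:trace-separation} uses trace cyclicity along morphisms $M\to B$, and the Auslander--Reiten argument needs $S^2$ to be a Serre functor on all of $\C$.
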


\begin{proof}
Since \(R_m\) is commutative and symmetric, the duality
\(D=\Hom_\kk(-,\kk)\) is a contravariant autoequivalence of
\(\underline{\operatorname{mod}}R_m\).  Since \(\mathsf A\) is also
contravariant, their composite is a covariant additive equivalence
\[
 F=D\circ\mathsf A:\mathcal P(\Lambda)\xrightarrow{\sim}
 \underline{\operatorname{mod}}R_m.
\]
Transport the canonical triangulation of
\(\underline{\operatorname{mod}}R_m\) along \(F\), and denote the
resulting triangulation on \(\mathcal P(\Lambda)\) by
\(\Delta_{\kk,0}\).  Moreover,
\(\underline{\operatorname{mod}}\Lambda\) is \(2\)-Calabi--Yau
\cite[Section~8.4, Corollary~1, and Section~8.5, Lemma~2]{Keller2005}.

By \cref{lem:a3a-rank-one},
\(\underline{\End}_\Lambda(X)\cong R_a\) for \(X=M,N\), while
\(\underline{\Hom}_\Lambda(M,N)=0=
\underline{\Hom}_\Lambda(N,M)\).  Since \(R_a\) is local, \(M\) and
\(N\) are indecomposable in the stable category.  The vanishing of the
two morphism spaces shows that they are not isomorphic.  The strict
period of \(P^\bullet\) identifies
\[
 S(M)\cong N,\qquad S(N)\cong M,
\]
and carries \(\rho_M\) to \(\rho_N\) and \(\rho_N\) to \(\rho_M\).
Thus \(\mathcal O=\{M,N\}\) is an \(S\)-orbit.  For \(X=M,N\), the same
lemma gives
\[
 \underline{\End}_\Lambda(X)/
 \operatorname{rad}\bigl(\underline{\End}_\Lambda(X)\bigr)\cong\kk,
 \qquad
 \operatorname{soc}\bigl(\underline{\End}_\Lambda(X)\bigr)=\kk\rho_X.
\]
Since \(a\ge2\), \(t^{a-1}\) is a nonzero element of
\(\operatorname{rad}(R_a)\).  Hence each \(\rho_X\) is nonzero and belongs
to the radical.
Consequently, \((\mathcal O,\rho)\) is a socle orbit.

In the short exact sequence \eqref{eq:a3a-detector-sequence}, the
right-hand term \(M\) belongs to \(\mathcal O\), and
\cref{lem:a3a-two-cones} verifies conditions~\textup{(2)}--\textup{(3)}
of \cref{prop:periodic-orbit-detector}.  Applying that proposition gives
the required twists \(\Delta_{\kk,\lambda}\) and the asserted
equivalence.
\end{proof}

\section{Dynkin preprojective algebras in characteristic two}
\label{sec:ade-lines}

We construct central Heller twists for Dynkin preprojective algebras
in characteristic two.  A criterion for periodic modules reduces the
construction to endomorphisms of a projective module.  We then verify
its hypotheses using cyclic modules in type \(A_n\) and a common
dimension argument in types \(D_n,E_6,E_7,E_8\).

\begin{notation}
\label{not:ade-modules}
Throughout this section, \(\kk\) is a field of characteristic two,
and all modules are finite-dimensional right modules.  For a
self-injective algebra \(\Lambda\), we write
\(S=\Omega_\Lambda^{-1}\) for the suspension of
\(\underline{\operatorname{mod}}\Lambda\).  The stable class of a
homomorphism \(f\) is denoted by \(\underline f\).
For a quotient vector space or algebra \(B/J\), the notation
\([x]\) means the coset \(x+J\).  For vectors \(v_1,\ldots,v_r\),
\(\kk\{v_1,\ldots,v_r\}\) denotes their \(\kk\)-linear span.

Let \(Q\) be a simply laced Dynkin diagram, with vertex set \(Q_0\).
Its doubled quiver \(\overline Q\) has two opposite arrows
\(x:i\to j\) and \(\bar x:j\to i\) for each edge.  Put
\[
 r_i=\sum_{x:i\to j}x\bar x,\qquad
 I_Q=(r_i\mid i\in Q_0),\qquad
 \Lambda_Q=\Pi_\kk(Q)=\kk\overline Q/I_Q,
\]
where the sum is over all arrows starting at \(i\).
Products of paths are read from left to right.  We use the same
symbol for a path and its image in \(\Lambda_Q\).  For a path
\(\gamma=x_1\cdots x_\ell\), put
\(\bar\gamma=\bar x_\ell\cdots\bar x_1\).

The trivial path at \(i\) is denoted by \(e_i\), and
\[
 P_i=e_i\Lambda_Q,\qquad S_i=P_i/\operatorname{rad}(P_i).
\]
Thus \(S_i\) is the simple module supported at \(i\).
The grading on \(\Lambda_Q\) is by path length.  When
\(\Lambda=\Lambda_Q\), its degree-\(\ell\) component is written
\(\Lambda_\ell\).  For \(c\in e_i\Lambda e_j\), let
\[
 L_c:e_j\Lambda\longrightarrow e_i\Lambda,\qquad w\longmapsto cw.
\]
In particular, left multiplication identifies \(e_i\Lambda e_i\)
with \(\End_\Lambda(e_i\Lambda)\), and
\(L_c\circ L_{c'}=L_{cc'}\) whenever the products are defined.
\end{notation}

\begin{fact}
\label{fact:ade-triangulations}
The following properties hold for \(\Lambda_Q\).
\begin{enumerate}[label=\textup{(\roman*)},leftmargin=2.1em]
\item The algebra \(\Lambda_Q\) is finite-dimensional and Frobenius,
and hence self-injective
\cite[Theorem~4.8]{BrennerButlerKing2002}.
\item Keller's orbit construction gives a triangulation \(\Delta_0\)
on \(\mathcal P(\Lambda_Q)\)
\cite[Section~7.3]{Keller2005}; the corrected formulation using the
triangulated hull is in \cite[Section~1]{KellerCorrection}.
\item The stable module category
\(\underline{\operatorname{mod}}\Lambda_Q\) is \(2\)-Calabi--Yau
\cite[Section~8.4, Corollary~1, and Section~8.5, Lemma~2]{Keller2005}.
The suspension and Serre-functor conventions are explained in
\cite[Sections~2.3--2.4]{KellerCorrection}.
\end{enumerate}
\end{fact}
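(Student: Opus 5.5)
This Fact is a collection of citations, so the plan is to check that each cited result applies in exactly the form used here: right modules, characteristic two, and the relations \(r_i\) summed over arrows starting at \(i\).

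For \textup{(i)}, I would first observe that in characteristic two the signs in the usual preprojective relation \(\sum\pm x\bar x\) are irrelevant. Hence \(I_Q\) coincides with the standard preprojective ideal for any choice of orientation. The algebra \(\Lambda_Q\) is then the preprojective algebra of \cite{BrennerButlerKing2002}. Their Theorem~4.8 gives finite dimensionality for Dynkin \(Q\), together with a Frobenius (Nakayama) form. Since a finite-dimensional Frobenius algebra is self-injective, this settles \textup{(i)}, and also makes \(\operatorname{mod}\Lambda_Q\) a Frobenius category.

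For \textup{(ii)}, the route is through Keller's orbit-category construction. The stable category \(\underline{\operatorname{mod}}\Lambda_Q\) is equivalent to the orbit category \(D^b(\kk Q)/\tau^{-1}[1]\). Keller's theorem, in the corrected form using the triangulated hull \cite[Section~1]{KellerCorrection}, says that this orbit category, or its triangulated hull, is triangulated and equivalent to the stable category. I would then pull the triangulation back along the stable Auslander-type equivalence between \(\mathcal P(\Lambda_Q)\) and an additive generator of that hull, in the same way \(\Delta_{\kk,0}\) was transported in the proof of \cref{thm:scalar-family}. The hypotheses to check are hereditarity of \(\kk Q\) and the Dynkin condition, both of which are automatic. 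The cited results are stated over an arbitrary field, so characteristic two imposes no restriction.

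For \textup{(iii)}, I would quote Keller's 2-Calabi--Yau statement, which rests on the Nakayama/Serre functor of \(\Lambda_Q\) being \(\Omega^{-2}\) up to the Nakayama automorphism twist. The point to check is that in characteristic two this twist acts trivially on the stable category. The sign ambiguity of the Nakayama automorphism disappears, and on stable Homs the twist is isomorphic to the identity via the correction explained in \cite[Sections~2.3--2.4]{KellerCorrection}.

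I expect \textup{(iii)} to be the main obstacle: I need to confirm that the suspension and Serre-functor conventions match \(S=\Omega^{-1}\) and the pairing used in \cref{sec:serre-traces}. Concretely, this means checking that the structure provided is an \(n\)-Calabi--Yau structure with \(n=2\) in the sense fixed there, with a natural, bifunctorial, perfect pairing. I would do this by comparing the cited duality formulas directly with \eqref{eq:serre-trace-cyclicity}.
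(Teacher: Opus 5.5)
Your overall plan, checking that the three cited results apply, is what the paper does: the Fact is justified by citation alone. Your treatment of \textup{(i)} is fine. The real problem is \textup{(ii)}, which rests on a false identification. The stable category $\underline{\operatorname{mod}}\Lambda_Q$ is not equivalent to the cluster category $D^b(\kk Q)/\tau^{-1}[1]$. Already for $Q=A_2$ the former has two indecomposable objects ($S_1,S_2$) and the latter has five.

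More importantly, $\Delta_0$ does not come from $\underline{\operatorname{mod}}\Lambda_Q$ at all. That category, with suspension $\Omega^{-1}$, is where the Serre traces live. By contrast, $\Delta_0$ is a triangulation of the much smaller category $\mathcal P(\Lambda_Q)$, which has only $|Q_0|$ indecomposables.

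What \cite[Section~7.3]{Keller2005} uses is that $\mathcal P(\Lambda_Q)$ itself is equivalent to the orbit category $D^b(\kk Q)/\tau$. The indecomposables of $D^b(\kk Q)$ form the mesh category of $\mathbb ZQ$. The quotient $\mathbb ZQ/\tau$ is the double quiver $\overline Q$, and its mesh relations are the relations $r_i$; the signs are irrelevant in characteristic two. Keller's theorem, in the corrected form of \cite[Section~1]{KellerCorrection}, makes this orbit category triangulated, and $\Delta_0$ is the transported structure. Your appeal to the construction in \cref{thm:scalar-family} is also misplaced. There the model for $\mathcal P(\Lambda)$ is $\underline{\operatorname{mod}}R_m$, not a stable category of $\Lambda$, and the paper uses that model only in type $A$.

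In \textup{(iii)}, the point you single out to check would fail. The Nakayama twist does not act trivially on $\underline{\operatorname{mod}}\Lambda_Q$ in characteristic two. The permutation $\nu$ of \cref{fact:ade-graded-structure} is nontrivial in types $A_n$ $(n\ge2)$, $D_{2m+1}$ and $E_6$. In those types the twist interchanges non-isomorphic simple modules, so it is not isomorphic to the identity.

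Triviality is not needed. For a self-injective algebra, the Serre functor of the stable category is $\Omega$ composed with the Nakayama functor. The periodicity of $\Lambda_Q$ from \cite{BrennerButlerKing2002} identifies the Nakayama functor with $\Omega^{-3}$ on $\underline{\operatorname{mod}}\Lambda_Q$. The Serre functor is therefore $\Omega^{-2}=S^2$; the twist is cancelled, not trivialised. What \cite[Section~8.4, Corollary~1, and Section~8.5, Lemma~2]{Keller2005} supply, with the conventions of \cite[Sections~2.3--2.4]{KellerCorrection}, is that this holds as an isomorphism of triangulated functors. With these two corrections, your proposal reduces to the paper's citation-based justification.
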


As in \cref{eq:L2-Freyd-evaluation}, evaluation at \(\Lambda_Q\)
identifies the Freyd category of \(\mathcal P(\Lambda_Q)\) with
\(\operatorname{mod}\Lambda_Q\).  Thus
\cref{prop:periodic-orbit-detector} applies to short exact sequences
of \(\Lambda_Q\)-modules.

\subsection{A criterion using periodic modules}

We first express the stable endomorphism algebra of a periodic module
as a quotient of the endomorphism algebra of a projective module.

\begin{lemma}
\label{lem:periodic-block-stable-endomorphisms}
Let \(\Lambda\) be finite-dimensional and self-injective, let \(P\)
be projective, and put
\(E=\End_\Lambda(P)\).  Suppose that \(d\in E\) satisfies
\(\im d=\ker d=:M\).  Then restriction induces an algebra isomorphism
\begin{equation}
\label{eq:periodic-block-stable-endomorphisms}
 \frac{\{x\in E\mid d\circ x\circ d=0\}}{dE+Ed}
 \xrightarrow{\sim}\underline{\End}_\Lambda(M),
 \qquad [x]\longmapsto\underline{x|_M}.
\end{equation}
Here \(dE=\{d\circ y\mid y\in E\}\) and
\(Ed=\{y\circ d\mid y\in E\}\).
\end{lemma}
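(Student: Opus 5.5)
The plan is to build the map in stages: first show that restriction gives a ring map \(\{x\mid d\circ x\circ d=0\}\to\End_\Lambda(M)\), then show it is surjective already at the level of genuine endomorphisms, and finally identify the preimage of the projectively trivial maps with \(dE+Ed\).

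\emph{Well-definedness.} Write \(M=\im d=\ker d\), with inclusion \(\iota:M\hookrightarrow P\) and corestriction \(\bar d:P\twoheadrightarrow M\), so that \(d=\iota\circ\bar d\). If \(d\circ x\circ d=0\), then \(x(M)=x(\im d)\subseteq\ker d=M\). Hence \(x|_M\) is an endomorphism of \(M\), and restriction is multiplicative on this subalgebra. We check that \(dE+Ed\) is a two-sided ideal of \(\{x\mid dxd=0\}\) that maps into projectively trivial maps.
\begin{itemize}
\item For \(y\circ d\): its restriction to \(M\) is zero, since \(d|_M=0\).
\item For \(d\circ y\): its restriction is \(\bar d\circ(y\circ\iota)\), which factors through \(P\).
\end{itemize}
So we obtain an algebra map into \(\underline{\End}_\Lambda(M)\).

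\emph{Surjectivity.} Let \(u\in\End_\Lambda(M)\). Since \(P\) is injective (\(\Lambda\) is self-injective), \(\iota\circ u\) extends along \(\iota\) to some \(x\in E\) with \(x\circ\iota=\iota\circ u\). Then \(x(M)\subseteq M=\ker d\), so \(d\circ x\circ d=0\) and \(x|_M=u\). Thus even the unstable restriction is onto \(\End_\Lambda(M)\).

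\emph{Injectivity.} This is the main step. Suppose \(dxd=0\) and \(x|_M=b'\circ a'\) with \(a':M\to Q\), \(b':Q\to M\), and \(Q\) projective. We lift in two directions.
\begin{itemize}
\item By injectivity of \(Q\), extend \(a'\) to \(\tilde a:P\to Q\) with \(\tilde a\circ\iota=a'\).
\item By projectivity of \(Q\), lift \(b'\) through the epimorphism \(\bar d\) to \(\tilde b:Q\to P\) with \(\bar d\circ\tilde b=b'\).
\end{itemize}
Put \(w=\tilde b\circ\tilde a\in E\). Then
\[
\iota\circ x|_M=\iota\circ\bar d\circ\tilde b\circ\tilde a\circ\iota=d\circ w\circ\iota,
\]
so \(x-d\circ w\) vanishes on \(M=\im d\). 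Consequently \((x-d\circ w)\circ d=0\). By exactness at the kernel, \(\ker d=\im d\), the map \(x-dw\) factors through the cokernel of \(d\), i.e. through \(P/\im d=P/\ker d\cong\im d\) via \(\bar d\). This gives \(x-d\circ w=v\circ\bar d\) for some \(v:M\to P\). Since \(P\) is injective, \(v\) extends along \(\iota\) to \(y\in E\), and then \(v\circ\bar d=y\circ d\). Hence \(x=d\circ w+y\circ d\in dE+Ed\), which proves injectivity.

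The only delicate point is this last factorization through \(\bar d\). It uses \(\ker d=\im d\) twice: once to identify \(P/\im d\) with the image of \(d\), and once together with injectivity of \(P\) to extend \(v\) to an element of \(E\).
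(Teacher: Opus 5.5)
Your proof is correct and follows essentially the same route as the paper: surjectivity comes from injectivity of \(P\), and a restriction \(x|_M\) factoring through a projective \(Q\) differs from some \(d\circ w\) by a map vanishing on \(M\). That difference lies in \(Ed\) because it factors through the corestriction of \(d\) and then extends along the inclusion.

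The differences are cosmetic. You extend \(a'\) to \(P\) using injectivity of \(Q\) and then compose with the lift \(\tilde b\). The paper instead extends the composite \(\tilde\beta\circ\alpha\) using injectivity of \(P\), and it computes the ordinary kernel \(Ed\) as a separate step.

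Two small points. The inclusion \(dE+Ed\subseteq\{x\mid d\circ x\circ d=0\}\) uses \(d^2=0\). You announce that \(dE+Ed\) is a two-sided ideal but do not check it; this follows afterwards, since you show it is exactly the kernel of an algebra homomorphism.
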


\begin{proof}
Let \(\iota:M\hookrightarrow P\) be the inclusion and let
\(\pi:P\twoheadrightarrow M\) be the corestriction of \(d\), so
that \(d=\iota\circ\pi\).  Since \(\im d=\ker d=M\),
the condition \(d\circ x\circ d=0\) is equivalent to
\(x(M)\subseteq M\).  Thus the numerator in
\eqref{eq:periodic-block-stable-endomorphisms} is a subalgebra of
\(E\), and restriction defines an algebra homomorphism
\[
 \operatorname{res}:
 \{x\in E\mid x(M)\subseteq M\}\longrightarrow\End_\Lambda(M).
\]
This homomorphism is surjective.  Indeed, for
\(f\in\End_\Lambda(M)\), injectivity of \(P\) extends
\(\iota\circ f:M\to P\) along \(\iota\) to an endomorphism
\(x\) of \(P\), and then \(x|_M=f\).

We first compute the ordinary kernel of \(\operatorname{res}\).
If \(x|_M=0\), then \(\ker\pi=M\) gives a factorization
\(x=s\circ\pi\), with \(s:M\to P\).  Extend \(s\) along
\(\iota\) to \(y\in E\).  Then \(x=y\circ d\).
Conversely, every element of \(Ed\) vanishes on \(M\), so
\(\ker(\operatorname{res})=Ed\).

Now suppose that \(x|_M=\beta\circ\alpha\) factors through a
projective module \(Q\), where \(\alpha:M\to Q\) and
\(\beta:Q\to M\).  Projectivity of \(Q\) gives a lift
\(\widetilde\beta:Q\to P\) with
\(\pi\circ\widetilde\beta=\beta\).  By injectivity of \(P\),
the map \(\widetilde\beta\circ\alpha:M\to P\) extends to
\(y\in E\).  Hence
\[
 x\circ\iota
 =\iota\circ\beta\circ\alpha
 =d\circ y\circ\iota,
\]
and therefore \(x-d\circ y\in Ed\).
Conversely, \((d\circ y)|_M\), regarded as an endomorphism of
\(M\), factors through \(P\), and \((y\circ d)|_M=0\).
The kernel after passage to stable endomorphisms is consequently
\(dE+Ed\), as required.
\end{proof}

\begin{proposition}
\label{prop:strict-period-one-detector}
Let \(\kk\) be a field of characteristic two, and let \(\Lambda\) be a
finite-dimensional self-injective \(\kk\)-algebra.  Assume that
\(\underline{\operatorname{mod}}\Lambda\) is \(2\)-Calabi--Yau
and that \(\mathcal P(\Lambda)\) carries a triangulation
\(\Delta_0\).  Let \(P\) be projective and put
\(E=\End_\Lambda(P)\).  Suppose that \(d,h\in E\) satisfy
\begin{equation}
\label{eq:ade-block-certificate}
\begin{gathered}
 \im d=\ker d=:M,\qquad d\circ h=h\circ d,\qquad h^2=0,\\
 \frac{\{x\in E\mid d\circ x\circ d=0\}}{dE+Ed}
 \cong\kk[\varepsilon]/(\varepsilon^2),
 \qquad [1_P]\longmapsto1,\quad[h]\longmapsto\varepsilon.
\end{gathered}
\end{equation}
Then \((\{M\},\underline{h|_M})\) is a socle orbit.  Its central
Heller twists \(\Delta_\lambda\) of \(\Delta_0\) are pairwise
distinct, and
\[
 \Delta_\lambda\text{ satisfies }\mathrm{TR4}
 \quad\Longleftrightarrow\quad\lambda=0.
\]
\end{proposition}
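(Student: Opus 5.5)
The plan is to reduce the statement to \cref{prop:periodic-orbit-detector}, using the strict period-one analogue of the double-cone construction of \cref{sec:two-successive-cones}. Regard \(d\) as a strictly \(1\)-periodic acyclic complex \(P^\bullet\) with \(P^n=P\) and all differentials \(d\). Because \(\im d=\ker d=M\), this complex is acyclic and \(Z^0(P^\bullet)=M\). In characteristic two the shifted differential \(-d\) equals \(d\), so \(P^\bullet[1]=P^\bullet\) strictly. Hence \(S(M)\cong M\), and \(\{M\}\) is an \(S\)-orbit of size one. The endomorphism \(h\) commutes with \(d\), so it is a strict \(1\)-periodic chain map with \(h^2=0\) and \(h[1]=h\).

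The first step is the socle-orbit condition. By \cref{lem:periodic-block-stable-endomorphisms} and \eqref{eq:ade-block-certificate}, we have \(\underline{\End}_\Lambda(M)\cong\kk[\varepsilon]/(\varepsilon^2)\), with \(\rho_M:=\underline{h|_M}\) corresponding to \(\varepsilon\). It follows that \(M\) is indecomposable, the top of \(\underline{\End}_\Lambda(M)\) is \(\kk\), and the radical equals the socle equals \(\kk\rho_M\neq0\). The identity \(S(\rho_M)=\rho_M\) comes from \(h[1]=h\) under the equivalence \(Z^0\).

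The second step builds the detector sequence exactly as in \eqref{eq:double-cone-cycle-rows}. Put \(C^\bullet=\operatorname{Cone}(-h)\), \(D^\bullet=\operatorname{Cone}(f)\), \(A=Z^0(C^\bullet)\), \(B=Z^0(D^\bullet)\), and \(p=Z^0(\pi_D):B\to M\). Period one makes every matrix in \eqref{eq:cone-sign-certificate} invariant under \([1]\), so in particular under \([2]\). Arguing as at the end of \cref{lem:a3a-two-cones} then yields \(\phi_B,\phi_M\) satisfying \eqref{eq:relative-period-square} with \(n=2\), which matches the \(2\)-Calabi--Yau hypothesis.

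The third step, which I expect to be the main obstacle, is condition (2): neither \(A\) nor \(B\) has a summand isomorphic to \(M\). By \cref{lem:socle-annihilation} it is enough to show \(\underline{\Hom}(M,A)\circ\rho_M=0\) and \(\underline{\Hom}(M,B)\circ\rho_M=0\). I would copy the \(A_{3a-1}\) argument with \(N\) replaced by \(M\) and \(R_a\) replaced by \(\kk[\varepsilon]/(\varepsilon^2)\).

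For \(A\), use the triangle \(M\to A\to M\xrightarrow{\rho_M}M\). Given \(u:M\to A\), set \(c=\underline b\circ\underline u\). Since \(\rho_M\circ\underline b=0\) and \(\underline{\End}(M)\) is commutative, we get \(\rho_M c=0\), so \(c\in\kk\varepsilon\). Then \(c\) lifts through \(\underline b\): the class \(\varepsilon\) lifts via \(\underline f\), because \(\underline b\circ\underline f=\rho_M\). Moreover \(\underline f\circ\rho_M=0\), which follows from \(f\circ h[1]=0\). Hence \(\underline u=\lambda\underline f+\underline{a_0}\circ c'\) for some scalar \(\lambda\) and some \(c'\). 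The term \(\underline{a_0}\circ\rho_M\) vanishes by the triangle, and commutativity handles \(c'\), so \(\underline u\circ\rho_M=0\).

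For \(B\), take \(v:M\to B\) and set \(z=\underline p\circ\underline v\). Exactness gives \(S(\underline f)\circ z=0\), and \(\underline f\neq0\), so \(z\) is not a unit. Thus \(z=c\varepsilon\), and the strict period-one chain map \(z^\bullet=ch\) satisfies \(h\circ z^\bullet=0=z^\bullet\circ h\). This produces a lift \(\widetilde z\) with \(\underline{\widetilde z}\circ\rho_M=0\). The remaining difference \(\underline v-\underline{\widetilde z}\) factors through \(\underline g\), and the result for \(A\) finishes the argument.

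With conditions (1)--(3) verified, \cref{prop:periodic-orbit-detector} gives that the twists are pairwise distinct and that TR4 holds exactly when \(\lambda=0\).
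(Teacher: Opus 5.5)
Your proposal is correct and follows essentially the same route as the paper. You reduce to \cref{prop:periodic-orbit-detector} via the period-one double-cone construction and read off the socle orbit from \cref{lem:periodic-block-stable-endomorphisms}. You then prove $\underline{\Hom}(M,A)\circ\rho=0=\underline{\Hom}(M,B)\circ\rho$ by subtracting scalar multiples of $\underline f$ and $\underline{\sigma_h}$, as the paper does. The only cosmetic difference is that you get $S(M)\cong M$ and the maps $\phi_B,\phi_M$ from strict periodicity under $Z^0$, whereas the paper uses the connecting isomorphisms $\kappa_X$ of the cycle sequences; these give the same identifications.
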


\begin{proof}
Put \(S=\Omega_\Lambda^{-1}\) and \(\rho=\underline{h|_M}\).  By
\cref{lem:periodic-block-stable-endomorphisms},
\[
 \underline{\End}_\Lambda(M)=\kk1_M\oplus\kk\rho,
 \qquad \rho^2=0\ne\rho.
\]
Thus \(M\) is indecomposable in the stable category, and the radical
and socle of its stable endomorphism algebra are both \(\kk\rho\).
The exact sequence
\[
 0\longrightarrow M\longrightarrow P\xrightarrow{d}M
 \longrightarrow0
\]
gives a connecting isomorphism \(\kappa_M:M\xrightarrow{\sim}S(M)\).
Since \(d\circ h=h\circ d\), naturality gives
\[
 \kappa_M\circ\rho=S(\rho)\circ\kappa_M.
\]
Transporting \(\rho\) to \(S(M)\) along \(\kappa_M\), we therefore have
\[
 \rho_{S(M)}=\kappa_M\circ\rho\circ\kappa_M^{-1}=S(\rho).
\]
Hence \((\{M\},\rho)\) is a socle orbit in the sense of
\cref{def:periodic-socle-orbit}.

Apply the construction of \cref{sec:two-successive-cones} to the acyclic
complex
\[
 P^\bullet=(\cdots\xrightarrow{d}P\xrightarrow{d}P
 \xrightarrow{d}P\xrightarrow{d}\cdots)
\]
and the chain endomorphism with every component equal to \(h\).
As \(\operatorname{char}\kk=2\), both \(P^\bullet\) and \(h\) have
strict period one.  Retain the notation \(C^\bullet,D^\bullet,f\) and
\(\sigma_h\) from \eqref{eq:cone-sign-certificate} and
\eqref{eq:double-cone-lift}, and put
\[
 A=Z^0(C^\bullet),\qquad B=Z^0(D^\bullet).
\]
The cycle sequences \eqref{eq:double-cone-cycle-rows} are
\begin{equation}
\label{eq:ade-periodic-rows}
 0\longrightarrow M\xrightarrow{u}A\xrightarrow{v}M\longrightarrow0,
 \qquad
 0\longrightarrow A\xrightarrow{j}B\xrightarrow{p}M\longrightarrow0.
\end{equation}
Here \(u,v,j,p\) are induced by the inclusions and projections in
\eqref{eq:double-cone-cycle-rows}.  We also write \(f:M\to A\) and
\(\sigma_h:M\to B\) for the induced maps on cycles.

The cone complexes and all these chain maps have strict period one.
As for \(M\), the exact cycle sequences of \(C^\bullet,D^\bullet\)
give connecting isomorphisms
\(\kappa_A:A\xrightarrow{\sim}S(A)\) and
\(\kappa_B:B\xrightarrow{\sim}S(B)\).
These isomorphisms are natural for the maps on cycles above: for each
such map \(t:X\to Y\),
\begin{equation}
\label{eq:ade-period-naturality}
 \kappa_Y\circ\underline t=S(\underline t)\circ\kappa_X.
\end{equation}
Consequently the isomorphisms
\[
 \phi_X=(S(\kappa_X)\circ\kappa_X)^{-1}:
 S^2(X)\xrightarrow{\sim}X\qquad(X=B,M)
\]
satisfy
\(\underline p\circ\phi_B=\phi_M\circ S^2(\underline p)\).
Thus the second sequence in \eqref{eq:ade-periodic-rows} has the
compatible period identifications required by
\cref{prop:periodic-orbit-detector}.

The mapping-cone triangles of \cref{sec:two-successive-cones}, under
the triangulated equivalence \(Z^0\), give
\[
 M\xrightarrow{\makebox[2.6em]{$\scriptstyle\underline u$}}
 A\xrightarrow{\makebox[2.6em]{$\scriptstyle\underline v$}}
 M\xrightarrow{\makebox[2.6em]{$\scriptstyle\kappa_M\circ\rho$}}S(M),
 \qquad
 M\xrightarrow{\makebox[2.6em]{$\scriptstyle\underline f$}}
 A\xrightarrow{\makebox[2.6em]{$\scriptstyle\underline j$}}
 B\xrightarrow{\makebox[2.6em]{$\scriptstyle\kappa_M\circ\underline p$}}S(M).
\]
By \eqref{eq:ade-period-naturality}, the zero composites in these
triangles and their rotations give
\(\underline u\circ\rho=0\) and
\(\underline f\circ\underline p=0\).
The identities in \eqref{eq:cone-sign-certificate} and
\eqref{eq:double-cone-lift}, with \(z=h\), also give
\[
 \underline v\circ\underline f=\rho,\qquad
 \underline f\circ\rho=0,\qquad
 \underline p\circ\underline{\sigma_h}=\rho,\qquad
 \underline{\sigma_h}\circ\rho=0.
\]

It remains to exclude direct summands isomorphic to \(M\) in \(A\)
and \(B\).  Let \(g:M\to A\) be a module homomorphism.
The first triangle gives
\(\rho\circ\underline v\circ\underline g=0\), whence
\(\underline v\circ\underline g=c\rho\) for some \(c\in\kk\).
Exactness of \(\underline{\Hom}_\Lambda(M,-)\) gives
\(\underline g-c\underline f=\underline u\circ w\) for some
\(w\in\underline{\End}_\Lambda(M)\).  Since
\(w\circ\rho\in\kk\rho\), it follows that
\(\underline g\circ\rho=0\).

Now let \(g:M\to B\) be a module homomorphism.
Since \(\underline f\ne0\) and
\(\underline f\circ\underline p=0\), the endomorphism
\(\underline p\circ\underline g\) is not invertible.
Thus \(\underline p\circ\underline g=c\rho\) for some \(c\in\kk\).
Exactness of the second triangle gives
\(\underline g-c\underline{\sigma_h}=\underline j\circ a\) for some
\(a\in\underline{\Hom}_\Lambda(M,A)\).  The preceding paragraph
shows that \(a\circ\rho=0\), and hence
\(\underline g\circ\rho=0\).
By \cref{lem:socle-annihilation}, neither \(A\) nor \(B\) has a direct
summand isomorphic to \(M\) in the stable category.
The second sequence in \eqref{eq:ade-periodic-rows} therefore satisfies
all the hypotheses of \cref{prop:periodic-orbit-detector}.
\end{proof}
\subsection{Type \texorpdfstring{$A_n$}{An}}

Fix \(n\ge6\), put \(m=n+1\), and write
\[
 R_m=\kk[t]/(t^m),\qquad U_i=R_m/(t^i),\qquad
 \Lambda=\Pi_\kk(A_n),\qquad P_i=e_i\Lambda.
\]
Set \(X=U_2\oplus U_{m-2}\) and \(P=P_2\oplus P_{m-2}\).
The stable Auslander realization recalled in \cref{sec:uniform-a3a}
\cite[Section~7, pp.~219--220]{DlabRingel1992} gives an algebra
isomorphism
\begin{equation}
\label{eq:an-corner-anti-isomorphism}
 \Phi:\End_\Lambda(P)^{\mathrm{op}}
 \xrightarrow{\sim}
 C:=\underline{\End}_{R_m}(X).
\end{equation}
The cyclic-module formulas in \cref{fact:a3a-cyclic-homs} hold for
arbitrary \(m\), with the same proof.  Retain the notation
\(\mu_{ij}^q\), and let \(t_i=\mu_{ii}^1\) for \(i=2,m-2\).
Define endomorphisms of \(X\) by
\begin{equation}
\label{eq:an-delta-eta}
 \delta=
 \begin{bmatrix}
 t_2&\mu_{m-2,2}^0\\
 \mu_{2,m-2}^{m-4}&t_{m-2}
 \end{bmatrix},\qquad
 \eta=
 \begin{bmatrix}
 0&0\\
 \mu_{2,m-2}^{m-4}&0
 \end{bmatrix}.
\end{equation}
Let \(d,h\in\End_\Lambda(P)\) correspond under \(\Phi\) to
\(\underline\delta,\underline\eta\), respectively.

\begin{proposition}
\label{prop:an-period-one}
The endomorphisms \(d,h\) satisfy \eqref{eq:ade-block-certificate}.
Consequently \cref{prop:strict-period-one-detector} applies.
\end{proposition}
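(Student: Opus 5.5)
Throughout, work on the $R_m$-side via the anti-isomorphism $\Phi$ of \eqref{eq:an-corner-anti-isomorphism}. Passing to the opposite algebra preserves commutation and squaring. It also preserves the subsets $\{x\mid x\circ y\circ x=0\}$ and $dE+Ed$. So the conditions in \eqref{eq:ade-block-certificate} other than $\im d=\ker d$ become identities in $C=\underline{\End}_{R_m}(X)$:
\[
 \underline\delta\,\underline\eta=\underline\eta\,\underline\delta,\qquad
 \underline\eta^{\,2}=0,\qquad
 \frac{\{c\in C\mid \underline\delta c\underline\delta=0\}}{C\underline\delta+\underline\delta C}
 \cong\kk[\varepsilon]/(\varepsilon^2),\quad [\underline\eta]\mapsto\varepsilon .
\]
I would write every element of $C$ as a $2\times2$ matrix whose entries are combinations of the stable monomial basis maps $\underline\mu_{ij}^q$ of \cref{fact:a3a-cyclic-homs}(2), valid for arbitrary $m$. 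I would then compute all products with the composition rule of \cref{fact:a3a-cyclic-homs}(3), using $\operatorname{char}\kk=2$ throughout to discard signs and to cancel paired terms.

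\emph{Step 1 (the easy identities).} First I check $\underline\delta^{\,2}=0$. On the diagonal, $t_2^2=\mu^2_{22}$ vanishes since $2\not<2$. The composite $\mu^{m-4}_{2,m-2}$ followed by $\mu^0_{m-2,2}$ gives $t^{m-4}$ on $U_2$, which is zero since $m-4\ge2$. On $U_{m-2}$, the composite gives $t^{m-4}$ with $(m-2)+(m-4)\ge m$, so it is stably zero; $t_{m-2}^2$ contributes $t^2$ there. The off-diagonal entries are sums of two equal monomials, hence vanish in characteristic two. The remaining diagonal $t^2$ contribution has to be rechecked carefully against the stable vanishing criterion. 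The verification of $\underline\eta^{\,2}=0$ is immediate, since $\eta$ is strictly lower triangular. For $\underline\delta\underline\eta=\underline\eta\underline\delta$, I expect both sides to be the single entry $t\,\mu^{m-4}_{2,m-2}=\mu^{m-3}$ in position $(2,1)$. The other entries should vanish by \cref{fact:a3a-cyclic-homs}(3), the case $\mu^0\circ\mu^{m-4}$ being killed as above.

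\emph{Step 2 ($\im d=\ker d$).} I would argue exactly as in \cref{lem:a3a-periodic-exact}. Under $P_ie_j\cong\underline{\Hom}_{R_m}(U_i,U_j)$, the complex $P\xrightarrow{d}P\xrightarrow{d}P$ multiplied by $e_j$ becomes precomposition with $\underline\delta$ on $\underline{\Hom}_{R_m}(X,U_j)$, for $1\le j<m$. Step 1 gives that this is a complex. Exactness then follows from a rank count: the rank of $\underline\delta^*$ must equal half of $\dim\underline{\Hom}_{R_m}(X,U_j)$, and both numbers are read off the monomial bases. Alternatively, one can exhibit a short exact sequence $0\to X\to X\oplus Q\to X\to0$, with $Q$ free, whose stable triangle is $X\xrightarrow{\underline\delta}X\xrightarrow{\underline\delta}X\to S(X)$; long exactness of $\underline{\Hom}(-,U_j)$ then gives exactness at once. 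I would try the dimension count first, since it is mechanical.

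\emph{Step 3 (the quotient algebra; main obstacle).} The substantive step is the two-dimensionality of the quotient. I would grade $C$ by the exponent $q$ of $t$, adjusted on the off-diagonal blocks so that $\underline\delta$ becomes homogeneous. Alternatively, I would parametrize a general $c=\begin{bmatrix}\alpha&\beta\\ \gamma&\varepsilon\end{bmatrix}$ by its monomial coefficients. I would then list the spanning set of $C\underline\delta+\underline\delta C$ obtained from the basis monomials. The expected outcome is that every element of the subalgebra $\{c\mid\underline\delta c\underline\delta=0\}$ reduces modulo this subspace to a combination of $1$ and $\underline\eta$. I would then show that $1,\underline\eta\notin C\underline\delta+\underline\delta C$ are independent there. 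For this it is enough to use a linear functional vanishing on $C\underline\delta+\underline\delta C$, such as the coefficient of the extremal monomial $\mu^{m-4}_{2,m-2}$ in position $(2,1)$, together with the constant term. Together with $\underline\eta^{\,2}=0$, this yields $\kk[\varepsilon]/(\varepsilon^2)$, and the conclusion then follows from \cref{prop:strict-period-one-detector}. The bookkeeping in this last step is where I expect errors, since $n\ge6$ is precisely what keeps the monomials $\mu^{m-4}$ and $\mu^{m-3}$ from colliding with the diagonal terms.
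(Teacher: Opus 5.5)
Your overall route is the paper's: transport to \(C=\underline{\End}_{R_m}(X)\) and compute with the monomial basis of \cref{fact:a3a-cyclic-homs} in characteristic two. Step~1 is correct. The leftover \(t^2\) on \(U_{m-2}\) is stably zero because \((m-2)+2=m\). The rank count in Step~2 also goes through: the ranks are \(1,2,\dots,2,1\) against dimensions \(2,4,\dots,4,2\), and \(m\ge7\) is needed at \(j=3\) and \(j=m-3\). The paper instead writes down the sequence \(0\to X\to X\oplus R_m\to X\to0\) from \(\delta^2=\epsilon\circ\alpha\), where \(m\ge7\) makes \(1+t^{m-6}\) a unit.

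The genuine gap is in Step~3, where the one concrete tool you name does not work. The coefficient of \(\underline\mu^{m-4}_{2,m-2}\) does not vanish on \(\underline\delta C+C\underline\delta\). That subspace contains \(\underline\delta\circ\underline 1_{U_2}=\underline t_2+\underline\mu^{m-4}_{2,m-2}\) and \(\underline 1_{U_{m-2}}\circ\underline\delta=\underline t_{m-2}+\underline\mu^{m-4}_{2,m-2}\). In fact \(\underline\eta\equiv\underline t_2\equiv\underline t_{m-2}\equiv\underline\mu^0_{m-2,2}\) modulo it, so no single coefficient detects \([\underline\eta]\).

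What has to be done is the explicit computation. The subspace \(\underline\delta C+C\underline\delta\) is spanned by the four elements \(\underline\delta\circ\underline 1_{U_i}\) and \(\underline 1_{U_i}\circ\underline\delta\), together with \(\underline\mu^1_{m-2,2}\) and \(\underline\mu^{m-3}_{2,m-2}\). The four elements sum to zero, and any three of them are independent. A valid witness is the sum of the coefficients on \(\underline t_2,\underline t_{m-2},\underline\mu^0_{m-2,2},\underline\mu^{m-4}_{2,m-2}\). In characteristic two it kills every spanning element, and it takes the value \(1\) on \(\underline\eta\).

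You also need the numerator explicitly. For radical \(z\), one has \(\underline\delta(\lambda\underline 1_{U_2}+\mu\underline 1_{U_{m-2}}+z)\underline\delta=(\lambda+\mu)(\underline\mu^1_{m-2,2}+\underline\mu^{m-3}_{2,m-2})\). Hence the numerator is \(\kk\underline 1_X\oplus\operatorname{rad}C\). Without this, the two idempotents survive separately, since the denominator lies in the radical, and the quotient would be three-dimensional. Your ``constant term'' only makes sense after this step.

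Finally, drop the grading idea. With \(t\) in degree one and shifts on the two summands, homogeneity of \(\underline\delta\) forces \((m-4)-1=1\), that is \(m=6\).
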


\begin{proof}
We first prove exactness.  Direct multiplication on \(X\) gives
\[
 \delta^2=
 \begin{bmatrix}0&0\\0&t^2(1+t^{m-6})\end{bmatrix}.
\]
Indeed, the off-diagonal entries cancel in characteristic two, while
\(\mu_{2,m-2}^{m-4}\circ\mu_{m-2,2}^0+t_{m-2}^2\) is
multiplication by \(t^{m-4}+t^2\) on \(U_{m-2}\).
The map
\[
 \ell:U_{m-2}\longrightarrow R_m,\qquad
 \ell(\bar z)=t^2(1+t^{m-6})z
\]
is well defined and injective: \(m\ge7\), so \(1+t^{m-6}\) is a
unit of \(R_m\), and multiplication by \(t^2\) identifies
\(U_{m-2}\) with the ideal \(t^2R_m\).
Let \(\alpha:X\to R_m\) be given by \(\alpha(x,y)=\ell(y)\), and
let \(\epsilon:R_m\to X\) be given by \(\epsilon(c)=(0,\bar c)\).
Then
\begin{equation}
\label{eq:an-short-certificate}
\begin{tikzcd}[column sep=3.6em]
 0 \arrow[r] & X \arrow[r,"{\binom{\delta}{\alpha}}"]
   & X\oplus R_m \arrow[r,"{(\delta\ -\epsilon)}"]
   & X \arrow[r] & 0
\end{tikzcd}
\end{equation}
is exact.  The composite is zero because
\(\delta^2=\epsilon\circ\alpha\).
The first map is injective since \(\ell\) and
\(\mu_{2,m-2}^{m-4}\) are injective.  The second is surjective because
\(\mu_{m-2,2}^0\) and \(R_m\twoheadrightarrow U_{m-2}\) are
surjective.  Finally, \(\dim(X\oplus R_m)=2\dim X\), so the
sequence is exact at its middle term.

The free summand \(R_m\) vanishes in the stable category, and
\eqref{eq:an-short-certificate} induces a distinguished triangle
\begin{equation*}
 \xymatrix@C=2.2pc{
 X\ar[r]^-{\underline\delta}&
 X\ar[r]^-{\underline\delta}&
 X\ar[r]&\Omega_{R_m}^{-1}(X).
 }
\end{equation*}
For every vertex \(s\), the realization identifies \(Pe_s\) with
\(\underline{\Hom}_{R_m}(X,U_s)\), and \(d\) acts by precomposition
with \(\underline\delta\).
Applying \(\underline{\Hom}_{R_m}(-,U_s)\) to this triangle gives
\(\im(d|_{Pe_s})=\ker(d|_{Pe_s})\).
Since \(P=\bigoplus_sPe_s\), this proves \(\im d=\ker d\).

We now verify \(d\circ h=h\circ d\), \(h^2=0\), and the quotient
condition in \eqref{eq:ade-block-certificate}.
Regard maps between the summands of \(X\) as endomorphisms of \(X\)
by extending them by zero on the other summand.
By \cref{fact:a3a-cyclic-homs},
\[
 C=\kk\{\underline{1}_{U_2},\underline{1}_{U_{m-2}},
 \underline t_2,\underline t_{m-2},
 \underline\mu_{m-2,2}^{0},\underline\mu_{m-2,2}^{1},
 \underline\mu_{2,m-2}^{m-4},\underline\mu_{2,m-2}^{m-3}\}.
\]
The only nonzero products of two radical basis elements are
\begin{equation}
\label{eq:an-small-algebra}
\begin{aligned}
 \underline t_2\circ\underline\mu_{m-2,2}^{0}
 &=\underline\mu_{m-2,2}^{0}\circ\underline t_{m-2}
  =\underline\mu_{m-2,2}^{1},\\
 \underline t_{m-2}\circ\underline\mu_{2,m-2}^{m-4}
 &=\underline\mu_{2,m-2}^{m-4}\circ\underline t_2
  =\underline\mu_{2,m-2}^{m-3}.
\end{aligned}
\end{equation}
Thus
\[
 \underline\eta^{\,2}=0,\qquad
 \underline\delta\circ\underline\eta
 =\underline\eta\circ\underline\delta
 =\underline\mu_{2,m-2}^{m-3}.
\]
Transporting these identities along \(\Phi\) gives \(h^2=0\) and
\(d\circ h=h\circ d\).

Put \(\mathfrak r=\operatorname{rad}C\).
For \(\lambda,\mu\in\kk\) and \(z\in\mathfrak r\), the products in
\eqref{eq:an-small-algebra} give
\[
 \underline\delta\circ
 (\lambda\underline{1}_{U_2}+\mu\underline{1}_{U_{m-2}}+z)
 \circ\underline\delta=(\lambda+\mu)
 (\underline\mu_{m-2,2}^{1}+\underline\mu_{2,m-2}^{m-3}).
\]
Hence the numerator of the required quotient is
\(\kk\underline{1}_X\oplus\mathfrak r\).
For its denominator, we have
\[
\begin{aligned}
 \underline\delta\circ\underline{1}_{U_2}
  &=\underline t_2+\underline\mu_{2,m-2}^{m-4},&
 \underline{1}_{U_2}\circ\underline\delta
  &=\underline t_2+\underline\mu_{m-2,2}^{0},\\
 \underline\delta\circ\underline{1}_{U_{m-2}}
  &=\underline t_{m-2}+\underline\mu_{m-2,2}^{0},&
 \underline{1}_{U_{m-2}}\circ\underline\delta
  &=\underline t_{m-2}+\underline\mu_{2,m-2}^{m-4}.
\end{aligned}
\]
These four elements have sum zero, and any three are independent.
Products of \(\underline\delta\) with radical basis elements span
\(\kk\underline\mu_{m-2,2}^{1}\oplus
\kk\underline\mu_{2,m-2}^{m-3}\).
Therefore
\begin{equation}
\label{eq:an-quotient-certificate}
 \underline\delta C+C\underline\delta
 =\kk\{\underline\mu_{m-2,2}^{1},\underline\mu_{2,m-2}^{m-3},
 \underline t_2+\underline\mu_{2,m-2}^{m-4},
 \underline t_{m-2}+\underline\mu_{m-2,2}^{0},
 \underline t_2+\underline\mu_{m-2,2}^{0}\}.
\end{equation}
Modulo this subspace, the classes of \(\underline t_2\),
\(\underline t_{m-2}\), \(\underline\mu_{m-2,2}^{0}\), and
\(\underline\mu_{2,m-2}^{m-4}\) coincide and are nonzero.
The other two radical basis elements have zero class.
It follows that
\[
 \frac{\{z\in C\mid\underline\delta\circ z\circ\underline\delta=0\}}
      {\underline\delta C+C\underline\delta}
 =\kk\{[\underline{1}_X],[\underline\mu_{2,m-2}^{m-4}]\}
 \cong\kk[\varepsilon]/(\varepsilon^2).
\]
Since this quotient is commutative, \(\Phi\) induces the required
algebra isomorphism in \eqref{eq:ade-block-certificate}, with
\([1_P]\mapsto1\) and \([h]\mapsto\varepsilon\).
\end{proof}

\subsection{Types \texorpdfstring{$D_n$ and $E_6,E_7,E_8$}{Dn and E6, E7, E8}}

We shall construct a square-zero endomorphism of a single
indecomposable projective module.  To prove exactness, it suffices
to show that its image has half the dimension of the projective
at every vertex.  The Hom--Ext formula below restricts any possible
failure of this equality to a small number of dimension vectors.
We then exclude them using the preprojective relations.

\subsubsection{Dimension vectors}

\begin{notation}
\label{not:ade-dimension-vectors}
For a \(\Lambda_Q\)-module \(V\), its \emph{dimension vector} is
the column vector
\[
 \mathbf d(V)=(\dim_\kk(Ve_i))_{i\in Q_0}
 \in\mathbb Z_{\geq0}^{Q_0}.
\]
Let \(\mathbf e_i\) be the coordinate vector with entry one at
\(i\) and zero elsewhere.  Thus \(\mathbf d(S_i)=\mathbf e_i\).
The \emph{support} of \(V\) is
\(\operatorname{supp}(V)=\{i\in Q_0\mid Ve_i\ne0\}\),
which we also regard as the full subdiagram on these vertices.
For a vertex \(s\), \(Q\setminus\{s\}\) denotes the full
subdiagram obtained by deleting \(s\).

Write \(C_Q\) for the symmetric Cartan matrix of \(Q\): its
diagonal entries are \(2\), its entries at adjacent vertices are
\(-1\), and all other entries are zero.  The associated bilinear
form is
\[
 (z,w)_Q=z^{\mathsf t}C_Qw
 \qquad(z,w\in\mathbb R^{Q_0}).
\]
Vector inequalities are coordinatewise.  All vectors and Cartan
forms in these inequalities are over \(\mathbb Z\) or \(\mathbb R\),
independently of the characteristic of \(\kk\).
\end{notation}

\begin{fact}
\label{lem:ade-euler}
For \(\Lambda_Q\)-modules \(V,W\), one has
\begin{equation}
\label{eq:ade-hom-ext}
\begin{aligned}
 (\mathbf d(V),\mathbf d(W))_Q
 &=\dim_\kk\Hom_{\Lambda_Q}(V,W)
  +\dim_\kk\Hom_{\Lambda_Q}(W,V)\\
 &\qquad-\dim_\kk\Ext^1_{\Lambda_Q}(V,W).
\end{aligned}
\end{equation}
This is Crawley--Boevey's formula
\cite[Lemma~1]{CrawleyBoevey2000}, which is stated over an arbitrary
field.  The same statement for right modules follows by applying it
to the opposite algebra, whose doubled-quiver presentation is again
that of \(\Lambda_Q\).
\end{fact}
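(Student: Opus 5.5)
The plan is to follow Crawley-Boevey's argument, which uses only linear algebra and works over any field. We may assume \(V\) and \(W\) are finite-dimensional, as they are throughout this section. Write \(V_i=Ve_i\) and \(W_i=We_i\). Let \(\overline Q_1\) be the arrow set of the doubled quiver, and write \(t(x),h(x)\) for the tail and head of an arrow \(x\). We first treat left modules, i.e.\ representations of \(\overline Q\) satisfying the preprojective relations, and pass to right modules at the end.

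\textbf{Step 1: the complex.} We form the four-term complex
\[
 0\to\Hom_{\Lambda_Q}(V,W)\to\bigoplus_{i}\Hom_\kk(V_i,W_i)\xrightarrow{\ \partial_0\ }\bigoplus_{x\in\overline Q_1}\Hom_\kk(V_{t(x)},W_{h(x)})\xrightarrow{\ \partial_1\ }\bigoplus_i\Hom_\kk(V_i,W_i)\to0 .
\]
The map \(\partial_0\) sends \((\phi_i)_i\) to \((\phi_{h(x)}x_V-x_W\phi_{t(x)})_x\). The map \(\partial_1\) sends \((\psi_x)_x\) to the signed sum
\[
 \Bigl(\sum_{x:i\to j}\epsilon(x)\,(\bar x_W\psi_x+\psi_{\bar x}x_V)\Bigr)_i,
\]
where \(\epsilon(x)=\pm1\) according as \(x\) is an arrow of \(Q\) or of its opposite. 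These signs are the derivative of the relation \(r_i\), so \(\partial_1\partial_0=0\) follows from the relations in \(V\) and \(W\).

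This complex is \(\Hom_{\Lambda_Q}(-,W)\) applied to the standard bimodule complex
\[
 \bigoplus_i\Lambda e_i\otimes e_i\Lambda\to\bigoplus_x\Lambda e_{t(x)}\otimes e_{h(x)}\Lambda\to\bigoplus_i\Lambda e_i\otimes e_i\Lambda\to\Lambda\to0,
\]
tensored with \(V\). The last three terms of the bimodule complex are a standard exact start of a projective bimodule resolution of any path algebra modulo relations. Two identifications follow:
\begin{enumerate}
\item exactness at the left gives \(\ker\partial_0=\Hom_{\Lambda_Q}(V,W)\);
\item \(\ker\partial_1/\operatorname{im}\partial_0\cong\Ext^1_{\Lambda_Q}(V,W)\). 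This is the usual description of \(\Ext^1\) as pairs (arrow actions on \(V_i\oplus W_i\) with off-diagonal blocks \(\psi_x\) satisfying the relations) modulo coboundaries.
\end{enumerate}

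\textbf{Step 2: the cokernel of \(\partial_1\), the main obstacle.} Use the nondegenerate trace pairings \(\Hom_\kk(V_i,W_i)\times\Hom_\kk(W_i,V_i)\to\kk\) and \((\psi,\chi)\mapsto\operatorname{tr}(\psi\chi)\), and match the arrow term for \(x\) with the one for \(\bar x\). Under these pairings, \(\partial_1\) for the pair \((V,W)\) should be adjoint, up to a global sign, to \(\partial_0\) for the pair \((W,V)\). This is a cyclic-trace computation, and the signs \(\epsilon\) are what make it work. Hence
\[
 \coker\partial_1\cong D\ker\partial_0^{(W,V)}=D\Hom_{\Lambda_Q}(W,V).
\]
I expect checking this adjointness, with the correct signs, to be the only delicate point.

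\textbf{Step 3: counting dimensions.} Take the alternating sum of dimensions in the resulting five-term picture. Each vertex \(i\) contributes twice the \(i\)th vertex product of dimensions. Each edge \(i\!-\!j\) of \(Q\) contributes the two arrows \(x,\bar x\) of \(\overline Q\), hence \(d_ie'_j+d_je'_i\) with a minus sign, where \(d=\mathbf d(V)\) and \(e'=\mathbf d(W)\). This gives
\[
 \dim\Hom(V,W)-\dim\Ext^1(V,W)+\dim\Hom(W,V)=\mathbf d(V)^{\mathsf t}C_Q\mathbf d(W),
\]
which is \eqref{eq:ade-hom-ext}.

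\textbf{Step 4: right modules.} Right \(\Lambda_Q\)-modules are left modules over \(\Lambda_Q^{\mathrm{op}}\). Reversing the arrows identifies \(\Lambda_Q^{\mathrm{op}}\) with the preprojective algebra of the same doubled quiver, with the roles of \(x\) and \(\bar x\) exchanged. Dimension vectors and \(C_Q\) are unchanged, so the same formula holds for right modules.
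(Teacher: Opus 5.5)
Your proposal is correct. It is essentially the argument the paper relies on: you reconstruct Crawley-Boevey's proof of \cite[Lemma~1]{CrawleyBoevey2000} and then pass to right modules through $\Lambda_Q^{\mathrm{op}}$, exactly as the paper does. The only thing to tidy is Step~2, where the paper's $r_i$ carries no signs, which is harmless because $\operatorname{char}\kk=2$ in this section, and where, with signed relations and the unweighted trace pairing, $\partial_1^{(V,W)}$ is adjoint to $\partial_0^{(W,V)}$ only up to the signs $\epsilon(x)$ on the arrow components rather than a global sign (weighting the arrow pairing by $\epsilon(x)$ makes it exact), which still gives $\coker\partial_1\cong D\Hom(W,V)$.
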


We will use the following consequence of the standard description
of simply laced root systems.  The statement is expressed entirely
in terms of the Cartan matrix.

\begin{lemma}
\label{lem:ade-cartan-uniqueness}
Let \(T\) be a connected simply laced Dynkin diagram.  There is
exactly one nonzero integer vector \(z\) such that
\[
 z\geq0,\qquad C_Tz\geq0,\qquad (z,z)_T\leq2.
\]
This vector has positive entries at every vertex.
If \(T\) is disconnected, every nonzero vector satisfying these
conditions is supported on one connected component, where its
restriction is the vector just described.
\end{lemma}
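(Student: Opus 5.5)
The plan is to reduce the statement to the standard description of the root system of \(T\). For connected \(T\), write \(\mathcal R\) for the set of vectors \(z\in\mathbb Z^{Q_0}\) with \((z,z)_T=2\); these are the roots. I will use the classical facts that \(C_T\) is positive definite, that every root is either nonnegative or nonpositive, and that there is a unique highest root \(\theta\), characterized as the unique positive root satisfying \((\theta,\mathbf e_i)_T\ge0\) for all \(i\). The latter condition is exactly \(C_T\theta\ge0\). It is also standard that \(\theta\) has positive entries at every vertex.

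\textbf{Existence and positivity.} The highest root \(\theta\) satisfies all three conditions, with \((\theta,\theta)_T=2\). This gives existence, and positivity of its entries is the standard fact just quoted.

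\textbf{Uniqueness.} Let \(z\ne0\) be an integer vector with \(z\ge0\), \(C_Tz\ge0\), and \((z,z)_T\le2\). Positive definiteness and evenness of the form on integer vectors give \((z,z)_T\in\{2\}\), since \(z\neq0\) forces \((z,z)_T>0\) and the value is even. Hence \(z\) is a root, and since \(z\ge0\) it is a positive root. It is also dominant, because \((z,\mathbf e_i)_T=(C_Tz)_i\ge0\) for every \(i\). The dominant positive root of a connected simply laced diagram is unique: the dominant elements of the Weyl group orbit \(\mathcal R\) form a single point, so \(z=\theta\).

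\textbf{Disconnected case.} For disconnected \(T=T_1\sqcup\cdots\sqcup T_k\), the matrix \(C_T\) is block diagonal and \((z,z)_T=\sum_i(z_i,z_i)_{T_i}\). Each nonzero restriction \(z_i\) contributes at least \(2\) to this sum. The bound \((z,z)_T\le2\) therefore forces exactly one restriction to be nonzero. That restriction satisfies the connected hypotheses on its component \(T_i\), so it equals the highest root of \(T_i\) by the connected case.

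\textbf{Main obstacle.} The only substantive input is quoting cleanly that the dominant root is unique and has full support. If a reference-free argument is preferred, I would argue by maximality. Let \(z\) be dominant and positive, and suppose \(\theta-z\ne0\). Using \(\theta-z\ge0\), which holds because \(\theta\) dominates all positive roots, I would compute
\[
0\le(\theta-z,\theta)_T+(\theta-z,z)_T=(\theta,\theta)_T-(z,z)_T=0 .
\]
Both terms on the left are nonnegative by dominance, so each pairing must vanish. Then \((\theta-z,\theta-z)_T=0\), which contradicts positive definiteness.
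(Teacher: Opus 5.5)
Your proposal is correct and follows the paper's proof essentially step for step: integrality, evenness and positive definiteness force \((z,z)_T=2\), so \(z\) is a positive dominant root and hence the highest root, and the disconnected case follows because each nonzero component contributes at least \(2\) to the orthogonal sum. Your optional ``reference-free'' aside is also valid, but it still relies on the standard fact that the highest root dominates every positive root.
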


\begin{proof}
Identify \(\mathbb Z^{T_0}\) with the root lattice of type \(T\)
by taking the simple roots as a basis.  In this basis the Gram
matrix is \(C_T\), and the roots are precisely the lattice vectors
of squared length two
\cite[Chapter~VI, Section~4]{Bourbaki2002}.
Positive definiteness and integrality give
\((z,z)_T\in2\mathbb Z_{>0}\) for \(z\ne0\).
Thus a vector in the statement is a root.  The condition
\(C_Tz\geq0\) says that its inner product with every simple root
is nonnegative, so it is a dominant root.
In a connected simply laced root system, the unique dominant root
is the highest root; its simple-root coefficients are all positive
\cite[Chapter~VI, Section~1, no.~8]{Bourbaki2002}.
This proves the connected case, including existence.

For disconnected \(T\), the Cartan form is the orthogonal sum of
the forms of its connected components.  Each nonzero integral
component contributes at least two to \((z,z)_T\), so at most
one component can be nonzero.  The assertion follows from the
connected case.
\end{proof}

\subsubsection{The projective module and its endomorphisms}

We fix the vertex labels by the following diagrams.  For \(D_n\),
\(n\ge5\), use
\[
\begin{tikzcd}[column sep=1.8em,row sep=1.3em]
 &2\arrow[d,dash]&&&\\
 1\arrow[r,dash]&0\arrow[r,dash]&3\arrow[r,dash]&
 \cdots\arrow[r,dash]&n-1
\end{tikzcd}
\]
For \(E_6,E_7,E_8\), use
\[
\begin{tikzcd}[column sep=1.8em,row sep=1.3em]
 &&1\arrow[d,dash]&&&&\\
 3\arrow[r,dash]&2\arrow[r,dash]&0\arrow[r,dash]&
 4\arrow[r,dash]&5\arrow[r,dash,dashed]&6\arrow[r,dash,dashed]&7
\end{tikzcd}
\]
The solid subdiagram is \(E_6\).  Adjoining vertex \(6\), and then
vertex \(7\), gives \(E_7\) and \(E_8\), respectively.
In type \(D_4\), used in the next subsection, \(0\) is the
central vertex and \(1,2,3\) are the leaves.

The construction uses homogeneous elements of a projective module.
We therefore record the required degree bound and the location of
the socle.

\begin{fact}
\label{fact:ade-graded-structure}
The following graded properties hold for Dynkin preprojective
algebras.
\begin{enumerate}[label=\textup{(\roman*)},leftmargin=2.1em]
\item The largest nonzero path degree \(N_Q\) is given by
\[
\begin{array}{c|ccccc}
 Q&A_m&D_m&E_6&E_7&E_8\\\hline
 N_Q&m-1&2m-4&10&16&28.
\end{array}
\]
In particular, every path of length greater than \(N_Q\) is zero
in \(\Lambda_Q\)
\cite[Corollary~4.3]{BrennerButlerKing2002}.

\item Each \(P_i=e_i\Lambda_Q\) has simple socle concentrated
in degree \(N_Q\).  Write \(\nu\) for the permutation of the
vertices determined by
\[
 \operatorname{soc}(P_i)\cong S_{\nu(i)}.
\]
For the \(D\) and \(E\) diagrams labelled above, \(\nu\) is
the identity in types \(D_{2m},E_7,E_8\).  In type \(D_{2m+1}\),
it interchanges \(1,2\) and fixes the other vertices.  In type
\(E_6\), it interchanges the two arms of length two and fixes
\(0,1\)
\cite[Definition~4.6 and Theorem~4.8]{BrennerButlerKing2002}.
\end{enumerate}
\end{fact}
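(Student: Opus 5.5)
The two assertions are graded refinements of the Frobenius property in \cref{fact:ade-triangulations}\textup{(i)}. I would derive both from one tool: the graded Hilbert series of \(\Lambda_Q\) and its homogeneous Frobenius form, as in \cite{BrennerButlerKing2002}. Let \(h_Q\) be the Coxeter number of \(Q\). The target is the matrix Hilbert series identity
\[
 H_{\Lambda_Q}(t)=\bigl(1+P_\nu t^{h_Q}\bigr)\bigl(1-A_Qt+t^2\bigr)^{-1},
\]
where \(A_Q\) is the adjacency matrix and \(P_\nu\) is a permutation matrix. This identity holds over an arbitrary field. The reason is that the graded projective resolution
\[
 0\to P_{\nu(i)}\langle h_Q\rangle\to P_i\langle h_Q-2\rangle\to\bigoplus_{i-j}P_j\langle1\rangle\to P_i\to S_i\to0
\]
of \cite[Theorem~4.8]{BrennerButlerKing2002} is built from the preprojective relations, and these relations are defined over \(\mathbb Z\).

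\emph{Step 1: the degree bound \textup{(i)}.} The displayed series is a polynomial, and its top coefficient sits in degree \(h_Q-2\). Reading this off gives \(N_Q=h_Q-2\). Substituting the Coxeter numbers \(h_{A_m}=m+1\), \(h_{D_m}=2m-2\), \(h_{E_6}=12\), \(h_{E_7}=18\), \(h_{E_8}=30\) produces the table. The vanishing of all paths of length greater than \(N_Q\) is exactly the statement that the Hilbert series has no terms beyond degree \(N_Q\).

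\emph{Step 2: the socle \textup{(ii)}.} Since \(\Lambda_Q\) is Frobenius, \cref{fact:ade-triangulations}\textup{(i)}, each \(P_i\) has simple socle. The leftmost term of the resolution shows that \(\operatorname{soc}(P_i)\cong S_{\nu(i)}\), concentrated in the top degree \(N_Q\). Equivalently, the Frobenius form is homogeneous of degree \(N_Q\), so it pairs \(e_i\Lambda_{N_Q}e_{\nu(i)}\) nondegenerately with degree zero.

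\emph{Step 3: identifying \(\nu\) (the main obstacle).} The permutation \(\nu\) is the Nakayama permutation. BBK identify it with the diagram automorphism induced by \(-w_0\), where \(w_0\) is the longest Weyl group element. It remains to quote the standard facts about \(-w_0\). It is trivial for \(D_{2m}\), \(E_7\) and \(E_8\), since \(w_0=-1\) in these types. It is the nontrivial diagram involution for \(A_m\), \(D_{2m+1}\) and \(E_6\).

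The genuine work is to match this involution with the labels fixed above. In \(D_{2m+1}\) it swaps the two short leaves \(1,2\). In \(E_6\) it swaps the arms \(\{2,3\}\) and \(\{4,5\}\) and fixes \(0\) and \(1\). As a cross-check, I would compare \(P_\nu\) with the degree-\(N_Q\) coefficient of the Hilbert series at a leaf vertex.
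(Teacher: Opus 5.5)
Your route is the paper's: the paper gives no argument beyond citing Brenner--Butler--King (Corollary~4.3 for (i), Definition~4.6 and Theorem~4.8 for (ii)), and you unpack the same source. Your conclusions are right, including the uniform explanation \(N_Q=h_Q-2\) of the table and the identification of \(\nu\) with the \(-w_0\) involution in the given labels. However, two of your supporting claims are wrong, and Step~2 rests on the first.

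\textbf{The displayed sequence.} It is not BBK's sequence, and it is not exact. The correct four-term exact sequence is
\[
 0\to S_{\nu(i)}\langle h_Q\rangle\to P_i\langle 2\rangle\to\bigoplus_{i-j}P_j\langle 1\rangle\to P_i\to S_i\to0 .
\]
After splicing, the map \(P_{\nu(i)}\langle h_Q\rangle\to P_i\langle 2\rangle\) has image the one-dimensional socle, so it is far from injective. With your terms \(0\to P_{\nu(i)}\langle h_Q\rangle\to P_i\langle h_Q-2\rangle\to\cdots\), the Euler characteristic would give \((1-At+t^{h_Q-2}-P_\nu t^{h_Q})H(t)=1\). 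This is incompatible with the identity \((1-At+t^2)H(t)=1+P_\nu t^{h_Q}\) that you actually use in Step~1.

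So the ``leftmost term'' argument in Step~2 has to be redone. The simplest fix avoids the resolution entirely:
\begin{itemize}
\item For degree reasons \(\Lambda_{N_Q}\Lambda_{\ge1}=0\), so \(e_i\Lambda_{N_Q}\subseteq\operatorname{soc}(P_i)\).
\item The top coefficient \(P_\nu\) of the Hilbert series shows that \(e_i\Lambda_{N_Q}\) is one-dimensional and equals \(e_i\Lambda_{N_Q}e_{\nu(i)}\).
\item \(\operatorname{soc}(P_i)\) is simple because \(\Lambda_Q\) is Frobenius, so \(\operatorname{soc}(P_i)=e_i\Lambda_{N_Q}\cong S_{\nu(i)}\).
\end{itemize}

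\textbf{Field-independence.} The observation that ``the relations are defined over \(\mathbb Z\)'' does not show that the resolution, or the Hilbert series, is independent of the field. Base change of a complex of \(\mathbb Z\)-modules need not preserve exactness, and exactness after reduction modulo each prime is exactly what would have to be proved. This matters here, because the Fact is applied in characteristic two. The characteristic-free statement has to be taken from the cited source, as the paper does. Once the Hilbert series is known to be field-independent, so is \(\nu\), since it is read off the top coefficient. Only the Nakayama automorphism, not the permutation, can depend on the characteristic.
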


We shall use an indecomposable projective module with isomorphic
top and socle and a four-dimensional endomorphism algebra.
The following choice has both properties.

\begin{notation}
\label{not:ade-chosen-projective}
For the remainder of this subsection, let \(Q=D_n\), \(n\ge5\),
or \(Q=E_6,E_7,E_8\).  In type \(D_n\), take \(s=n-2\),
the neighbour of the end vertex on the long arm.  In types
\(E_6,E_7,E_8\), take \(s\) to be the end vertex of the arm of
length \(1,2,4\), respectively.  Thus
\[
 s=1\ (E_6),\qquad s=3\ (E_7),\qquad s=7\ (E_8).
\]
Put
\[
 \Lambda=\Lambda_Q,\qquad e=e_s,\qquad P=e\Lambda,
 \qquad\Gamma=e\Lambda e,\qquad N=N_Q.
\]
We identify \(\Gamma\) with \(\End_\Lambda(P)\) by left
multiplication, as in \cref{not:ade-modules}.
\end{notation}

The chosen vertex \(s\) is fixed by the permutation in
\cref{fact:ade-graded-structure}.  Consequently
\[
 \operatorname{top}(P)\cong\operatorname{soc}(P)\cong S_s,
 \qquad\operatorname{soc}(P)=e\Lambda_Ne.
\]
Apply \eqref{eq:ade-hom-ext} to \(P\) and each simple module
\(S_i\).  Since \(P\) is projective and has the displayed top
and socle, this gives
\begin{equation}
\label{eq:ade-cartan-projective}
 C_Q\mathbf d(P)=2\mathbf e_s.
\end{equation}
To solve this equation, let \(\theta\) have entry one at the
three leaves and entry two elsewhere in type \(D_n\).  In the
exceptional types, let
\begin{equation}
\label{eq:ade-projective-dimensions}
\begin{array}{c|c}
 Q&\theta\rule{0pt}{2.4ex}\\\hline
 E_6&(3,2,2,1,2,1)\\
 E_7&(4,2,3,2,3,2,1)\\
 E_8&(6,3,4,2,5,4,3,2),
\end{array}
\end{equation}
where the coordinates are indexed by the vertices in increasing label
order.
In each case, twice the entry at a vertex minus the sum of the
neighbouring entries gives
\(C_Q\theta=\mathbf e_s\), and \(\theta_s=2\).
Since \(C_Q\) is invertible, \eqref{eq:ade-cartan-projective} yields
\begin{equation}
\label{eq:ade-half-projective}
 \mathbf d(P)=2\theta,\qquad\dim_\kk\Gamma=4.
\end{equation}

We now specify a homogeneous element of \(\Gamma\).
In type \(D_n\), let \(t=n-1\) and let \(x:s\to t\) be
the arrow to the end vertex.  Define
\[
 a=x\bar x,\qquad\alpha=2.
\]
In types \(E_6,E_7,E_8\), let \(\gamma:s\to0\) be the
path along the chosen arm, without repeated vertices.  Its length
is \(1,2,4\), respectively.  Let \(y_1,y_2\) be the arrows
starting at \(0\) along the other two arms, and let \(y_3\)
be the arrow from \(0\) towards \(s\).  Define
\[
 p_i=\gamma y_i\bar y_i\bar\gamma\quad(i=1,2),
 \qquad a=p_1,\qquad\alpha=2\operatorname{length}(\gamma)+2.
\]
Thus \(p_i\) travels from \(s\) to \(0\), crosses one edge
of another arm, and returns along the same route.
In all four types, put \(\beta=N-\alpha\).  These are the
degrees in which we will find the two generators of \(\Gamma\):
\begin{equation}
\label{eq:trivalent-degree-data}
\begin{array}{c|cccc}
 Q&D_n&E_6&E_7&E_8\\\hline
 \alpha&2&4&6&10\\
 \beta&2n-6&6&10&18.
\end{array}
\end{equation}
In particular, \(0<\alpha<\beta\) and \(\alpha+\beta=N\).

\begin{lemma}
\label{lem:trivalent-local-algebra}
There exists \(b\in e\Lambda_\beta e\) such that
\[
 \Gamma=\kk\{e,a,b,ab\},\qquad
 a^2=b^2=0,\qquad ab=ba\ne0.
\]
\end{lemma}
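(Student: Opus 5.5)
The plan is to combine the dimension count \(\dim_\kk\Gamma=4\) from \eqref{eq:ade-half-projective} with the grading of \(\Gamma=e\Lambda e\) by path length, and with the symmetric/Frobenius pairing that pairs degree \(\ell\) with degree \(N-\ell\) in \(\Gamma\). Since \(\operatorname{soc}(P)=e\Lambda_Ne\) is one-dimensional and the top is \(S_s\), \(\Gamma\) contains \(e\) in degree \(0\) and a socle element in degree \(N\). The other two basis vectors must then lie in degrees strictly between. The Nakayama pairing
\[
 e\Lambda_\ell e\times e\Lambda_{N-\ell}e\to e\Lambda_Ne\cong\kk
\]
is nondegenerate because \(\nu(s)=s\) and \(\Lambda\) is Frobenius. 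So the graded dimensions of \(\Gamma\) are symmetric about \(N/2\). Hence, once I show that \(a\in e\Lambda_\alpha e\) is nonzero, there is a nonzero \(b\in e\Lambda_\beta e\) with \(ab\neq0\) spanning the socle. Counting gives \(\Gamma=\kk\{e,a,b,ab\}\), because \(\alpha\neq\beta\) forces \(a,b\) to be independent and both are distinct in degree from \(e\) and \(ab\).

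The key steps are as follows. First, show \(a\neq0\) in \(\Lambda\). In type \(D_n\), \(a=x\bar x\) at the neighbour of an end vertex, and the preprojective relation at the end vertex \(t\) does not kill \(x\bar x\) starting at \(s\). I would verify nonvanishing by exhibiting a \(\Lambda\)-module, or a piece of the known basis of \(P\), on which \(x\bar x\) acts nonzero. For instance, I would check that \(\dim Pe_t=2\theta_t=2\) forces the arrow \(\bar x\) to be nonzero on degree-one elements. For the \(E\) types I would use a similar argument along the path \(\gamma y_1\bar y_1\bar\gamma\), using that \(\dim P e_i\) is known from \(\theta\). An alternative is a graded Hilbert series computation: the graded dimensions of \(P\) are determined by \(C_Q\) via \(H_P(t)\) (the standard formula \((1-Ct+t^2)^{-1}\) truncated), which gives \(\dim e\Lambda_\ell e\) in each degree. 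I would use that formula to confirm that \(e\Lambda_\ell e\) is nonzero exactly for \(\ell\in\{0,\alpha,\beta,N\}\), each one-dimensional. Then \(a\) spans \(e\Lambda_\alpha e\) once it is nonzero.

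Second, take \(b\) with \(ab\neq0\), using the nondegenerate pairing. Third, derive the relations: \(a^2\) and \(b^2\) lie in degrees \(2\alpha\) and \(2\beta\), which are not in \(\{0,\alpha,\beta,N\}\). Here \(2\beta>N\), and \(2\alpha\ne\beta,N\) is checked from table \eqref{eq:trivalent-degree-data}; the one exception is \(D_n\) with \(2n-6=4\), i.e.\ \(n=5\), where \(2\alpha=\beta\) and \(a^2\) must be handled directly. So \(a^2=b^2=0\), except in that case. Similarly \(ba\in e\Lambda_Ne\), and the nondegenerate pairing on the other side gives \(ba\neq0\). Since \(\nu(s)=s\), the Nakayama automorphism acts on the one-dimensional socle component of \(\Gamma\) by a scalar, so \(ba=c\,ab\) with \(c\ne0\). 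Rescaling is not enough here, so I would argue that \(c=1\), using that \(\Lambda_Q\) is symmetric in characteristic two. An alternative is to note that \(\Gamma\) is a graded local algebra of dimension \(4\) generated in two degrees.

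The main obstacle is establishing the graded dimensions of \(e\Lambda e\) (nonvanishing of \(a\) and the exact degree distribution), together with the exceptional case \(D_5\) where \(2\alpha=\beta\). For \(D_5\) I would compute \(a^2=x\bar x x\bar x\) directly: the relation at \(t\) gives \(\bar x x=0\), so \(a^2=0\) immediately. Indeed this argument gives \(a^2=0\) uniformly in type \(D\). In types \(E\), \(a^2=p_1^2\) contains \(\bar\gamma\gamma\), which I would reduce using the relations along the arm.
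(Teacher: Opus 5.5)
Your skeleton agrees with the paper's. The simple socle $e\Lambda_Ne$ of $P$ yields $b\in e\Lambda_\beta e$ with $ab\neq0$ once $a\neq0$, and the four elements in distinct degrees form a basis because $\dim\Gamma=4$. Also $b^2=0$ since $2\beta>N$, and $a^2=0$ follows from degrees in the exceptional types and from the leaf relation $\bar xx=0$ in type $D$, which also covers $D_5$ where $2\alpha=\beta$.

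\textbf{Gap 1: $a\neq0$ is not proved.} Dimension data, namely $\mathbf d(P)$ or even the graded Hilbert series of $e\Lambda e$, can give $\dim e\Lambda_\alpha e=1$. It cannot tell you that the particular path $p_1=\gamma y_1\bar y_1\bar\gamma$ survives. Your step ``then $a$ spans $e\Lambda_\alpha e$ once it is nonzero'' assumes what is needed. Likewise, $\dim Pe_t=2$ alone does not show $x\bar x\neq0$. In type $D_n$ the fix is easy: $e_s\Lambda_2e_s$ is the span of the two length-two loops at $s$ modulo the single relation $r_s$, and the relation at $t$ plays no role. For $E_6,E_7,E_8$ you are missing the paper's functional argument. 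Let $\varphi$ be $1$ on $p_1,p_2$ and $0$ on every other length-$\alpha$ path from $s$ to $s$ in $\kk\overline Q$. The only pair of consecutive opposite arrows in $p_j$ is $y_j\bar y_j$. Hence the only product $\xi r_i\zeta$ (with $\xi,\zeta$ paths) having $p_1$ or $p_2$ as a term is $\gamma r_0\bar\gamma$. In characteristic two, $\varphi(\gamma r_0\bar\gamma)=1+1+0=0$. So $\varphi$ descends to $e\Lambda_\alpha e$, and $\varphi(a)=1$.

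\textbf{Gap 2: $ab=ba$ is not justified.} Nondegeneracy only gives $ba=c\,ab$ with $c\neq0$. Your reason for $c=1$, that $\Lambda_Q$ is symmetric in characteristic two, is false for $D_{2m+1}$ and $E_6$. There the Nakayama permutation of \cref{fact:ade-graded-structure} is nontrivial, so $\Lambda_Q$ is not even weakly symmetric. Your alternative also fails: a four-dimensional graded local algebra generated in two degrees need not be commutative, for example $\kk\langle a,b\rangle/(a^2,b^2,ba-q\,ab)$ with $q\in\kk\setminus\{0,1\}$. The paper settles $c=1$ by path reversal $\gamma\mapsto\bar\gamma$. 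It fixes every $r_i$, so it induces a graded anti-involution $\iota$ of $\Lambda$ preserving $\Gamma$. Each nonzero homogeneous component of $\Gamma$ is one-dimensional, so $\iota$ acts on it by a scalar $\xi$ with $\xi^2=1$. In characteristic two this forces $\xi=1$, hence $ab=\iota(ab)=\iota(b)\iota(a)=ba$.

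Your planned reduction of $p_1^2$ via $\bar\gamma\gamma$ is unnecessary, since $2\alpha\notin\{0,\alpha,\beta,N\}$ in the exceptional types.
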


\begin{proof}
We first verify that \(a\ne0\).  In type \(D_n\), the two
length-two paths from \(s\) to itself have only their sum as a
relation.  Hence \(a=x\bar x\ne0\).  The relation at the leaf
\(t\) is \(\bar x x=0\), and therefore \(a^2=0\).

For an exceptional type, consider the vector space
\[
 V_\alpha=e(\kk\overline Q)_\alpha e
\]
with basis the length-\(\alpha\) paths from \(s\) to itself,
before imposing the relations.  Define a linear functional
\(\varphi:V_\alpha\to\kk\) to have value one on
\(p_1,p_2\) and zero on every other basis path.
We claim that \(\varphi\) vanishes on \(eI_Qe\cap V_\alpha\).
This subspace is spanned by products \(\xi r_i\zeta\), where
\(\xi,\zeta\) are paths of total length \(\alpha-2\) and the
product starts and ends at \(s\).

The only consecutive opposite arrows in \(p_j\) are
\(y_j\bar y_j\).  Thus \(p_j\) can occur in
\(\xi r_i\zeta\) only when \(i=0\),
\(\xi=\gamma\), and \(\zeta=\bar\gamma\).
For this product, the relation
\(r_0=y_1\bar y_1+y_2\bar y_2+y_3\bar y_3\) gives
\[
 \varphi(\gamma r_0\bar\gamma)=1+1+0=0.
\]
Every other product \(\xi r_i\zeta\) contains neither
\(p_1\) nor \(p_2\), so its image under \(\varphi\) is also
zero.  The functional therefore induces a linear functional on
\(e\Lambda_\alpha e\).  Its value on \(a=p_1\) is one,
proving \(a\ne0\).

The nonzero submodule \(a\Lambda\) of \(P\) contains a simple
submodule.  Since \(\operatorname{soc}(P)\) is simple, it follows
that \(\operatorname{soc}(P)\subseteq a\Lambda\).
Choose \(0\ne c\in e\Lambda_Ne\) and write \(c=aw\).
Let \(w_\beta\) be the degree-\(\beta\) component of \(w\).
Since \(a\) has degree \(\alpha\), the element
\(b=ew_\beta e\) satisfies \(ab=c\ne0\).
The elements \(e,a,b,ab\) have distinct degrees
\(0,\alpha,\beta,N\).  By \eqref{eq:ade-half-projective},
they form a basis of \(\Gamma\).
For an exceptional type, the values in
\eqref{eq:trivalent-degree-data} give
\(2\alpha\notin\{0,\alpha,\beta,N\}\), so \(a^2=0\).
In every type \(2\beta>N\), so \(b^2=0\).

Path reversal \(\gamma\mapsto\bar\gamma\) preserves the
defining relations and induces a graded anti-involution \(\iota\)
of \(\Lambda\).  It preserves \(\Gamma\), each of whose nonzero
homogeneous components is one-dimensional.  On such a component,
\(\iota\) acts by a scalar \(\xi\) satisfying \(\xi^2=1\).
As \(\kk\) has characteristic two, \(\xi=1\).
Thus \(\iota\) fixes every element of \(\Gamma\), and
\(ab=\iota(ab)=\iota(b)\iota(a)=ba\).
\end{proof}

\subsubsection{Exactness}

The following lemma bounds the difference between half the dimension
vector of a projective module and the dimension vector of an image.
It will also be used for \(D_4\).

\begin{lemma}
\label{lem:ade-image-dimensions}
Let \(P=e_s\Lambda_Q\) have simple top and socle \(S_s\).
Suppose that
\[
 \mathbf d(P)=2\theta,\qquad C_Q\theta=\mathbf e_s,
 \qquad\theta_s=2,
\]
and that its endomorphism algebra has the form
\[
 \Gamma=e_s\Lambda_Qe_s=\kk\{e_s,u,v,uv\},
 \qquad u^2=v^2=0,\qquad uv=vu\ne0.
\]
For \(U=uP\) and \(z=\theta-\mathbf d(U)\), one has
\begin{equation}
\label{eq:ade-image-bound}
 z\geq0,\qquad z_s=0,\qquad
 (z,z)_Q\leq2,\qquad (C_Qz)_i\geq0\quad(i\ne s).
\end{equation}
If \(z\ne0\), it is supported on one connected component of
\(Q\setminus\{s\}\), and its restriction there is the vector
of \cref{lem:ade-cartan-uniqueness}.
\end{lemma}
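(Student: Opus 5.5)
The plan is to compare \(\mathbf d(U)\) with \(\theta\) through the endomorphism \(L_u:P\to P\), whose image is \(U\). The two numerical invariants \(C_Q\mathbf d(U)\) and \((\mathbf d(U),\mathbf d(U))_Q\) will then be controlled by \eqref{eq:ade-hom-ext}. Throughout we use \(C_Q\theta=\mathbf e_s\), which gives \((\theta,w)_Q=w_s\) for every \(w\).

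First, nonnegativity and the value at \(s\). Put \(K=\ker L_u\), so that \(U\cong P/K\) and \(\mathbf d(U)+\mathbf d(K)=\mathbf d(P)=2\theta\). Since \(u^2=0\), we have \(U\subseteq K\), hence \(\mathbf d(U)\le\mathbf d(K)\). This forces \(\mathbf d(U)\le\theta\), that is, \(z\ge0\). At the vertex \(s\), we have \(Ue_s=u\Lambda e_s=u\Gamma=\kk\{u,uv\}\), which is two-dimensional because \(u^2=0\) and \(uv\neq0\). Thus \(z_s=\theta_s-2=0\).

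Second, the inequalities \((C_Qz)_i\ge0\) for \(i\ne s\). We apply \eqref{eq:ade-hom-ext} with \(V=U\) and \(W=S_i\), which gives
\[
(C_Q\mathbf d(U))_i=\dim\Hom(U,S_i)+\dim\Hom(S_i,U)-\dim\Ext^1(U,S_i).
\]
As a quotient of \(P\), the module \(U\) has top \(S_s\). As a submodule of \(P\), it has socle contained in \(\operatorname{soc}(P)\cong S_s\). Hence both Hom terms vanish for \(i\ne s\), and so \((C_Q\mathbf d(U))_i\le0\). Since \((C_Q\theta)_i=0\) for \(i\ne s\), it follows that \((C_Qz)_i=\dim\Ext^1(U,S_i)\ge0\).

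Third, the bound \((z,z)_Q\le2\). Expanding the form gives
\[
(z,z)_Q=(\theta,\theta)_Q-2(\theta,\mathbf d(U))_Q+(\mathbf d(U),\mathbf d(U))_Q=2-4+2\dim\End(U)-\dim\Ext^1(U,U),
\]
using \((\theta,\theta)_Q=\theta_s=2\), \((\theta,\mathbf d(U))_Q=\mathbf d(U)_s=2\), and \eqref{eq:ade-hom-ext} with \(V=W=U\). Because \(U\cong P/K\), precomposition with \(P\twoheadrightarrow U\) embeds \(\End(U)\) into \(\Hom(P,U)\cong Ue_s\). Therefore \(\dim\End(U)\le2\), and consequently \((z,z)_Q\le2-\dim\Ext^1(U,U)\le2\). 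This dimension count is the step I expect needs the most care: one has to use the quotient description of \(U\), not the submodule description.

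Finally, the structural conclusion. Since \(z_s=0\), the vector \(z\) lives on \(T=Q\setminus\{s\}\). For \(i\ne s\) the equality \((C_Qz)_i=(C_Tz)_i\) holds, and \((z,z)_Q=(z,z)_T\). So \(z\) satisfies the hypotheses of \cref{lem:ade-cartan-uniqueness} for the possibly disconnected diagram \(T\). If \(z\neq0\), that lemma places its support in a single connected component, where it equals the highest-root vector.
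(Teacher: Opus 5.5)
Your proof is correct and follows the paper's outline closely. It uses rank--nullity for $z\ge0$, the computation $Ue_s=u\Gamma$ for $z_s=0$, and the Hom--Ext formula against the simples $S_i$ (using that $U$ has top and socle $S_s$) for $(C_Qz)_i\ge0$. It then uses the Hom--Ext formula for $U$ against itself for $(z,z)_Q\le2$, and finally \cref{lem:ade-cartan-uniqueness} on $Q\setminus\{s\}$.

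The one place you diverge is the bound on $\dim_\kk\End(U)$. The paper computes $\End(U)$ exactly:
\begin{itemize}
\item commutativity of $\Gamma$ makes every endomorphism of $P$ preserve $U=uP$;
\item injectivity of $P$ makes restriction $\Gamma\to\End(U)$ surjective;
\item $\operatorname{ann}_\Gamma(u)=u\Gamma$ identifies the kernel.
\end{itemize}
Hence $\End(U)\cong\Gamma/u\Gamma\cong\kk[\varepsilon]/(\varepsilon^2)$.

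You instead use only that $U$ is a quotient of $P=e_s\Lambda_Q$. Then $\End(U)$ embeds into $\Hom(P,U)\cong Ue_s=u\Gamma$, which is two-dimensional. Your route is shorter and uses neither $uv=vu$ nor the injectivity of $P$. It yields only the inequality $\dim_\kk\End(U)\le2$, but that is all the lemma needs.

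The paper's computation gives the exact value, and hence the identity $(z,z)_Q=2-\dim_\kk\Ext^1(U,U)$. That extra precision is not used later; the quotient condition in \cref{prop:trivalent-period-one} recomputes $\Gamma/u\Gamma$ directly.
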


\begin{proof}
Since \(u^2=0\), the image \(U\) is contained in \(\ker L_u\).
Applying rank--nullity at each vertex gives
\(\mathbf d(U)\leq\frac12\mathbf d(P)=\theta\), and hence
\(z\geq0\).  Moreover,
\(Ue_s=u\Gamma=\kk u\oplus\kk uv\), so \(z_s=0\).

We compute the ordinary endomorphism algebra of \(U\).
Commutativity of \(\Gamma\) implies that every endomorphism of
\(P\) preserves \(U=uP\).  Conversely, any endomorphism of
\(U\), viewed as a map \(U\to P\), extends to \(P\) because
\(P\) is injective.  Thus restriction
\(\Gamma\to\End_{\Lambda_Q}(U)\) is surjective.
Its kernel consists of the elements \(c\in\Gamma\) with
\(cu=0\), since \(u\) generates \(U\).  The displayed basis
of \(\Gamma\) gives
\(\operatorname{ann}_\Gamma(u)=\kk u\oplus\kk uv=u\Gamma\).
Consequently
\[
 \End_{\Lambda_Q}(U)\cong\Gamma/u\Gamma
 \cong\kk[\varepsilon]/(\varepsilon^2),
 \qquad\dim_\kk\End_{\Lambda_Q}(U)=2.
\]

As a nonzero quotient of \(P\), the module \(U\) has simple top
\(S_s\).  As a nonzero submodule of \(P\), it has simple socle
\(S_s\).  Since \((\theta,z)_Q=z_s=0\) and
\((\theta,\theta)_Q=\theta_s=2\), the Hom--Ext formula gives
\[
\begin{aligned}
 2+(z,z)_Q
 &=(\mathbf d(U),\mathbf d(U))_Q\\
 &=2\dim_\kk\End_{\Lambda_Q}(U)
   -\dim_\kk\Ext^1_{\Lambda_Q}(U,U)
 \leq4.
\end{aligned}
\]
This proves \((z,z)_Q\leq2\).
For \(i\ne s\), the top and socle of \(U\) give
\(\Hom(U,S_i)=0=\Hom(S_i,U)\).  Applying
\eqref{eq:ade-hom-ext} to \(U,S_i\), we obtain
\[
 \bigl(C_Q\mathbf d(U)\bigr)_i
 =-\dim_\kk\Ext^1_{\Lambda_Q}(U,S_i)\leq0.
\]
Since \(C_Q\theta=\mathbf e_s\), this is equivalent to
\((C_Qz)_i\geq0\).
Finally, deleting the zero coordinate \(z_s\) identifies these
conditions with those for the Cartan matrix of
\(Q\setminus\{s\}\).  The last assertion follows from
\cref{lem:ade-cartan-uniqueness}.
\end{proof}

Fix \(b\) as in \cref{lem:trivalent-local-algebra}.

\begin{proposition}
\label{prop:trivalent-period-one}
For \(Q=D_n\), \(n\ge5\), or \(Q=E_6,E_7,E_8\), the maps
\[
 d=L_{a+b},\qquad h=L_a
\]
satisfy \eqref{eq:ade-block-certificate}.
\end{proposition}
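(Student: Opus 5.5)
We must verify \eqref{eq:ade-block-certificate} for \(d=L_{a+b}\), \(h=L_a\) on \(P=e\Lambda\), with \(E=\Gamma\) acting by left multiplication. The algebraic conditions are immediate from \cref{lem:trivalent-local-algebra}. Since \(\Gamma\) is commutative, \(d\circ h=h\circ d\). We have \(h^2=L_{a^2}=0\). In characteristic two, \((a+b)^2=a^2+2ab+b^2=0\), so \(d^2=0\) and hence \(\im d\subseteq\ker d\).

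The quotient in \eqref{eq:ade-block-certificate} is computed inside the four-dimensional commutative algebra \(\Gamma=\kk\{e,a,b,ab\}\). Put \(c=a+b\). Then \(c x c=c^2x=0\) for every \(x\in\Gamma\), so the numerator is all of \(\Gamma\). The denominator is \(c\Gamma=\kk\{a+b,\,ab\}\), because \(ce=a+b\), \(ca=ab\), \(cb=ab\) and \(c\,ab=0\). The quotient is therefore two-dimensional with basis \([e],[a]\), and \([a]^2=[a^2]=0\). Thus it is isomorphic to \(\kk[\varepsilon]/(\varepsilon^2)\) with \([1_P]\mapsto1\) and \([h]\mapsto\varepsilon\).

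The real content is the exactness \(\im d=\ker d\). Since \(d^2=0\), rank–nullity at each vertex shows it suffices to prove \(\mathbf d(dP)=\theta=\tfrac12\mathbf d(P)\). The plan is to apply \cref{lem:ade-image-dimensions} with \(u=a+b\) and \(v=b\). Its hypotheses hold: \(u^2=v^2=0\), \(uv=ab\ne0\), \(uv=vu\), and \(\Gamma=\kk\{e,u,v,uv\}\). The remaining hypotheses are \(\mathbf d(P)=2\theta\), \(C_Q\theta=\mathbf e_s\) and \(\theta_s=2\), which are given by \eqref{eq:ade-half-projective}, together with the simple top and socle \(S_s\). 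The lemma yields \(z=\theta-\mathbf d(dP)\ge0\) with \(z_s=0\). Moreover, if \(z\ne0\), then \(z\) is the highest root of a single connected component \(T\) of \(Q\setminus\{s\}\), extended by zero.

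The main obstacle is to exclude each such candidate \(z\), and I would do this using the grading. The element \(a+b\) is not homogeneous, so grading arguments cannot be applied to \(d\) directly. However, for \(w\in P_k\) the lowest-degree component of \((a+b)w\) is \(aw\), so a leading-term argument gives \(\dim (dP)e_i\ge\dim(aP)e_i\). The same holds with \(b\) in place of \(a\) if one instead takes top-degree components. I would then bound \(\dim aPe_i\) from below using the explicit paths \(a=x\bar x\) (type \(D\)) or \(a=p_1\) (types \(E\)). The aim is to show that \(aP\) already reaches \(\theta_i\) at some vertex of each component \(T\) where the highest root is positive. Since that root is positive at every vertex of \(T\), this would force \(z|_T=0\).

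Concretely, I would proceed component by component: the long-arm tail and the trivalent part in type \(D_n\), and the complementary components in \(E_6,E_7,E_8\). In each case I would pick a vertex \(i\) of \(T\) adjacent to \(s\). Since \(\theta_i\) is small there, I would exhibit \(\theta_i\) linearly independent elements of \(aPe_i\) or \(bPe_i\). To prove independence, I would use linear functionals on paths that vanish on the relations, in the same way \(\varphi\) was used in \cref{lem:trivalent-local-algebra}.

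If this fails at some vertex, the fallback is a socle argument. The pairing \(P\times P\to eN\), given by multiplication into \(\operatorname{soc}(P)\), makes \(\ker L_c\) the orthogonal of \(cP\). This would identify \(\dim\ker d\) with \(\dim P-\dim dP\) vertexwise, using the self-duality \(\nu(s)=s\).
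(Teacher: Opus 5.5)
Your checks of \(d\circ h=h\circ d\), \(h^2=0\) and \(\Gamma/(a+b)\Gamma\cong\kk[\varepsilon]/(\varepsilon^2)\) are correct. Applying \cref{lem:ade-image-dimensions} with \(u=a+b\), \(v=b\) is also legitimate; the paper takes \(v=a\).

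The gap is the step you yourself call the real content: you never actually exclude a nonzero \(z\). What you give is a programme, not an argument. You propose to exhibit \(\theta_i\) independent elements of \((aP)e_i\) or \((bP)e_i\) at one vertex of each component, certified by path functionals, but you carry this out in no type. Your leading-term bound \(\dim(dP)e_i\ge\max\{\dim(aP)e_i,\dim(bP)e_i\}\) is correct. For the isolated leaf \(t\) of \(D_n\) it works with \(b\): note \((aP)e_t=0\), while \(bx\ne0\) because \(bx\bar x=ab\ne0\).

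In the exceptional types, however, the elements you need sit in high degree. In \(E_8\), at the neighbour \(6\) of \(s=7\) (the vertex your plan picks), you would need three independent elements of \((aP)e_6\), all of degree at least \(11\) in \(\Pi_\kk(E_8)\). The functional trick of \cref{lem:trivalent-local-algebra} worked only because \(p_1,p_2\) each contain a single backtrack and occur in a single relation product. Certifying nonvanishing in high degree that way amounts to computing graded pieces of the algebra. Your fallback does not help either. The socle pairing only yields \(\dim\ker(d)e_i=\dim Pe_i-\dim(dP)e_i\), which is rank--nullity. Isotropy of \(dP\) just reproves \(\mathbf d(dP)\le\theta\). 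Neither says anything about \(\operatorname{im}d=\ker d\).

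The missing idea is a support argument that needs no explicit elements beyond \(ab\ne0\). If \(z\ne0\), then \(\mathbf d(U)=\theta-z\) is determined, and its support is a small full subdiagram \(T\):
\begin{itemize}
\item \(D_4,D_5,D_7\) for \(E_6,E_7,E_8\);
\item in type \(D_n\), when \(z\) lies on the non-leaf component of \(D_n\setminus\{s\}\), the path \(A_3\) formed by \(s\) and its two neighbours.
\end{itemize}
A module supported on \(T\) is a \(\Pi_\kk(T)\)-module, so every element of degree greater than \(N_T\) annihilates it. Here \(N_T=4,6,10\) equals \(\alpha\), and \(N_{A_3}=2<2n-6\), so in all cases \(\deg b=\beta>N_T\). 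Since \(u=a+b\in U\), this gives \(ub=0\), contradicting \(ub=ab\ne0\). Combined with the leaf computation \(ux=bx\ne0\) in type \(D_n\), this forces \(z=0\). Hence \(\mathbf d(U)=\theta\) and \(\operatorname{im}d=\ker d\).
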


\begin{proof}
Put \(u=a+b\), \(U=uP\), and \(z=\theta-\mathbf d(U)\).
The relations in \(\Gamma\) give \(u^2=a^2=0\) and
\(ua=au=ab\ne0\), and \(\{e,u,a,ua\}\) is a basis.
Thus \cref{lem:ade-image-dimensions} applies with \(v=a\).
We prove \(z=0\) by excluding the possible supports of a nonzero
\(z\).

We use one consequence of support throughout the argument.
If a module is supported on a full subdiagram \(T\), its action
factors through the graded quotient
\(\Lambda_Q\twoheadrightarrow\Pi_\kk(T)\) obtained by setting
the omitted vertex idempotents equal to zero.  Indeed, all paths
through those vertices act as zero, and the surviving defining
relations are precisely those of \(\Pi_\kk(T)\).
Hence any element of degree greater than \(N_T\) annihilates
such a module.

Suppose first that \(Q\) is exceptional and \(z\ne0\).
The diagram \(Q\setminus\{s\}\) is connected.
The unique possible vector \(z\) and the resulting support of
\(U\), computed from \(\mathbf d(U)=\theta-z\), are
\begin{equation}
\label{eq:ade-possible-supports}
\begin{array}{c|c|c|c}
 Q&Q\setminus\{s\}&z&\operatorname{supp}(U)\rule{0pt}{2.4ex}\\\hline
 E_6&A_5&(1,0,1,1,1,1)&\{0,1,2,4\}\ (D_4)\\
 E_7&D_6&(2,1,1,0,2,2,1)&\{0,1,2,3,4\}\ (D_5)\\
 E_8&E_7&(4,2,3,2,3,2,1,0)&\{0,1,2,4,5,6,7\}\ (D_7).
\end{array}
\end{equation}
Here the coordinates of \(z\) retain the original vertex order,
with zero at \(s\).  On the deleted diagram, each displayed vector
has squared length two and its product with the Cartan matrix is
nonnegative.  Thus \cref{lem:ade-cartan-uniqueness} identifies it
as the only possibility.  Subtraction from
\eqref{eq:ade-projective-dimensions} gives the last column.

For \(T=D_4,D_5,D_7\), respectively,
\cref{fact:ade-graded-structure} gives \(N_T=4,6,10\).
These are the respective values of \(\alpha\), whereas \(b\)
has degree \(\beta>\alpha\).
Thus the image of \(b\) in \(\Pi_\kk(T)\) is zero, so \(b\) acts
trivially on \(U\).  In particular, \(ub=0\).  On the other hand,
\[
 ub=(a+b)b=ab\ne0,
\]
a contradiction.  Hence \(z=0\) in the exceptional types.

Now let \(Q=D_n\) and suppose \(z\ne0\).
Deleting \(s=n-2\) leaves two components: \(D_{n-2}\) and the
isolated vertex \(t=n-1\).  When \(n=5\), the first component
is \(A_3\).
If \(z\) is supported at \(t\), then
\(z=\mathbf e_t\), so \(Ue_t=0\).
Since \(x\) ends at \(t\), the element \(ux\) belongs to \(Ue_t\).
The leaf relation gives \(ax=0\), so \(ux=bx\).
This element is nonzero, because
\[
 (ux)\bar x=bx\bar x=ba=ab\ne0.
\]
This contradicts \(Ue_t=0\).

If \(z\) is supported on the other component, its entries are
one at the leaves and two elsewhere; for \(A_3\), all three
entries are one.  These vectors satisfy the conditions of
\cref{lem:ade-cartan-uniqueness} and so exhaust this case.
Subtracting from \(\theta\) leaves support on the three-vertex
path formed by \(s\) and its two neighbours, with dimension
vector \((1,2,1)\).  Thus \(U\) is supported on \(A_3\),
where the largest path degree is two.
Since \(\deg b=\beta=2n-6>2\), the image of \(b\) in
\(\Pi_\kk(A_3)\) is zero.  Hence \(b\) acts trivially on \(U\),
so \(ub=0\).  But \(ub=(a+b)b=ab\ne0\), a contradiction.
Thus \(z=0\) also in type \(D_n\).

We have proved \(z=0\), so \(\mathbf d(U)=\theta\).
As \(\mathbf d(P)=2\theta\), the inclusion
\(\im d\subseteq\ker d\) is an equality.
The relations already give \(d\circ h=h\circ d\) and \(h^2=0\).
Finally, commutativity and \(u^2=0\) give
\[
 \frac{\{c\in\Gamma\mid ucu=0\}}{u\Gamma+\Gamma u}
 =\Gamma/u\Gamma
 =\kk\{[e],[a]\}
 \cong\kk[\varepsilon]/(\varepsilon^2),
\]
where \([e]\) maps to \(1\) and \([a]\) maps to
\(\varepsilon\).  This verifies the remaining condition in
\eqref{eq:ade-block-certificate}.
\end{proof}
\subsection{Type \texorpdfstring{$D_4$}{D4}}

Assume \(|\kk|>2\), and choose \(\omega\in\kk\setminus\{0,1\}\).
Number the central vertex by \(0\)
and the leaves by \(1,2,3\).  Let \(x_i:0\to i\) and \(\bar x_i\)
be the opposite arrows, and put
\[
 \Lambda=\Pi_\kk(D_4),\qquad P=e_0\Lambda,\qquad
 \Gamma=e_0\Lambda e_0,\qquad
 a=x_1\bar x_1,\qquad b=x_2\bar x_2.
\]
The relations give
\[
 x_3\bar x_3=a+b,\qquad \bar x_i x_i=0\quad(1\le i\le3),
\]
so \(a^2=b^2=(a+b)^2=0\), and \(ab=ba\).
By \cref{fact:ade-graded-structure}, the top and socle of \(P\)
are both \(S_0\), and the socle is concentrated in degree four.
The vector \(\theta=(2,1,1,1)\) satisfies
\(C_{D_4}\theta=\mathbf e_0\).
Applying \eqref{eq:ade-hom-ext} to \(P,S_i\), as in
\eqref{eq:ade-cartan-projective}, therefore gives
\(\mathbf d(P)=2\theta\) and \(\dim_\kk\Gamma=4\).
The three length-two paths from \(0\) to itself have exactly their sum as a
relation, so \(a,b\) are independent.  The nonzero submodule
\(a\Lambda\) contains the socle.  Its degree-four component is
\(a(\kk a+\kk b)=\kk ab\), so \(ab\ne0\).  Therefore
\[
 e_0\Lambda e_0=\kk\{e_0,a,b,ab\},
 \qquad a^2=b^2=0,\qquad ab=ba\ne0.
\]

\begin{proposition}
\label{prop:d4-period-one}
The maps \(d=L_{a+\omega b}\) and \(h=L_a\) satisfy
\eqref{eq:ade-block-certificate}.
\end{proposition}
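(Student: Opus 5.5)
The plan is to follow the pattern of \cref{prop:trivalent-period-one}, with \(s=0\), \(\theta=(2,1,1,1)\), \(u=a+\omega b\), \(v=a\) and \(U=uP\).

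\textbf{The algebra relations.} First I check the relations needed for \cref{lem:ade-image-dimensions}. Since \(\Gamma\) is commutative,
\[
u^2=a^2+2\omega ab+\omega^2b^2=0
\]
(characteristic two and \(a^2=b^2=0\)). Also \(ua=au=\omega ab\ne0\), since \(\omega\ne0\). The four elements \(\{e_0,u,a,ua\}\) lie in degrees \(0,2,2,4\), and \(u,a\) are independent because \(\omega\ne0\). Hence this set is a basis of \(\Gamma\) with the required multiplication table. The same computation gives \(d\circ h=h\circ d\) and \(h^2=0\).

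The quotient condition is handled as at the end of \cref{prop:trivalent-period-one}. Since \(\Gamma\) is commutative and \(u^2=0\), every \(c\) satisfies \(ucu=0\), and \(u\Gamma+\Gamma u=u\Gamma=\kk\{u,ab\}\). So the quotient is \(\kk\{[e_0],[a]\}\cong\kk[\varepsilon]/(\varepsilon^2)\), with \([h]\mapsto\varepsilon\).

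\textbf{Reduction to \(z=0\).} The real content is exactness, \(\im d=\ker d\). By rank–nullity it suffices to show \(\mathbf d(U)=\theta\), that is \(z=0\). By \cref{lem:ade-image-dimensions}, if \(z\ne0\) it is supported on one component of \(D_4\setminus\{0\}\). These components are the three isolated leaves, so \(z=\mathbf e_i\) for some leaf \(i\), and then \(Ue_i=0\). I exclude each leaf by exhibiting a nonzero element of \(Ue_i\), namely \(ux_i\), and verifying \((ux_i)\bar x_i\ne0\).

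\begin{itemize}
\item \(i=1\): \(ux_1\bar x_1=ua=\omega ab\ne0\).
\item \(i=2\): \(ux_2\bar x_2=ub=ab\ne0\).
\item \(i=3\): \(ux_3\bar x_3=u(a+b)=(a+\omega b)(a+b)=(1+\omega)ab\). This is nonzero precisely because \(\omega\ne1\).
\end{itemize}

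Hence \(z=0\) and \(\im d=\ker d\). Together with the first step, all conditions of \eqref{eq:ade-block-certificate} hold.

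\textbf{Main obstacle.} The main point is the leaf-\(3\) case, which is where the hypothesis \(\omega\notin\{0,1\}\) (hence \(|\kk|>2\)) is used. Over \(\mathbb F_2\), each of the three choices \(a\), \(b\), \(a+b\) for \(u\) kills one of the leaves. I should also double-check that \(\mathbf d(U)\le\theta\) combined with \(z_0=0\) leaves only the isolated-leaf supports, so that no other case arises.
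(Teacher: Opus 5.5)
Your proposal is correct and follows the paper's proof essentially step for step. You apply \cref{lem:ade-image-dimensions} with \(s=0\) and \(v=a\), and reduce a nonzero defect to \(z=\mathbf e_i\) for a leaf \(i\). You then exclude each leaf using \((ux_i)\bar x_i\in\{\omega ab,\,ab,\,(1+\omega)ab\}\), and read off \(h^2=0\), \(d\circ h=h\circ d\) and the quotient \(\Gamma/u\Gamma\cong\kk[\varepsilon]/(\varepsilon^2)\) from commutativity and \(u^2=0\); the only difference is the order of presentation.
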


\begin{proof}
Put \(u=a+\omega b\) and \(U=uP\).  Since \(\omega\ne0\),
\[
 \Gamma=\kk\{e_0,u,a,ua\},\qquad
 u^2=a^2=0,\qquad ua=au=\omega ab\ne0.
\]
Thus \cref{lem:ade-image-dimensions} applies with \(s=0\) and \(v=a\).
Deleting \(0\) leaves three isolated vertices.
If \(\theta-\mathbf d(U)\ne0\),
\cref{lem:ade-cartan-uniqueness} gives
\(\theta-\mathbf d(U)=\mathbf e_i\) for some leaf \(i\).
This would give \(Ue_i=0\).
However, all three elements \(ux_i\in Ue_i\) are nonzero, since
\[
 (ux_1)\bar x_1=\omega ab,\qquad
 (ux_2)\bar x_2=ab,\qquad
 (ux_3)\bar x_3=(1+\omega)ab.
\]
Thus \(\mathbf d(U)=\theta\), proving \(\im d=\ker d\).
The displayed relations give \(h^2=0\) and
\(d\circ h=L_{ua}=L_{au}=h\circ d\).
Finally, commutativity of \(\Gamma\) and \(u^2=0\) give
\[
 \frac{\{c\in\Gamma\mid ucu=0\}}{u\Gamma+\Gamma u}
 =\Gamma/u\Gamma
 =\kk\{[e_0],[a]\}
 \cong\kk[\varepsilon]/(\varepsilon^2),
\]
where \([e_0]\) maps to \(1\) and \([a]\) maps to \(\varepsilon\).
This verifies the remaining condition in \eqref{eq:ade-block-certificate}.
\end{proof}

\begin{theorem}
\label{thm:ade-explicit-lines}
Let \(\kk\) have characteristic two and let
\[
 Q\in\{A_n\mid n\ge5\}\cup\{D_n\mid n\ge5\}
       \cup\{E_6,E_7,E_8\}.
\]
If \(|\kk|>2\), one may also take \(Q=D_4\).  Put
\(\mathcal F_Q=\mathcal P(\Pi_\kk(Q))\).  Then \(\mathcal F_Q\) is a
Hom-finite idempotent-complete \(\kk\)-linear category carrying pairwise
distinct pre-triangulations
\(\{\Delta^Q_{\kk,\lambda}\}_{\lambda\in\kk}\) such that
\[
 \Delta^Q_{\kk,\lambda}\text{ satisfies }\mathrm{TR4}
 \quad\Longleftrightarrow\quad
 \lambda=0.
\]
\end{theorem}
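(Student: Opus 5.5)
The proof is an assembly of the results of this section. For each listed \(Q\), \cref{fact:ade-triangulations} shows that \(\Lambda=\Pi_\kk(Q)\) is finite-dimensional and self-injective, that \(\mathcal F_Q=\mathcal P(\Lambda)\) carries Keller's triangulation \(\Delta_0\), and that \(\underline{\operatorname{mod}}\Lambda\) is \(2\)-Calabi--Yau. The category \(\mathcal F_Q\) is Hom-finite because \(\Lambda\) is finite-dimensional. It is idempotent-complete because it is the category of finitely generated projectives over an artinian ring, which is Krull--Schmidt. As noted after \cref{fact:ade-triangulations}, evaluation at \(\Lambda\) identifies the Freyd category of \(\mathcal F_Q\) with \(\operatorname{mod}\Lambda\). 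Its stable category is then identified with \(\underline{\operatorname{mod}}\Lambda\), so the standing hypotheses of \cref{sec:serre-traces} (with \(n=2\)) and of \cref{prop:strict-period-one-detector} hold.

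Next I produce, case by case, a projective \(P\) and endomorphisms \(d,h\) satisfying \eqref{eq:ade-block-certificate}.
\begin{itemize}
\item For \(A_n\) with \(n\ge6\), use \cref{prop:an-period-one}.
\item For \(D_n\) with \(n\ge5\) and for \(E_6,E_7,E_8\), use \cref{prop:trivalent-period-one}.
\item For \(D_4\) with \(|\kk|>2\), use \cref{prop:d4-period-one}.
\end{itemize}
In each case \cref{prop:strict-period-one-detector} yields central Heller twists \(\Delta_\lambda\) of \(\Delta_0\), pairwise distinct, with \(\Delta_\lambda\) satisfying \(\mathrm{TR4}\) iff \(\lambda=0\). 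Setting \(\Delta^Q_{\kk,\lambda}=\Delta_\lambda\) gives the family.

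The remaining case is \(A_5\), which is not covered by \cref{prop:an-period-one} since that needs \(n\ge6\). I handle it with \cref{thm:scalar-family} for \(a=2\), since \(3a-1=5\). That theorem works over any field, in particular in characteristic two. It gives pairwise distinct pre-triangulations \(\Delta_{\kk,\lambda}\) on \(\mathcal P(\Pi_\kk(A_5))\) with the required \(\mathrm{TR4}\) dichotomy, and I take these as \(\Delta^{A_5}_{\kk,\lambda}\).

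I expect the main obstacle to be bookkeeping rather than mathematics: checking that the base triangulation used in each cited proposition is the one intended, and that the case split covers every \(Q\) listed in the statement. In particular, the only gap in the \(A_n\) range is \(n=5\), and it is filled by \cref{thm:scalar-family}.
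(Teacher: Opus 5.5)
Your proposal is correct and matches the paper's proof. You handle $A_5$ by \cref{thm:scalar-family} with $a=2$, and every other listed type by feeding \cref{prop:an-period-one,prop:trivalent-period-one,prop:d4-period-one} into \cref{prop:strict-period-one-detector}, with your checks of Hom-finiteness, idempotent-completeness and the $2$-Calabi--Yau hypothesis simply spelling out what the paper leaves implicit.
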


\begin{proof}
The case \(A_5\) is \cref{thm:scalar-family} with \(a=2\).
For the remaining types, apply \cref{prop:strict-period-one-detector}
to \cref{prop:an-period-one,prop:trivalent-period-one,prop:d4-period-one}.
\end{proof}

\medskip
\noindent\textbf{Acknowledgements.}
{\itshape\emergencystretch=2em
The authors developed the main ideas, central concepts, and overall proof
strategies, and established the results with assistance from GPT-5.6 Sol.
They checked the mathematical arguments and revised the manuscript, and
take full responsibility for its content and mathematical correctness.
The authors would like to thank Amnon Neeman for helpful discussions and
suggestions.

This work is supported by the National Natural Science Foundation of China
(Nos.~1250011863 and~12371034).
\par}


\begin{thebibliography}{CLLZ26}
\setlength{\itemsep}{0pt}

\bibitem[Ami07]{Amiot2007}
C.~Amiot,
\emph{On the structure of triangulated categories with finitely many
indecomposables},
Bull. Soc. Math. France \textbf{135} (2007), no.~3, 435--474.

\bibitem[Ano26]{AnonymousD4}
Anonymous,
\emph{A computer-assisted mapping-cone obstruction for the preprojective
algebra of type \(D_4\)},
unpublished manuscript, Zenodo (2026),
\url{https://doi.org/10.5281/zenodo.21889660}.

\bibitem[Bal11]{Balmer2011}
P.~Balmer,
\emph{Separability and triangulated categories},
Adv. Math. \textbf{226} (2011), 4352--4372.

\bibitem[Bel00]{Beligiannis2000}
A.~Beligiannis,
\emph{Relative homological algebra and purity in triangulated categories},
J. Algebra \textbf{227} (2000), no.~1, 268--361.

\bibitem[Bou02]{Bourbaki2002}
N.~Bourbaki,
\emph{Lie groups and Lie algebras. Chapters 4--6},
Elements of Mathematics, Springer-Verlag, Berlin, 2002.

\bibitem[BBK02]{BrennerButlerKing2002}
S.~Brenner, M.~C. R.~Butler, and A.~D. King,
\emph{Periodic algebras which are almost Koszul},
Algebr. Represent. Theory \textbf{5} (2002), no.~4, 331--368,
\url{https://doi.org/10.1023/A:1020146502185}.

\bibitem[CLLZ26]{ChenLiuLuZhang2026}
X.-W.~Chen, J.~Liu, X.-S.~Lu, and C.~Zhang,
\emph{A pre-triangulated category which is not triangulated},
Preprint, arXiv:2608.09777 (2026).

\bibitem[CB00]{CrawleyBoevey2000}
W.~Crawley-Boevey,
\emph{On the exceptional fibres of Kleinian singularities},
Amer. J. Math. \textbf{122} (2000), no.~5, 1027--1037.

\bibitem[DCM26]{DiazCabreraMuro2026}
J.~D\'iaz Cabrera and F.~Muro,
\emph{An exotic finite pretriangulated category over any algebraically closed field},
Preprint, arXiv:2608.15203v3 (2026).

\bibitem[DR92]{DlabRingel1992}
V.~Dlab and C.~M. Ringel,
\emph{The module theoretical approach to quasi-hereditary algebras},
in: H.~Tachikawa and S.~Brenner (eds.), Representations of Algebras and
Related Topics, London Math. Soc. Lecture Note Ser., vol.~168,
Cambridge Univ. Press, Cambridge, 1992, 200--224.

\bibitem[Ela15]{Elagin2014}
A.~Elagin,
\emph{On equivariant triangulated categories},
Preprint, arXiv:1403.7027v2 (2015).

\bibitem[Fre66]{Freyd1966}
P.~Freyd,
\emph{Representations in abelian categories},
in: Proceedings of the Conference on Categorical Algebra, La Jolla 1965,
Springer, Berlin--Heidelberg, 1966, 95--120.

\bibitem[GKO13]{GKO2013}
C.~Geiss, B.~Keller, and S.~Oppermann,
\emph{$n$-angulated categories},
J. Reine Angew. Math. \textbf{675} (2013), 101--120.

\bibitem[Hap88]{Happel1988}
D.~Happel,
\emph{Triangulated categories in the representation theory of
finite-dimensional algebras},
London Math. Soc. Lecture Note Ser., vol.~119, Cambridge Univ. Press, 1988.

\bibitem[Hel68]{Heller1968}
A.~Heller,
\emph{Stable homotopy categories},
Bull. Amer. Math. Soc. \textbf{74} (1968), no.~1, 28--63.

\bibitem[Kel05]{Keller2005}
B.~Keller,
\emph{On triangulated orbit categories},
Doc. Math. \textbf{10} (2005), 551--581.

\bibitem[Kel09]{KellerCorrection}
B.~Keller,
\emph{Corrections to ``On triangulated orbit categories''},
author's note, last revised 2009,
\url{https://webusers.imj-prg.fr/~bernhard.keller/publ/corrTriaOrbit.pdf}.

\bibitem[KY11]{KrauseYe2011}
H.~Krause and Y.~Ye,
\emph{On the centre of a triangulated category},
Proc. Edinb. Math. Soc. (2) \textbf{54} (2011), no.~2, 443--466,
\url{https://doi.org/10.1017/S0013091509001199}.

\bibitem[MM15]{MarkmanMehrotra2015}
E.~Markman and S.~Mehrotra,
\emph{Integral transforms and deformations of K3 surfaces},
Preprint, arXiv:1507.03108v1 (2015).

\bibitem[MMV19]{MarkmanMehrotraVerbitsky2019}
E.~Markman, S.~Mehrotra, and M.~Verbitsky,
\emph{Rigid hyperholomorphic sheaves remain rigid along twistor deformations
	of the underlying hypark\"ahler manifold},
Eur. J. Math. \textbf{5} (2019), 964--1012.

\bibitem[MSS07]{MuroSchwedeStrickland2007}
F.~Muro, S.~Schwede, and N.~Strickland,
\emph{Triangulated categories without models},
Invent. Math. \textbf{170} (2007), no.~2, 231--241.

\bibitem[Nee91]{Neeman1991}
A.~Neeman,
\emph{Some new axioms for triangulated categories},
J. Algebra \textbf{139} (1991), no.~1, 221--255.

\bibitem[Nee01]{Neeman2001}
A.~Neeman,
\emph{Triangulated categories},
Ann. of Math. Stud., vol.~148, Princeton Univ. Press, 2001.

\bibitem[Pos21]{Posur2021}
S.~Posur,
\emph{A constructive approach to Freyd categories},
Appl. Categ. Structures \textbf{29} (2021), 171--211.

\bibitem[Pup67]{Puppe1967}
D.~Puppe,
\emph{Stabile Homotopietheorie I},
Math. Ann. \textbf{169} (1967), 243--274.

\bibitem[RVdB02]{ReitenVanDenBergh2002}
I.~Reiten and M.~Van den Bergh,
\emph{Noetherian hereditary abelian categories satisfying Serre duality},
J. Amer. Math. Soc. \textbf{15} (2002), no.~2, 295--366.

\bibitem[RVdB20]{RizzardoVandenBergh2020}
A.~Rizzardo and M.~Van den Bergh,
\emph{A \(k\)-linear triangulated category without a model},
Ann. of Math. (2) \textbf{191} (2020), no.~2, 393--437.

\bibitem[Sun19]{Sun2019}
C.~Sun,
\emph{A note on equivariantization of additive categories and triangulated
categories},
J. Algebra \textbf{534} (2019), 483--530.

\bibitem[Wei94]{Weibel1994}
C.~A. Weibel,
\emph{An introduction to homological algebra},
Cambridge Stud. Adv. Math., vol.~38,
Cambridge Univ. Press, Cambridge, 1994.
\end{thebibliography}
\end{document}